\documentclass[10pt,a4paper]{article}
\usepackage{graphicx} 
\usepackage{fullpage}
\usepackage[utf8]{inputenc}
\usepackage{amsmath,amsfonts,amsthm,amssymb}
\usepackage{graphicx}
\usepackage{graphics}
\usepackage{url}
\usepackage{hyperref}
\usepackage{color}
\usepackage{enumitem}
\usepackage{lineno}
\usepackage{subfiles}
\usepackage{mathrsfs}
\usepackage{caption}
\usepackage[subsection]{algorithm}
\usepackage{algpseudocode}
\usepackage{varwidth}
\usepackage{mathabx}
\usepackage{svg}
\usepackage{booktabs}
\usepackage{multirow}
\usepackage{multicol}
\usepackage{subcaption}
\usepackage{here}
\usepackage{xspace}
\usepackage[export]{adjustbox} 
\usepackage{tikz}
\usepackage{tkz-graph}
\usetikzlibrary{shapes.geometric}
\usetikzlibrary{positioning}
\usetikzlibrary {decorations.markings}
\usetikzlibrary{arrows.meta}
\usetikzlibrary{calc}

\newtheorem{thm}{Theorem}[section]
\newtheorem{lem}[thm]{Lemma}

\newtheorem{claim}[thm]{Claim}

\newtheorem{dfn}[thm]{Definition}

\renewcommand{\bar}{\widebar}
\renewcommand{\hat}{\widehat}

\def\nil{\textit{nil}}
\def\head{\textit{head}}
\def\tail{\textit{tail}}
\def\reverse{\textit{rev}}
\def\successor{\textit{succ}}
\def\predecessor{\textit{pred}}
\def\Cpp{\textnormal C\raisebox{0.5ex}{\tiny++}}

\algnewcommand{\Break}{\State \textbf{break}}
\algnewcommand{\Continue}{\State \textbf{continue}}
\algnewcommand{\True}{\textbf{true}}
\algnewcommand{\False}{\textbf{false}}

\newcommand{\bojan}[1]{\textbf{\textcolor{blue}{~{B: } #1}}}

\newcommand{\mikkel}[1]{\textbf{\textcolor{red}{{M: } #1}}}

\newcommand{\KK}[1]{\textbf{\textcolor{blue}{{KK: } #1}}}

\definecolor{orange}{rgb}{1,0.5,0}
\newcommand{\carsten}[1]{\textbf{\textcolor{orange}{{C: } #1}}}
\def\cR{{\mathcal R}}
\def\cD{{\mathcal D}}

\def\cC{{\mathcal C}}
\def\Tseven{{T_{7^3}}}

\newcommand{\yuta}[1]{\textbf{\textcolor{violet}{{Y: }#1}}}

\newcommand\drop[1]{}

\def\showlabel#1{}

\renewcommand{\thepage}{\roman{page}}

\title{The Four Color Theorem with Linearly Many  Reducible Configurations and Near-Linear Time Coloring}

\begin{document}

\author{
    {\em Yuta Inoue}\thanks{The University of Tokyo, Tokyo, Japan, \texttt{yutainoue@is.s.u-tokyo.ac.jp}.\\  
    Supported by JSPS Kakenhi 22H05001 and JP25K24465, by JST ASPIRE JPMJAP2302, and by Hirose Foundation Scholarship.}
    \and
    {\em Ken-ichi Kawarabayashi}\thanks{National Institute of Informatics \& The University of Tokyo, Tokyo, Japan, \texttt{k\_keniti@nii.ac.jp}.\\ 
    Supported by JSPS Kakenhi 26K21777 and JP25K24465, and by JST ASPIRE JPMJAP2302.}
    \and
    {\em Atsuyuki Miyashita}\thanks{The University of Tokyo, Tokyo, Japan, \texttt{miyashita-atsuyuki869@is.s.u-tokyo.ac.jp}.\\  
    Supported by JSPS Kakenhi 22H05001 and JP25K24465, by JST ASPIRE JPMJAP2302, and by JST SPRING JPMJSP2108.}
    \and
    {\em Bojan Mohar}\thanks{Simon Fraser University, Burnaby, BC, Canada \& FMF, University of Ljubljana, Slovenia, \texttt{mohar@sfu.ca}.\\ 
    Supported in part by NSERC Discovery Grant R832714 (Canada),
    by the ERC Synergy grant (European Union, ERC, KARST, project number 101071836),
    and by the Research Project N1-0218 of ARIS (Slovenia).}
    \and
    {\em Carsten Thomassen}\thanks{Technical University of Denmark, \texttt{ctho@dtu.dk}}
    \and
     {\em Mikkel Thorup}\thanks{ University of Copenhagen, Denmark, 
 \texttt{mikkel2thorup@gmail.com}.\\ 
 Supported by VILLUM Foundation grant 54451, Basic Algorithms Research Copenhagen (BARC).}}

\date{\today\vspace{-1cm}\\}

\maketitle

\begin{abstract}

We give a near-linear time 4-coloring algorithm for planar graphs, improving on the previous quadratic time algorithm by Robertson et al. from 1996. Such an algorithm cannot be achieved by the known proofs of the Four Color Theorem (4CT). Technically speaking, we show the following significant generalization of the 4CT: every planar triangulation contains linearly many pairwise non-touching reducible configurations or pairwise non-crossing obstructing cycles of length at most 5 (which all allow for making effective 4-coloring reductions). 

The known proofs of the 4CT only show the existence of a single reducible configuration or obstructing cycle in the above statement. The existence is proved using the discharging method based on combinatorial curvature. It identifies reducible configurations in parts where the local neighborhood has positive combinatorial curvature. Our result significantly strengthens the known proofs of 4CT, showing that we can also find reductions in large ``flat" parts where the curvature is zero, and moreover, we can make reductions almost anywhere in a given planar graph. This also opens possibilities for extensions to higher surfaces since
we can find such flat parts in any large-width triangulation of any fixed surface.

From a computational perspective, the old proofs allowed us to apply induction on a problem that is smaller by some additive constant. The inductive step took linear time, resulting in a quadratic total time.
With our linear number of reducible configurations or obstructing cycles, we can reduce the problem size by a constant factor. Our inductive step takes $O(n\log n)$ time, yielding a 4-coloring in $O(n\log n)$ total time.

In order to efficiently handle a linear number of reducible configurations, we need them to have certain 
robustness that could also be useful in other applications. All our reducible configurations are what is known as D-reducible.
\end{abstract}

\newpage
\setcounter{page}1
\renewcommand{\thepage}{\arabic{page}}

\section{Introduction}

The celebrated Four Color Theorem (4CT) 
was conjectured by Francis Guthrie in 1852 (see \cite{madison1897,tait1880dual} for some of the early history), and remained open for more than 100 years, until Appel and Haken found a proof \cite{4ct1, 4ct2} in 1976. The theorem states that every planar map or graph with no loops can be colored with four colors. So 4-coloring is always possible, but how do we 4-color a concrete planar graph? There is no easy answer. In fact, historically, it is so complicated that it has led to many proposed counter-examples to 4CT.

Robertson, Sanders, Seymour, and Thomas \cite{RSST, RSST-STOC} 
found a simpler proof and
proved in 1997 that a 4-coloring can be found in quadratic time. Here we show how to 4-color planar graphs in near-linear time:
\begin{thm}\label{thm:linear-time}
    We can 4-color a planar graph of order $n$ in $O(n\log n)$ time.
\end{thm}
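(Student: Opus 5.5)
The plan is to follow the route sketched in the abstract: a recursion that shrinks the instance by a constant factor in each round, with each round costing $O(n\log n)$ time. This gives total time $O(n\log n)$ by a geometric sum, provided each round genuinely costs $O(m\log m)$ on the current size $m$ and geometric decay keeps the sum at $O(n\log n)$.

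\textbf{Preprocessing.} First I reduce to the case of a planar triangulation. Remove loops and parallel edges and compute a planar embedding in linear time. Add edges, or a new vertex inside each face, so that every face becomes a triangle; any 4-coloring of the supergraph restricts to one of the original graph. I then invoke the main structural theorem announced in the abstract. It says that every triangulation $G$ on $n$ vertices contains either $\Omega(n)$ pairwise non-touching D-reducible configurations from a fixed finite list, or $\Omega(n)$ pairwise non-crossing obstructing cycles of length at most $5$ (separating triangles, separating 4-cycles, and nontrivial 5-cycles).

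\textbf{Finding the reductions.} The configurations have bounded size, so they can be found in $O(n)$ time by local search around each vertex. Among vertices of bounded degree there are only constantly many candidate embeddings each; a linear number of these candidates are disjoint, so a greedy maximal non-touching family works. Short separating cycles of length at most $5$ can also be enumerated in linear or near-linear time, using bounded-length cycle detection in planar graphs. From them I extract a laminar (non-crossing) family of linear size.

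\textbf{The two cases.}
\begin{itemize}
\item \emph{Configuration case.} I contract or delete each configuration's interior simultaneously in the standard reduction manner, obtaining a triangulation $G'$ with $|G'|\le(1-\varepsilon)n$. I then color $G'$ recursively.
\item \emph{Cycle case.} I split $G$ along the laminar family of cycles into pieces, each of which is smaller. The pieces sharing a cycle are glued by permuting colors. For 4- and 5-cycles one uses the classical trick of identifying vertices or adding a vertex so that the coloring on the cycle is extendable. The recursion tree has total size $O(n)$ per level and geometrically decreasing depth contribution.
\end{itemize}

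\textbf{Main obstacle: reinserting the configurations.} The hardest step is extending the coloring of $G'$ back into all configurations simultaneously. D-reducibility guarantees that any ring coloring can be changed by Kempe chain swaps into one that extends inside. However, Kempe chains are global, and swaps for different configurations interact. I would process the configurations one at a time and maintain the Kempe components of each color pair in a dynamic data structure. For example, dynamic connectivity on planar graphs, or link-cut or Euler-tour trees over an appropriate planar dual structure, support each swap and each query in $O(\log n)$ amortized time. Since each configuration needs only a bounded number of swaps, and non-touching guarantees that recoloring one ring does not destroy already-finished interiors, this step costs $O(n\log n)$ per round. The non-touching property must be used carefully here: a later swap may recolor vertices of an earlier configuration, but it swaps colors consistently on a whole Kempe component. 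The earlier coloring therefore remains proper and extendable, and I would verify this invariant explicitly.

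\textbf{Total time.} The running time satisfies $T(n)\le T((1-\varepsilon)n)+O(n\log n)$, which gives $T(n)=O(n\log n)$.
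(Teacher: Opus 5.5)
Your outer framework (triangulate, invoke the linear-many dichotomy, shrink by a constant factor per round, solve $T(n)\le T((1-\varepsilon)n)+O(n\log n)$) matches the paper. The gap is in the step you yourself single out as the main obstacle. A swap on an $ab$-Kempe chain $K$ recolors all of $K$, and $|K|$ can be $\Theta(n)$.

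The $ab$- and $cd$-components are unaffected by the swap; a lazy flip bit would handle them. The other four color pairs are not. Every vertex of $K$ colored $a$ leaves the $ac$- and $ad$-subgraphs and enters the $bc$- and $bd$-subgraphs, together with all its incident edges there, and symmetrically for $b$. So any structure that maintains the Kempe chains of those four pairs must absorb $\Omega(|K|)$ edge updates per swap.

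Dynamic connectivity, link-cut trees and Euler-tour trees cost polylogarithmic time per \emph{edge} update, not per swap. Nothing in your argument bounds the total size of the chains you swap, so no amortization applies. Handling $\Theta(n)$ configurations one at a time, each needing up to 25 improving Kempe changes on possibly linear-size chains, can therefore cost $\Theta(n^2)$. This is exactly the bottleneck of the quadratic algorithm of Robertson et al. The paper says updating Kempe-chain information in sublinear time seems hopeless, and leaves even an $O(n/\log n)$-per-change simulation as future work.

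The missing idea is to stop processing configurations sequentially. The paper performs only $O(\log n)$ \emph{global} Kempe changes, each recomputing all chains from scratch in $O(n)$ time. The rings are vertex-disjoint, and each ring $R$ meets at most $|R|\le 18$ chains. Hence a uniformly random choice of the complementary pairing, with independent swap bits for all chains, is improving for each fixed pending ring with probability at least $p=1/(3\cdot 2^{18})$. Whether a change is improving is read off from the stored extendibility levels of all ring colorings of configurations in $\mathcal{D}$.

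The method of conditional expectations makes this deterministic in linear time (Lemma~\ref{lem:det-Kempe-change}). A batch of 25 such changes completes a $p^{25}$ fraction of the pending configurations. Thus $O(\log n)$ batches suffice, giving $O(n\log n)$ per round.

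Your cycle case also needs repair, in three respects:
\begin{itemize}
\item With $c=\Omega(n)$ cycles, recursing on every piece of the laminar decomposition gives total instance size $n+O(c)$ rather than $(1-\varepsilon)n$, and nesting depth can be linear. The paper colors constant-size pieces directly with the quadratic algorithm. It then uses the non-touching private parts to show that the remaining big pieces total at most $n-c/2$ vertices.
\item Chordlessness and the public/private structure are what guarantee that identifying $v_j$ and $v_{j+2}$ inside one cycle creates no loop or chord for another cycle.
\item The paper obtains these cycles locally from degree-bounded balls. It does not enumerate all short separating cycles, of which there can be quadratically many.
\end{itemize}
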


All known proofs of the 4CT are based on finding a so-called \emph{reducible configuration} or an \emph{obstructing cycle}\footnote{Obstructing cycles and reducible configurations will be defined in Section \ref{sect:classic4ct}. They cannot be in a minimal counterexample to the 4CT.}. 
These both allow us to reduce the planar graph (by an additive constant size) so that it can be colored inductively.

The proofs of the 4CT only show the existence of a single reducible configuration or obstructing cycle in the above statement. The existence proof is based on the notion of combinatorial curvature, which is defined at each vertex $v$ as the value $\kappa(v)=6-d(v)$. Euler's formula implies that the average curvature is strictly positive. The so-called \emph{discharging method} ``smooths out'' the curvature by redistributing the curvature locally, and it is designed so that a vicinity of each vertex whose curvature is positive after the redistribution contains a reducible configuration.

In this paper, we show that every planar triangulation contains not only one, but linearly many pairwise non-touching reducible configurations or non-crossing obstructing cycles.
The main insight is that reducible configurations or obstructing cycles also appear in large ``flat" parts where the smoothed curvature is identically zero. This is the most significant novel contribution of this paper, with several nontrivial implications. An interesting aspect of this is that such large flat parts can be found on large-width triangulations of any fixed surface, which provides a way to use our results much beyond planar graphs.

From a computational perspective, as described in \cite{RSST-STOC}, the old proofs allowed us to apply induction on a problem that is smaller by some additive constant. The inductive step took linear time, using so-called Kempe changes \cite{kempe1879}, resulting in quadratic total time.
With our linear number of reducible configurations or obstructing cycles, we can reduce the problem size by a constant factor. With a joint use of Kempe chains, for graphs of order $n$, our inductive step takes $O(n\log n)$ time, yielding 4-coloring in $O(n\log n)$ total time.

In order to efficiently handle a linear number of reducible configurations, we need them to be of a special, simple form called D-reducible. 
To find a linear number of D-reducible configurations in any planar graph, we use more configurations than previous papers that only needed to find \emph{one} reducible configuration.
Appel and Haken \cite{AppelHaken89} used 1482 reducible configurations. Robertson, Seymour, Sanders, and Thomas \cite{RSST} got down to 
633. These papers used arbitrary reducibility, but Steinberger \cite{steinberger2010unavoidable} proved that it suffices to consider D-reducible configurations; however, he then used a list of 2822 D-reducible configurations. Here we want something much stronger. We do not just want to find \emph{one}, but rather a \emph{linear} number of D-reducible configurations, which forces us to handle the flat case. For this, we use a list of more than 8200 D-reducible configurations, including those from Steinbeger's list.
As we shall discuss later, only computers will perform the local case studies involving all these configurations.

Since we are simultaneously working on a linear number of reducible configurations and obstructing cycles, one might wonder whether our $O(n\log n)$ 4-coloring could be made faster on a parallel computer. However, Chechik and Mukhtar \cite{CM19}
have shown that we need $\Omega(n)$ rounds in the \emph{local model} to 4-color
planar graphs, so the scope for improvement using parallelism is limited. This implies that the Kempe chains used in the inductive step cannot be efficiently parallelized in this model.

\paragraph{Computer checks in proofs.}
All (correct\footnote{Incorrect proofs have been proposed and even published regularly for more than a century, e.g.,
a human proof was proposed in 1879 by Kempe \cite{kempe1879}, but a 
mistake was found 11 years later by Heawood in 1890 \cite{heawood1890}.}) 
proofs of the 4CT use computers to check concrete, well-defined, finite cases that would be unrealistic for humans to check reliably. We have also used computers. Our proof relies on statements of the form: if we run a certain $\Cpp$ program with a given input, e.g., the D-reducible configurations, then it yields a certain output. 
The $\Cpp$ code is made available on GitHub
so that any (skeptical) reader can run the code on their own computer. 

We note that our proof is not a formal proof, as in Gonthier \cite{gonthier08} for the 4CT. Rather, our proof is intended for human readers, aiming to explain why planar graphs have a linear number of D-reducible configurations, and why this yields a near-linear four-coloring algorithm. To explain the computer checks involved, the main body of the paper provides an algorithmic description of the computer checks, with pointers to the corresponding pseudo-code in an appendix. From each pseudo-code, we then have pointers to the corresponding $\Cpp$ code. The top-level description for all our GitHub repositories is available at this link \url{https://github.com/near-linear-4ct}. A human can thus follow and verify all the connections\footnote{While our proof, including pseudo-code and $\Cpp$ code, is intended to be read, understood, and verified by humans, we also asked Gemini to check it. Using Gemini is no guarantee of correctness, but Gemini suggested we wrote the following explanation of its role: ``The formal pseudocodes (Appendix A) and the corresponding $\Cpp$ implementation were cross-verified using advanced reasoning AI models (such as Gemini and DeepThink). This AI-assisted verification acted as an advanced static analyzer—auditing loop invariants, pointer safety within the dart representations, and logical isomorphism—ensuring that the software architecture flawlessly executes the deductive mathematics.''}.

Our use of computer checks is pragmatic. We use a computer when the code performing a case analysis is simpler to understand and follow than the analysis by hand.
With a computer, we can thus focus on designing the analysis (its code) rather than on tedious details.

More specifically, we use computers, not only to check that configurations are D-reducible (as previous papers did), but also more systematically in the analysis showing that local 
neighborhoods with certain charging always have one of our 8000+ D-reducible configurations (so no human needs to look at them; it is only the computer program that needs access to a file containing them). For a computer check, we need to prove that it suffices to consider certain neighborhoods of limited size and limited degrees, but this still leaves us with more than $2^{100}$
neighborhoods to consider. To enable the computer to handle them, we design the analysis to adaptively group the neighborhoods. The grouping has to be so fine that each group can be analyzed efficiently. At the same time, it has to be so coarse that there are not too many groups for the computer to handle.
The algorithms behind this adaptive analysis are described in the body of the paper. We also provide both pseudo-code and $\Cpp$ code. Using a powerful computer with 256 cores, we could run all the computer checks within a couple of hours.

A curious aspect 
of using computers to model all possible neighborhoods is that it is convenient to modify the usual dart representation of combinatorial embeddings \cite{MT} to include a terminal object from category theory. This aligns well with the common use of category theory in formal program specification \cite{BarrWels95}.

\paragraph{Preliminaries.}
We shall use the following standard definitions and notation. For a vertex-set $U\subseteq V(G)$, we denote by $G[U]$ the subgraph of $G$ induced by $U$. By $d_G(v)$ or simply $d(v)$ if $G$ is clear from the context, we denote the degree of the vertex $v$ in $G$. A \emph{plane graph} is a graph with a fixed embedding in the plane. 
In a 2-connected plane graph, every face boundary is a cycle. The \emph{outer face}, that is the unbounded face, is bounded by the \emph{outer cycle}. Every other face is called an \emph{inner face} and is bounded by an \emph{inner facial cycle.}
For further reading on topological graph theory, we refer the reader to the monograph \cite{MT}.

We also assume the following algorithmic understanding. Given a planar graph, we can embed it in the plane in linear time, as proved by Hopcroft and Tarjan
\cite{HopcroftT74}. This orders the edges around each vertex.
Moreover, since the graph is planar, we can order
the vertices so that each vertex has at most 5 succeeding neighbors (recursively remove a vertex of degree at most 5, as guaranteed by Euler's formula). If each vertex remembers its succeeding neighbors (those removed later), we can test if two vertices are neighbors in constant time. We can then also triangulate a plane graph, ensuring that no loops or parallel edges are introduced. The triangulation can only make coloring harder, so we may assume that our input is a planar triangulation supporting constant time neighbor queries.

\section{Classic 4CT (high level description)}\label{sect:classic4ct}

By a \emph{planar triangulation} (or just \emph{triangulation}) we mean a (simple) graph embedded in the 2-sphere, all of whose faces are triangles. The 4CT says that any planar graph can be 4-colored, but since triangulation can only make coloring harder, it suffices to consider planar triangulations. 

We say a cycle $C$ is \emph{obstructing} if it is of length at most 4 with at least one vertex on each side, or of length 5, with at least 2 vertices on each side. 
We say a triangulation $G$ is \emph{internally $6$-connected} if it has no obstructing cycles.
This implies that $G$ is a 5-connected triangulation and every separating set of cardinality 5 induces a 5-cycle surrounding a vertex of degree 5. 
Birkhoff proved in \cite{birkhoff} that a minimal counterexample to 4CT is an internally $6$-connected triangulation.

\subsection{Reducible configurations}
\label{sec:reducibility}

A key concept from the classic proof of the 4CT \cite{AppelHaken89,RSST} is that of a reducible configuration. A reducible configuration is a locally defined configuration for which we can say that if it is present in a triangulation, then it cannot be a minimal counterexample to 4CT.
To define reducible configurations formally, we first need to define configurations.

Sometimes we view a triangulation as embedded in the plane, and we may or may not treat one of the faces as the outer face. A \emph{near-triangulation} is a connected graph embedded in the plane such that all facial cycles, except the outer cycle, are triangles. This includes the \emph{trivial near-triangulations} consisting of a single edge or vertex.

By a \emph{$k$-coloring} of a graph $G$ we mean a coloring of $V(G)$ with colors
$1,2, \ldots, k$ such that each pair of neighbors gets different colors. Not all colors need to be present. If we just say \emph{coloring}, we mean $4$-coloring, unless otherwise stated.
Given a 4-coloring of $G$, a \emph{Kempe chain} is a 
connected component of the subgraph induced by the vertices of colors $a,b$ where $a,b$ are distinct colors in  $\{1,2,3,4\}$.
Two color pairs $\{a,b\}$ and $\{c,d\}$  are \emph{complementary} if $\{a,b,c,d\}=\{1,2,3,4\}$. A \emph{Kempe change} is a series of such color changes that result in new 4-colorings, switching colors $a$ and $b$ on some $ab$-colored Kempe chains, followed by switching of two complementary colors $c$ and $d$ at some of the $cd$-colored Kempe chains.  With this definition, when we consider sequences of Kempe changes, no two consecutive Kempe changes use the same or complementary pair of colors. If we only switch colors in one Kempe chain, we call that Kempe change a \emph{single Kempe change}. This approach was pioneered by Kempe in \cite{kempe1879}, and has been used in all later correct proofs of 4CT.

Let us first recall the basic definitions of configurations and reducibility, as formally introduced in \cite{AllaireSwart78}. 
We want to have the same definitions as in \cite{AllaireSwart78,RSST,steinberger2010unavoidable} since we will use some of their reducibility results. By a \emph{configuration} we mean a pair $(Z,\delta)$, where $Z$ is a near-triangulation and $\delta: V(Z)\to {\mathbb N}$ is the \emph{degree function} satisfying the following conditions:
\begin{enumerate}
  \item[\rm (Z1)] If $v\in V(Z)$ is a vertex that is not on the outer face of $Z$, then $\delta(v)=d_Z(v)$.
  \item[\rm (Z2)] If $v\in V(Z)$ is on the outer face of $Z$, then $\delta(v) - d_Z(v) >0$. 
  \item[\rm (Z3)] If $v\in V(Z)$ is on the outer face of $Z$ and is a cut-vertex of $Z$, then
  $v$ is contained in precisely two blocks of $Z$, and
  $\delta(v)=d_Z(v)+2$. We only allow one cut-vertex in a configuration. 
\end{enumerate}
We will usually assume that $\delta$ is given implicitly, and we will just speak about a configuration $Z$.

We say that a plane triangulation $G$ \emph{contains the configuration $(Z,\delta)$} if the following holds:
\begin{enumerate}
  \item[\rm (Z4)] $Z$ 
  is (isomorphic to) a subgraph of $G$, every triangular {(non-outer)} face of $Z$ is also a face in $G$, and all triangular faces of $Z$ have the same orientation in $G$.

\item[\rm (Z5)] The degree of each vertex $v\in V(Z)$ in $G$ is equal to $\delta(v)$, that is $d_G(v)=\delta(v)$. Moreover, if $v$ is a cut-vertex, then the two edges in $G$ but not in $Z$ are non-consecutive in the clockwise ordering. (Informally, they go to different sides of the cut-vertex.)
\end{enumerate}
We say that $Z$, or the configuration $(Z,\delta)$, is \emph{induced} in $G$ if $Z$ is an induced subgraph of $G$, that is, two vertices in $Z$ are $Z$-neighbors if and only if they are $G$-neighbors. We note that if $Z$ is not induced, it must be because some boundary vertices of $Z$ are connected by an edge in $G$ that is not in $Z$.

Observe that the condition in (Z4) that
facial triangles in $Z$ (except the outer facial triangle in case $Z$ is a triangulation) are facial triangles in $G$ imply that facial triangles in $Z$ are not obstructing cycles in $G$. Internally 6-connected triangulations do not have obstructing cycles. 
However, we shall later consider arbitrary triangulations that may have
obstructing cycles, including the possibility of separating triangles.

We will sometimes say that a configuration $(Z,\delta)$ is contained in a certain subgraph $H$ of $G$. By this we mean that $(Z,\delta)$ is contained in $G$ with $Z$
embedded in  $H$. Likewise, if we say that the subgraph $H$ 
has an obstructing cycle, we mean that a cycle in $H$ is obstructing in $G$.

If $(Z,\delta)$ is a configuration, the \emph{free completion of $(Z,\delta)$} is the near-triangulation $\widehat Z$ obtained from $Z$ by adding a cycle $R$ disjoint from $Z$ and joining vertices on the outer face of $Z$ to $R$ in such a way that $R$ becomes the outer face of the near-triangulation $\widehat Z$ and such that $\widehat Z$ contains $(Z,\delta)$. The length of the cycle $R$ is precisely
$$
   |R| = \sum_{v\in V(Z)} (\delta(v) - d_Z(v)) - t = \sum_{v\in V(Z)} \delta(v) - 2|E(Z)| - t,
$$
where $t$ is the length of the outer facial walk of $Z$. The cycle $R = R(Z)$ is called the \emph{ring} of the free completion. We will sometimes refer to the free completion $\widehat Z$ as the \emph{extended configuration} of $Z$. By the remark made after (Z5), there is no ambiguity about how to construct $\widehat Z$ even if $Z$ contains cut-vertices. See Figure~\ref{fig:extended configuration} for an example.
\begin{figure}[htb]
    \centering
    \includegraphics[width=0.62\textwidth]{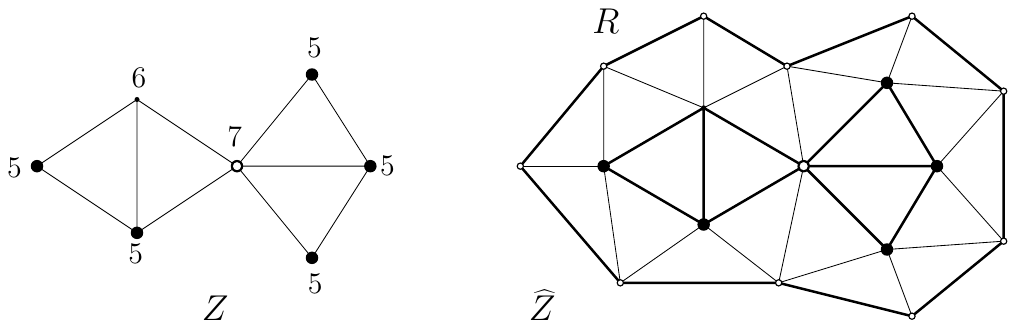}
    \caption{A D-reducible configuration $(Z,\delta)$ with a cut-vertex and its free completion.}
    \label{fig:extended configuration}
\end{figure}

When presenting configurations, as in Figure \ref{fig:extended configuration}, we use Heesch's notation \cite{Heesch69} shown in Figure~\ref{fig:shapes}.

\begin{figure}[htbp]
  \centering
  \scalebox{0.95}{%
  \includegraphics[width=0.68\columnwidth]{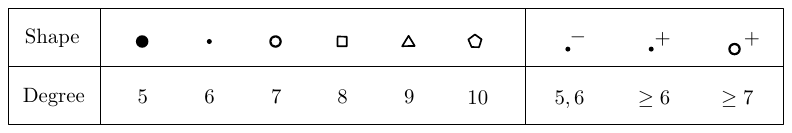}
  }
  \caption{Shapes used to designate vertices of specific degrees.}
  \label{fig:shapes}
\end{figure}

Suppose that $G$ contains an induced configuration $(Z,\delta)$. Since $Z$ is induced in $G$, the triangulation $G$ contains the free completion of $Z$, possibly with some vertices and edges on $R$ that may be identified in $G$. We note that there are no further requirements for the ring and the graph outside the ring. For example, the ring could be self-intersecting, corresponding to cut-vertices in $G-V(Z)$.
Every 4-coloring $\phi_0$ of $G-V(Z)$ induces a 4-coloring 
of the ring $R$ in the free completion. If this 4-coloring can be extended to a 4-coloring $\phi_1$ of $\widehat Z$, then the combination of $\phi_0$ and $\phi_1$ gives a 4-coloring of $G$. In the proof of 4CT, special configurations are shown to be reducible. This happens if any 4-coloring $\phi_0$ can be changed (by using a sequence of Kempe changes in $G-V(Z)$) so that the resulting coloring extends from $R$ to $\widehat Z$. The precise definition and the process related to Kempe changes will be described next.

We say that a coloring $\phi$ of $R$ is \emph{$0$-extendible} (or just \emph{extendible}) to $Z$ if it can be extended to a coloring of $\widehat Z$. For each integer $i>0$ we say that $\phi$ is \emph{$i$-extendible} to $Z$ if it is either $(i-1)$-extendible or the following holds: If $G'$ is an arbitrary plane graph containing the ring $R$ as a facial cycle and $\phi_0$ is any 4-coloring of $G'$ extending $\phi$, then there is a Kempe change of $\phi_0$ such that the resulting
4-coloring on $R$ after making the Kempe change is an $(i-1)$-extendible 4-coloring of $R$. We define the \emph{reducibility level} (or just \emph{level}) of $\phi$ as the minimum $i$ such that $\phi$ is $i$-extendible.
A Kempe change in $G-V(Z)$ is called an \emph{improving Kempe change} if it changes the coloring of $R$ to a lower level.

We say that the configuration $(Z,\delta)$ is D$^i$-\emph{reducible} if every 4-coloring of its ring is $i$-extendible, and D-\emph{reducible} means D$^i$-reducible for some finite $i$.
It is easy to see \cite{AllaireSwart78,Franklin1922} that a minimum counterexample to 4CT cannot contain D-reducible configurations.

Exercising the definition, we consider the cases of a single vertex of degree 3 and 4. 

\begin{lem}\label{lem:3-4}
    A single vertex of degree 3 or 4 is D-reducible. It is 0-extendible if the degree is 3 and 1-extendible if the degree is 4.
\end{lem}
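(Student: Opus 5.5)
For a single vertex $v$ of degree $3$, the configuration $Z$ is the one-vertex graph with $\delta(v)=3$. Its free completion $\widehat Z$ is the wheel with hub $v$ and a ring $R$ of length $3$. The ring is a triangle, so any $4$-coloring $\phi$ of $R$ uses exactly three colors, and we give $v$ the fourth. Hence every ring coloring is $0$-extendible, which proves the first claim.

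For degree $4$, the ring $R=r_1r_2r_3r_4$ is a $4$-cycle and $\widehat Z$ is the wheel $W_4$. A coloring $\phi$ of $R$ extends to $v$ exactly when $\phi$ uses at most three colors on $R$. So the colorings that are not $0$-extendible are precisely those using all four colors, and since $r_1r_3$ and $r_2r_4$ are non-adjacent in $R$, they use four distinct colors. Without loss of generality these are $\phi(r_i)=i$. We must show such a $\phi$ is $1$-extendible. To this end, take an arbitrary plane graph $G'$ with $R$ as a facial cycle, and an arbitrary $4$-coloring $\phi_0$ of $G'$ extending $\phi$. We will exhibit a Kempe change after which $R$ sees at most three colors.

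This is Kempe's classical argument. Consider the Kempe chain $K$ of colors $\{1,3\}$ in $G'$ containing $r_1$. If $r_3\notin K$, we switch colors $1$ and $3$ on $K$. Then $r_1$ gets color $3$ while $r_3$ keeps color $3$, and the colors of $r_2,r_4$ are unchanged since they are not colored $1$ or $3$. Now $R$ uses only colors $\{2,3,4\}$, so the coloring is $0$-extendible.

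Otherwise $r_3\in K$, so there is a path $P$ from $r_1$ to $r_3$ in $G'$ using only colors $1$ and $3$. This is the step needing care: we use planarity together with the fact that $R$ bounds a face of $G'$. Since $R$ is facial, we may add a new vertex $w$ inside that face adjacent to all of $r_1,\dots,r_4$, keeping the graph plane. Then $P+r_1w+wr_3$ is a closed curve (a cycle) separating $r_2$ from $r_4$, because around $w$ the vertices $r_2$ and $r_4$ alternate with $r_1,r_3$ in the rotation. Every vertex of $P$ has color $1$ or $3$, so the $\{2,4\}$-Kempe chain containing $r_2$ cannot cross this cycle and does not contain $r_4$. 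Switching $2$ and $4$ on that chain recolors $r_2$ to $4$ and leaves $r_4$ with color $4$, so $R$ now uses three colors and the coloring is $0$-extendible.

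In either case a single Kempe change produces a $0$-extendible ring coloring, so every ring coloring is $1$-extendible and the configuration is D$^1$-reducible. The only nontrivial point is the Jordan-curve separation argument, and it works because the definition requires $R$ to be a facial cycle of $G'$.
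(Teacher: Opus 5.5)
Your proof is correct and follows the same route as the paper. For degree 3 you give $v$ the color missing from the ring triangle. For degree 4 you use Kempe's argument: take the chain on the colors of two opposite ring vertices, and if it joins them, switch the complementary chain at $r_2$. The only difference is that you make the planarity step explicit, by adding a vertex $w$ in the face bounded by $R$ to obtain a cycle separating $r_2$ from $r_4$, which the paper leaves implicit.
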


\begin{proof}
    If the vertex has degree 3, we can remove it, color the remaining graph, and then put it back, assigning a color not used in its ring.
    If the vertex $v$ has degree 4, we also remove it and color the rest. If the ring has at most three
    colors, we are done as above. Otherwise, we need a Kempe change. Let $v_0,\ldots, v_3$ be the ring vertices in the order they appear 
    around $v$. Look at the Kempe chain $A$ of $v_0$ induced by the colors of $v_0$ and $v_2$. If $A$ does not include $v_2$, we simply perform the Kempe change on $A$. Otherwise, $A$ connects $v_0$ and $v_2$, but this implies that we can do a Kempe change on $v_1$ so that it gets the same color as $v_3$. In each case, the modified coloring can be extended to $v$. The proof for the degree-4 case goes back to Kempe \cite{kempe1879}.
\end{proof}

As mentioned earlier, if the triangulation is internally 6-connected, then we cannot have vertices of degree 3 and 4. However, the example is illustrative and our fast algorithm, which works on any triangulation, will use the fact that these cases are D-reducible.

We now discuss a more general notion of reducibility, called C-reducibility. 
It was introduced by Franklin \cite{Franklin1922} and used in \cite{AppelHaken89} and \cite{RSST}; see also \cite{AllaireSwart78}. 
Suppose that $X\subseteq E(\widehat Z)\setminus E(R)$ is a fixed non-empty edge-set such that the 
graph $\widehat Z/X$ obtained from $\widehat Z$  by contracting all edges of $X$ 
is loopless. If every 4-coloring of $R$ that arises from some 4-coloring of $\widehat Z / X$ is $i$-extendible, for some finite $i$, then we say that $Z$ is \emph{{\rm C}$^i$-reducible} with respect to the \emph{reducer} $X$. We note that when we say that the coloring of $R$ is $i$-extendible, it still means that by using at most 
$i$ Kempe changes in $G-V(Z)$ (not in $G/X$), we can get a coloring that is extendible to $Z$.  Some
subtleties exist. In \cite{RSST}, it is assumed that no triangle in $\widehat Z$ contains more than one edge of $X$ and also that $|X|\le 4$. However, we only need that $G/X$ cannot introduce any loop unless there is an obstructing cycle in $G$ with all vertices in $\widehat Z$, and that any coloring of $\widehat Z / X$ induces a coloring of $R$.

Let us consider three examples illustrated in Figure \ref{fig:BirkhoffFranklin}.
Birkhoff \cite{birkhoff} proved that the configuration consisting of two adjacent triangles with all four vertices of degree 5 is D-reducible (see Figure \ref{fig:BirkhoffFranklin}(a)).  Based on this result, this configuration is known as the \emph{Birkhoff diamond}. More generally, if we increase the degrees of the middle vertices to 6, the resulting configuration is C-reducible. In this case, the reducer $X$ has four edges that are indicated in Figure \ref{fig:BirkhoffFranklin}(b) as bold half-edges leaving the configuration.
Franklin \cite{Franklin1922} proved that the configuration consisting of a vertex of degree 6 surrounded by six vertices of degree 6 is C-reducible (see Figure \ref{fig:BirkhoffFranklin}(c)). This case uses a reducer that identifies all 6 vertices of degree 4 on the ring into a single vertex. 

\begin{figure}[htb]
    \centering
    \includegraphics[width=0.65\textwidth]{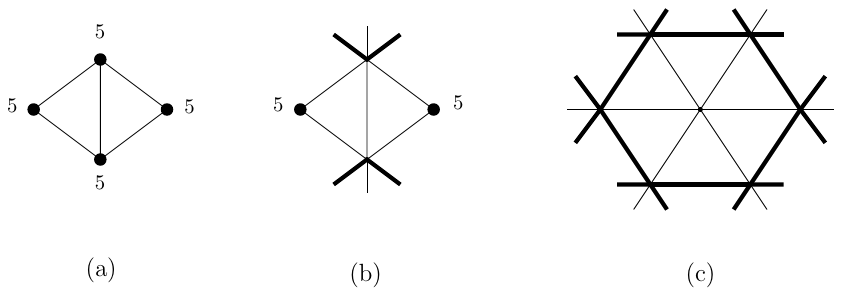}
    \caption{(a) Birkhoff diamond, (b) diamond with increased degrees, (c) Franklin's 6-regular configuration.}
    \label{fig:BirkhoffFranklin}
\end{figure}

\paragraph{Computer check of D-reducibility.}
We note that checking the reducibility of a configuration $Z$ is typically done by a computer that will compute the extendibility of every possible 4-coloring of the ring $R$
(there are nearly $3^{|R|-1}$ options). Our program to check for reducibility is very similar to the one from \cite{RSST}.

First, the program identifies which 4-colorings of $R$ are 0-extendible. Next, for rounds $i=1,2,\ldots$, we go through all unclassified 4-colorings. For each such 4-coloring, we check if there must
exist an \emph{improving Kempe change} resulting in an $(i-1)$-extendible 4-coloring of the ring. The computer proof that such an improving Kempe change exists uses arguments similar to those in the proof in Lemma \ref{lem:3-4} that a degree-4 vertex is D$^1$-reducible. The only reasoning we need is that if $v_1,\ldots, v_\ell$ is the walk along the ring, whose interior
has been removed, and if $\{a,b\}$ and $\{c,d\}$ are pairs of complementary colors and there is an $ab$-Kempe chain from $v_1$ to $v_j$, then there cannnot be a $cd$-Kempe chain from a vertex in $\{v_2,\ldots,v_{j-1}\}$ to a vertex in $\{v_{j+1},\ldots,v_\ell\}$.
We thus have to consider all possible configurations and 4-colorings of $G\setminus Z$, including cases where a vertex is repeated in $R$, and hence a cut-vertex in $G-V(Z)$.

We terminate after round $i$ if it completes the classification of all 4-colorings; in that case, $Z$ is D-reducible. We also terminate after round $i$ if it finds no new $i$-extendible coloring. In this case, we know that $Z$ is not D-reducible. 

The pseudo-code for checking $D$-reducibility is found in Algorithm \ref{alg:check-red_a}.

\subsection{4CT in quadratic time}

All known proofs of the 4CT show
that any internally 6-connected triangulation has a reducible configuration, contradicting the minimality of a hypothetical counterexample. The reducible configurations are all of constant size, since they all belong to
some specific finite set. Steinberger \cite{steinberger2010unavoidable} has even proved that we can always find a D-reducible configuration \cite{steinberger2010unavoidable}, hence the C-reducible configurations can be avoided.

\paragraph{A quadratic algorithm.}
In \cite{RSST-STOC}, the authors derive a quadratic-time 4-coloring algorithm. 
Recall that a planar triangulation $G$ is internally 6-connected if and only if it has no obstructing cycles, that is, a separating 3- or 4-cycle, or a 5-cycle with at least two vertices on both sides.

The above-mentioned proofs of 4CT imply that for an arbitrary triangulation, we can find either an obstructing cycle or a reducible configuration, and this can be done in linear time.

If we find a reducible configuration $Z$, we reduce it and color the rest by induction. This gives a coloring of the ring $R$. If it is an $i$-extendible configuration, then we may have to perform up to $i$ improving Kempe changes before we can extend the coloring to the original $Z$. 

Note that to tell if a Kempe chain is improving, we need to know the level of all colorings of the ring $R$. This is only a constant amount of information, since we only have a constant number of reducible configurations of constant size.

Also note that we only know that an improving Kempe change exists, but which one works depends on the graph and the coloring outside the ring. Recall that a Kempe change is characterized by two complementary pairs $\{a,b\}$ and $\{c,d\}$ of colors and a subset of $ab$- and $cd$-colored Kempe chains on which the two colors are swapped. The swapping only affects the coloring of the ring if the Kempe chain
intersects the ring. This means that there are fewer than $3\cdot 2^{|R|}=O(1)$ relevant Kempe changes to check, and we apply the first improving one we find. All this can be implemented in linear total time.

\cite{RSST-STOC} also shows how we can benefit from obstructing cycles. The general conclusion is that we can reduce the problem size by an additive constant in linear time, so the overall algorithm is quadratic. 

The main bottleneck in the above approach is the Kempe changes. We note that the interaction between different Kempe chains can be very complicated. For example, if we first perform a Kempe change using colors 1 and 2 and their complementary pair 3 and 4, then it affects the 13, 14, 23, and 24-colored Kempe chains. So, the result of multiple Kempe changes can be quite chaotic. It seems hopeless to update the information about Kempe chains in sublinear time.

\section{Our results (high level description)}

Previous proofs and algorithms for 4CT only guarantee a single reducible configuration. We will show here that we can always find a linear number of reducible configurations if the triangulation is internally 6-connected. In general, we find either a linear number of reducible configurations or a linear number of obstructing cycles (or both). To make algorithmic use of the many reducible configurations, we will need them to be D-reducible. 

To get a linear number of reducible configurations, we do need some 6-regular reducible configuration since the graph could have all vertices of degree 6 except for 12 of degree 5. The classic 6-regular configuration is Franklin's from Figure~\ref
{fig:BirkhoffFranklin} (c), but it is not D-reducible. However, we discovered the larger D-reducible 6-regular configuration in Figure \ref{fig:flatn}, which is the only 6-regular configuration we use.

\begin{figure}
    \centering
    \includegraphics[width=0.3\linewidth]{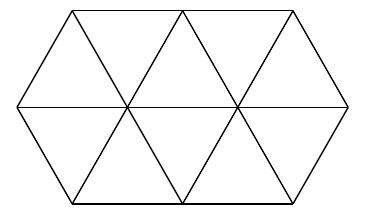}
    \caption{The configuration in $\mathcal D$ for the 6-regular case.} \label{fig:flatn}
\end{figure}

We will need a special property of configurations that will enable us to use the following fact. 

\begin{lem}\label{lem:induced-config}\showlabel{lem:induced-config}
Let $(Z,\delta)$ be a configuration. Suppose it satisfies the following conditions:
\begin{itemize}
    \item[(i)] The diameter of $Z$ is at most 4.
    \item[(ii)] If $u$ and $w$ are vertices at $Z$-distance 4, then there is a length-4 path $P$ joining them in $Z$ such that either $P$ is not contained in the outer face boundary of $Z$, or $P=(u,v_1,v_2,v_3,w)$ is  contained in the outer face boundary of $Z$,
    and then one of the three vertices $v_i$ is not a cut-vertex of $Z$ and has external degree $\delta(v_i)-d_Z(v_i)\geq 2$.
\end{itemize}
If $(Z,\delta)$ is contained in $G$ and $G$ does not contain an obstructing cycle whose vertices are all in $Z$, then $(Z,\delta)$ is induced in $G$.
\end{lem}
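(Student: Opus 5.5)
We argue by contradiction. Suppose $(Z,\delta)$ is contained in $G$ but is not induced, so there is an edge $e=uw\in E(G)\setminus E(Z)$ with $u,w\in V(Z)$. By the remark after (Z5), $u$ and $w$ both lie on the outer face boundary of $Z$. Every inner face of $Z$ is a face of $G$ by (Z4), so $e$ is drawn in the outer face of $Z$. We take a shortest $Z$-path $P$ from $u$ to $w$, chosen as in condition (ii) when $d_Z(u,w)=4$. Then $C=P+e$ is a cycle of $G$ of length $d_Z(u,w)+1\le 5$, and all its vertices lie in $Z$. The plan is to show that $C$ is an obstructing cycle in $G$, which contradicts the hypothesis. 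For this it suffices to find one vertex of $G$ on each side of $C$ when $|C|\le 4$, and two vertices on each side when $|C|=5$.

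\textbf{Side of $C$ containing $e$ away from $Z$.} The edge $e$ lies in the outer face of $Z$, and $P$ consists of edges of $Z$. Hence $C$ separates the plane so that one side contains part of the outer face of $Z$ cut off by $e$. Look at an internal vertex $x$ of $P$ lying on the outer boundary of $Z$. Its external edges, $\delta(x)-d_Z(x)>0$ of them by (Z2), leave into the outer face of $Z$. Locally at $x$, the region between $P$ and $e$ is on the same side as these external edges, or at least is wedged between consecutive $Z$-edges there. The free completion shows that the neighbours of $x$ reached by these edges are vertices of $G$ that are not in $Z$, or are ring vertices. So this side contains at least one vertex, namely a neighbour of $x$ through an external edge, provided that edge is not $e$ itself and its endpoint is not on $C$. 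Such an edge cannot end on $C$ without creating a shorter cycle through $Z$-vertices, and that case is handled by induction on $|C|$: take the shortest bad edge.

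\textbf{Side of $C$ containing the bulk of $Z$.} If $P$ is not contained in the outer face boundary, then some edge of $P$ is an interior edge of $Z$. The two triangles of $Z$ incident with that edge lie on opposite sides of $C$, and their apexes give vertices on the $Z$-side. We must also check that the third vertices are not on $C$; this again follows because $P$ is shortest. If $P$ lies entirely on the outer boundary, then the whole rest of $Z$ is on one side. Diameter $\le 4$ together with the facial triangles gives a vertex there, unless $Z$ is tiny; in that degenerate case we use (Z2) triangles on the other side.

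\textbf{Main obstacle: length $5$.} The hardest step is $|C|=5$, which needs two vertices on each side; this is exactly where condition (ii) is used. If $P$ is not on the outer boundary, the interior edge of $P$ is flanked by triangles on both sides. Together with the triangles at adjacent edges, this should yield two distinct vertices per side. If $P=(u,v_1,v_2,v_3,w)$ lies on the outer boundary, then the vertex $v_i$ with external degree $\ge 2$ that is not a cut-vertex has two external edges on the $e$-side. Their endpoints give two vertices on that side. Coincidence of these endpoints, or an endpoint lying on $C$, would force a shorter obstructing cycle or a repeated edge. Meanwhile, the $Z$-side contains the triangle apexes of $Z$ along $P$. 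I expect this case analysis, especially excluding coincidences without circularity, to be the delicate part. I would run the whole argument taking $e$ minimising $d_Z(u,w)$, so that shorter cases can be assumed already excluded.
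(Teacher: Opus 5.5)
\textbf{Where the proposal stands.} Your treatment of $|C|\le 4$ is essentially sound. If $P$ uses an inner edge of $Z$, the two $Z$-triangles at that edge lie on opposite sides of $C$, and their apexes lie off $C$ because $P$ is shortest. If $P$ runs along the boundary, an internal vertex of $P$ has an external edge into the cut-off outer region, and its endpoint cannot lie on $C$ by your minimal choice of $e$. The boundary case $|C|=5$ via (ii) is also fine.

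\textbf{The gap.} It is the case $|C|=5$ with $P$ not contained in the outer boundary. There you assert that the triangles along $P$ ``should yield two distinct vertices per side'', and this is false in general. Once $P$ uses an inner edge, each side of $C$ contains part of $Z$ \emph{and} part of the outer face of $Z$. So your ``$e$-side/$Z$-side'' picture no longer applies, and a side $S$ may contain only one vertex of $Z$. For example, take the following edges of $P=(u,v_1,v_2,v_3,w)$:
\begin{itemize}
\item $uv_1$ and $v_1v_2$ are boundary edges whose outer face lies in $S$;
\item $v_2v_3$ is an inner edge;
\item $v_3w$ is a boundary edge whose outer face lies on the other side.
\end{itemize}
The $Z$-triangles in $S$ at $v_2v_3$ and at $v_3w$ may share their apex $a$, with $a$ a boundary vertex adjacent to $v_2,v_3,w$. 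Then $a$ is the only vertex of $Z$ strictly inside $S$. The required second vertex of $S$ is a vertex of $G-V(Z)$ in the outer face of $Z$, which no argument about triangles along $P$ can produce.

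\textbf{How to close it.} Your own minimality of $e$ does it; condition (ii) is only needed in the boundary case, where your argument already works. Write $P=p_0p_1\cdots p_4$.
\begin{itemize}
\item A chord of the $5$-cycle $C$ joins two vertices of $P$ at $Z$-distance $2$ or $3$. So it is either an edge of $Z$, contradicting that $P$ is shortest, or a bad edge shorter than $e$. Hence $C$ is chordless.
\item Each side contains an apex of the inner edge $p_ip_{i+1}$ of $P$. Suppose a side contained only one vertex $t$. Every facial triangle on that side at an edge of $C$ then has apex $t$, since an apex on $C$ would give a chord. So $t$ is adjacent to all five vertices of $C$, and $t$ is the $Z$-apex of $p_ip_{i+1}$.
\item Since $d_Z(u,w)=4$, one of $tu,tw$ is not in $Z$; say $tw$, the case $tu$ being symmetric. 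Minimality gives $d_Z(t,w)\ge 4$, while $d_Z(t,w)\le 4-i$, so $i=0$.
\item Then $tv_3$ either lies in $Z$, giving $d_Z(u,v_3)\le 2$, or is a bad edge with $d_Z(t,v_3)\le 3$. Both contradict the setup, so each side has at least two vertices.
\end{itemize}
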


\begin{proof}
   If $(Z,\delta)$ is contained in $G$ but not induced, it is because $G$ has an edge not in $Z$ but joining vertices $u$ and $w$ from $Z$. Then $u$ and $w$ must be on the boundary of $Z$ since only boundary vertices can have incident edges not in $Z$. By (i), we have a shortest path $P$ of length at most 4 between $u$ and $w$, and if it is of length 4, we can assume it satisfies (ii). Now $P$ and the edge $uw$ form a cycle $C$, which we want to prove is obstructing.

   We now walk clockwise from $u$ to $w$ along the outer face boundary of $Z$. Since the edge $uw$
   is not in $Z$, we must meet a vertex $v$ between $u$ and $w$. By (Z2) and (Z5), $v$ must have a neighbor $v'$ to the left and outside $Z$ (because the edges leaving to the left cannot all be chords). Clearly,
   $v$ also has a neighbor $v''$ to the right and outside $Z$.
   So, if $C$ has length at most $4$, then it is obstructing.

   Now suppose $C$ is of length 5. Then we need to find two vertices on either side of $C$. Again, we walk clockwise around the boundary of $Z$ from $u$ to $w$. If this walk is not just following $P$, then
   it is easy to find a vertex $v'$ in the interior of $C$ and a vertex $v''$ in the exterior of $C$. Condition (ii) implies that $v'$, respectively $v''$, is not the only such vertex. If the walk follows $P$, then again, condition (ii) implies that the interior, respectively exterior, of $C$ has at least two vertices.
   Therefore, $C$ is an obstructing 5-cycle.
\end{proof}

Note that the condition in (ii) is needed for the conclusion that $Z$ is induced. Indeed, if for all $i=1,2,3$, $v_i$ is a cut-vertex or $\delta(v_i)-d_Z(v_i)= 1$, then $u,v$ may be neighbors such that $P+uv$ is the cycle of neighbors around a single degree-5 vertex outside $Z$.

All reducible configurations used in this paper also satisfy (i) and (ii) of Lemma \ref{lem:induced-config} (see item (D0) in Lemma \ref{lem:reduc}). This property, which is needed for 
our proof, has not been addressed explicitly in previous proofs of the 4CT.
We have verified by computer that the conditions of Lemma \ref{lem:induced-config} are satisfied not only for all reducible configurations used in this paper, but also for all reducible configurations used by Robertson et al.\ in \cite{RSST} and by Steinberger in \cite{steinberger2010unavoidable}, so their proofs still stand.

The D-reducible configurations that we shall use are from a set $\cD$ that has 8202 elements. Two of them are the single vertices of degree 3 or 4 from Lemma \ref{lem:3-4} that are relevant when we consider arbitrary triangulations. All other configurations in $\cD$ use only vertices of degree at least 5. These 8200 configurations are collected in our GitHub repository \footnote{Removed in order to anonymize paper, will be included in camera-ready version.\drop{\url{https://github.com/near-linear-4ct/reducible-configurations}}
}. The set $\cD$ contains the set of all D-reducible configurations used by Steinberger in \cite{steinberger2010unavoidable}. We will use $\cD_0$ to denote this subset of configurations. 

The important characteristics of the configurations in $\cD$ are summarized in the following lemma.

\begin{lem}\label{lem:reduc}\showlabel{lem:reduc}
The set $\mathcal{D}$ consists of 8202 D-reducible configurations. It includes the D-reducible configurations consisting of a single vertex of degree 3 or 4 (cf. Lemma \ref{lem:3-4}), the ``flat'' case and includes Steinberger's set $\cD_0\subset \cD$ of configurations from \cite{steinberger2010unavoidable}. Each configuration $D\in \cD$ has the following properties:
\begin{itemize}
    \item[\rm (D0)] $D$ satisfies conditions (i) and (ii) of Lemma \ref{lem:induced-config}; in particular, it has diameter at most 4.
    \item[\rm (D1)] With a single exception, $D$ has 
    radius at most 2. We distinguish a vertex of eccentricity 2, which we call the
    \emph{center vertex} or just \emph{center}. The center is the only vertex in $D$ that can be of degree greater than 8, in which case it is between $9$ and $12$. The exceptional configuration in $\mathcal{D}$ which has radius $3$
    and degrees 5 and 7
    is shown in Figure~\ref{fig:radius3}. It is one of our configurations to handle the flat case.
    \item[\rm (D2)] $D$ has at most 19 vertices and its ring size is at most 18.
    \item[\rm (D3)] Every $4$-coloring of the ring of the free completion of $D$ is $25$-extendible.
\end{itemize}
\end{lem}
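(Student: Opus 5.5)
This lemma is a statement about a fixed finite list, so I will prove it by exhaustive computer verification, supported by the two structural facts already established. The plan is to load the 8202 configurations of $\mathcal D$ from the repository in a dart representation. Lemma~\ref{lem:3-4} already gives the two single-vertex configurations of degree 3 and 4, together with their levels 0 and 1. For every other configuration I will check, one at a time, that the triple $(Z,\delta)$ really is a configuration, i.e.\ that (Z1)--(Z3) hold. This covers the near-triangulation structure, positive external degree on the boundary, and at most one cut-vertex, lying in exactly two blocks and satisfying $\delta=d_Z+2$. Containment of Steinberger's set $\mathcal D_0$ and of the flat configuration of Figure~\ref{fig:flatn} will be checked by canonical-form isomorphism testing of the configurations together with their degree functions.

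\textbf{Step (D0) and (D2).} For each $Z$ I will compute all-pairs $Z$-distances by BFS and verify diameter at most 4; this is condition (i). For every pair $u,w$ at distance exactly 4, I will enumerate all length-4 paths between them and accept if some path uses a vertex or edge off the outer facial walk. Otherwise the path lies on the boundary walk, and I accept if some middle vertex $v_i$ is not a cut-vertex and has $\delta(v_i)-d_Z(v_i)\ge 2$. This is precisely condition (ii), so Lemma~\ref{lem:induced-config} applies. For (D2), $|V(Z)|\le 19$ is immediate, and the ring size is computed by the formula $|R|=\sum_v\delta(v)-2|E(Z)|-t$.

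\textbf{Step (D1).} I will compute eccentricities and confirm that some vertex has eccentricity at most 2. When the maximum degree exceeds 8, I will confirm that it is attained only at a vertex of eccentricity 2, which is then designated the center, and that this degree lies in $[9,12]$. For the single configuration of radius 3, I will match it against Figure~\ref{fig:radius3} and check that its degrees lie in $\{5,7\}$.

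\textbf{Step (D3), the main obstacle.} Here I will build the free completion $\widehat Z$ and run the D-reducibility checker described in Section~\ref{sec:reducibility} (Algorithm~\ref{alg:check-red_a}). The checker first enumerates the roughly $3^{|R|-1}$ ring colorings up to color permutation and marks as level 0 those that extend to $\widehat Z$. It then iterates rounds $i=1,2,\dots$. In round $i$ it marks a coloring when, for every consistent pattern of Kempe chains (the ones respecting the non-crossing constraint for complementary pairs, and allowing repeated ring vertices), some Kempe change reaches a coloring of level $\le i-1$. The claim is that all colorings are marked by round 25.

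The difficulty is computational rather than conceptual. With $|R|\le 18$ there are up to about $3^{17}\approx 1.3\cdot 10^8$ colorings per configuration, across 8200 configurations. Each non-extendible coloring also requires reasoning over the possible chain structures, which are encoded, as in \cite{RSST}, as non-crossing partitions of the ring positions. I would first reuse the RSST-style signed-matching representation, so that each round amounts to a fixed-point test over the set of "bad" colorings. I would then parallelize across configurations and record the maximum round reached, expecting it to be at most 25.

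Correctness then rests on the soundness of the chain reasoning: the only constraint used is planarity of the exterior, namely that an $ab$-chain and a $cd$-chain cannot cross. I would argue this once in prose and keep the code a direct transcription of that argument.
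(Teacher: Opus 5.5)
Your plan matches the paper's own justification, which is also a finite computer verification over the list in the repository: D-reducibility with the level bound is certified by the RSST-style Kempe-chain checker of Algorithm~\ref{alg:check-red_a} (your synchronous round-by-round formulation, where round $i$ may use only colorings already certified at level $\le i-1$, is the correct way to certify the bound $25$), and conditions (i)--(ii) of Lemma~\ref{lem:induced-config}, radii, degrees, vertex counts and ring sizes are computed directly. One small tightening is needed in (D1): check that at most one vertex has $\delta>8$ and that this vertex is the center, rather than only checking where the maximum degree is attained.
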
 
 
\begin{figure}
    \centering
    \includegraphics[width=0.3\linewidth]{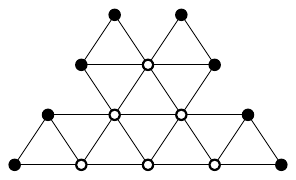}
    \caption{The configuration in $\mathcal D$ whose radius is 3 (but the diameter is still 4).} \label{fig:radius3}
\end{figure}

In order to make use of multiple reducible configurations in a triangulation, we want them to be \emph{non-touching} in the sense that they are disjoint and the graph has no edge joining them.\footnote{The definition of two subgraphs of $G$ to \emph{touch}, meaning that they either have a vertex in common or $G$ contains an edge joining them, is frequently used in graph minors theory, see \cite{Diestel_book,DvorakBook} or \cite{reed1997tree}.} This implies that their rings cannot cross.

Our main theorem is the following.

\begin{thm}\label{thm:main-linear}\showlabel{thm:main-linear}
    There is a linear-time algorithm that, given any triangulation, finds one of the following: \begin{itemize}
    \item A linear number of non-touching induced reducible configurations from $\cD$.
    \item A linear number of non-crossing obstructing cycles. The obstructing cycles are chordless, and every obstructing cycle has a \emph{public} and a \emph{private part}. 
    The public part consists of at most two vertices, which must be consecutive. The rest is private. Different obstructing cycles may intersect in their public parts, but their private parts are non-touching (recall that  vertices in private parts have degree at most 8).
    \end{itemize}
\end{thm}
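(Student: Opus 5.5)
I would prove Theorem~\ref{thm:main-linear} by a local discharging argument in which every vertex is classified by its constant-radius neighborhood, then a greedy selection step. Fix a constant radius $r$, say $r=10$, larger than the diameters from (D0)--(D2).

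\textbf{Step 1: high-degree vertices.} Each vertex $v$ gets initial charge $\kappa(v)=6-d(v)$, and Euler's formula gives total charge $12$. Only a linear quantity matters here, so I would first get rid of vertices of large degree. If some vertex has degree $\Delta>12$, the charge of degree-$\le 12$ vertices exceeds $\sum_{d(v)>12}(d(v)-6)$. Hence the vertices of degree at most $8$ carry a constant fraction of the structure. More precisely, the number of vertices of degree at most $8$ whose $r$-neighborhood has bounded degree is $\Omega(n)$. A standard counting step gives this, using the fact that planar graphs have average degree below $6$.

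\textbf{Step 2: classifying a neighborhood.} For each such vertex $v$, inspect its bounded-size $r$-neighborhood $N_r(v)$ in constant time. Either $N_r(v)$ contains an obstructing cycle of length at most $5$ through low-degree vertices, or it contains a configuration from $\cD$, or the discharging is locally ``flat.'' The key lemma, which the computer checks over all bounded neighborhoods, is the following.

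\begin{quote}
After applying the paper's discharging rules, every vertex whose final charge is positive, and every vertex lying in a large region where the final charge is identically zero, has within distance $r$ either a configuration from $\cD$ or an obstructing cycle.
\end{quote}

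The flat case uses the 6-regular configuration of Figure~\ref{fig:flatn} and the radius-3 configuration of Figure~\ref{fig:radius3}. Negative-charge vertices are only high-degree vertices, and they are compensated. So the vertices witnessing positive or zero charge and not adjacent to degree $>12$ form a linear set. Lemma~\ref{lem:induced-config} together with (D0) upgrades a contained configuration to an induced one whenever no obstructing cycle lies inside it. So each witness yields either an induced $\cD$-configuration or an obstructing cycle near it.

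\textbf{Step 3: selection.} Each witness has size $O(1)$, and each vertex lies in $O(1)$ witnesses once degrees are bounded. So a greedy maximal independent set in the conflict graph (witnesses conflict if they touch) has linear size. By pigeonhole, a constant fraction of the chosen witnesses are all configurations or all obstructing cycles.

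\textbf{Step 4: cycle requirements.} For the cycles I must additionally make them chordless and non-crossing, with public and private parts. I take each cycle to be a shortest (hence chordless) obstructing cycle in its neighborhood. I designate as public the at most two consecutive vertices of high degree. This is possible because a cycle of length $\le 5$ through a low-degree witness region can be rerouted so that its high-degree vertices are consecutive. I then uncross cycles by choosing innermost ones.

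\textbf{Main obstacle.} I expect the hardest step to be the flat case of the key lemma: showing that in large zero-curvature regions the computer enumeration always finds a D-reducible configuration. The large number of neighborhoods must be grouped adaptively. I would first try to reduce the flat case to near-6-regular patches, and check these against the configurations of Figures~\ref{fig:flatn} and~\ref{fig:radius3}.
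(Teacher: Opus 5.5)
Your local key lemma is essentially Theorem~\ref{thm:main-theorem}. The gaps are in the global assembly around it.

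\textbf{Step 1 is false, and Steps 2--3 depend on it.} Cone both $m$-gonal faces of an antiprism. The result is a triangulation on $n=2m+2$ vertices in which every non-apex vertex has degree $5$ and is adjacent to an apex of degree $(n-2)/2$. So no vertex has a bounded-degree, let alone bounded-size, $r$-neighborhood. Average degree below $6$ limits how many high-degree vertices there are, not how many vertices lie near one.

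The paper instead uses degree-bounded balls. $B^8_k(v)$ grows only along paths through vertices of degree at most $8$, and $\bar B^8_k(v)$ adds high-degree vertices only as boundary vertices. Hence $|\bar B^8_k(v)|<5^k d(v)$ whatever lies nearby (Lemma~\ref{lem:ball-size}), while the charge of $v$ is still determined by $\bar B^8_2(v)$ (Lemma~\ref{limitcartwheel}).

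This is also where public parts come from. One high-degree vertex can be a boundary vertex of linearly many selected balls, so only the parts inside the $B^8_k$-balls can be made non-touching. Your Step~4 rerouting is unsupported and also unnecessary:
\begin{itemize}
\item The paper only searches for $k$-local obstructing cycles, with at most two consecutive vertices outside $B^8_k(v)$.
\item Lemma~\ref{lem:pre-cartwheel}, proved via Lemma~\ref{lem:cartwheel}, shows that if there is no local obstructing cycle and no configuration from $\cD$, then $\bar B^8_2(v)$ has no obstructing cycle at all.
\item A shortest local cycle is chordless.
\item Non-crossing, with at most the public part shared, follows directly from the non-touching of the $B^8_k$-balls (Lemma~\ref{lem:non-crossing}). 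No uncrossing step is needed.
\end{itemize}

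\textbf{Your count of witnesses does not go through.} A positive total charge bounds the total negative charge, not the number of vertices that lie near a negatively charged vertex and so fail to be flat. So ``negative vertices are compensated'' does not give a linear set. Your claim that only high-degree vertices end up negative is also unsupported; the paper's argument treats all of $V^-$.

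The missing ingredient is Lemma~\ref{maxplus}, $T(v)\le 60-2d(v)$, which rests on the computer-checked bound of $8$ on the charge across any edge (Lemma~\ref{lem:vsends-noconf}). It gives $d(v)\le -31T(v)$ for $v\in V^-$, hence $\sum_{v\in V^-}d(v)\le 31\cdot 54\,|V^+|$. It also shows that every vertex of $U$ (positive charge, or zero charge and degree at least $9$) has degree at most $30$.

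Each $v\in V^-\cup U$ spoils at most $|B^8_{12}(v)|=O(d(v))$ vertices. This yields the dichotomy the proof actually needs:
\begin{itemize}
\item Either $|U|=\Omega(n)$, and greedy selection plus Theorem~\ref{thm:main-theorem}(i),(ii) at radius $2$ finishes.
\item Or the set $W$ of vertices $w$ whose $B_{12}(w)$ is flat with all degrees at most $8$ has size $n-O(|U|)$, and Theorem~\ref{thm:main-theorem}(iii) applies at radius $12$.
\end{itemize}
In particular, zero-charge vertices of degree at least $9$ belong to $U$ and must be handled within radius $2$. No large flat region around them is guaranteed, so they cannot go into your flat case.
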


The condition that the private parts of different obstructing cycles are non-touching may seem a bit cumbersome, but the point will be that if we identify two 
vertices at distance 2 in some obstructing cycle, then one of them is private, and then this cannot create a chord for any other obstructing cycle.

\subsection{Sketch of $O(n\log n)$ recursion}
\label{sec:algorithm-sketch}\showlabel{sec:algorithm-sketch}

To obtain an $O(n\log n)$ 4-coloring algorithm for a planar triangulation $G$ of order $n$, it suffices that we can construct in $O(n\log n)$ time a planar triangulation $H$ that is smaller by a constant factor, color $H$ recursively, and use this coloring to color $G$ in $O(n\log n)$ additional time.

Given $G$, we first apply Theorem \ref{thm:main-linear}, and then we make a recursion depending on which of the two outcomes we get. The most interesting case is the first one.

\paragraph{D-reducible configurations.}
Suppose we get $\Omega(n)$ non-touching D-reducible configurations.
In the quadratic-time algorithm, we would just handle one of them in linear time, but
here we will handle all of them in $O(n\log n)$ time. 
Below, we first show this using randomization. 

More specifically, our first obvious step is to remove all the reducible configurations from the graph. This removes $\Omega(n)$ vertices. Next, using new auxiliary edges, we triangulate all the faces inside their now empty rings. Then, recursively, we color the resulting reduced triangulation, which has a constant factor fewer vertices.

Having colored the reduced
triangulation recursively, we restore the original triangulation exactly as it was, removing the auxiliary edges and putting back all the reducible configurations. However, we keep the recursive coloring, which is now a 4-coloring of all vertices outside the reducible configurations. We note that the reducible configurations, while non-colored, have all their rings colored. This follows because they are non-touching, implying that the ring of one reducible configuration does not intersect any other reducible configuration.

We will now start doing Kempe changes on the colored vertices. If the ring $R$ of a non-colored configuration $Z$ gets a 0-extendible coloring, then we extend this 4-coloring of $R$ to $Z$, and then $Z$ is no longer among the non-colored configurations (strictly speaking, it is $(Z,d_G)$ that is the reducible configuration).

In general, if the coloring of a ring is  $i$-extendible, then we may have to
do $i$ improving Kempe changes to make it $0$-extendible. We know from Lemma \ref{lem:reduc} that all colorings of the rings of our reducible configurations are 25-extendible. Each Kempe change may affect most of the graph, so we cannot afford to do this for one configuration at a time. 
However, we claim that random Kempe changes are expected to be improving for a constant fraction of 
the non-colored configurations, and this leads to the following very simple algorithm.

While there is a non-colored configuration, we repeat the following two
steps:
\begin{enumerate}
    \item Make
    a random Kempe change.
    First, we randomly
    decide which two complementary pairs $\{a,b\}$ and $\{c,d\}$ of colors will be used by the Kempe chains, that is, the Kempe chains 
    considered are the $ab$- and $cd$-colored 
    components. These partition all colored vertices. Next, for every $ab$-colored and every $cd$-colored component, we toss a coin to decide whether to swap the colors on it in the random Kempe change.
    \item For all non-colored components $Z$ whose ring coloring became $0$-extendible, we extend the coloring to $Z$.
\end{enumerate}
We claim that this 
algorithm terminates after $O(\log n)$ iterations with high probability. To prove 
this, we consider an arbitrary non-colored configuration $Z$, and argue that its ring coloring will become $0$-extendible within $O(\log n)$ rounds with high probability. The result then follows by a union bound.

Consider the iterations from
the perspective of any given reducible configuration $Z$ with ring $R$. If $Z$ is non-colored, it wants a sequence of at most 25 improving Kempe changes. As discussed in the quadratic algorithm, an improving Kempe change requires the right choice of complementary color pairs $\{a,b\}$ and $\{c,d\}$.
There is a $1/3$ chance that a random choice is right since the pairs are determined just by the pairing of color 1. Now the improvement only depends on the color change (color swap or no-swap) 
of the disjoint $ab$ and $cd$-colored components (Kempe chains) intersecting the ring $R$, of which there are at most
$|R|\leq 18$. A random Kempe change is thus improving for $Z$ with probability at least $\tfrac{1}{3}\cdot 2^{-18}$. We need at most 25 such consecutive improvements for $Z$ for the ring to become $0$-extendible. The probability of this success for $Z$ is at least $p=1/(3\cdot 2^{18})^{25}=\Omega(1)$.

The above fails for $Z$ if we make a Kempe change that is non-improving for $Z$ before $R$ becomes $0$-extendible, but if it fails, we still have a coloring of $R$, and any coloring of $R$ is 25-extendible. Thus, we just try again for the next at most 25 random Kempe changes. The probability that we fail $f$ times is at most
$(1-p)^f<e^{-pf}$, so $Z$ fails $2p^{-1}\ln n$ times with probability
at most $1/n^2$; otherwise
$Z$ succeeded within 
$50p^{-1}\ln n$ iterations. A union bound then implies that 
all configurations succeed
within $50p^{-1}\ln n$ iterations with probability at least $1-1/n$.

Using the method of conditional expectations, we are going to derandomize the above Kempe changes in Section~\ref{sect:D-reducible}.

\paragraph{Why do we insist on D-reducible configurations?}
It is important to note that the above approach would not work for general C-reducible configurations $Z$ with reducer $X$, e.g., the graph in Figure \ref{fig:BirkhoffFranklin} (b).

We could, as above, reduce all of them, removing $Z$ and contracting $X$, and color the graph inductively. However, $X$ is only used to control this initial coloring of the ring $R$ to make sure that the coloring is $i$-extendable, that is, with ring $R$ empty inside, and no contracted $X$, there are $i$ improving Kempe chains, leading to an extendable coloring of the ring. 

However, for most of these C-reducible configurations, the Kempe chains may not be improving, and even worse, they could very easily lead to colorings that are 
not 
extendable for $Z$ (i.e., not possible to color $Z$ with this ring coloring). Then there is no way to complete the coloring process. With ring $R$ empty inside, if there are $i$ improving Kempe chains, then they lead to an extendable coloring of the ring, but we may never obtain such chains because the contracted graph inside the ring may prevent them. 

One could imagine another way of working with C-reducible configurations $Z$, where we kept the contractions of the reducer while doing Kempe changes. However, there are many important cases where there are no improving Kempe changes, e.g., the classic C-reducible Birkhoff diamond in Figure \ref{fig:BirkhoffFranklin} (b).

\paragraph{Obstructing cycles.}
Finally, we have the case of a linear number of pairwise non-crossing obstructing cycles. In this case, we apply the techniques from \cite{RSST-STOC} to handle a single obstructing cycle; these techniques can be extended to a component hierarchy in which components are separated by obstructing cycles. To achieve an efficient reduction using such a hierarchy, it is crucial that the obstructing cycles are non-crossing and have non-touching private parts as described in Theorem \ref{thm:main-linear}. With a linear number of obstructing cycles, we can reduce the problem size by a constant factor in linear time. A full description of this case is found in Section~\ref{sect:obstructing-cycles}.

\paragraph{Bottleneck: the number of Kempe chains.}
Let us mention one important challenge. One Kempe change may take linear time, because even a single Kempe chain may be of linear order. One important thing to note is that the previous algorithm \cite{RSST-STOC} requires $O(1)$ Kempe changes to reduce the graph size by a constant size. This explains why the previous algorithm takes $O(n^2)$ time. The correctness of the algorithm requires the proof of the 4CT. 

In contrast, we do only $O(\log n)$ Kempe chains to reduce the graph size by a constant factor. This is our main challenge, because one Kempe chain may take linear time, and in order to achieve our near-linear time algorithm, a linear number of reducible configurations to make reductions by a constant factor is definitely necessary (which is our main technical contribution). 

There is a linear-time algorithm for 5-coloring planar graphs \cite{CHIBA1981317}; however, the method presented there does not involve any Kempe chain/change. In contrast, for 4-coloring, we definitely need Kempe chains/changes, or we would need a completely new proof of 4CT.

\section{Discharging to find a reducible configuration in classic 4CT}
\label{sect:discharging}\showlabel{sect:discharging}

Let $G$ be a triangulation in the plane. We start by assigning to each vertex $v \in V(G)$ the value 
$$
    T_0(v) := 10(6 - d(v)),
$$
which we call the \emph{initial charge at $v$}. It follows from Euler's formula (see Lemma \ref{lem:120} below) that 
$$
    \sum_{v\in V(G)} T_0(v) = 120.
$$
Now we redistribute the charge among the vertices applying \emph{discharging rules}. We use the same set of rules as in \cite{steinberger2010unavoidable}. As mentioned in the introduction, the initial charge $T_0(v)$ can be viewed as a discrete curvature measure. The discharging process is a way to ``smooth out'' the curvature locally around each vertex, and the rules are chosen in such a way that we obtain a reducible configuration or an obstructing cycle whenever the final charge is positive. 

Discharging rules are formally defined as follows. 

\begin{dfn}\label{dfn:rule}\showlabel{dfn:rule}
  A \emph{rule} is six-tuple $R = (G(R), \delta^-_R, \delta^+_R, s(R), t(R), r(R))$, so that
  \begin{enumerate}
      \renewcommand{\labelenumi}{(\roman{enumi})}
      \item $G(R) = (V(R), E(R))$ is a near-triangulation, and for each $v \in V(R)$, $G(R) - v$ is connected.
      \item $\delta^-_R \colon V(R) \rightarrow \mathbb{N}$, $\delta^+_R \colon V(R) \rightarrow \mathbb{N} \cup \{\infty\}$, such that $5 \leq \delta^-_R(v) \leq \delta^+_R(v)$ for each $v \in V(R)$.
      \item $s(R), t(R) \in V(R)$ are distinct adjacent vertices.
      \item $r(R)$ is a positive integer.
  \end{enumerate}
  A configuration $(Z,\delta)$ \emph{obeys} the rule $R$ if $Z$ is isomorphic to $G(R)$ and every pair $(v_Z, v_R) \in V(Z) \times V(G(R))$ of corresponding vertices under the isomorphism satisfies $\delta^-_R(v_R) \leq d(v_Z) \leq \delta^+_R(v_R)$.
\end{dfn}

The graph $G(R)$ of the rule $R$ is a near-triangulation in which each vertex $v$ is given a range of possible degrees, the interval $[\delta^-_R(v), \delta^+_R(v)]$. The value $r(R)$ represents the amount of charge that is sent from the vertex $s(R)$ to its neighbor $t(R)$. We have $r(R) = 1$ or $r(R)=2$ for all our rules. We describe the degree range $[\delta^-_R, \delta^+_R]$ 
in drawings (for example in Figure \ref{fig:rules}) as follows:
There are three cases. When $\delta^-_R(v) = \delta^+_R(v)$, we 
draw $v$ such that it has the shape for $\delta^-_R(v)$ shown in Figure \ref{fig:shapes}. 
Secondly, when $5 = \delta^-_R(v) < \delta^+_R(v)$, we 
draw $v$ such that it has the shape for the integer $\delta^+_R(v)$ shown in Figure \ref{fig:shapes}. Moreover, we add a minus sign close to $v$. 
Finally, when $\delta^-_R(v) < \delta^+_R(v) = \infty$, we 
draw $v$ such that it has the shape for $\delta^-_R(v)$ and we add a plus sign close to $v$. The edge $s(R)t(R)$ is indicated by an arrow. All our rules in Figure \ref{fig:rules} have one of these three types of degree ranges. 

In this paper, we use the set $\mathcal{R}$ of 43 rules shown in Figure \ref{fig:rules}.
If $R$ is a rule, then the rule $R'$ obtained from $R$ by a reflection in the line containing edge $s(R)t(R)$ is considered a different rule unless it is isomorphic to $R$. Thus, Figure \ref{fig:rules} shows 84 rules since precisely two, the first and fourth last, are symmetric under the reflection.
These 84 rules are collected in our GitHub repository\footnote{Removed in order to anonymize paper, will be included in camera-ready version.\drop{\url{https://github.com/near-linear-4ct/discharging-rules}}}
This set is exactly the same as the set of rules used in \cite{steinberger2010unavoidable}, except that our 12th and 13th rules are considered as a single rule in \cite{steinberger2010unavoidable}. We also note that Steinberger only applied the rules to internally 6-connected graphs, whereas we consider arbitrary triangulations, where they are not necessarily induced. However, we recall that we insist that facial triangles in the rules must be matched to facial triangles in the graph.

\subfile{tikz/rule}

An edge with an arrow represents the direction in which a charge moves. Thus, $s(R)$ (resp., $t(R)$) is the tail (the head) of the edge with an arrow.
The number of arrows in the figure represents $r(R)$.

We calculate the amount of charge sent along an edge $st$ in $G$ by checking which rules can be used on this edge.
We apply a rule $R$ when a configuration $Z$, which obeys $R$, is contained in $G$ such that the edge $st\in E(G)$ corresponds to the edge $s(R)t(R)$ of the rule. We say that \emph{the rule $R$ is applied at the edge $st$}.
Note that each\footnote{The very first rule (R1) and the rule (R40) are the only exceptions that can be used just once due to their symmetry.} rule $R\in \mathcal{R}$ can potentially be applied at the edge $st$ in two symmetric ways.

\begin{dfn}
\label{dfn:amount}\showlabel{dfn:amount}
  (a) For adjacent vertices $u, v \in V(G)$, we define $\phi(u,v)$ as the sum of the values $r(R)$ over all rules $R\in \mathcal{R}$ that are applied with $s(R) = u$ and $t(R) = v$. This is the \emph{charge sent from $u$ to $v$}.

\label{dfn:finalcharge}\showlabel{dfn:finalcharge}
  (b) For a vertex $u \in V(G)$, we set
    \[
      T(u) := T_0(u) + \sum_{v \sim u} (\phi(v, u) - \phi(u,v)).
    \]
    The value $T(u)$ is called the \emph{final charge} at $u$.
\end{dfn}

We note that it is easy to process all discharging rules and compute the final charge in linear time. The point is that for each edge in the graph, there is only a constant number of ways that we can possibly apply the discharging rules from Figure~\ref{fig:rules} to the edge.

The discharging method uses the following easy observation.

\begin{lem}
\label{lem:120}\showlabel{lem:120}
$\sum_{u \in V(G)} T(u) = 120$.
\end{lem}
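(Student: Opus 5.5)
The plan is to show that the discharging step conserves total charge, and then to compute the total initial charge using Euler's formula for triangulations.

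\emph{Conservation of charge.} By Definition~\ref{dfn:finalcharge}(b),
\[
\sum_{u\in V(G)} T(u) = \sum_{u} T_0(u) + \sum_{u}\sum_{v\sim u}\bigl(\phi(v,u)-\phi(u,v)\bigr).
\]
The double sum runs over all ordered pairs $(u,v)$ of adjacent vertices. Each ordered pair $(x,y)$ contributes $\phi(x,y)$ exactly once with a plus sign, namely in the term for $u=y$. It contributes $\phi(x,y)$ exactly once with a minus sign, in the term for $u=x$. Hence the double sum vanishes. This is finite bookkeeping, since each $\phi(u,v)$ is a finite sum: only finitely many rules, in finitely many placements, can be applied at a given edge. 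It therefore suffices to prove that $\sum_u T_0(u)=120$.

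\emph{Initial charge.} Let $n=|V(G)|$, $e=|E(G)|$ and $f$ the number of faces. Since $G$ is a simple triangulation of the sphere, every face is a triangle and every edge lies on exactly two faces, so $2e=3f$. Euler's formula $n-e+f=2$ then gives $e=3n-6$. The handshake lemma $\sum_u d(u)=2e$ yields
\[
\sum_u T_0(u)=10\Bigl(6n-\sum_u d(u)\Bigr)=10\bigl(6n-2(3n-6)\bigr)=120.
\]

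No step here is a real obstacle. The only point needing care is whether $2e=3f$ holds in degenerate cases, such as a triangulation with separating triangles. It does, because the paper's triangulations are simple graphs embedded in the 2-sphere with all faces triangles. In such a graph each face is bounded by a 3-cycle, and every edge borders two distinct faces; this holds for $n\ge 3$, and $n=3$ gives the two-face triangle with $e=3=3n-6$.
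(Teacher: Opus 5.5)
Your proof is correct and follows the paper's argument: total charge is conserved under discharging, and Euler's formula $|E(G)|=3n-6$ together with the handshake lemma gives total initial charge $120$. You spell out the cancellation of the double sum over ordered adjacent pairs, which the paper only asserts.
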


\begin{proof}
By applying any discharging rule, the total sum of all charges remains the same. Thus, $\sum_{u \in V(G)} T(u) = \sum_{u \in V(G)} 10(6-d(u))$. If $n$ is the number of vertices of $G$, then Euler's formula for the triangulation $G$ implies that $\tfrac12 \sum_{u\in V(G)}d(u) = |E(G)| = 3n - 6$, and thus we have:
 \[
 \sum_{u \in V(G)} T(u) = \sum_{u \in V(G)} 10(6-d(u)) = 60n - 10\sum_{u \in V(G)}d(u) =
 60n-20(3n-6) = 120.
 \]
\end{proof}

Lemma \ref{lem:120} implies that there is a vertex $u$ with $T(u) > 0$. Previous proofs of 4CT show that for any vertex $u$ with $T(u)>0$, there is a reducible configuration in the vicinity of $u$.\footnote{Each proof of 4CT, by Appel and Haken \cite{AppelHaken89}, by Robertson et al. \cite{RSST} and by Steinberger \cite{steinberger2010unavoidable} has different discharging rules, so the final charge $T(v)$ has a different meaning for each of them, but the stated conclusion is the same.} This yields a contradiction to the assumption that we have a minimum counterexample.
However, we need much more for our main result of having a linear number of reducible configurations (see Theorem \ref{thm:main-linear}).

\begin{dfn}\label{def:B_2}
    Let $v$ be a vertex in $G$ and let the \emph{ball} $B_k(v)$ be the subgraph of $G$ induced by the set of vertices at distance at most $k$ from $v$. We call 
    $B_1(v)$ the \emph{wheel} with \emph{center} $v$, and
    $B_2(v)$ the \emph{cartwheel} with \emph{center}  $v$.
\end{dfn}

As our discharging rules are exactly the same as those in \cite{steinberger2010unavoidable}, we have the following result from \cite{steinberger2010unavoidable}.

\begin{thm}
\label{thm:T(v)> 0}\showlabel{T(v)> 0}
Suppose that $G$ is an internally $6$-connected triangulation that contains a vertex $v$ with final charge $T(v) > 0$. Then one of the D-reducible configurations 
in Steinberger's \cite{steinberger2010unavoidable} set $\mathcal{D}_0\subset \cD$ is contained in the cartwheel $B_2(v)$ as an induced subgraph. 
\end{thm}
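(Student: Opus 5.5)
The theorem is essentially Steinberger's result. The proof therefore reduces to checking that his hypotheses and conclusion match ours exactly, plus one point about inducedness. My plan has three steps.

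First, I will recall what Steinberger proves in \cite{steinberger2010unavoidable}. He uses the same initial charges $T_0(v)=10(6-d(v))$ and the same discharging rules as our set $\mathcal R$. The only difference is that our 12th and 13th rules form a single rule in his list. Since each rule moves charge along one edge and $\phi(u,v)$ is a sum over applied rules, splitting one rule into two leaves the charge sent along every edge unchanged, provided the two rules together cover exactly the degree range of his merged rule. Hence his final charge agrees with our $T(v)$ at every vertex. His theorem states that in an internally $6$-connected triangulation, any vertex $v$ with positive final charge has some configuration of $\mathcal D_0$ contained in $B_2(v)$. Steinberger obtains this by a computer case analysis over all possible cartwheels with degrees bounded by the discharging constraints. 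I will take that analysis as given, since $\mathcal D_0\subset\cD$ is by definition exactly his set.

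Second, I must check that our notions of application and containment coincide with his in the internally 6-connected setting. Our definition (Z4)--(Z5) requires facial triangles of the configuration to be facial in $G$ and degrees to match. Steinberger uses the same Allaire--Swart style definition. In an internally $6$-connected triangulation there are no separating triangles, so every triangle is facial. Consequently a rule applies in our sense if and only if it applies in his, which is what the first step needs.

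Third, I will upgrade ``contained'' to ``contained as an induced subgraph.'' This follows from Lemma~\ref{lem:induced-config} together with property (D0) of Lemma~\ref{lem:reduc}. Every configuration in $\mathcal D_0$ satisfies conditions (i) and (ii), which we verified by computer. Since $G$ is internally $6$-connected it has no obstructing cycles at all, and in particular none with all vertices in $Z$. Therefore any contained configuration from $\mathcal D_0$ is induced. It also lies inside $B_2(v)$, because the configuration is found in the cartwheel around $v$.

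The main obstacle is the first step. I must make sure the rule sets and the charge convention coincide exactly, including the factor $10$, the counting of reflected rules, and the treatment of the two exceptional symmetric rules, so that Steinberger's computer-verified conclusion applies verbatim. If his paper records the configuration's location only as ``in the vicinity'' rather than within $B_2(v)$, I would have to re-examine his case analysis, or rerun our own check, to confirm that every configuration found lies within distance $2$ of $v$.
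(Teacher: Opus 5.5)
Your proposal is correct and follows the paper's route. The paper imports Steinberger's theorem on the grounds that its discharging rules are identical to his, up to splitting his merged rule into rules 12 and 13, and it secures inducedness exactly as in your third step, via the computer-verified property (D0) and Lemma~\ref{lem:induced-config}. Your additional checks make explicit what the paper takes for granted: that the two split rules must partition, not merely cover, the degree range of his rule, and that rule application agrees once every triangle is facial.
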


This result implies 4CT, since a minimal counterexample is known to be internally 6-connected (see \cite{RSST,RSST-STOC}). However, there may only be a constant number of vertices with
positive charge, so this does not help us find a linear number of reducible configurations.

\section{Extending unavoidability to the flat case}

The classic proof of the 4CT tells us that we can find a reducible configuration in the cartwheel (that is, within the second neighborhood $B_2(v)$) around any positively charged vertex. However, a planar triangulation may have only a few vertices of positive charge. In fact, almost all the vertices may have final charge zero, and we refer to this as the \emph{flat case}. To motivate this terminology, observe that in a large piece of the flat hexagonal tiling, all the vertices would have final charge zero. For example, this occurs with all triangulations that are dual to the class of fullerenes. Those have 12 vertices of degree 5 and all others of degree 6. Positive final charge will only be found in a neighborhood around the degree-5 vertices. In this case, the only other reducible configuration we will find there is a configuration in Figure \ref{fig:flatn}. 

However, the general flat case is much more complicated. We need thousands of additional reducible configurations to cover all cases. Formally, we say that a vertex $v$ is \emph{flat} if its final charge is zero, that is, $T(v)=0$. 
In the flat case, we may need to look at a larger neighborhood. 
We say that the ball $B_k(v)$ is \emph{flat} if all vertices in $B_k(v)$ are flat.

One of our main insights comes from analyzing large, flat balls. We will prove the following nontrivial generalization of Theorem~\ref{thm:T(v)> 0}.

\begin{thm}\label{thm:basic-flat}
    Suppose that $G$ is an internally 6-connected triangulation that contains a vertex $v$ such that 
    $B_{12}(v)$ is flat. Then $B_{14}(v)$ 
    contains a reducible configuration contained in $\mathcal D$ as an induced subgraph.
\end{thm}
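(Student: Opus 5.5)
We will argue by contradiction: assume $B_{12}(v)$ is flat but $B_{14}(v)$ contains no induced configuration from $\cD$. The first step is to turn flatness into strong structural information. Every $u$ with $T(u)=0$ satisfies $d(u)\ge 5$, since $G$ is internally 6-connected. The discharging rules of Figure~\ref{fig:rules} only look at $B_2(u)$. Hence we can enumerate, by computer, all cartwheels $B_2(u)$ with $T(u)=0$ that contain no configuration of $\cD$. This is the same kind of case analysis that underlies Theorem~\ref{thm:T(v)> 0}, but with the inequality $T>0$ replaced by $T=0$ and with the larger set $\cD$.

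We first bound the degrees. A vertex of degree at least 9 has $T_0\le -30$ and must receive a lot of charge, which forces many degree-5 neighbours. We expect that these neighbours produce a configuration from $\cD_0$ or an obstructing cycle. So in the flat, reducible-free case every vertex of $B_{12}(v)$ should have degree at most 8, and ideally the degrees are even more restricted, with 5, 6 and 7 the main cases. This bounded-degree step makes the set of possible flat cartwheels finite, which is what a computer enumeration needs.

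The second step is a propagation argument. We grow the neighbourhood of $v$ layer by layer, maintaining a finite family of possible partial embeddings of $B_k(v)$ in which every vertex is flat and no configuration of $\cD$ appears. Growing a layer means extending the embedding by one more ring of vertices with bounded degrees. We discard an extension whenever some vertex at distance at most $k-2$ has its full cartwheel determined and that cartwheel is non-flat. We also discard it whenever an induced member of $\cD$ appears. For the induced part, Lemma~\ref{lem:induced-config} together with (D0) shows that containment already implies being induced, since there are no obstructing cycles. By (D1) and (D2), each member of $\cD$ found near a vertex at distance $k$ lies inside $B_{k+2}(v)$. This is why checking flatness up to radius 12 yields a configuration within radius 14. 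To keep the enumeration feasible we group partial neighbourhoods adaptively, as described in the introduction: we leave degrees of outer vertices as intervals and refine an interval only when a rule or a configuration needs a specific value.

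The main obstacle is showing that this search terminates with no surviving neighbourhood by radius 12. Local flat structures can be extended consistently for a long time. The hexagonal lattice is the prototypical example, and it is killed only by the 6-regular configuration of Figure~\ref{fig:flatn}. Mixed flat patterns, for instance 5--7 pairs arranged periodically so that the rules exactly cancel the charges, could survive many layers; the exceptional radius-3 configuration of Figure~\ref{fig:radius3} suggests these are genuinely present. My first attempt at this step would be to classify the flat cartwheels into a small number of "local types" and show that the adjacency constraints between types form a finite automaton with no infinite consistent path in the plane. Radius 12 is then the bound on the longest consistent path, certified by running the enumeration. If the enumeration blows up, I would try to prove a rigidity lemma: beyond some radius, a reducible-free flat region must be exactly hexagonal, after which Figure~\ref{fig:flatn} finishes the argument.
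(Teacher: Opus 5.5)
\textbf{First step.} This step is right in spirit but only asserted. To exclude a flat vertex of degree $\ge 9$ you need two ingredients. One is the per-edge bound $\phi(u,v)\le 5$ of Lemma~\ref{lem:vsends}, which leaves only the degree-12 wheel of 5's (Lemma~\ref{lem:12+}). The other is a cartwheel enumeration for degrees 9--11 (Lemma~\ref{lem:positive-comp}). Steinberger's $\cD_0$ is only known to suffice when $T>0$, so the enlarged set $\cD$ is needed here.

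\textbf{Second step: the gap.} This step carries the entire content of the theorem, and you do not prove it. You do not show that the layer-by-layer search around $v$ dies by radius 12. You name this as the main obstacle and offer a finite automaton or a rigidity lemma only as hopes. What you have is therefore a plan for an experiment whose outcome is the theorem itself.

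The plan is also not a realistic certificate. The number of degree patterns on $B_k(v)$ is exponential in $|B_k(v)|$. A single extended cartwheel already has more than $2^{100}$ patterns and needs adaptive tail ranges. A search over the whole ball that does not know which cartwheels to combine has no reason to stay manageable. Finally, your picture of mixed flat patterns surviving many layers is misleading. Reducible configurations are forced very locally, and radius 12 is an artefact of chaining local lemmas, not the length of a long consistent extension.

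\textbf{The missing idea: localise by degree instead of propagating.}
\begin{itemize}
\item[(a)] If all vertices within distance 2 of a vertex have degree at most 6, a hand argument finds a configuration of Figure~\ref{fig:confs-positive-hand} in its cartwheel, without using charges at all (Lemma~\ref{lem:positive-hand}).
\item[(b)] A flat vertex of degree 7 or 8 whose neighbours all have degree $\le 6$ already has a configuration in its own cartwheel (Lemma~\ref{lem:positive-comp}).
\item[(c)] Otherwise, for a degree-8 vertex you combine its cartwheel with that of one of the following: a degree-8 neighbour, its unique degree-7 neighbour, or its two closest degree-7 neighbours. The computer check of these free combinations finds a member of $\cD$ in every case except one configuration $X$ (Figure~\ref{fig:exception-zero-concentrate}). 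That case is eliminated by repeating the check at a degree-8 vertex of $X$ at distance 2 (Lemmas~\ref{lem:zero-deg7,8} and~\ref{lem:zero-8}).
\item[(d)] With maximum degree 7, the analogous combinations of two or three cartwheels leave only a $\Tseven$ triangle. Combining its three cartwheels kills it, and this is where the radius-3 configuration of Figure~\ref{fig:radius3} enters (Lemmas~\ref{lem:777}, \ref{lem:zero-deg7,7}, \ref{lem:zero-7}).
\end{itemize}
Each of these lemmas needs flatness only in $B_3$ and the degree bound only in $B_5$ of the chosen vertex.

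The radius then follows by bookkeeping, $12=2+5+5$. From a vertex of degree $\le 6$ you move to a vertex of degree 7 or 8 within distance 2. From a degree-7 vertex you possibly move to a degree-8 vertex within distance 5, and apply the radius-5 lemma there. A flat vertex of degree $\ge 9$ anywhere in $B_{12}(v)$ yields a configuration in its own cartwheel, hence in $B_{14}(v)$.
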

Note above that to find the reducible configuration in the flat case, we have to go much beyond the cartwheel $B_2(v)$ considered in the classic Theorem \ref{thm:T(v)> 0}.

Theorem \ref{thm:basic-flat} above is our main structural
graph theoretic insight, but more is needed for an efficient coloring algorithm.

\section{Triangulations and obstructing cycles}

The input to our coloring algorithm is an arbitrary triangulation that need not be internally 6-connected.
For our results, we want a local view of internal 6-connectivity. Recall that a planar triangulation $G$ is internally 6-connected if it has no obstructing cycles. In this section, we state lemmas and theorems, deferring most proofs to later sections. The main goal here is to communicate our new understanding.

Both reducible configurations and obstructing cycles help us make progress towards a 4-coloring. For the known quadratic 4-coloring algorithm \cite{RSST-STOC}, it suffices to find \emph{one} reducible configuration or \emph{one} obstructing cycle, but in order to prove Theorem \ref{thm:main-linear}, we need to find a linear number of reducible configurations or obstructing cycles.

We will achieve this locally by finding a linear number of non-touching neighborhoods of constant size, each of which contains either a reducible configuration or an obstructing cycle. 
First, we need the following property of our discharging rules.

\begin{lem}
\label{maxplus}\showlabel{maxplus} 
For any triangulation $G$ and $v \in V(G)$, we have final charge $T(v) \leq 60 - 2 \cdot d_G(v)$. 
Consequently, $T(v) \leq 54$ and if $d_G(v) \geq 31$, then $T(v) \le -\tfrac{2}{31} d_G(v)$.  
\end{lem}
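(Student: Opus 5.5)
We need to prove $T(v)\le 60-2d(v)$. The plan is to start from $T_0(v)=60-10d(v)$ and show that the net charge received by $v$ is at most $8d(v)$. The rules apply with a wide range of degrees, so we bound what $v$ can gain per incident edge, or per incident face, and not by counting rules alone.

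Since $T(v)=T_0(v)+\sum_{u\sim v}(\phi(u,v)-\phi(v,u))\le 60-10d(v)+\sum_{u\sim v}\phi(u,v)$, it suffices to show $\sum_{u\sim v}\phi(u,v)\le 8d(v)$. The natural approach is to inspect the 84 rules of $\mathcal R$ and look at the possible sinks $t(R)$. In the rules of Figure~\ref{fig:rules}, the source $s(R)$ is always a vertex of degree $5$. The rule can therefore be applied at edge $uv$ with $t(R)=v$ only if $d(u)=5$.

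The aim is a per-neighbour bound: for a fixed neighbour $u$ of $v$, the total $\phi(u,v)$ is at most a small constant $c$. The two orientations of every rule together give a bounded total. A charge of $2$ per rule and a handful of rules matching in a given embedded situation would give $c\le 8$, which is what is needed.

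A cleaner alternative, if the per-edge bound is not uniformly $\le 8$, is to charge each application to one of the two faces incident to edge $uv$, according to which side the rule's graph extends. We would then show that each face $vuw$ receives at most $4$ units charged from it. Since there are $d(v)$ faces at $v$ and each is incident to two edges at $v$, this again yields $\le 8d(v)$. The rule graphs are near-triangulations that must match facial triangles in $G$ with the correct orientation. Hence, for a fixed edge $uv$ and fixed side, the set of rules that can match is determined by the local degrees, and the maximum total over simultaneously compatible rules is a finite check. We would carry this check out over the list of rules (by computer, in the spirit of the paper), taking into account that a degree-5 vertex $u$ cannot satisfy mutually incompatible degree constraints at once.

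The consequences are immediate. $T(v)\le 60-2d(v)\le 54$ since $d(v)\ge 3$. Also $60-2d\le -\frac{2}{31}d$ is equivalent to $60\le d(2-\frac{2}{31})=\frac{60}{31}d$, that is $d\ge 31$.

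The main obstacle is the uniform bound on incoming charge per edge (or per face). It requires verifying that no combination of rules matching simultaneously at one edge sends more than $8$ units. This includes the situation in non-internally-6-connected triangulations where rule graphs need not be induced, so degree constraints at distinct rule vertices might be realized by identified vertices of $G$. The first thing to try is to prove that compatible rules at an edge are determined by the degrees of the two common neighbours of $u$ and $v$ and a few vertices beyond, and then to tabulate the maxima.
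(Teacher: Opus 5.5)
Your reduction is the paper's. You drop the outgoing charge, bound $\phi(u,v)\le 8$ for every oriented edge by a finite computer enumeration of the rule sets that can act simultaneously at $uv$ (this is Lemma~\ref{lem:vsends-noconf}), and sum over the $d(v)$ neighbours. Your derivation of both consequences from $T(v)\le 60-2d(v)$ is correct.

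Two points in your sketch need fixing.

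First, it is false that every rule has a degree-$5$ source. The rules only satisfy $\delta^+_R(s(R))\le 8$, and Figure~\ref{fig:send5} shows vertices of degree $6$ and $7$ sending charge. So the enumeration must range over all source degrees $5,\dots,8$. If you restricted it to $d(u)=5$, as your remark about ``a degree-$5$ vertex $u$'' suggests, you would wrongly conclude that $\phi(u,v)=0$ whenever $d(u)\ge 6$, and the bound would not be justified.

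Second, the obstacle you flag for triangulations that are not internally $6$-connected is not resolved in the paper by tabulating degrees of the common neighbours of $u$ and $v$. It is resolved by free combinations. A simultaneous application of $R_1,\dots,R_\ell$ at $uv$ is a homomorphism into $G$ from the disjoint union of the rule graphs, with all darts $s_it_i$ identified. By Lemma~\ref{lem:free-homo-conf}, every such homomorphism factors through the free homomorphic image respecting these identifications.

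Extra identifications of rule vertices in $G$ therefore only add constraints (degree ranges must intersect); they can never allow more rules to apply. Enumerating the free combinations, with degree-range splitting as in Lemma~\ref{lem:free-comb-rules}, thus gives an upper bound valid for every triangulation. No injectivity or planarity check is needed. This is the device that turns your ``finite check'' into a rigorous argument, and it returns the maximum $8$, attained only by the configuration of Figure~\ref{fig:any_send8}.
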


The proof of this lemma is deferred to Section \ref{sect:edge-charge}.
The final charge of a vertex is thus linearly decreasing with the degree.
The constants (e.g., 54, 31) in Lemma \ref{maxplus} are not important. The existence of an upper bound on the final charge and the degree of a vertex whose final charge is possibly positive or zero is important because we only consider neighborhoods of vertices with non-negative final charge, and now we know they have bounded degree.

\subsection{Local neighborhoods of constant size}

We will carefully define a constant-size local neighborhood around $v$ with $T(v)\geq 0$ that contains a reducible configuration or
an obstructing cycle and such that each local neighborhood can interfere with only a constant number of other local neighborhoods. By interference between local neighborhoods, we mean that their reducible configurations are touching or that their obstructing cycles are crossing.

Our local neighborhoods are defined in terms of (extended) degree-bounded balls:

\begin{dfn}\label{def:limitcart}\showlabel{def:limitcart}
 Let $v_0$ be a vertex in $G$. The \emph{$k$-local neighborhood} around $v_0$ has the following elements:
\begin{itemize}
\item 
The \emph{degree-bounded ball} $B_k^8(v_0)$ of the \emph{center vertex} $v_0$ is the  subgraph of $G$ induced by the set of vertices $V_1 \subseteq V(G)$ such that 
for every $v_1 \in V_1$, there is a path $P$ between $v_0$ and $v_1$ of length at most $k$, and the degrees of the vertices of $P$ except for the center $v_0$ are at most 8. 
\item 

The set of \emph{boundary vertices} $V_2$ consists of the vertices $v\ne v_0$ of degree at least 9 such that for every $v \in V_2$, there is a path $P$ between $v_0$ and $v$ of length at most $k$ and all the internal vertices of $P$ are of degree at most 8. We define $\bar B_k^8(v_0) = G[V_1 \cup V_2]$ calling it the \emph{extended degree-bounded ball} around $v_0$.
\item 
We say an obstructing cycle $C$ is \emph{$k$-local} 
(or just \emph{local} if $k$ is clear from the context) if $C\subseteq \bar B_k^8(v_0)$ has at most
two vertices outside (the non-extended) $B_{k}^8(v_0)$, and if two, they have to be consecutive on $C$. Therefore, $C$ has an edge $uw$, called the \emph{public part} of $C$, such that $C-u-w$ is a path contained in $B_{k}^8(v_0)$ (the \emph{private part} of $C$).
\end{itemize}
\end{dfn}

We shall use the distinctions in Theorem \ref{thm:main-theorem} (i) and (ii) but not in (iii), where we assume that all vertices in $B_{12}(v)$ have degree at most 8, and then $B^8_{12}(v)=\bar B^8_{12}(v)=B_{12}(v)$. 

If $d(v_0)$ and $k$ are bounded above by a constant, then this leads to neighborhoods of constant size.

\begin{lem}\label{lem:ball-size}\showlabel{lem:ball-size} 
For every $k\ge0$, we have $|\bar B^8_k(v_0)| < 5^k\, d(v_0)$.  
\end{lem}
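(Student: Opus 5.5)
We count vertices layer by layer with a BFS-type argument that follows the definition of $\bar B^8_k(v_0)$. For $0\le i\le k$, let $L_i$ be the set of vertices $v$ that have a path from $v_0$ of length exactly $i$ whose internal vertices all have degree at most $8$, and such that $i$ is the smallest length of any such path. Then $L_0=\{v_0\}$, and $V(\bar B^8_k(v_0))=V_1\cup V_2\subseteq L_0\cup\dots\cup L_k$: a vertex of $V_1$ has a suitable path all of whose non-center vertices have degree at most $8$, and a vertex of $V_2$ has one whose internal vertices have degree at most $8$.

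The key recursive bound comes from how the layers are reached.
\begin{itemize}
\item Every vertex of $L_1$ is a neighbour of $v_0$, so $|L_1|\le d(v_0)$.
\item For $i\ge 1$, every vertex of $L_{i+1}$ is adjacent to some vertex $u\in L_i$ that is internal on a witnessing path. Hence $d(u)\le 8$, and only vertices of degree at most $8$ in $L_i$ can generate new vertices.
\item Such a $u$ has its predecessor on a shortest witnessing path as one neighbour, and that predecessor lies in $L_{i-1}$. So $u$ has at most $7$ neighbours that could be new, giving $|L_{i+1}|\le 7|L_i|$.
\end{itemize}
Together these give $|L_i|\le 7^{i-1}d(v_0)$ for $i\ge1$.

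This crude estimate is too weak, since summing gives roughly $7^{k-1}d(v_0)$, not $5^k d(v_0)$. So the main step is to sharpen the branching factor using that $G$ is a triangulation, and this is where the real work (and the main obstacle) lies. The plan is to charge each new vertex of $L_{i+1}$ to the first vertex of $L_i$ that reaches it in a fixed rotational scan. In a triangulation, the neighbours of $u$ form a cycle around $u$, so the predecessor $w\in L_{i-1}$ and its two rotational neighbours around $u$ (which are adjacent to $w$) lie in $L_{i-1}\cup L_i$ and are not new. This leaves at most $d(u)-3\le 5$ genuinely new neighbours per generating vertex.

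I expect that saving $3$ rather than $1$ per vertex is exactly what yields the base $5$. The delicate point is justifying that the two rotational neighbours of $w$ around $u$ are always already in $L_{i-1}\cup L_i$, rather than being new vertices of $L_{i+1}$. I would try to prove this first, from the fact that they are adjacent to $w\in L_{i-1}$. One subtlety: if $w=v_0$ and $d(v_0)$ is large, $w$ itself may have degree above $8$, but this does not matter, because being adjacent to $w$ still places them at level at most $i$ via $w$.

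With the sharpened bound, the count becomes $|L_1|\le d(v_0)$ and $|L_{i+1}|\le 5|L_i|$ for $i\ge1$, so $|L_i|\le 5^{i-1}d(v_0)$. Summing,
\[
|\bar B^8_k(v_0)|\le 1+d(v_0)\sum_{i=1}^{k}5^{i-1}=1+d(v_0)\,\frac{5^k-1}{4}<5^k d(v_0)
\]
for $k\ge1$, using $d(v_0)\ge 3$. For $k=0$, the ball is $\{v_0\}$ and $1<d(v_0)$ trivially.

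If the triangulation refinement proves awkward, the fallback is to observe that the weaker bound $|\bar B^8_k(v_0)|\le 8^k d(v_0)$ already suffices for every later use of constant size. For the stated constant $5$, though, the rotational-saving argument is the step I would have to make rigorous.
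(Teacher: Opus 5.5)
Your proof is correct and is essentially the paper's argument: you layer the extended ball by restricted distance, and every new vertex hangs off a degree-$\le 8$ vertex $u$ of the previous layer. Its predecessor $w$ and the two common neighbours of $u$ and $w$ in the triangles on $uw$ all have level at most $i$ (via $w$, which is $v_0$ or has degree at most $8$), so $u$ has at most $5$ new neighbours; the paper then finishes with the crude induction $|\bar B^8_k|\le 5^{k-1}d(v_0)+5^{k-1}d(v_0)$, while you sum the geometric series. One small precision: those two common neighbours need not lie in $L_{i-1}\cup L_i$ (one of degree at least $9$ can sit in a much lower layer), but you only need that they are not in $L_{i+1}$, and your ``via $w$'' argument already gives this.
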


\begin{proof}
  For $j\ge1$, let $S_j = V(B_j^8)\setminus V(B_{j-1}^8)$ and $\bar S_j = V(\bar B_j^8)\setminus V(\bar B_{j-1}^8)$, where all sets are considered around $v_0$. We claim that for every $j\ge2$, $|S_j|\le |\bar S_j| \le 5|S_{j-1}|$. For each $v\in \bar S_j$, pick one of its neighbors in $S_{j-1}$, say $u$. Note that $u$ has a neighbor $w\in S_{j-2}$ and has two more neighbors $u_1,u_2$ that are in the two triangles containing the edge $uw$. Clearly, $u_1,u_2$ belong to $\bar B_{j-1}$ and hence are not in $\bar S_j$. Consequently, since $d(u)\le8$, $u$ has at most 5 neighbors in $\bar S_j$. This easily implies that $|\bar S_j| \le 5|S_{j-1}|$ and hence $|\bar S_j| \le 5^{j-1} d(v_0)$.
  
  Let us now consider the cardinality of $V(\bar B_k^8)$. The claim of the lemma is easy for $k\le1$, so we may assume that $k\ge2$ and proceed with induction. Since $V(\bar B_k^8) = \bar S_k \cup V(\bar B_{k-1}^8)$, the induction hypothesis and the claim derived above give that $|\bar B_k^8| \le |\bar S_k| + |\bar B_{k-1}^8| \le 5^{k-1}d(v_0) + 5^{k-1}d(v_0) < 5^kd(v_0)$.
\end{proof}

Definition \ref{def:limitcart} may seem overly complicated, so below we justify this definition, which will be used in the main technical result, Theorem \ref{thm:main-theorem}. 

By inspection of the discharging rules (see Section \ref{sect:edge-charge}), we will prove:

\begin{lem}\label{limitcartwheel}\showlabel{limitcartwheel}
Let $v$ be a vertex in $G$ and $vu$ any incident edge. The amount of charge, $\phi(v,u)$ and $\phi(u, v)$, sent along this edge is completely determined by the graph structure of the extended degree-bounded ball $\bar B_2^8(v)$. Since all discharging over edges incident to $v$ is determined by $\bar B_2^8(v)$, it follows that $\bar B_2^8(v)$ also determines the final charge $T(v)$.
\end{lem}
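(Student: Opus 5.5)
The plan is to prove the lemma by a uniform structural property of the rule set $\mathcal R$, checked rule by rule (by inspection of Figure~\ref{fig:rules}, or mechanically over the 84 oriented rules). The property is: for every rule $R$, every vertex $x\in V(G(R))$ with $x\ne s(R),t(R)$ satisfies two conditions. First, $x$ is adjacent in $G(R)$ to $s(R)$ or to $t(R)$; that is, $G(R)$ lies within distance $1$ of the edge $s(R)t(R)$. Second, if $x$ is not adjacent to the endpoint we are centering at, then $x$ is reached through a vertex whose degree range forces degree at most $8$. In practice, every vertex of a rule other than possibly $s(R)$ and $t(R)$ has $\delta^+_R\le 8$ or lies in the wheel of $s$ or of $t$. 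If instead some vertex with $\delta^+=\infty$ sits at distance $2$ from the centre but only behind another high-degree vertex, I would treat it separately. This is where the boundary vertices $V_2$ of $\bar B^8_2$ enter: a vertex of degree $\ge 9$ adjacent to a degree-$\le 8$ neighbour of $v$ is still in $\bar B_2^8(v)$, together with its adjacencies to the other vertices there, since $\bar B_2^8(v)$ is induced.

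Fix the edge $vu$ and consider an application of a rule $R$ at $vu$ in either direction, so $\{s(R),t(R)\}$ is mapped to $\{v,u\}$. The image of $G(R)$ consists of $v$, $u$, common neighbours of $v$ and $u$ (the apexes of the two faces on $vu$ and their continuations), and further vertices adjacent to $u$ or $v$. I will argue that every image vertex lies in $\bar B_2^8(v)$. Neighbours of $v$ are there trivially. A vertex adjacent to $u$ but not to $v$ is at distance $2$ via $u$. If $d(u)\le 8$ it lies in $V_1\cup V_2$. If $d(u)\ge 9$, I need a check of the rules: whenever the tip vertex has degree range allowing $\ge 9$, all rule vertices are adjacent to the other endpoint or reachable through a low-degree common neighbour. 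I expect this follows because in all rules with a vertex of degree $\ge 9$ at $s$ or $t$ the rule graph is just a wheel-like fan around the other endpoint. In addition, the degrees of all rule vertices must be read off. For vertices in $V_1$ of $\bar B_2^8(v)$ at distance $\le 1$, all neighbours are in $\bar B_2^8(v)$, so their degree is visible. For the others, the rule only requires a range, and I would use the facial-triangle structure together with the degree-$\le 8$ or $\ge 9$ dichotomy. Where exact degrees of distance-$2$ vertices are needed, the rule ranges are of the form $[5,\delta^+]$ with $\delta^+\le 8$ or $[\delta^-,\infty]$. Their satisfaction is determined once we know the degree or that it is $\ge 9$, which is encoded by whether the vertex lies in $V_1$ or $V_2$. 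This is the subtle point, and I would handle it by checking that each rule vertex at distance $2$ from $v$ has range either contained in $[5,8]$ or of the form $[9,\infty]$ or unrestricted.

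Given this, the existence of an embedding of $G(R)$ with facial triangles matched to faces is decided inside $\bar B_2^8(v)$ with its rotation system, so $\phi(v,u)$ and $\phi(u,v)$ are determined. Summing over all $u\sim v$ and adding $T_0(v)=10(6-d(v))$ then determines $T(v)$, since $d(v)$ is visible. The main obstacle is the rule-by-rule verification in the first paragraph and the degree-visibility issue in the second. I would delegate this to the same computer check that enumerates $\mathcal R$, verifying for each oriented rule and each choice of which endpoint is the centre that the embedded rule graph stays within the extended degree-bounded ball and that all degree constraints are decidable there.
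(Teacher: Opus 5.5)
Your overall plan is the paper's: inspect the rules and show that every application of a rule at $vu$ lands inside $\bar B_2^8(v)$. However, the structural property you propose to verify is false, so your case analysis is incomplete. The condition that every vertex of $G(R)$ is adjacent to $s(R)$ or $t(R)$ fails for this rule set. In the fourth graph of Figure~\ref{fig:send5}, which is a free combination of rules with every rule arrow identified with the drawn arrow, the bottom degree-$5$ vertex and the top $7^+$ vertex are adjacent to neither end of the arrow. Since homomorphisms preserve adjacency, the rules contributing these vertices have vertices adjacent to neither $s(R)$ nor $t(R)$. So your split into ``neighbours of $v$'' and ``vertices adjacent to $u$'' misses vertices. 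Your separate treatment of $d(u)\ge 9$ via a hoped-for ``wheel-like fan'' is also not the right invariant. What the paper checks, and what suffices, is this: every vertex of $G(R)$ is joined to $s(R)$, and also to $t(R)$, by a path in $G(R)$ of length at most $2$ whose middle vertex (if any) has $\delta^+_R\le 8$. Degrees in $G$ respect the rule's ranges, so any application with $\{s(R),t(R)\}$ mapped onto $\{u,v\}$ lands in $\bar B_2^8(v)\cap\bar B_2^8(u)$. This holds whichever endpoint is $v$ and whatever $d(u)$ is, and it covers vertices at distance $2$ from both ends.

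Your ``degree visibility'' step cannot work. A vertex of $V_1$ at distance $2$ from $v$ generally has neighbours outside $\bar B_2^8(v)$, so its degree is not recoverable from the induced subgraph. Membership in $V_1$ versus $V_2$ only tells you $d\le 8$ versus $d\ge 9$, which does not decide a range such as $\{5\}$, even though that range is ``contained in $[5,8]$''. Your proposed range check is also false as stated: ranges such as $6^+$ and $7^+$ occur at distance $2$ from an end of the arrow, for instance in the same graph. Read literally without degree data, the lemma is false, because the rules impose exact degrees such as $5$ on vertices at distance $2$ from an endpoint. The statement has to be read as the paper reads it throughout, with the degrees $d_G$ and the rotations of $G$ attached to $\bar B_2^8(v)$. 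Configurations, free extended degree-bounded cartwheels and Lemma~\ref{lem:cartwheel-x} all carry $\delta=d_G$ on every vertex, boundary vertices included. With that reading, the containment property above is the whole proof, and there is no degree issue left to resolve.
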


Our discharging rules thus act within the extended degree-bounded ball $\bar B_2^8(v)$, and many of the rules are affected by high-degree boundary vertices, so these are important for the final charge of $v$.

However, if we wanted local neighborhoods to have disjoint extended degree-bounded balls, we would face the issue that a linear number of them could intersect at a single high-degree vertex.
We therefore want to claim that it suffices that the non-extended degree-bounded balls are non-touching.

It suffices to consider obstructing cycles that are 2-local. 
More precisely, in Section \ref{sect:free-cartwheel}, 
we will prove:

\begin{lem}\label{lem:pre-cartwheel} 
\showlabel{lem:pre-cartwheel}
For any $v$, if $\bar B^8_2(v)$ has no $2$-local obstructing cycle and no reducible configuration from $\cD$,\footnote{It suffices to exclude vertices of degree at most 4 and Birkhoff's diamond in Figure \ref{fig:BirkhoffFranklin}(a).} then $\bar B^8_2(v)$ has no obstructing cycle.
\end{lem}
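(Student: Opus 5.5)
We argue by contraposition: assume $\bar B^8_2(v)$ contains an obstructing cycle $C$ (a cycle of $G$ that is obstructing in $G$ with all vertices in $\bar B^8_2(v)$), contains no vertex of degree at most 4 and no Birkhoff diamond (cf.\ the footnote), and show that $\bar B^8_2(v)$ then contains a $2$-local obstructing cycle. We may take $C$ as short as possible, and among those, with the fewest vertices outside $B^8_2(v)$. A first reduction makes $C$ chordless. A chord $xy$ of $C$ splits it into two shorter cycles through $xy$, each lying in $\bar B^8_2(v)$, and one of them is again obstructing: for length $\le 4$ this is immediate; for length 5 a chord gives a triangle and a 4-cycle, and since each side of $C$ has $\ge2$ vertices, one of them separates. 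This contradicts minimality. Also, no vertex has degree $\le4$, so in particular $v$ is not enclosed in a trivial way by a separating 3-cycle around a degree-3 vertex.

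The main step is a case analysis on which vertices of $C$ lie in $V_2$, the boundary vertices of degree $\ge9$ at distance exactly $\le2$ reached through low-degree internal vertices. Every vertex of $V_2$ is at distance $\le2$ from $v$ via a path whose interior has degree $\le8$. Vertices of $\bar B^8_2(v)$ at distance 2 that are in $V_2$ are only adjacent, inside $\bar B^8_2(v)$, to vertices at distance 1 or to other distance-2 vertices. Suppose $C$ has two non-consecutive vertices $a,b\in V_2$, or three or more vertices in $V_2$. We then route through $v$: take paths $a\!-\!p\!-\!v$ and $b\!-\!q\!-\!v$ (or shorter ones) with $p,q$ of degree $\le8$. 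Together with a subpath of $C$ between $a$ and $b$, these give a closed walk of length at most $|C|$, or close to it. Using the planar separation that $C$ induces, and the fact that $v$ lies on one side of $C$ or on $C$, one of the two cycles formed by $a\!-\!\cdots\!-\!v\!-\!\cdots\!-\!b$ with the two arcs of $C$ should be obstructing. It is at most as long and has fewer $V_2$ vertices, unless lengths force equality. In the equality cases we use the degree bounds ($\le8$ on $p,q$, $\ge5$ everywhere) and the triangulation structure around $p,q$ to locate a separating short cycle or a vertex of degree $\le4$. Iterating leaves a $C$ whose $V_2$-vertices are at most two and consecutive, which is exactly the definition of $2$-local.

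I expect the main obstacle to be the length-5 cases. The rerouted cycle may have length 5 but only one vertex on one side. In particular, the rerouted cycle could be the link of a single degree-5 vertex (as in the remark after Lemma~\ref{lem:induced-config}). That is where the excluded Birkhoff diamond must enter: two adjacent degree-5 vertices each sitting inside such a 5-cycle, with all four in the relevant configuration, should yield the diamond. Since the number of combinatorial shapes of $\bar B^8_2(v)$ with a short cycle meeting $V_2$ is finite but large, I would first attempt the argument by hand for $|C|\le4$. For $|C|=5$ I would fall back to a computer enumeration in the style of the paper's other checks: enumerate the possible placements of a chordless 5-cycle in a cartwheel-like structure with bounded degrees, and verify that each yields a 2-local obstructing cycle, a degree $\le4$ vertex, or a Birkhoff diamond.
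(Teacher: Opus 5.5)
\paragraph{There is a genuine gap: the rerouting step does not make progress.} Your reduction to a chordless $C$ is fine, but the main step fails. Suppose you reroute an arc of $C$ between two non-consecutive $V_2$-vertices $a,b$ through a path $a\,p\,v\,q\,b$. Then both $a$ and $b$ stay on the new cycle, and they remain non-consecutive, separated by the interior vertices of that path. So when $a,b$ are the only $V_2$-vertices of $C$, the rerouted cycle is exactly as non-local as $C$. Your potential (length, then number of $V_2$-vertices) does not decrease, and the iteration is stuck in precisely the basic case.

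The lengths also go up rather than down. Take $|C|=5$, with $a,b$ at distance $2$ on $C$, both at distance $2$ from $v$, and $p,q\notin C$. Then the two rerouted cycles have lengths $6$ and $7$, so neither can be obstructing. The proposal does not say how such cases, or the "equality cases," are handled, nor why a new cycle would have enough vertices on both sides. The length-5 case, which you yourself identify as the crux, is then deferred to an unspecified computer enumeration. As it stands this is a plan, not a proof.

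\paragraph{The missing idea is structural.} Instead of transforming $C$, use only the absence of local obstructing cycles and of vertices of degree $\le 4$ to show that $\bar B^8_2(v)$ is an induced triangulated disk, isomorphic to the free extended degree-bounded cartwheel. Each possible defect directly produces a short cycle through $v$ or through a neighbor of degree $\le 8$ whose vertices outside $B^8_2(v)$ number at most two and are consecutive, i.e.\ a local obstructing cycle. The defects to check are:
\begin{itemize}
\item a chord of the wheel around $v$ or around such a neighbor;
\item a second common vertex of two consecutive such wheels;
\item a common vertex of two non-consecutive such wheels;
\item an edge $uw$ between non-consecutive boundary vertices.
\end{itemize}
In the last case you do route through $v$, but it is the offending edge $uw$ itself that makes the two $V_2$-vertices consecutive. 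That is exactly what your rerouting lacks.

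\paragraph{Finishing once the disk structure is known.} Every boundary vertex of the disk has a neighbor outside $\bar B^8_2(v)$. Hence one side of any obstructing cycle contains only internal disk vertices, that is, $v$ and neighbors of $v$ of degree $\le 8$. With all degrees $\ge 5$, an Euler/degree count gives the following. Any cycle enclosing $v$ together with at least one neighbor, or enclosing at least two neighbors of $v$, has length $\ge 6$. The only exception is when $B_1(v)$ is $5$-regular, and then it contains the Birkhoff diamond. Enclosing a single vertex is impossible for an obstructing cycle. No case enumeration or computer check is needed.
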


Recall that $\bar B^8_2(v)$ not having an obstructing cycle and not having vertices of degree 3 or 4 (both D-reducible) makes it locally equivalent to being
internally 6-connected. 
The big advantage of working with local obstructing cycles is:

\begin{lem}\label{lem:non-crossing}\showlabel{lem:non-crossing}
    If $\bar B_{k_0}^8(v_0)$ has a 
    $k_0$-local obstructing cycle $C_0$, and $\bar B_{k_1}^8(v_1)$ has a $k_1$-local obstructing cycle $C_1$ and
    if $B_{k_0}^8(v_0)$ and $B_{k_1}^8(v_1)$ do not touch, then $C_0$ and $C_1$ are non-crossing. Moreover, they can only intersect in the public parts, while their private parts do not touch.
\end{lem}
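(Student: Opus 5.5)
The approach is to exploit the structure of local obstructing cycles given by Definition \ref{def:limitcart}. Write $C_i = P_i + u_iw_i$, where the private part $P_i = C_i - u_i - w_i$ is a path contained in $B_{k_i}^8(v_i)$ and the public part is the edge $u_iw_i$. Since $B_{k_0}^8(v_0)$ and $B_{k_1}^8(v_1)$ do not touch, the private parts $P_0\subseteq B_{k_0}^8(v_0)$ and $P_1\subseteq B_{k_1}^8(v_1)$ are disjoint and joined by no edge of $G$. This gives the ``moreover'' clause about the private parts immediately.

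To show that $C_0$ and $C_1$ can only meet in their public parts, we first bound the intersection $C_0\cap C_1$. A vertex of $P_0$ lies in $B_{k_0}^8(v_0)$, so it cannot lie in $B_{k_1}^8(v_1)$, and hence it cannot lie on $P_1$. Could it be one of $u_1,w_1$? Those vertices lie in $\bar B_{k_1}^8(v_1)$, and every vertex of $\bar B_{k_1}^8(v_1)\setminus B_{k_1}^8(v_1)$ is adjacent to a vertex of $B_{k_1}^8(v_1)$: it is reached by a path whose internal vertices have degree at most 8, and the penultimate vertex of that path lies in $B_{k_1}^8(v_1)$. If $u_1\in P_0$, then $u_1$ would be a vertex of $B_{k_0}^8(v_0)$ adjacent to a vertex of $B_{k_1}^8(v_1)$, contradicting non-touching. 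There is one subtlety: $u_1$ might itself belong to $B_{k_1}^8(v_1)$ and not only to the extended ball. Even then $u_1\in B_{k_1}^8(v_1)$ is a common vertex of the two balls, which is again forbidden. By symmetry, $C_0\cap C_1\subseteq \{u_0,w_0\}\cap\{u_1,w_1\}$, so the cycles intersect only in their public parts. The same adjacency argument shows that no vertex of $P_0$ is adjacent to a vertex of $P_1$.

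For non-crossing, we argue topologically. Since $C_0$ and $C_1$ share at most the two vertices of a common edge, any crossing would have to occur at a shared vertex. If they share only one vertex $x$, two cycles meeting at a single vertex cannot cross in the plane, since a transversal crossing requires two intersection points. If they share both $u_0=u_1$ and $w_0=w_1$, then both cycles contain the edge $u_0w_0$, and $C_1 - u_0w_0$ is the path $P_1$ (together with its ends), which avoids $C_0$ apart from its endpoints. So $P_1$ lies entirely in one closed side of $C_0$, and again there is no crossing. The case where $u_0=u_1$ but $w_0\ne w_1$ reduces to the single-shared-vertex case.

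The step I expect to need the most care is this last one: making precise what ``non-crossing'' means when cycles share a vertex, and checking that the local picture at a shared public vertex cannot produce a crossing. I would handle it by noting that the neighbors of $x$ on $C_1$ lie in $P_1\cup\{u_1,w_1\}$ and not on $C_0$ unless shared. Since $C_1 - x$ is connected and disjoint from $C_0$ (or disjoint apart from the shared public edge), it lies in one face of $C_0$.
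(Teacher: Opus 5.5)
Your proof is correct and follows the same route as the paper. The private parts lie in the non-touching balls $B_{k_0}^8(v_0)$ and $B_{k_1}^8(v_1)$, so $C_0\cap C_1\subseteq\{u_0,w_0\}\cap\{u_1,w_1\}$, and two cycles meeting only within a single edge cannot cross. You make explicit two steps the paper leaves implicit: why a private vertex of one cycle cannot be a public vertex of the other (your adjacency argument works; alternatively, $u_1$ is adjacent on $C_1$ to an end of $P_1$), and why $C_1$ minus the shared vertices, being connected and disjoint from $C_0$, lies in a single face of $C_0$.
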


\begin{proof}
   By definition, the private parts of $C_0$ and $C_1$ are contained in
   $B_{k_0}^8(v_0)$ and $B_{k_1}^8(v_1)$, respectively. Since these degree-bounded balls do not touch, neither can the private parts of the cycles. Therefore, $C_0\cap C_1$ is contained in a single edge (the public part of one of them). Two such cycles cannot cross in the plane.
\end{proof}

\subsection{Main technical result}

With the above understanding of the local neighborhood, we can now state the generalization of
Theorems \ref{thm:T(v)> 0} and \ref{thm:basic-flat}, which we want to prove in the rest of the paper:

\begin{thm}\label{thm:main-theorem}\showlabel{thm:main-theorem}
Let $G$ be a triangulation and $v\in V(G)$. Suppose that $B^8_2(v)$ contains no vertices of degree less than $5$. 
\begin{itemize}
    \item[(i)] If $v$ has positive final charge, $T(v) > 0$, and ${\bar B}^8_2(v)$ contains no local obstructing cycles, then $B^8_2(v)$ contains a reducible configuration in ${\mathcal D}$.\footnote{In fact,
    the reducible configuration is either a vertex of degree at most 4 or it is one of Steinberger's \cite{steinberger2010unavoidable} from  $\mathcal{D}_0$.}).
    \item[(ii)] If $d(v)\ge9$ and $T(v) = 0$, and ${\bar B}^8_2(v)$ contains no local  obstructing cycles, then $B^8_2(v)$ contains a reducible configuration from $\mathcal{D}$.
    \item[(iii)] If $d(v)\le8$, and all vertices in 
    $B_{12}(v)$
    have degree at most 8, and have final charge $0$, then either
     
    (1) $B_{12}(v)$ contains an obstructing cycle, or

    (2) $B_{12}(v)$ contains a reducible configuration in $\mathcal{D}$. 
    \end{itemize}
\end{thm}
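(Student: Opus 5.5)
Theorem~\ref{thm:main-theorem} is a finite but very large case analysis, and I would split the proof along its three items. Each item reduces to a computer-checkable statement about bounded neighborhoods. Throughout, I use Lemma~\ref{maxplus} to bound $d(v)$: a vertex with $T(v)\ge 0$ has $d(v)\le 30$. By Lemma~\ref{limitcartwheel}, $T(v)$ and all charges on edges at $v$ are determined by $\bar B^8_2(v)$. By Lemma~\ref{lem:ball-size}, this extended ball has fewer than $25\,d(v)$ vertices. Hence, up to isomorphism, only finitely many neighborhoods are relevant, with interior degrees in $\{5,\dots,8\}$ and boundary vertices of degree $\ge 9$. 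I would treat boundary vertices as \emph{degree $\ge 9$} symbolically, since the rules in Figure~\ref{fig:rules} only distinguish degree ranges of the form $[\delta^-,\infty)$.

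For (i), I would reduce to Steinberger's Theorem~\ref{thm:T(v)> 0}. That theorem assumes internal 6-connectivity, so I would first show local equivalence. Suppose $B^8_2(v)$ has no vertex of degree $<5$ and $\bar B^8_2(v)$ has no local obstructing cycle. If $\bar B^8_2(v)$ also contains no configuration from $\cD$, then by Lemma~\ref{lem:pre-cartwheel} it has no obstructing cycle at all. Lemma~\ref{lem:induced-config} together with (D0) then ensures that any configuration found is induced. Steinberger's proof only inspects the cartwheel around $v$, so his case analysis applies verbatim to $\bar B^8_2(v)$. The reducible configuration it produces lies in $B_2(v)$. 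I would then argue, by inspection of $\cD_0$ (all non-center vertices have degree $\le 8$), that it in fact lies in $B^8_2(v)$. Concretely, I would rerun Steinberger's enumeration as a program: enumerate all symbolic neighborhoods $\bar B^8_2(v)$ with $T(v)>0$, and check that each contains a member of $\cD_0$ or a local obstructing cycle.

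For (ii), the same program applies with $T(v)=0$ and $9\le d(v)\le 30$ (the bound coming from Lemma~\ref{maxplus}). There is no classical result to cite, so this is a new computer check. Its search space is larger than in (i), so I would use the adaptive grouping described in the introduction. I would branch on degrees of vertices in the first and second neighborhoods only when a rule's applicability depends on them. I would prune a branch as soon as the accumulated charge at $v$ provably stays negative, or as soon as a configuration of $\cD$ (centered at $v$, allowing degrees $9$--$12$ per (D1)) or a local obstructing cycle is forced.

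For (iii), the flat case, the neighborhood is $B_{12}(v)$ with all degrees in $\{5,\dots,8\}$ and all charges zero. Here I would exploit propagation. Flatness at every vertex is a local constraint on each $B_2$, and it sharply restricts which degree patterns can appear. In particular, degree-5 vertices must send exactly their charge $10$ away, and degree-7 and degree-8 vertices must receive exactly $10$ and $20$. I would compute, again by computer, the finite set of flat $2$-balls containing no $\cD$-configuration and no obstructing cycle. I would then grow the ball outward layer by layer, gluing compatible flat $2$-balls. The goal is to show that every consistent extension reaches a configuration from $\cD$ or an obstructing cycle before radius $12$. The purely 6-regular region is disposed of by the configuration in Figure~\ref{fig:flatn}, and the radius-3 configuration in Figure~\ref{fig:radius3} handles a degree-5/7 pattern.

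I expect the main obstacle to be this layer-by-layer gluing in (iii). The number of partial flat neighborhoods explodes well beyond $2^{100}$, so the check only terminates if the groups are chosen cleverly. I would group partial embeddings by their unresolved boundary degree intervals, and refine a group only when some rule or some configuration test is ambiguous for it.
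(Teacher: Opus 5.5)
\textbf{The gap is in (iii).} ``Glue flat $2$-balls outward layer by layer until a configuration of $\cD$ or an obstructing cycle appears before radius $12$'' only restates the claim as a search. You give no reason why it must succeed by radius $12$, and by your own account the search is infeasible. The propagation you invoke is also weaker than you suggest. Under these rules, vertices of degree $5$ and $6$ always have final charge $0$, so flatness constrains only vertices of degree $7$ and $8$, and only through their own cartwheels.

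The missing idea is that the ball never has to be explored.
\begin{itemize}
\item[(a)] If all vertices within distance $2$ of the centre have degree at most $6$, a short hand argument finds a configuration of $\cD$ in $B_2(v)$ (Lemma~\ref{lem:positive-hand}). It handles all mixtures of degrees $5$ and $6$, not only the $6$-regular pattern.
\item[(b)] Otherwise you anchor at a flat vertex of degree $7$ or $8$. If all its neighbours have degree at most $6$, its own cartwheel suffices (case 3 of Lemma~\ref{lem:positive-comp}). If not, the union of its cartwheel with those of at most two chosen neighbours contains a configuration of $\cD$ (Lemmas~\ref{lem:zero-deg7,8}, \ref{lem:777}, \ref{lem:zero-deg7,7}). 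The neighbour patterns are: $8$--$8$; $8$--$7$; a $7$--$8$--$7$ path with the $7$'s closest around the $8$; and, when the maximum degree is $7$, $7$--$7$, $7$--$7$--$7$ or a $T_{7^3}$. The only exception is the configuration $X$ of Figure~\ref{fig:exception-zero-concentrate}.
\item[(c)] You remove $X$ by re-anchoring at a degree-$8$ vertex of $X$ at distance $2$. A second copy of $X$ centred there forces an extra degree-$5$ vertex, which completes a configuration of $\cD$ (Lemma~\ref{lem:zero-8}).
\end{itemize}
So the computer only ever glues at most three cartwheels, taken from the $728$ (degree $8$) and $9366$ (degree $7$) bad cartwheels. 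The radius $12$ is then bookkeeping. Take $v'$ of degree $7$ or $8$ in $B_2(v)$. Then take a degree-$8$ vertex $v''$ in $B_5(v')$ if one exists; otherwise Lemma~\ref{lem:zero-7} applies at $v'$. Apply Lemma~\ref{lem:zero-8} at $v''$, which needs $B_3(v'')$ flat and $B_5(v'')\subseteq B_{12}(v)$ of maximum degree $8$. This gives $2+5+5=12$.

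\textbf{Two smaller points.} For (ii), the bound $d(v)\le 30$ from Lemma~\ref{maxplus} does not make the enumeration feasible. The paper first shows that, without obstructing cycles and configurations of $\cD$, an edge carries charge at most $5$ (Lemma~\ref{lem:vsends}, via free combinations of rules not blocked by $\cD$). Hence $T(v)\le 60-5d(v)$. For $d(v)\ge 12$ this forces $d(v)=12$ with every neighbour sending $5$. By the list in Figure~\ref{fig:send5}, all neighbours then have degree $5$, giving the configuration of Figure~\ref{fig:deg12-conf} (Lemma~\ref{lem:12+}). Only $9\le d(v)\le 11$, and $7\le d(v)\le 8$ for (i), is left to the cartwheel enumeration; $T(v)>0$ is impossible when $d(v)\in\{5,6\}$.

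For (i), rerunning Steinberger's enumeration is what the paper does. However, your shortcut ``non-centre vertices of $\cD_0$ have degree at most $8$, so the configuration lies in $B^8_2(v)$'' is not valid, for two reasons. It does not show that a degree-$\ge 9$ vertex of the configuration is mapped to $v$ rather than to a high-degree neighbour. It also does not exclude second neighbours reachable only through high-degree neighbours. The check must be run on free extended degree-bounded cartwheels with centred homomorphisms.
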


If we exclude both, obstructing cycles and vertices of degree less than 5, then
${\bar B}^8_2(v)$ is locally internally 6-connected
(meaning that ${\bar B}^8_2(v)$ is a configuration in some other internally 6-connected triangulation.)
Thus, Theorem \ref{thm:main-theorem} (i) is just a more specific version of Theorem \ref{thm:T(v)> 0} from \cite{steinberger2010unavoidable}. We use the same discharging rules, and the proof is the same as in \cite{steinberger2010unavoidable}. 
In addition, we have 
rerun the computer-based proof from \cite{steinberger2010unavoidable} to show that when it finds a reducible configuration in $B_2(v)$ with a vertex of degree more than 8, then this vertex is the ball center $v$.

Theorem \ref{thm:main-theorem} (ii) and (iii) are the fundamental new contributions of this paper, with (iii) being by far the hardest. Note that they imply Theorem \ref{thm:basic-flat}, which assumes no obstructing cycles. To see this, consider a vertex $v$ such that all vertices in $B_{12}(v)$
have final charge zero. If $v$ has degree at least 9, then the statement follows immediately from Theorem \ref{thm:main-theorem} (ii). Suppose instead that $v$ has degree at most $8$. If all vertices in $B_{12}(v)$ 
also have degree at most $8$, then Theorem \ref{thm:main-theorem} (iii) applies. Otherwise, we have some zero-charge vertex $v'$ in $B_{12}(v)$ 
having degree at least $9$, and then Theorem \ref{thm:main-theorem} (ii) applied at the vertex $v'$ is used. Note that the resulting configuration is contained in $B_{14}(v)$.

We note that for case (iii), the number of possible configurations for $\bar B^8_{12}$ is enormous.
We have at most $8\cdot 5^{12}$ vertices, including up to $8\cdot 5^{11}$ boundary vertices, and for each of these we must choose a degree between 5 and 9, for a total of $5^{11\cdot 5^{12}}$ options. This is far beyond what a computer can handle, so our proof requires a careful combination of human reasoning and computational power.

\subsection{Finding a linear number of reducible configurations or obstructing cycles}\label{sec:accounting}

We want to find a linear number of reducible configurations or obstructing cycles so as to prove
Theorem \ref{thm:main-linear}, assuming all the results claimed above.
First, we show the following.

\begin{lem} \label{lem:local}\showlabel{lem:local}
In each of the cases in Theorem \ref{thm:main-theorem}, we can find in constant time one of the reducible configurations or obstructing cycles that can be used in Theorem \ref{thm:main-linear}. 
\end{lem}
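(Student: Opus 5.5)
The plan is to treat each case of Theorem~\ref{thm:main-theorem} as a bounded search in a constant-size graph. The first step is to show that the relevant neighborhood has constant size and can be extracted in constant time. In cases (i) and (ii), Lemma~\ref{maxplus} gives $T(v)\ge 0 \Rightarrow d(v)\le 30$. By Lemma~\ref{lem:ball-size}, $|\bar B^8_2(v)|<25\cdot 30$. In case (iii) every vertex of $B_{12}(v)$ has degree at most $8$, so $|B_{12}(v)|\le 8\cdot 5^{12}$.

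To extract the ball, we run a BFS from $v$ in which we expand only through vertices of degree at most $8$ (other than $v$). This visits a constant number of vertices and edges. Using the rotation system of the embedding together with the constant-time adjacency queries from the preliminaries, we also obtain the induced subgraph with its facial triangles and the true degrees $d_G$. Boundary vertices of high degree are recorded but never expanded, so their large degree costs nothing beyond reading their degree.

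Next we search the extracted graph for the object that Theorem~\ref{thm:main-theorem} guarantees.

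For \textbf{reducible configurations}, we enumerate each $D\in\cD$, each center $c$ in the ball, and each embedding of $D$ around $c$ that respects orientation and facial triangles. We then check that $d_G=\delta$ on all of $V(D)$. By (D1) and (D0), every vertex of $D$ is within distance $2$ of the center (or $3$ for the single exceptional configuration), and every vertex other than the center has degree at most $8$. Hence each candidate embedding is determined by a constant amount of local rotation data. Since $|\cD|=8202$ and the ball has constant size, this takes constant time. To conclude that a configuration found this way is \emph{induced}, we invoke Lemma~\ref{lem:induced-config} via (D0). This requires ruling out an obstructing cycle whose vertices all lie in the configuration. If such a cycle exists, we simply output it instead, after checking that it qualifies as described below.

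For \textbf{obstructing cycles}, we enumerate all cycles of length at most $5$ in $\bar B^8_k(v)$ and test whether each is separating. A cycle can be tested locally: a triangle or $4$-cycle is obstructing exactly when it is not facial (respectively, when it is not the boundary of two adjacent faces), and a $5$-cycle is obstructing exactly when it does not bound a single vertex or a union of faces on either side. In each case, enough of the rotation around the cycle's vertices is visible within the ball to decide this.

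The step I expect to be the main obstacle is showing that the object found satisfies the additional requirements of Theorem~\ref{thm:main-linear}. In cases (i) and (ii), Lemma~\ref{lem:pre-cartwheel} shows that if no reducible configuration is present, any obstructing cycle we find can be taken to be $2$-local, which gives the public/private split with private vertices of degree at most $8$.

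We must also ensure the cycle is \emph{chordless}. If $C$ has a chord, the chord splits $C$ into two shorter cycles, and one of them is again obstructing. It should also remain local, since its vertices are a subset of those of $C$ and it keeps at most the same two public vertices. We therefore take a shortest obstructing cycle among the local ones.

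Case (iii) needs the most care, because $B_{12}(v)$ has no high-degree vertices. Any obstructing cycle found there lies entirely in $B^8_{12}(v)$, so it is $12$-local with an empty public part, and the chord-removal argument applies as before.

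Finally, the configurations found in cases (i) and (ii) lie in $B^8_2(v)$ rather than $\bar B^8_2(v)$. This matters for the later disjointness argument via non-touching degree-bounded balls, as in Lemma~\ref{lem:non-crossing}.
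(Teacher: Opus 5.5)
Your proof is correct and follows essentially the paper's argument: constant-size degree-bounded balls (you make the bound $d(v)\le 30$ from Lemma~\ref{maxplus} explicit), a shortest local obstructing cycle, which is chordless by the chord-splitting argument, and (D0) with Lemma~\ref{lem:induced-config} for inducedness. The only difference is the order of the searches: the paper first rules out local obstructing cycles and then looks for $D$, while you look for $D$ first and fall back to an obstructing cycle with all vertices in $D$; that cycle is automatically local provided you search for $D$ inside $B^8_k(v)$ rather than $\bar B^8_k(v)$ (which is all Theorem~\ref{thm:main-theorem} promises anyway), so your appeal to Lemma~\ref{lem:pre-cartwheel} is not actually needed.
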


\begin{proof}
First, we look for a $k$-local obstructing cycle in $\bar B^8_k(v)$ where $k=2$ in (i) and (ii), and $k=12$ in (iii).
We note that case (iii) is a bit simpler in that $B_{12}^8(v)=\bar B_{12}^8(v) = B_{12}(v)$, but our argument works even if this is not the case.
The cycle has to be local, which means that it can be found by starting from some vertex in $u\in B^8_k(v)$ and following a path of length at most 2 inside $B^8_k(v)$ to some vertex $w\in B^8_k(v)$. Finally, we add arbitrary edges $uu'$ and $ww'$ incident with $u$ and $w$. Here $u'$ and $w'$ may both be high degree vertices from $\bar B^8_k(v)$.
Finally, we check, in constant time, if $u'$ and $w'$ are neighbors. Since all the branching is done via vertices of degree at most $8$ and $B^8_k(v)$
has constant size, we find any local obstructing cycle in constant time. If we find one, we return the shortest one, which we claim is chord-free. 
More precisely, if we had an obstructing cycle $C$ with a chord, then the chord would split it into two shorter cycles of length at most 4. One of these would, on the inside, have one of the vertices that were inside $C$, implying that it is a shorter obstructing cycle.
If our chordless local obstructing cycles come from local neighborhoods with non-touching private parts in degree-bounded balls as in Lemma \ref{lem:non-crossing}, then they will satisfy all the requirements for obstructing cycles in Theorem \ref{thm:main-linear}.

We may now assume that there
is no $k$-local obstructing cycle in $\bar B^8_k(v)$, but by Lemma \ref{lem:pre-cartwheel}, this also implies that there is no obstructing cycle in $ B^8_k(v)$. Since $B^8_k(v)$ has constant size, we can easily, in constant time, check if it contains one of the constantly many reducible configurations $D$ from our finite set $\cD$ described in Lemma \ref{lem:reduc}. The configurations from $\cD$ all satisfy the conditions of Lemma \ref{lem:induced-config}, so we conclude that  $D$ is also induced, as stated in Theorem \ref{thm:main-linear}.
 
\drop{ 
The last case is outcome (2) in Theorem \ref{thm:main-theorem}(iii), where $B_2(v)$ contains a configuration in Figure \ref{fig:flatn}.
As above, Lemma \ref{lem:induced-config} implies that the configuration is induced. 
Then the completion 
of it is contained in the cartwheel $B_3(v)$. We have required  all the vertices in $B_9(v)$  
to have degree at most $8$, so $B_3(v)=B_3^8(v)$, so we know that there is no obstructing cycle in the induced graph $B_3(v)$. 
}
\end{proof}


\paragraph{Accounting.}
We now show that Theorem \ref{thm:main-theorem} and Lemma \ref{maxplus} imply a
linear number of reducible configurations or obstructing cycles that are sufficiently far apart, as detailed in Theorem \ref{thm:main-linear}.

First, we consider the case where we have a linear
number of vertices $v$ satisfying 
the final-charge condition in (i) or (ii) of Theorem~\ref{thm:main-theorem}, that is, either $v$ has positive final charge or $v$ has final charge zero and degree at least $9$. 
Theorem~\ref{thm:main-theorem} states that we can find a local obstructing cycle in $\bar B_2^8(v)$
or a reducible configuration from $\cD$ in $B_2^8(v)$.

Let $U$ be the set of vertices satisfying the final-charge condition in
(i) or (ii). Iteratively, we will construct a subset $U^*$ of vertices $v$ with non-touching non-extended $B_2^8(v)$. 
We also reduce $U$ iteratively. We construct $U^*$ such that the reducible configurations inside different $B_2^8(v)$, $v\in U^*$, will be non-touching, and by Lemma \ref{lem:non-crossing}, local obstructing cycles are non-crossing.

The construction of $U^*$ and the reduction of $U$ are as follows: If the current 
$U$ is non-empty, we move an arbitrary vertex $v$ from $U$ to $U^*$ and remove all vertices in $\bar B^8_5(v)\cap U$ from $U$. 

By Lemma \ref{maxplus}, the maximum degree $d(u)$ among all vertices $u\in U$ is at most 30, and this implies that the maximum number of vertices in $\bar B^8_5(v)\cap U$ is at most $2\cdot 30\cdot 5^5=O(1)$, so if $U$ is of linear size, then so is $U^*$. Each vertex in $U^*$ gives a reducible configuration or an obstructing cycle in its cartwheel. Therefore, we get a linear number of non-crossing obstructing cycles or a linear number of pairwise non-touching reducible configurations (if $U$ is of linear size).

We may thus assume that there is only a sublinear number of vertices with positive final charge or with final charge zero and degree greater than 8. We still let $U$ denote the set of these vertices, so $|U|=o(n)$. We claim that the
set $W$ of vertices satisfying the final-charge condition in
(iii) in Theorem \ref{thm:main-theorem} must be linear in size.

For our analysis, it is useful to define
$V^- = \{v\in V(G)\mid T(v)<0\}$ and $V^+ = \{v\in V(G)\mid T(v)>0\}$. We note that
$U=V^+\cup \{v\in V(G)\mid T(v)=0 \textrm{ and } d(v)\ge9\}$.
The set $W$ is the set of vertices $w$ that satisfy 
the final-charge condition in (iii) of Theorem \ref{thm:main-theorem}, that is, $W$ is the set of vertices such that all vertices in $B_{12}(w)$
have degree at most 8 and final charge 0.
This means that $W$ must be the set of vertices that
are not in $B^8_{12}(v)$ for any $v\in V^-\cup U$.
By Lemma \ref{lem:ball-size}, $|B^8_{12}(v)| < d(v)\cdot 5^{12}=O(d(v))$, so we conclude
that $|W|=n-O(\sum_{v\in V^-\cup V^+} d(v))$. 

By Lemma \ref{maxplus}, the maximum degree in $V^+$ is at most 30, so trivially $\sum_{v\in V^+} d(v)=O(|V^+|)$. The challenge is to limit the total degree of the negatively charged vertices.

\begin{lem}
    $\sum_{v\in V^-} d(v)=O(|V^+|)$.
\end{lem}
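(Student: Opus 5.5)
We would prove this by a pure charge-counting argument built on Lemma~\ref{lem:120} and Lemma~\ref{maxplus}. Since the total final charge is $120$ and every vertex outside $V^+\cup V^-$ has final charge exactly $0$, we have
\[
\sum_{v\in V^-} |T(v)| \;=\; \sum_{v\in V^+} T(v) - 120 \;\le\; 54\,|V^+|,
\]
where we used the bound $T(v)\le 54$ from Lemma~\ref{maxplus}. So the total negative charge is $O(|V^+|)$. It remains to compare $d(v)$ with $|T(v)|$ for $v\in V^-$.

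We split $V^-$ according to degree. For $v\in V^-$ with $d(v)\ge 31$, Lemma~\ref{maxplus} gives $T(v)\le -\tfrac{2}{31}d(v)$, hence $d(v)\le \tfrac{31}{2}|T(v)|$. Summing over these vertices gives
\[
\sum_{v\in V^-,\,d(v)\ge31} d(v) \;\le\; \tfrac{31}{2}\cdot 54\,|V^+|.
\]
For $v\in V^-$ with $d(v)\le 30$, we have $d(v)\le 30$ trivially. It therefore suffices to show $|T(v)|\ge c$ for some absolute constant $c>0$, so that $d(v)\le (30/c)|T(v)|$. This follows from integrality. All initial charges $10(6-d(v))$ are integers and every rule sends $r(R)\in\{1,2\}$, so $T(v)\in\mathbb{Z}$. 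Consequently $T(v)<0$ implies $|T(v)|\ge 1$. This gives
\[
\sum_{v\in V^-,\,d(v)\le30} d(v) \;\le\; 30\sum_{v\in V^-}|T(v)| \;\le\; 30\cdot 54\,|V^+|.
\]

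Adding the two parts yields $\sum_{v\in V^-} d(v)=O(|V^+|)$. The only step requiring care is the identity $\sum_{V^-}|T|=\sum_{V^+}T-120$. Here we must make sure that Lemma~\ref{lem:120} holds for arbitrary triangulations, not only internally 6-connected ones. It does, because discharging merely moves charge along edges, so the total is preserved regardless of which rules apply. We expect no real obstacle; the substantive input is Lemma~\ref{maxplus}, whose proof is deferred.

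Finally, we combine this with the preceding accounting. We get $|W| = n - O(|V^+|+|U|)=n-o(n)$, so $W$ has linear size, and the rest of the argument for Theorem~\ref{thm:main-linear} can proceed.
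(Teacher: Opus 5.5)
Your proof is correct and is essentially the paper's argument. Lemma~\ref{lem:120} together with $T(v)\le 54$ bounds the total negative charge by $O(|V^+|)$, and you then compare $d(v)$ with $|T(v)|$ using integrality ($|T(v)|\ge 1$) when $d(v)\le 30$ and using $T(v)\le -\tfrac{2}{31}d(v)$ when $d(v)\ge 31$. The paper does the same, except that it folds both cases into the single inequality $d(v)\le -31\,T(v)$.
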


\begin{proof}
By Lemma \ref{maxplus}, the maximum charge is at most $54$, so 
$\sum_{v\in V^+} T(v)\leq 54|V^+|$. By Lemma \ref{lem:120}, the total charge is 120, so the positive charges dominate the negative charges, that is, $-\sum_{v\in V^-}T(v)<\sum_{v\in V^+} T(v)$. 
Since $T(v) \le -1$ for $v \in V^-$, we know $-T(v) \ge 1$. For $d(v) \le 30$, we trivially have $d(v) \le 30 \le -31 T(v)$.
Finally, by Lemma \ref{maxplus} (with $d(v) \ge 31$), we have
$T(v)\leq 60-2d(v)$ for all vertices. Thus, if $v$ has a negative charge, then this implies that $T(v)\leq -2d(v)/31$. We conclude that
\[
   \sum_{v\in V^{-}}d(v)\le-\sum_{v\in V^{-}} 31 T(v) < 31 \sum_{v\in V^{+}}T(v)\le 31\cdot54\cdot|V^{+}|.\]
\end{proof}

The above calculation shows that $\sum_{v\in V^-\cup V^+} d(v) = O(|V^+|)$. 
Since $V^+\subseteq U$ and $|U|=o(n)$, it follows that $|W|=n-o(n)$. Now, for each $v\in W$, since it satisfies the final-charge condition in 
Theorem \ref{thm:main-theorem} (iii), we know that all vertices in $B_{12}(v)$
have degree at most $8$. We get one of two possibilities: (1) $\bar B^8_{12}(v)$ has a local obstructing cycle, 
or (2) $B^8_{12}(v)$ contains an induced reducible configuration from $\mathcal{D}$. 

Like we did for $U$, we now want to identify a subset $W^*\subseteq W$ such that the non-extended $B_{12}^8(v)$, for $v \in W^*$, are non-touching. 
This is done as follows. While $W$ is
non-empty, we move an arbitrary vertex $w$ from $W$ to $W^*$ and remove $\bar B^8_{25}(w)$
from $W$.
Each time, by Lemma \ref{lem:ball-size}, we remove at most $8\cdot 5^{25}$ vertices from $W$,
so we end up with $|W^*|=\Omega(|W|)=\Omega(n)$, and the vertices $w$ in $W^*$ have non-touching $B^8_{12}(v)$.

We now apply Theorem \ref{thm:main-theorem} (iii) to all vertices in $W^*$, getting one of the outcomes (1) and (2).
By Lemma \ref{lem:non-crossing}, the obstructing local cycles from (1) are non-crossing, and it also follows that the D-reducible configurations from (2) are non-touching. 

This completes the proof of Theorem \ref{thm:main-linear} assuming our Theorem \ref{thm:main-theorem} and Lemma \ref{maxplus}, both of which are yet to be proved.

\paragraph{Organization of the rest of the paper.}
In Sections \ref{sect:edge-charge}--\ref{sect:lowdegrees}, we prove our main technical result, 
Theorem \ref{thm:main-theorem}.
We will make it clear that some parts of the proof require a computer to check a finite number of cases. In the appendix, we provide details of the pseudo-code, so it can be verified that it does the claimed test. 

In Sections \ref{sec:combine}--\ref{sect:free-cartwheel}, we discuss how to combine intersecting near-triangulations and corresponding configurations in order to make effective computational tasks when dealing with multiple configurations containing the same edge in the unavoidability part of the proof. Finally, Sections \ref{sect:D-reducible} and \ref{sect:obstructing-cycles} give details about derandomization of our main computational tasks leading to the near-linear 4-coloring algorithm.

\section{Discharging}\label{sect:edge-charge}\showlabel{sect:edge-charge}

We will now consider discharging over edges and their impact on the final charge. First, we consider an arbitrary triangulation with no restrictions on obstructing cycles or reducible configurations.

\tikzset{deg5/.style={thick, circle, draw, fill=black, inner sep=1.5pt,}}
\tikzset{deg6/.style={thick, circle, draw, fill=black, inner sep=0pt,}}
\tikzset{deg7/.style={thick, circle, draw, fill=white, inner sep=2pt,}}
\tikzset{deg8/.style={thick, rectangle, draw, fill=white, inner sep=2pt,}}
\tikzset{deg9/.style={thick, regular polygon, regular polygon sides=3, rotate=180, draw, fill=white, inner sep=1pt,}}
\tikzset{deg10/.style={thick, regular polygon, draw, fill=white, inner sep=2pt,}}
\tikzset{->-/.style={decoration={
    markings,
    mark=at position .6 with {\arrow{>}}}, postaction={decorate}}}
\tikzset{->>-/.style={decoration={
    markings, 
    mark=at position .5 with {\arrow{>}};, 
    mark=at position .6 with {\arrow{>}};}, postaction={decorate}}}

\begin{figure}[htbp]
    \centering
    \includegraphics[width=2.4cm]{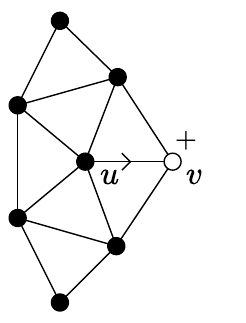}
\caption{A vertex $u$ sends charge eight to $v$ using the first four rules, each with two orientations except the first one which by itself sends charge 2. The Birkhoff diamond configuration is easily spotted in this case.}
\label{fig:any_send8}
\end{figure}


\begin{lem}
\label{lem:vsends-noconf}
\showlabel{lem:vsends-noconf}
Consider an arbitrary triangulation $G$ and let $uv$ be any oriented edge. Then
$\phi(u,v) \leq 8$. Moreover, if $\phi(u,v)=8$, then $\bar B_2^8(u) \cap \bar B_2^8(v)$ contains the graph in Figure \ref{fig:any_send8}, where $u$ and $v$ are the marked vertices.
\end{lem}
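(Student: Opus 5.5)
The plan is to bound $\phi(u,v)$ by grouping the rules of $\mathcal R$ according to the degree pattern they impose on $s(R)$, $t(R)$, and the two common neighbours $x,y$ of $u$ and $v$. Every application of a rule $R$ at the oriented edge $uv$ matches $G(R)$ onto a near-triangulation around $uv$ whose facial triangles are faces of $G$. Hence it is determined by which of the two triangles $uvx$, $uvy$ (and further faces) it uses, together with the orientation, of which there are at most two. This makes the set of possible applications at $uv$ finite and locally determined, as already noted for Lemma~\ref{limitcartwheel}.

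The key steps are as follows.

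First, I will tabulate for each of the 43 rules the degree ranges of $s$, $t$ and the vertices adjacent to both. I will observe that every rule requires $d(t)\le$ some bound and $d(s)\ge$ some bound, and that the rules with large $r(R)$ force $s,t$ and their common neighbours to have specific small degrees. Rules whose degree intervals at a common vertex are disjoint cannot fire simultaneously at $uv$.

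Second, I will split into cases by $(d(u),d(v),d(x),d(y))$ and, for each case, sum $r(R)$ over the rules compatible with it in each of the two orientations. Since the charge through each side (the $x$ side and the $y$ side) can be bounded separately, I expect a bound of $4$ per side, with the symmetric rule (R1) contributing $2$ shared by both sides. This gives $\phi(u,v)\le 8$.

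Third, for equality, every side bound must be tight. This forces precisely the first four rules to apply in both orientations, which pins down the degrees of $u$, $v$, $x$, $y$ and the extra vertices of those rules. The union of their graphs $G(R)$ is the graph in Figure~\ref{fig:any_send8}. Since all its vertices other than possibly $u,v$ have degree at most 8 and lie within distance 2 of both $u$ and $v$, this union sits in $\bar B_2^8(u)\cap\bar B_2^8(v)$.

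The main obstacle is the second step: checking that no combination of rules sharing a side exceeds $4$ requires exhaustive compatibility checks of degree intervals, including vertices beyond $x,y$ in the larger rules. I would do this by computer in the same style as the paper's other checks. The computer would enumerate all degree assignments to the union of rule graphs anchored at $uv$ (degrees capped at 9 meaning ``$\ge 9$'') and record the maximum total and the maximizing assignments. Care is needed because $G$ is arbitrary: rule graphs need not be induced and vertices of different rule applications may coincide. Still, since only degrees of the matched vertices matter and faces must match faces, the enumeration over the local near-triangulation suffices.
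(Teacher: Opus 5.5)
Your plan (let a computer decide which rules can fire together at an oriented edge, then take the maximum) is the paper's plan. The step that carries the whole proof, however, is only asserted: you claim that enumerating degree assignments on ``the union of rule graphs anchored at $uv$'' is valid for an arbitrary $G$ ``since only degrees of the matched vertices matter and faces must match faces.'' That union is not a fixed graph. Applications of different rules at the same dart are forced to share vertices beyond $u$ and $v$. Identifications propagate outward from the two triangles at $uv$ through shared faces. When the rules together run around a vertex of fixed degree (say a degree-$5$ neighbour of $u$), positions at rotation distance equal to that degree must coincide, or that vertex closes up into an inner vertex. Which of these happen depends on the degrees and on the subset of rules.

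If you omit these identifications, the degree constraints that different rules impose on one and the same vertex of $G$ are never intersected. Your computed maximum is then an upper bound that may exceed $8$, and it cannot certify the equality case. If you include them, you must prove that the object you build maps homomorphically, with degrees preserved, into every triangulation in which those rules co-occur at $uv$. That includes triangulations with separating triangles and overlapping, non-induced rule images. In other words, you must show you identified only what is forced.

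This universality is exactly what the paper establishes with free homomorphic images of pseudo-configurations. It propagates homomorphic identification requests by union-find, then resolves degree issues, branching on degree ranges. The universal property (Lemmas~\ref{lem:free-homo-conf}, \ref{lem:free-homo-conf-ranges} and \ref{lem:free-comb-rules}) shows that any simultaneous application in $G$ factors through one of the computed free combinations. Hence the maximum charge over $\mathcal{R}^*$ bounds $\phi(u,v)$ in every triangulation. Building combinations one rule at a time also keeps the search small, since a subset of rules that fails to combine prunes all its supersets.

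Your other steps do not fill this hole.
\begin{itemize}
\item The per-side bound is unsupported. Rule graphs may contain both triangles $uvx$ and $uvy$ and constrain degrees on both sides, so $\phi(u,v)$ need not split into independent left and right contributions. Partitioning the rules into mirror classes only gives a sum of separately attained maxima, which has no reason to equal $8$. The symmetric rule (R40) also has no side.
\item The claim that equality forces exactly the first four rules in both orientations is not derived. In the paper, the description of the extremal case is an output of the exhaustive computation (Lemma~\ref{comp:vsends-noconf}).
\item A smaller slip: you have the degree pattern reversed. Every rule has $\delta^+_R(s(R))\le 8$, and charge flows from the low-degree vertex $s$ to the high-degree vertex $t$, often with $\delta^+_R(t)=\infty$.
\end{itemize}
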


\begin{proof}[Sketch of computer-assisted proof] 
The proof uses a computer to check how the discharging rules from 
Figure \ref{fig:rules} can be combined so as to send maximal charge from $u$ to $v$. For an upper-bound, we only need to consider
rules sending charge from $u$ to $v$ (ignoring that there could also be charge sent from $v$ to $u$). 
More details on how we can combine the rules will be provided in Section \ref{sec:combine}. It appears that there is precisely one possibility with the maximum discharge of 8. That case is shown in Figure \ref{fig:any_send8}.
\end{proof}

We can now prove the statement of Lemma \ref{maxplus}. \smallskip \\
{\bf Lemma \ref{maxplus}} {\em
For any triangulation $G$ and $v \in V(G)$, we have $T(v) \leq 60 - 2 \cdot d_G(v)$. 
Consequently, $T(v) \leq 54$ and if $d_G(v) \geq 31$, then $T(v) \le -\tfrac{2}{31} d_G(v)$.}

\begin{proof}
Let $u_i$ $(0 \leq i < d_G(v))$ be the neighbors of $v$ listed in the clockwise order of the embedding of a given triangulation $G$. By Lemma \ref{lem:vsends-noconf}, $\sum \phi(u_i,v) \leq 8 \cdot d_G(v)$.
As we start with $T_0(v) = 10(6 - d_G(v))$, 
it follows that $T(v) \leq 10 \cdot (6-d_G(v))+8 \cdot d_G(v) = 60-2d_G(v) \leq 54$ since $d_G(v) \geq 3$ for all $v \in V(G)$. It is also easy to conclude that if $d_G(v) \geq 31$, then $T(v) \le -\tfrac{2}{31} d_G(v)$.
\end{proof}

Next, we prove the statement of Lemma \ref{limitcartwheel}. \smallskip \\
{\bf Lemma \ref{limitcartwheel}}
{\em Let $v$ be a vertex in $G$ and $vu$ any incident edge. The amount of charge $\phi(v,u)$ and $\phi(u,v)$ sent along this edge is completely determined by the graph structure in the extended degree-bounded ball $\bar B_2^8(v)$. Since all discharging rules over edges incident to $v$ are determined, this also determines the final charge $T(v)$.}

\begin{proof} 
We consider rules sending
charge from $u$ to $v$. Inspecting every rule from Figure \ref{fig:rules}, we see
that every vertex can be reached by a path in the near-triangulations of length at most two from $u$ ($v$, resp.) and where the internal vertex, if any, is of degree at most 8. Those rules are thus
contained in $\bar B_2^8(v)\cap\bar B_2^8(u)$.  
\end{proof}

We can obtain a stronger bound on the discharge over an edge if we rule out
local obstructing cycles and reducible configurations.

\begin{lem}\label{lem:vsends}\showlabel{lem:vsends}
 Let $uv$ be an oriented edge in $G$. 
Suppose that $B^8_2(v)$ has no obstructing cycle and
no reducible  configurations from $\mathcal{D}$. Then $\phi(u,v)\leq 5$.
Moreover, if $\phi(u,v)=5$ then $\bar B^8_2(v)$
contains one of the graphs shown in Figure \ref{fig:send5} (where $uv$ is marked with an arrow pointing from $u$ toward $v$).
\end{lem}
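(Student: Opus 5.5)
The bound $\phi(u,v)\le 5$ will be proved by a computer-assisted enumeration of how rules from $\mathcal R$ can be applied simultaneously at the oriented edge $uv$, in the same way as the proof of Lemma~\ref{lem:vsends-noconf}. The difference is that we now prune every combination that forces a reducible configuration from $\cD$ or an obstructing cycle inside $B_2^8(v)$. By Lemma~\ref{limitcartwheel}, $\phi(u,v)$ is determined by $\bar B_2^8(u)\cap\bar B_2^8(v)$. Every rule applied at $uv$ lives in this finite region: all its vertices are reachable from $u$ or $v$ by paths of length at most two whose internal vertex has degree at most $8$. So the analysis is local and finite.

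\textbf{Step 1: enumerate candidate rule sets.} I will enumerate all subsets $S$ of the 84 oriented rules, each with $s(R)$ mapped to $u$ and $t(R)$ mapped to $v$, such that $\sum_{R\in S} r(R)\ge 5$. For each subset I try to glue the near-triangulations $G(R)$, $R\in S$, along the common edge $uv$, using the combination procedure of Section~\ref{sec:combine}.

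The gluing must respect the embedding. Facial triangles of the rules must be facial in $G$ with consistent orientation, by (Z4). Hence the neighbours of $u$ and $v$ near $uv$ are forced to coincide across rules, and the rotation around each vertex must be consistent. Degree intervals are intersected, and an empty intersection kills the combination. Each surviving combination gives a partially specified near-triangulation $K$ containing $uv$, with a degree range at every vertex.

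\textbf{Step 2: prune.} For each surviving $K$, I check whether every refinement of its degree ranges to actual degrees contains a configuration from $\cD$ within $B_2^8(v)$. I also check for an obstructing cycle there, which in practice means a vertex of degree at most $4$ or a short separating cycle forced by the identifications.

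Where the ranges are too coarse for a uniform verdict, I split an interval, for example $[5,\infty]$ into $\{5\},\{6\},\{7\},\{8\},[9,\infty]$, and recurse. This is the adaptive grouping described in the introduction. One subtlety deserves care: the hypothesis only excludes configurations and obstructing cycles in the non-extended ball $B_2^8(v)$. A configuration found by the search therefore counts only if all its vertices are reachable from $v$ through degree-$\le 8$ vertices. Configurations with a high-degree vertex are acceptable only when that vertex is $v$ itself, consistent with (D1).

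\textbf{Step 3: conclude.} I expect no surviving $K$ with total charge at least $6$. The survivors with total exactly $5$ are then listed, up to the reflection symmetry through $uv$, and compared with Figure~\ref{fig:send5}; since $K\subseteq\bar B_2^8(v)$, this gives the second claim.

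The main obstacle is Step 2. The combinatorial explosion of rule subsets and degree refinements is large, but all rules giving charge to $v$ share the edge $uv$, and most pairs of rules are geometrically incompatible. I would first prune pairwise incompatibilities before building larger sets. Then I would greedily extend only those sets whose current charge plus the maximum still attainable charge reaches $5$. This branch-and-bound should keep the search small enough to run quickly.
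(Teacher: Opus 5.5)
Your proposal is essentially the paper's proof. It is a computer enumeration of free combinations of the rules at $uv$, built with the machinery of Section~\ref{sec:combine}, in which combinations blocked by centered configurations from $\cD$ are discarded; this gives maximum charge $5$, and the survivors are those of Figure~\ref{fig:send5}. One step should be made explicit: a glued combination $K$ maps to $G$ only homomorphically (not necessarily injectively, and $K$ need not even be planar), so the no-obstructing-cycle hypothesis should enter through Lemma~\ref{lem:1-1} and (D0), which turn a centered homomorphic copy of a configuration from $\cD$ in $K$ into an induced copy in $B_2^8(v)$, rather than as a separate, somewhat ill-defined pruning test for separating cycles inside $K$.
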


\tikzset{deg5/.style={thick, circle, draw, fill=black, inner sep=1.5pt,}}
\tikzset{deg6/.style={thick, circle, draw, fill=black, inner sep=0pt,}}
\tikzset{deg7/.style={thick, circle, draw, fill=white, inner sep=2pt,}}
\tikzset{deg8/.style={thick, rectangle, draw, fill=white, inner sep=2pt,}}
\tikzset{deg9/.style={thick, regular polygon, regular polygon sides=3, rotate=180, draw, fill=white, inner sep=1pt,}}
\tikzset{deg10/.style={thick, regular polygon, draw, fill=white, inner sep=2pt,}}
\tikzset{->-/.style={decoration={
    markings,
    mark=at position .6 with {\arrow{>}}}, postaction={decorate}}}
\tikzset{->>-/.style={decoration={
    markings, 
    mark=at position .5 with {\arrow{>}};, 
    mark=at position .6 with {\arrow{>}};}, postaction={decorate}}}

\begin{figure}[htbp]
\begin{tabular}{ccccc}

\begin{minipage}[t]{0.16\hsize}
\centering
\begin{tikzpicture} []
    \node [deg5] at (1.109, 1.251) (v0) {};
    \node [deg7] at (2.109, 1.251) (v1) {};
    \node [above = 0.15 cm of v1, anchor=center] (v1+) { $+$ };
    \node [deg5] at (1.418, 2.202) (v2) {};
    \node [deg5] at (0.3, 1.839) (v3) {};
    \node [deg5] at (1.418, 0.3) (v4) {};
    \draw [->-] (v0) -- (v1);
    \foreach \u / \v in {v0/v1, v0/v2, v0/v3, v0/v4, v1/v2, v1/v4, v2/v3}
        \draw (\u) -- (\v);
    \foreach \u / \v in {}
        \draw[ultra thick, blue] (\u) -- (\v);
\end{tikzpicture}
\end{minipage}
&
\begin{minipage}[t]{0.16\hsize}
\centering
\begin{tikzpicture} []
    \node [deg6] at (0.8, 1.166) (v0) {};
    \node [deg7] at (1.8, 1.166) (v1) {};
    \node [above = 0.15 cm of v1, anchor=center] (v1+) { $+$ };
    \node [deg5] at (1.3, 2.032) (v2) {};
    \node [deg5] at (0.3, 2.032) (v3) {};
    \node [deg5] at (0.3, 0.3) (v4) {};
    \node [deg5] at (1.3, 0.3) (v5) {};
    \node [deg5] at (1.126, 3.017) (v6) {};
    \draw [->-] (v0) -- (v1);
    \foreach \u / \v in {v0/v1, v0/v2, v0/v3, v0/v4, v0/v5, v1/v2, v1/v5, v2/v3, v2/v6, v3/v6, v4/v5}
        \draw (\u) -- (\v);
    \foreach \u / \v in {}
        \draw[ultra thick, blue] (\u) -- (\v);
\end{tikzpicture}
\end{minipage}
&
\begin{minipage}[t]{0.16\hsize}
\centering
\begin{tikzpicture} []
    \node [deg6] at (0.8, 1.166) (v0) {};
    \node [deg7] at (1.8, 1.166) (v1) {};
    \node [above = 0.15 cm of v1, anchor=center] (v1+) { $+$ };
    \node [deg5] at (1.3, 0.3) (v2) {};
    \node [deg5] at (0.3, 0.3) (v3) {};
    \node [deg6] at (0.3, 2.032) (v4) {};
    \node [deg5] at (1.3, 2.032) (v5) {};
    \node [deg5] at (1.126, 3.017) (v6) {};
    \node [deg5] at (2.24, 2.374) (v7) {};
    \draw [->-] (v0) -- (v1);
    \foreach \u / \v in {v0/v1, v0/v2, v0/v3, v0/v4, v0/v5, v1/v2, v1/v5, v1/v7, v2/v3, v4/v5, v4/v6, v5/v6, v5/v7, v6/v7}
        \draw (\u) -- (\v);
    \foreach \u / \v in {}
        \draw[ultra thick, blue] (\u) -- (\v);
\end{tikzpicture}
\end{minipage}
&
\begin{minipage}[t]{0.16\hsize}
\centering
\begin{tikzpicture} []
    \node [deg7] at (1.201, 1.827) (v0) {};
    \node [deg7] at (2.201, 1.827) (v1) {};
    \node [deg5] at (1.824, 2.609) (v2) {};
    \node [deg5] at (0.978, 2.802) (v3) {};
    \node [deg5] at (0.3, 2.261) (v4) {};
    \node [deg5] at (0.978, 0.852) (v5) {};
    \node [deg7] at (1.824, 1.045) (v6) {};
    \node [deg6] at (2.67, 2.802) (v7) {};
    \node [deg7] at (1.824, 3.476) (v8) {};
    \node [above = 0.15 cm of v8, anchor=center] (v8+) { $+$ };
    \node [deg5] at (1.379, 0.3) (v9) {};
    \node [deg6] at (2.65, 1.313) (v10) {};
    \node [above = 0.15 cm of v10, anchor=center] (v10+) { $+$ };
    \draw [->-] (v0) -- (v1);
    \foreach \u / \v in {v0/v1, v0/v2, v0/v3, v0/v4, v0/v5, v0/v6, v1/v2, v1/v6, v1/v7, v1/v10, v2/v3, v2/v7, v2/v8, v3/v4, v3/v8, v5/v6, v5/v9, v6/v9, v6/v10, v7/v8}
        \draw (\u) -- (\v);
    \foreach \u / \v in {}
        \draw[ultra thick, blue] (\u) -- (\v);
\end{tikzpicture}
\end{minipage}
&
\begin{minipage}[t]{0.16\hsize}
\centering
\begin{tikzpicture} []
    \node [deg7] at (1.201, 1.275) (v0) {};
    \node [deg8] at (2.201, 1.275) (v1) {};
    \node [above = 0.15 cm of v1, anchor=center] (v1+) { $+$ };
    \node [deg5] at (1.824, 0.493) (v2) {};
    \node [deg5] at (0.978, 0.3) (v3) {};
    \node [deg5] at (0.3, 1.709) (v4) {};
    \node [deg6] at (0.978, 2.25) (v5) {};
    \node [deg5] at (1.824, 2.057) (v6) {};
    \node [deg5] at (1.824, 2.925) (v7) {};
    \node [deg5] at (2.67, 2.25) (v8) {};
    \draw [->-] (v0) -- (v1);
    \foreach \u / \v in {v0/v1, v0/v2, v0/v3, v0/v4, v0/v5, v0/v6, v1/v2, v1/v6, v1/v8, v2/v3, v4/v5, v5/v6, v5/v7, v6/v7, v6/v8, v7/v8}
        \draw (\u) -- (\v);
    \foreach \u / \v in {}
        \draw[ultra thick, blue] (\u) -- (\v);
\end{tikzpicture}
\end{minipage}
\\
\end{tabular}
\caption{The cases where a vertex sends charge five even with no local obstructing cycles and no reducible configurations in $\bar B_2^8(u) \cap \bar B_2^8(v)$.}
\label{fig:send5}
\end{figure}
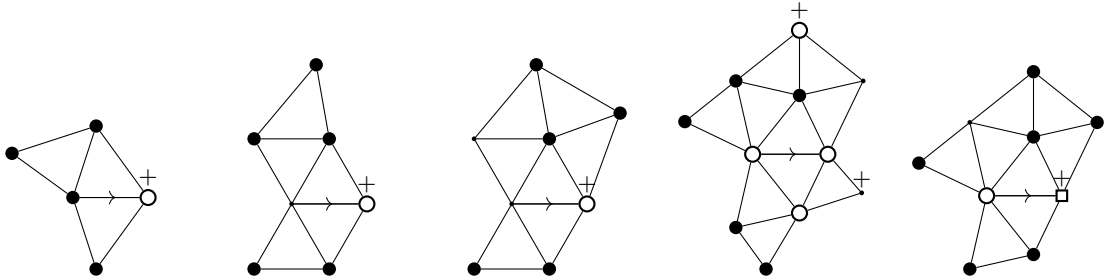

\begin{proof}[Sketch of computer-assisted proof]
First, we make the simple observation that every rule $R$ in $\mathcal{R}$ satisfies $\delta^+_R(s(R)) \leq 8$, so a vertex of degree at least nine sends no charge. Thus, we can assume $d(u)\leq 8$ and $u\in B_2^8(v)$.

Applying Lemma \ref{limitcartwheel} to both $u$ and $v$,
we get that the charge sent over $uv$ in both directions is determined by $\bar B^8_2(v)\cap \bar B^8_2(u)$. Since $\bar B^8_2(v)$ has no local obstructing cycle, we know that there is no obstructing cycle in $B^8_2(v)$. 

Now, as in the proof of Lemma \ref{lem:vsends-noconf}, we get a computer to check how we can combine the discharging rules from Figure \ref{fig:rules} using a simple branching program. 
However, this time we give up a combination if it implies that $B_2^8(v)$ contains either an obstructing cycle or a reducible configuration from $\mathcal D$. This is seen in Figure \ref{fig:any_send8} where the combined rules sending charge 8 contain the Birkhoff diamond, which is a reducible configuration in $\cD$.

The computer found that the maximum discharge is 5. The maximizing configurations with no reducible configurations from $\cD$ are collected in Figure \ref{fig:send5}.
\end{proof}

\begin{figure}
    \centering
    \includegraphics[width=0.17\linewidth]{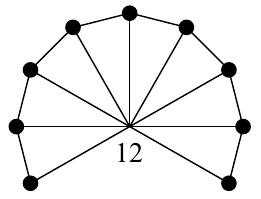}
    \caption{The D-reducible configuration from $\cD$ used in the proof of Lemma \ref{lem:12+}.}
    \label{fig:deg12-conf}
\end{figure}

Using our stronger discharging bound, we can now handle vertices of degree down to 12.
\begin{lem}
\label{lem:12+}
\showlabel{lem:12+}
Suppose that $v$ is a vertex in $G$ of degree at least 12 and with final charge $T(v) \geq 0$. Then $B^8_2(v)$ contains an obstructing cycle or a reducible configuration from $\mathcal{D}$.
\end{lem}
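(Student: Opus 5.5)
We argue by contradiction: assume $B^8_2(v)$ contains no obstructing cycle and no reducible configuration from $\mathcal{D}$, and show that then $T(v)<0$.

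The first step is to bound the charge entering $v$. Lemma \ref{lem:vsends} applies to every edge $uv$ incident with $v$, so $\phi(u,v)\le 5$ for each neighbor $u$. Since $T_0(v)=10(6-d(v))$ and charge sent out of $v$ only lowers $T(v)$, we get
\[
T(v)\le 10(6-d(v))+5d(v)=60-5d(v).
\]
For $d(v)\ge 13$ this is negative, which is the desired contradiction. The same crude count gives $T(v)\le 0$ already for $d(v)=12$, with equality only if \emph{every} neighbor sends exactly $5$ to $v$. So the whole difficulty is the boundary case $d(v)=12$, $T(v)=0$.

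In that case every neighbor $u_i$ ($0\le i<12$, in cyclic order) sends exactly $5$ to $v$, and $v$ sends nothing. By the second part of Lemma \ref{lem:vsends}, each edge $u_iv$ then lies in one of the five graphs of Figure \ref{fig:send5}, with $v$ as the head vertex. Note that $v$ has degree $12\ge 9$, so it can only play the role of a ``$+$'' head, namely a vertex of degree $7^+$ or $8^+$ in those pictures.

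The next step is to examine the degrees of consecutive neighbors $u_i,u_{i+1}$. These two vertices, together with $v$, form a facial triangle, and each of the local pictures of Figure \ref{fig:send5} prescribes degrees for the two vertices of $N(v)$ flanking $u_i$. I will argue that consistency around the whole wheel forces a very restricted pattern. In the first picture, for example, the sender has degree 5 and both flanking neighbors of $v$ have degree 5; the others force specific degree-5/6/7 patterns. Propagating around all 12 neighbors, I expect only a short periodic list of possibilities, for instance all neighbors of degree 5, or alternating patterns.

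The final step is to show that each surviving cyclic pattern contains a configuration from $\mathcal{D}$. The natural candidate is the configuration of Figure \ref{fig:deg12-conf}, a degree-12 center surrounded by low-degree vertices. Its vertices all lie within distance 2 of $v$ and, except for the center, have degree at most 8, so it sits in $B^8_2(v)$. This contradicts our assumption, and the lemma follows.

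The main obstacle is the consistency analysis around the wheel. The five pictures in Figure \ref{fig:send5} overlap in several ways, and one must check that every cyclic gluing either produces the configuration of Figure \ref{fig:deg12-conf} (or another member of $\mathcal{D}$) or forces an obstructing cycle. I would first try to show that all twelve neighbors must be of degree 5 in the first-type pattern. If mixed patterns survive, I would settle the case by a computer enumeration over cyclic sequences of the five types, using the same branching combination procedure as in Lemma \ref{lem:vsends}.
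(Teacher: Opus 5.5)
Your reduction matches the paper's. Lemma~\ref{lem:vsends} gives $T(v)\le 60-5d(v)$, so the only remaining case is $d(v)=12$ with every neighbor sending exactly $5$, and the contradiction is meant to come from the configuration of Figure~\ref{fig:deg12-conf}. The gap is that you never prove what is needed to reach it, namely that all twelve neighbors have degree~$5$. You say you ``expect'' a short periodic list, allow that alternating patterns might survive, and propose a computer enumeration as a fallback. As written, the decisive step is a conjecture.

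The missing observation is a direct reading of Figure~\ref{fig:send5}. When $u_i$ sends $5$ to $v$, the two vertices of the picture adjacent to both the sender and the head are the third vertices of the two facial triangles at $u_iv$, that is, $u_{i-1}$ and $u_{i+1}$. In every picture at least one of these has degree~$5$. In the only picture with a degree-$5$ sender (the first), both do. So each $u_i$ has a neighbor $u_j$, $j=i\pm 1$, of degree~$5$. Since $u_j$ also sends $5$ and has degree~$5$, its picture must be the first one, which forces $u_i$ to have degree~$5$.

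Your own remark makes this even quicker. A degree-$12$ head can only match the four pictures whose head carries a ``$+$''. In each of these, both flanking vertices have degree~$5$, and every $u_{i+1}$ is a flanking vertex for $u_i$. In those four pictures the degree-$6$ and degree-$7$ vertices are either the sender or vertices not adjacent to the head, so no degree-$6$ or degree-$7$ vertex can be forced among the other neighbors of $v$.

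Hence no mixed or alternating pattern exists and all neighbors have degree~$5$. The configuration of Figure~\ref{fig:deg12-conf} then lies in $B^8_1(v)\subseteq B^8_2(v)$, with no enumeration needed.
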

\begin{proof}
    For the proof by contradiction, suppose that there are no obstructing cycles and no reducible configurations as claimed.
    By Lemma \ref{lem:vsends}, for each vertex $u$, $T(u) \leq 10 \cdot (6 - d_G(u)) + 5 \cdot d_G(u) = 60 - 5 \cdot d_G(u)$. The only possibility is a vertex $v$ such that $d_G(v) = 12$ and $\phi(u_i,v) = 5$ for all its neighbors $u_i$ $(0 \leq i < 12)$, where $u_0,\dots,u_{11}$ are listed in clockwise order around $v$. From Figure \ref{fig:send5}, we immediately get that if $u_i$ sends charge $5$ to $v$, then one of its neighbors, say, $u_{i+1}$, has degree 5. But if $u_{i+1}$ has degree $5$ and sends charge 5 to $v$, then, again by  Figure \ref{fig:send5},  its neighbor $u_i$ has degree $5$. Thus we conclude that all neighbors $u_0,\ldots,u_{11}$ of $v$ have degree 5, but then we see that the configuration in $\cD$ shown in Figure \ref{fig:deg12-conf} is contained in $B_1^8(v)\subseteq B_2^8(v)$, a contradiction.
\end{proof}

\section{Reducible configurations around low-degree vertices} \label{sect:lowdegrees}

\paragraph{Cartwheel enumeration.}
Our basic use of computers is for the following \emph{cartwheel enumeration}: for a center vertex $v$ of prescribed bounded degree $d(v)$ and final charge $T(v)\geq 0$, assuming there are no local obstructing cycles, enumerate all possible configurations of $\bar B^8_2(v)$ where $v$ gets the prescribed final charge and where no reducible configuration from $\cD$ is found in $B_2^8(v)$. We note
here that by Lemma \ref{limitcartwheel}, $\bar B^8_2(v)$ determines the final charge of $v$.  For the center $v$, the maximal degree we consider here is 11 (for larger degrees we have Lemma \ref{lem:12+}), and for other vertices in $\bar B^8_2(v)$ it does not matter if the degree is 9 or larger, so it suffices to consider degrees up to 9. Thus we only have to consider a finite number of extended degree bounded cartwheels, but the number is large: even if
we exploit that the minimum degree is 5, the number of such cartwheels is bigger than $2^{100}$. For each of these cartwheels, we have more than 8200 reducible configurations in $\cD$ to look after, so the number of combinations is far too large for any computer to handle. We therefore have to design an analysis that identifies large groups of cartwheels that can be efficiently checked together. The
cartwheel enumeration algorithm is 
described in Section \ref{sect:free-cartwheel}.

For the classic 4CT proof, it suffices to consider 2-local neighborhoods (c.f. Theorem \ref{thm:T(v)> 0}), but here we need to consider much larger neighborhoods (up to 12-local in Theorem \ref{thm:main-theorem} (iii)), and to do so, we rely on human reasoning.

\begin{lem}
\label{lem:positive-comp}
\showlabel{lem:positive-comp}
    Let $v$ be a vertex in $G$ such that there are no local obstructing cycles in $\bar B_2^8(v)$. Suppose that one of the following holds:
    \begin{enumerate}
        \item $9 \leq d_G(v) \leq 11$ and $T(v) \geq 0$,
        \item $7 \leq d_G(v) \leq 8$ and $T(v) > 0$, or
        \item $7 \leq d_G(v) \leq 8$ and $T(v) = 0$ and all neighbors of $v$ have degree at most $6$.
    \end{enumerate}
    Then the degree-bounded ball $B_2^8(v)$
    contains a D-reducible configuration from $\mathcal{D}$. For the cases where $T(v)>0$, we
    only need configurations from $\cD_0$.
\end{lem}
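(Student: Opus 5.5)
The proof is a computer-assisted cartwheel enumeration, as announced in the paragraph preceding the lemma, and I would organize it case by case on the degree $d=d_G(v)\in\{7,\dots,11\}$ of the center. By Lemma \ref{limitcartwheel}, the final charge $T(v)$ is determined by the extended degree-bounded ball $\bar B^8_2(v)$. Vertices of degree at least 9 other than $v$ send no charge (as noted in the proof of Lemma \ref{lem:vsends}), and in every rule they matter only through the lower bound $\delta^-$. So it suffices to enumerate candidate cartwheels in which every vertex other than $v$ has a degree label in $\{5,6,7,8,9^+\}$. Degrees below 5 are excluded because they are themselves configurations in $\cD$. Since there is no local obstructing cycle, Lemma \ref{lem:pre-cartwheel} shows that $\bar B^8_2(v)$ has no obstructing cycle at all. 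Hence the cartwheel is locally internally 6-connected and behaves as a configuration in some internally 6-connected triangulation, which is exactly the setting in which the enumeration is sound.

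The key steps are as follows. First, fix the degree sequence of the neighbors $u_0,\dots,u_{d-1}$ of $v$; this is the first ring. Using Lemma \ref{lem:vsends}, bound $\phi(u_i,v)\le 5$. Also bound the outgoing charge $\phi(v,u_i)$ from below using only the rules already forced by the partial cartwheel. This gives an upper bound on $T(v)$ for each partial assignment.

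Second, refine adaptively. For a partial assignment, compute an upper bound on the achievable charge by summing per-edge maxima, where each maximum is computed by the same rule-combining branching used in Lemmas \ref{lem:vsends-noconf} and \ref{lem:vsends}. If this bound falls below the threshold ($T(v)\ge0$ in case 1, $T(v)>0$ in case 2, $T(v)=0$ in case 3), prune the assignment. Otherwise test whether the already-determined part of $B^8_2(v)$ contains a configuration from $\cD$, and prune if it does. If neither test applies, branch on the degree of an undetermined second-ring vertex, choosing the one that appears in the most partially matched rules or configurations. Case 3 additionally restricts all $u_i$ to degree $5$ or $6$, which shrinks the first-ring enumeration. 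For the cases with $T(v)>0$, I would run the configuration test only with $\cD_0$, which mirrors Steinberger's check. The lemma follows once every branch is pruned.

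The main obstacle is keeping the enumeration feasible: there are more than $2^{100}$ cartwheels and more than 8200 configurations. The grouping has to be coarse enough that there are few groups, yet fine enough that the charge bound or a configuration match becomes decisive. My first attempt would be to branch only on vertices adjacent to edges $vu_i$ whose charge bound is not yet tight. I would also use the rotational and reflectional symmetry of the first ring to canonicalize the assignments. If that still fails, I would fall back on the combination machinery of Section \ref{sec:combine} to precompute, for each edge $vu_i$, the finitely many local patterns that realize each charge value, and then enumerate compatible tuples of these patterns around $v$.
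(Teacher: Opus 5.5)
Your plan is, in substance, the paper's proof. It is an exhaustive computer search over free extended degree-bounded cartwheels with non-center degrees in $\{5,\dots,8,9^+\}$. The search is pruned by an upper bound on $T(v)$, where incoming charge is maximized over rule combinations not blocked by $\cD$ that can still apply and outgoing charge counts only forced rules. It is also pruned by the presence of a configuration from $\cD$. The soundness step you phrase as ``locally internally 6-connected'' is made precise in the paper by two lemmas. Lemma~\ref{lem:cartwheel} shows the ball is isomorphic to a free cartwheel, so the first-ring degrees fix its shape. Lemma~\ref{lem:1-1} shows that images of configurations are induced embeddings in $G$.

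Where you differ is the organization of the search; the paper never branches on second-ring degrees as such.
\begin{itemize}
\item It gives second neighbors tail ranges $[d,9]$, which count as degree $9$ for blocking (Lemma~\ref{lem:bounded-ranges}).
\item It branches on the exact set of rules sending charge along each $v_iv$, taken from the precomputed list $\mathcal{R}^{*-\mathcal{D}}$ of unblocked free combinations. This also allows pruning whenever a rule outside the chosen set always applies.
\item It refines only on outgoing rules that dominantly apply, so the ``never'' branch always yields a fixed degree below $9$.
\end{itemize}
This grouping is what keeps a space of more than $2^{100}$ cartwheels tractable. Your degree-by-degree branching has no such guarantee, although your fallback is essentially the paper's in-rule step.

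The paper also runs a single search with threshold $T(v)\ge 0$ for $d\in\{7,\dots,11\}$. It then checks that every survivor has charge bound exactly $0$, center degree $7$ or $8$, and a neighbor of degree at least $7$. This settles all three cases at once and produces the survivor set reused in Lemmas~\ref{lem:zero-deg7,8}, \ref{lem:777} and~\ref{lem:zero-deg7,7}. Your three separately thresholded searches would prove this lemma but not supply that set.

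The one genuine gap is the final clause about $\cD_0$. Your charge pruning uses $\phi(u_i,v)\le 5$ from Lemma~\ref{lem:vsends}, and per-edge maxima from the same blocked rule combinations. Both are valid only under the hypothesis that $B_2^8(v)$ contains no configuration from the whole of $\cD$; the unconditional bound is $8$ (Lemma~\ref{lem:vsends-noconf}). So in a run that tests only $\cD_0$, a branch discarded by the charge bound is certified merely to contain some member of $\cD$. The conclusion ``a configuration from $\cD_0$'' then does not follow. To get it from your own search, you must recompute the unblocked rule combinations, and hence the edge bounds, with blocking by $\cD_0$ alone.

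The paper's enumeration does not supply this clause either, since it blocks with all of $\cD$ throughout. It rests instead on Steinberger's computation \cite{steinberger2010unavoidable} for the same discharging rules, rerun to confirm that any vertex of degree above $8$ in the configuration found is the center.
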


\begin{proof}[Sketch of computer-assisted proof] The proof just does the above cartwheel enumeration to check that there are no counterexamples.
\end{proof}

We are now ready to prove the first two cases of Theorem \ref{thm:main-theorem}.

\paragraph{Theorem \ref{thm:main-theorem}.}
\emph{Let $G$ be a triangulation and $v\in V(G)$. Suppose that $B^8_2(v)$ contains no vertices of degree less than $5$. 
\begin{itemize}
    \item[(i)] If $v$ has positive final charge, $T(v) > 0$, and ${\bar B}^8_2(v)$ contains no  local obstructing cycles, then $B^8_2(v)$ contains a reducible configuration in $\mathcal{D}_0\subset {\mathcal D}$. 
    \item[(ii)] If $d(v)\ge9$ and $T(v) = 0$, and ${\bar B}^8_2(v)$ contains no local obstructing cycles, then $B^8_2(v)$ contains a reducible configuration from $\mathcal{D}$.
\end{itemize}
}
\begin{proof}
   First, we note that (ii) follows directly from
   Lemma \ref{lem:12+} and Case 1 of Lemma \ref{lem:positive-comp}.

   To get (i), we also need
   Case 2 of Lemma \ref{lem:positive-comp}, but we are still missing the case where $v$ has degree at most $6$ and positive final charge. However, this cannot happen, for if $v$ has degree 5 or 6, then it has final charge 0 as proved in \cite{steinberger2010unavoidable}, where exactly the same discharging rules as here are used. 
\end{proof}

We now begin examining vertices, whose final charge is 0, starting from a simple low-degree case that we can handle by hand.
 
\begin{lem}
\label{lem:positive-hand}
\showlabel{lem:positive-hand}
Suppose that $v$ is a vertex in $G$ such that all vertices at distance at most 2 from $v$ are of degree at most 6. 
Then $B_2(v)$ has an obstructing cycle or a reducible configuration from $\mathcal{D}$.
\end{lem}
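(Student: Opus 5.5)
We argue by contradiction and assume that $B_2(v)$ contains neither an obstructing cycle nor a reducible configuration from $\mathcal{D}$. Every vertex of degree 3 or 4 is itself a configuration in $\mathcal D$ (Lemma \ref{lem:3-4}). Hence every vertex of $B_1(v)$ has degree exactly 5 or 6. The vertices at distance 2 have degree at most 6 by hypothesis, and also at least 5 unless they are themselves reducible.

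Because $B_2(v)$ has no obstructing cycle, it is locally internally 6-connected. By Lemma \ref{lem:induced-config} (with (D0)), any configuration of $\mathcal D$ we locate in it is induced. Moreover, $B_2(v)$ is a genuine wheel-of-wheels: the neighbours $u_0,\dots,u_{d-1}$ of $v$ form an induced cycle, and the second neighbourhood is a cycle around it with no identifications.

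We then split into cases according to the degree pattern. The plan is to exhibit, in each case, one of a few small configurations of $\mathcal D$ whose vertices all lie in $B_1(v)$, so that its degrees are fully determined by the hypotheses.

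\textbf{Two adjacent degree-5 vertices.} Suppose first that some edge of $B_1(v)$ joins two degree-5 vertices $x,y$, one of which may be $v$. The two common neighbours $a,b$ of $x,y$ have degree 5 or 6. If both have degree 5, this is the Birkhoff diamond, which lies in $\mathcal D_0$. Otherwise we enlarge the pattern around the edge $xy$, using that $B_2(v)$ supplies the neighbourhoods of $x,y,a,b$ with degrees in $\{5,6\}$. We then look up the corresponding configurations from Steinberger's list $\mathcal D_0$. These are the classic reducible clusters composed solely of 5- and 6-vertices, for example the triangle of three 5-vertices and 5-5 pairs flanked by 6-vertices. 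Each of them has diameter at most 4 and fits inside $B_2(v)$.

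\textbf{No two degree-5 vertices adjacent.} Here each degree-5 vertex in $B_1(v)$ is surrounded by degree-6 vertices. If $v$ and all its neighbours have degree 6, the hypothesis gives a 6-regular patch of radius 2. This is exactly where the 6-regular configuration of Figure \ref{fig:flatn} must be used, so we need that configuration to fit inside a radius-2 ball of the hexagonal lattice. Its picture suggests that it is the hexagonal disk of radius 2 or a slight variant. If it does not fit, we would instead have to combine this case with the remaining configurations of $\mathcal D$ that contain an isolated 5-vertex surrounded by 6-vertices.

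The main obstacle is making this case analysis genuinely finite and clean by hand. Since degrees are restricted to $\{5,6\}$ on $B_2(v)$, the number of degree patterns on $B_1(v)$, together with the ring of $B_2(v)$, is small up to symmetry: at most $2^{1+6+12}$, and far fewer after symmetry. The real work is to check that each pattern contains some member of $\mathcal D$ entirely inside $B_2(v)$. I would first try to reduce to patterns on the closed wheel $B_1(v)$ together with one extra layer around a degree-5 vertex. In the worst case, I would fall back on invoking the same computer search as in Lemma \ref{lem:positive-comp}, restricted to degrees 5 and 6.
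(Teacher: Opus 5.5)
There is a genuine gap. The step that actually constitutes the proof is never carried out: for each degree pattern you must exhibit a specific member of $\mathcal D$ lying in $B_2(v)$, and you do not.

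Saying that one ``looks up'' clusters of $5$- and $6$-vertices in Steinberger's list is not an argument. The one concrete example you name, a triangle of three degree-5 vertices, is not in $\mathcal D$ at all, because it is not D-reducible. For the ring colouring $1,2,3,1,2,3$, all three of its vertices are forced to the fourth colour, and a short check shows Kempe changes cannot be guaranteed to repair this.

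Several concrete steps are also wrong.
\begin{itemize}
\item In your first case, if $x,y$ are both neighbours of $v$, one of their common neighbours lies at distance $2$ from $v$. Its neighbourhood then reaches distance $3$, where the hypothesis controls no degrees. So $B_2(v)$ does not supply the neighbourhoods of $x,y,a,b$.
\item In your second case, $v$ and all its neighbours having degree $6$ does not give a $6$-regular patch of radius $2$: vertices at distance $2$ may have degree $5$.
\item Most importantly, you never treat the case where $v$ has degree $5$ and all its neighbours have degree $6$. Nor do you treat a degree-6 centre whose degree-5 neighbours are pairwise non-adjacent. The first of these is exactly the case that needs an extra idea.
\item The computer fallback does not exist in the paper. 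The cartwheel enumeration behind Lemma~\ref{lem:positive-comp} only ranges over centres of degree $7$ to $11$ and prunes by the charge bound, so it says nothing about this lemma.
\end{itemize}

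The organizing idea you are missing is to work with wheels. After discarding vertices of degree at most $4$ (which are themselves in $\mathcal D$), every vertex of $B_2(v)$ has degree $5$ or $6$. Moreover, for every $u\in B_1(v)$ the wheel $B_1(u)$ lies inside $B_2(v)$ with all its degrees known.

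The paper's proof points to an explicit small family of D-reducible configurations (Figure~\ref{fig:confs-positive-hand}). It observes that one of them lies in $B_2(v)$ for every degree pattern except when $v$ has degree $5$ and all its neighbours have degree $6$. In that remaining case one recentres at any neighbour $u$ of $v$: its wheel has a degree-6 centre with the degree-5 vertex $v$ on its rim, and it is again one of the configurations in that figure. With the explicit list and this recentring step, the case analysis takes a few lines and needs no computer.
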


\begin{proof}
   A vertex of degree at most $4$ is by itself a D-reducible configuration in $\mathcal D$, so we can assume that all vertices in $B_2(v)$ are of degree five and six. Then 
    a configuration in Figure \ref{fig:confs-positive-hand} is contained in the degree-bounded ball $B_2^8(v)$, except if $v$ has degree five and is surrounded by neighbors of degree exactly six. If this is the case, then any one of the neighbors of $v$ and all its neighbors constitutes one of the configurations in Figure \ref{fig:confs-positive-hand}, which is in $\cD$.
\end{proof} 

\begin{figure}
    \centering
    \includegraphics[width=0.54\linewidth]{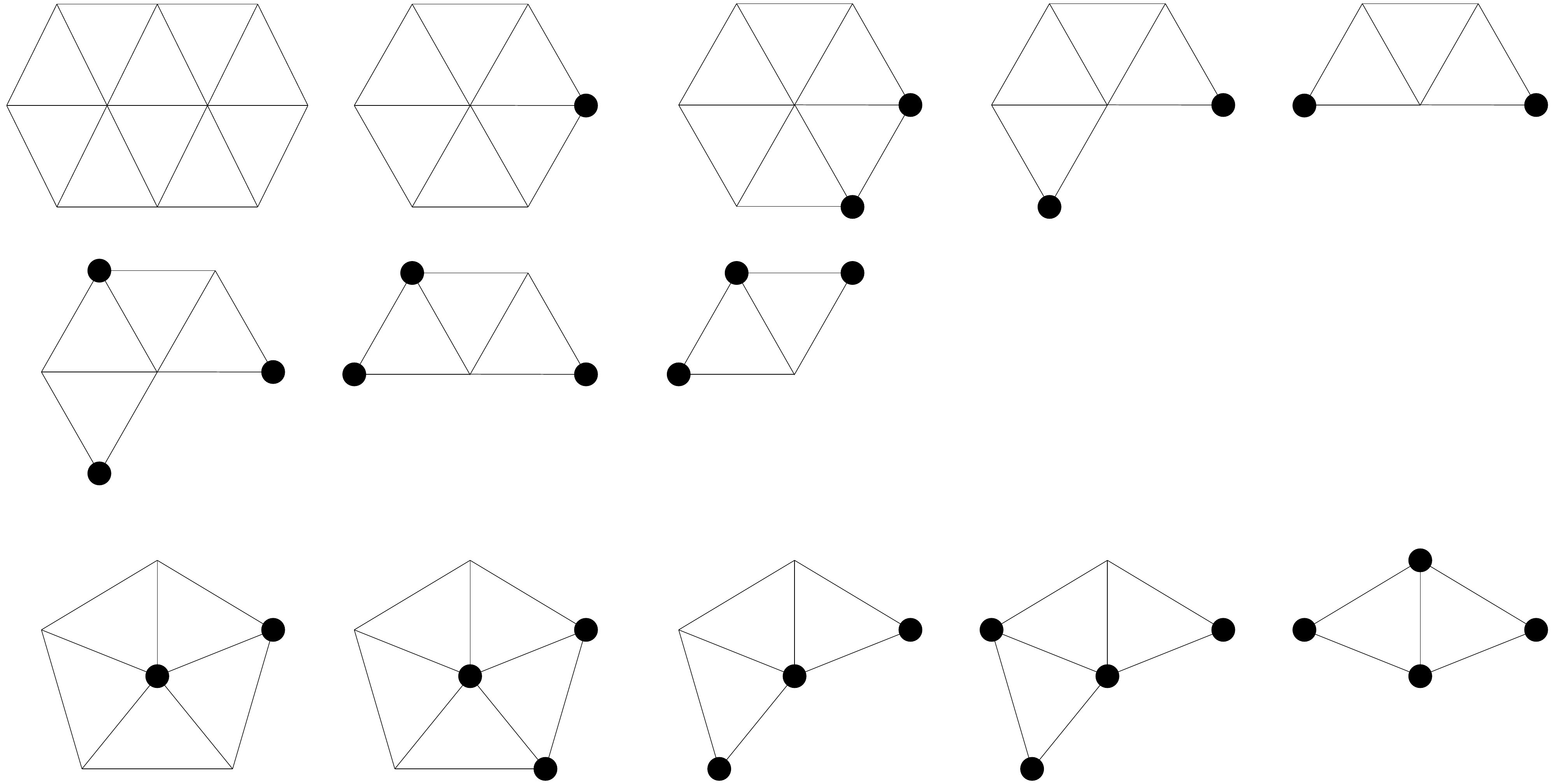}
    \caption{D-reducible configurations from $\cD$ used in the proof of Lemma \ref{lem:positive-hand}.}
    \label{fig:confs-positive-hand}
\end{figure}

The goal of the rest of this section is to prove Theorem \ref{thm:main-theorem} (iii), which assumes that all vertices in $B_{12}(v)$ have degree at most 8 and that $B_{12}(v)$ contains no obstructing cycles. This implies that for each vertex $u \in B_{10}(v)$, we have $B_2^8(u)=\bar B_2^8(u)=B_2(u)$.

When dealing with a vertex $v$ of degree 7 or 8 and final charge 0, we will not always be able to find obstructing cycles or reducible configurations in its cartwheel $B_2(v)$. Instead we have to consider combinations of neighboring cartwheels as illustrated in Figure \ref{fig:deg8-combine-example}. 

\begin{figure}[htbp]
    \centering
    \includegraphics[width=0.68\linewidth]{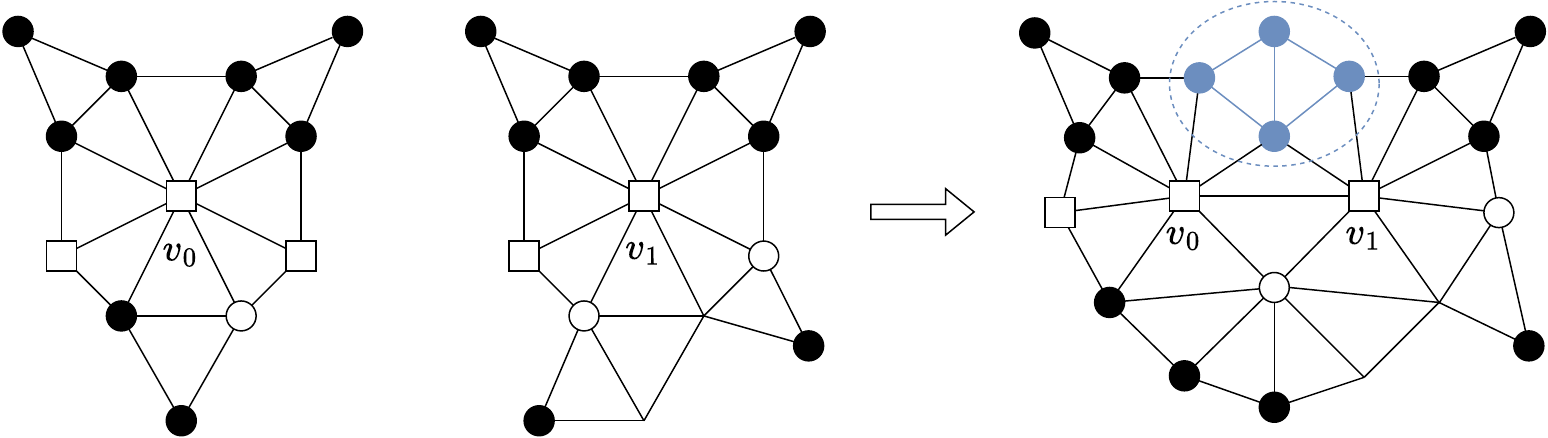}
    \caption{An example combining two cartwheels, neither of which contains a reducible configuration, but where the combination contains one; namely Birkhoff's diamond.}
    \label{fig:deg8-combine-example} 
\end{figure}

\paragraph{A vertex of degree 8.}
We now require some key technical lemmas, whose proofs are again computer-assisted. The first lemma applies whenever we have a vertex $v$ of degree 8, and all vertices in $B_1(v)$ have final charge 0 and all vertices in $B_3(v)$ have degree at most 8. 

\begin{lem}
\label{lem:zero-deg7,8}
\showlabel{lem:zero-deg7,8}  
Let $v$ be a degree-8 vertex and suppose that all vertices
in $B_3(v)$ have degree at most 8.
Let $V_0$ be a vertex-set of $G$
consisting of $v$ together with one or two neighbors as defined in one of the following cases:
\begin{itemize} 
    \item[(i)]  
    $V_0$ consists of two adjacent vertices, both of degree 8.   
\end{itemize}
In the two cases below, we assume that $v$ has no degree-8 neighbor. 
\begin{itemize}
    \item[(ii)] 
    $V_0$ consists of two adjacent vertices $v$ and $v'$, where $d_G(v)=8$, $d_G(v')=7$, and all other neighbors of $v$ have degree at most $6$.
    \item[(iii)] 
    $V_0$ consists of a path of length exactly two with vertex degrees $7, 8, 7$, where the two degree-7 neighbors of the degree-8 vertex $v$ are as close as possible in the successor order around $v$.
\end{itemize}
\noindent   
   Suppose that each vertex $u \in V_0$ has $T(u) = 0$.
   Then $G$ has an obstructing cycle or a reducible configuration from $\cD$ in the union of the cartwheels $B = \bigcup_{u\in V_0}B_2(u)$, except for a single special case within (iii), where $v$ is the middle vertex of a 7-8-7 path constituting $V_0$ and $v$ is also the center in the configuration $X$ in $G$ depicted in Figure \ref{fig:exception-zero-concentrate}.
\end{lem}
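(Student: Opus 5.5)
This is a computer-assisted statement, so the plan is to reduce it to a finite enumeration in the style of the cartwheel enumeration of Lemma \ref{lem:positive-comp}, now applied to a union of cartwheels. First we use the hypotheses to make everything finite and local. Every vertex of $B_3(v)$ has degree at most 8, and $V_0\subseteq B_1(v)$, so for each $u\in V_0$ we have $B_2^8(u)=\bar B_2^8(u)=B_2(u)\subseteq B_3(v)$. By Lemma \ref{limitcartwheel}, the final charge $T(u)$ is determined by $B_2(u)$. Hence the condition $T(u)=0$ for all $u\in V_0$, and the presence of an obstructing cycle or a configuration of $\cD$, are properties of the near-triangulation $B=\bigcup_{u\in V_0}B_2(u)$ with its degree function. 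This is a finite object: all degrees lie between 5 and 8, since a vertex of degree at most 4 is itself in $\cD$, and $|B|$ is bounded by Lemma \ref{lem:ball-size}.

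The enumeration proceeds in two stages. In the first stage, run the single-cartwheel enumeration of Section \ref{sect:free-cartwheel} with the center of prescribed degree (8 for $v$, 7 or 8 for the neighbors), requiring final charge exactly 0, no obstructing cycle, and no configuration of $\cD$ in $B_2(\cdot)$. By Lemma \ref{lem:positive-comp}(3), a degree-7 or degree-8 center with all neighbors of degree at most 6 is already excluded. We also restrict neighbor degrees according to case (i), (ii) or (iii): in (ii) the only neighbor of $v$ of degree above 6 is $v'$, and in (iii) the two degree-7 neighbors are as close as possible around $v$ and $v$ has no degree-8 neighbor. This stage yields a list $\mathcal{L}_8$ of surviving degree-8 cartwheels and a list $\mathcal{L}_7$ of surviving degree-7 (or degree-8, for case (i)) cartwheels.

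In the second stage, for each case we glue survivors along the edge $vv'$, or along the 7-8-7 path in case (iii). Gluing means identifying the shared part $B_2(v)\cap B_2(v')$, which lies inside $B_3(v)$ and must agree on the common vertices, edges and degrees. For each consistent gluing we check the union for an obstructing cycle and for a configuration of $\cD$, as in Figure \ref{fig:deg8-combine-example}. The combination machinery of Section \ref{sec:combine} carries out these overlaps. The expected outcome is that every glued union is eliminated except one in case (iii), which should be exactly the configuration $X$ of Figure \ref{fig:exception-zero-concentrate} centered at $v$. We then verify this by comparing the lone survivor against $X$ up to isomorphism.

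The main obstacle is the size of the second stage. Naive pairwise, or in case (iii) triple-wise, gluing of survivor lists may explode, and the gluing must correctly handle identifications of vertices in overlapping cartwheels. These identifications cannot create short cycles, because such cycles would be obstructing or would give chords, and Lemma \ref{lem:induced-config} guarantees that found configurations are induced. I would handle this by enumerating adaptively. Fix the degree-8 cartwheel of $v$ first. This determines a large part of $B_2(v')$, so we then extend only the unresolved vertices of the neighbor's cartwheel, pruning as soon as a configuration of $\cD$ or a charge violation appears. This is the same grouping strategy used for the single-cartwheel enumeration.
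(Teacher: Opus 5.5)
Your plan is essentially the paper's: it takes the surviving zero-charge cartwheels $\mathcal{C}_{\textsf{all}}$ (after discarding those with a fixed degree-9 vertex) and forms free combinations by identifying the edge $vv'$ with an edge at the center of another survivor; in case (iii) it does this for both degree-7 neighbors, one combination at a time with pruning in between. It then checks that every combination is blocked by $\cD$, and the link back to $G$ is the universal property of free combinations (Lemma~\ref{lem:free-homo-conf}) together with Lemma~\ref{lem:1-1}, so no obstructing-cycle test on the glued pseudo-configuration is needed. The one correction is in case (iii): the survivors there are unions of three cartwheels and need not be unique, so the right check is that each of them contains $X$ with its center mapped to $v$, not that a lone survivor is isomorphic to $X$.
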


\begin{figure}[htbp]
    \centering
\includegraphics[width=0.2\linewidth]{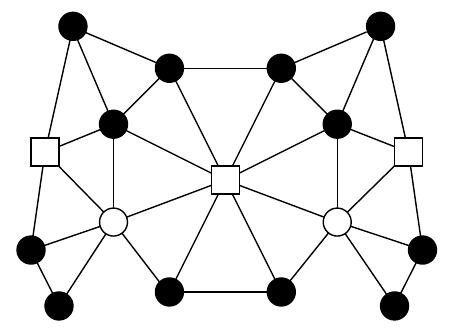}
    \caption{The exceptional configuration $X$ in Lemma \ref{lem:zero-deg7,8}.  This configuration is not reducible, but it is so special that it will help us in other ways.}
\label{fig:exception-zero-concentrate}
\end{figure}

\begin{proof}[Sketch of computer-assisted proof] 
We explain briefly how to proceed. 
First, we use our cartwheel enumeration for a vertex $u$ of prescribed degree $7$ or $8$ and final charge 0, and we use only vertices of degree at most 8. This yields all possible configurations of such cartwheels $B_2(u)$ with no obstructing cycles or reducible configurations from $\cD$, and such that $u$ has final charge 0.

Now we systematically try to combine these
configurations into $\bigcup_{u\in V_0}B_2(u)$ for the different possible cases for $V_0$ described in the lemma. If the configurations for the different $B_2(u)$ are generated independently, then typically it will not be possible to combine them. However, 
those that correspond to the balls in a concrete triangulation $G$ must be combinable. Details on how we can combine configurations are found in Section \ref{sec:combine}.
Next, we check if any configuration in $\mathcal{D}$ is contained in the combined configuration.
Figure \ref{fig:deg8-combine-example} shows an example where we combine degree-bounded balls around two neighboring degree 8 vertices
and find a configuration from $\cD$. 

The computer did not always find a configuration from $\cD$. However, in all cases where this was not the case, there is the exceptional configuration $X$ shown in Figure \ref{fig:exception-zero-concentrate}. 
\end{proof}

Lemma \ref{lem:zero-deg7,8} has the following consequence. 
\begin{lem}
\label{lem:zero-8}
\showlabel{lem:zero-8}
    Let $v$ be a degree-8 vertex in $G$.
    If every vertex in $B_3(v)$ has final charge $0$ and all vertices in $B_5(v)$ have degree at most 8, then $B_5(v)$ contains an obstructing cycle or a reducible configuration in $\cD$.
\end{lem}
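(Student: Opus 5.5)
We argue by contradiction: assume $B_5(v)$ contains no obstructing cycle and no configuration from $\cD$, and reduce to the cases of Lemma \ref{lem:zero-deg7,8}. Let $v$ have degree 8. Every vertex in $B_3(v)$ has final charge $0$, and every vertex in $B_5(v)$ has degree at most $8$. Hence for each $u\in B_1(v)$ the ball $B_3(u)\subseteq B_4(v)$ has all degrees at most 8, and for each $u\in B_2(v)$ the cartwheel $B_2(u)\subseteq B_4(v)\subseteq B_5(v)$. So every union of cartwheels appearing in Lemma \ref{lem:zero-deg7,8}, centred at $v$ or at a neighbour of $v$, lies in $B_5(v)$. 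We may also assume all degrees in $B_5(v)$ are at least $5$, since vertices of degree $3$ or $4$ are in $\cD$.

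\textbf{Case analysis on the neighbours of $v$.}
\begin{enumerate}
\item If $v$ has a degree-8 neighbour $w$, take $V_0=\{v,w\}$. Both have charge $0$, so Lemma \ref{lem:zero-deg7,8}(i) gives an obstructing cycle or a configuration from $\cD$ in $B_2(v)\cup B_2(w)\subseteq B_5(v)$.
\item Otherwise, if $v$ has no degree-7 neighbour, all neighbours have degree at most $6$. Then Lemma \ref{lem:positive-comp}, Case 3, applies to $v$ (degree 8, $T(v)=0$, no local obstructing cycles by assumption) and yields a configuration from $\cD$ in $B_2(v)$.
\item If $v$ has exactly one degree-7 neighbour $v'$, the hypotheses of Lemma \ref{lem:zero-deg7,8}(ii) hold for $V_0=\{v,v'\}$.
\item If $v$ has at least two degree-7 neighbours, choose two that are closest in the cyclic successor order around $v$. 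This gives a $7$-$8$-$7$ path as in (iii). Lemma \ref{lem:zero-deg7,8} then succeeds, unless $v$ is the centre of the exceptional configuration $X$ of Figure \ref{fig:exception-zero-concentrate}.
\end{enumerate}

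\textbf{The exceptional configuration $X$ (main obstacle).} When $v$ is the centre of $X$, $B_2(v)$ is essentially determined, and we must exploit the extra room in $B_3(v)$ (charge zero) and $B_5(v)$ (degree bound) that the lemma reserves. First I would try moving the argument to another vertex: locate a vertex $w$ of $X$ at distance at most $2$ from $v$ that is adjacent to a degree-8 vertex, or that has degree 7 or 8 with a suitable neighbour pattern. Since $w$ has charge $0$ and $B_3(w)\subseteq B_5(v)$ has degrees at most 8, we could then apply Lemma \ref{lem:zero-deg7,8} (or Lemma \ref{lem:positive-comp}) centred at $w$ rather than $v$. This is exactly why the hypotheses are stated for $B_3$ and $B_5$ rather than $B_1$ and $B_3$.

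For this to work, the structure of $X$ must force such a $w$ in a position where $w$ is \emph{not} itself the centre of another copy of $X$. I would verify this from Figure \ref{fig:exception-zero-concentrate}, for example by noting that the degree pattern around a second degree-8 vertex or a degree-7 vertex of $X$ is incompatible with being the centre of $X$.

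If no such direct choice exists, the fallback is a further computer-assisted cartwheel combination. I would glue the cartwheels $B_2(u)$ of the charge-zero vertices $u\in B_1(v)$ onto $X$, using the combining procedure of Section \ref{sec:combine}. This stays inside $B_3(v)$, where all charges are $0$, and I would check that every completion contains an obstructing cycle or a configuration from $\cD$ within $B_5(v)$.

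I expect this exceptional case to be the only genuinely delicate step; the other cases are direct applications of the earlier lemmas.
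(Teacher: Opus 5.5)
Your treatment of the non-exceptional cases is correct. Your Case 2, via Lemma \ref{lem:positive-comp}, Case 3, explicitly covers the situation where $v$ has no neighbour of degree 7 or 8, which Lemma \ref{lem:zero-deg7,8} as stated does not address. You also rightly see that the hypotheses on $B_3(v)$ and $B_5(v)$ are there so that one can re-centre at a vertex at distance $2$. However, the exceptional configuration $X$ is the only part of this lemma that goes beyond Lemma \ref{lem:zero-deg7,8}. For that case your proposal gives no argument, only a hoped-for incompatibility and an unspecified fallback computer check. So there is a genuine gap exactly where the work lies.

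\textbf{The re-centring step.} The natural re-centring point is a degree-8 vertex $v'$ of $X$ at distance $2$ from $v$. Lemma \ref{lem:zero-deg7,8} needs a degree-8 centre, and Lemma \ref{lem:zero-deg7,7} is unavailable near $v$ because $d(v)=8$. The hypotheses at $v'$ hold since $B_1(v')\subseteq B_3(v)$ and $B_3(v')\subseteq B_5(v)$. So cases (i), (ii) and the non-exceptional part of (iii) at $v'$ finish the proof.

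\textbf{Why your hoped-for mechanism fails.} The remaining situation is not excluded by a degree-pattern conflict, as you hoped. Nothing about degrees alone forbids $v'$ from being the centre of a second copy $X'$ of $X$. What must be shown is that $X\cup X'$ contains a member of $\cD$.

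\textbf{The missing idea.} For $X'$ to be centred at $v'$, the vertex $v'$ needs at least two degree-5 neighbours between its two degree-7 neighbours. This forces an additional degree-5 vertex $w$ attached to $X$ on one side. Then $X$ together with $w$ contains the D-reducible configuration shown in Figure \ref{fig:explain-lem:zero-(i)}.

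\textbf{Why your fallback does not close the case.} Gluing cartwheels $B_2(u)$ for $u\in B_1(v)$ onto $X$ uses only zero charge on $B_1(v)$. That is essentially the information Lemma \ref{lem:zero-deg7,8}(iii) has already exhausted. Moreover, the enumerated cartwheels only have centres of degree 7 or 8, so there is no catalogue for the degree-5 and 6 neighbours of $v$. The fallback never uses zero charge at distance 2--3 from $v$, which is exactly what the argument needs through $B_1(v')\subseteq B_3(v)$.
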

\begin{proof}
If $B_2(v)$ does not contain $X$, we can apply Lemma \ref{lem:zero-deg7,8} to $v$ because every vertex in $B_1(v)$ has final charge $0$ and all vertices in $B_3(v)$ have degree of at most $8$.

Let $v'$ be one of the degree-$8$ vertices in $X$ at distance 2 from  $v$. We can apply Lemma \ref{lem:zero-deg7,8} to $v'$ because every vertex in $B_1(v') \subseteq B_3(v)$ has final charge 0, and all vertices in $B_3(v') \subseteq B_5(v)$ have degree of at most 8. Therefore, if $B_3(v')$ does not contain an obstructing cycle or a reducible configuration from $\cD$,
then it contains $X$, that is, we have a copy of $X$ centered both at $v$ and at $v'$, and we call the later copy $X'$. Having a copy $X'$ of $X$ centered at $v'$, there has to be at least two degree-5 neighbors of $v'$ between degree-7 neighbors of $v'$, so we must have the degree-5 vertex $w$ added on one side of $X$, as shown in the middle of Figure
\ref{fig:explain-lem:zero-(i)}. This means that we have the D-reducible configuration depicted on the right side of Figure
\ref{fig:explain-lem:zero-(i)}.
\end{proof}

\begin{figure}[htbp]
    \centering
    \includegraphics[width=0.28\linewidth]{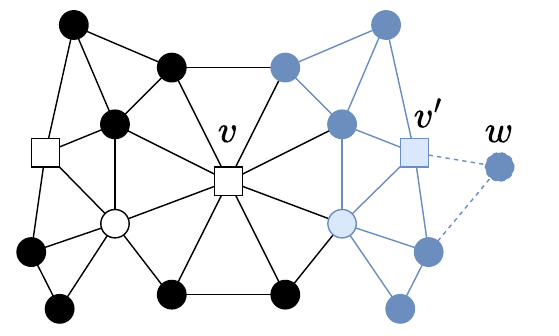}
    \qquad
    \includegraphics[width=0.12\linewidth]{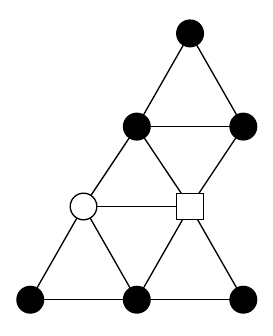}
    \caption{The exceptional configuration $X$ with an added external vertex of degree $5$ that is adjacent to a vertex of degree $8$ contains a blue-colored reducible configuration from $\mathcal{D}$, as illustrated in the right figure.}
    \label{fig:explain-lem:zero-(i)}
\end{figure}

\paragraph{Maximum degree 7.}
We will now focus on the cases where the maximum degree is 7.
We will have a special interest
in the configuration $\Tseven$ defined as the single facial triangle, all of whose vertices have degree 7. Using computers, we will prove the following two lemmas.

\begin{lem}
\label{lem:777}
\showlabel{lem:777}  
Suppose $G$ has a
$\Tseven$ with vertices $v_1v_2v_3$, all of which have final charge 0. Let $B=\bigcup_{i=1}^3 B_2(v_i)$. If all vertices in $B$ have degree at most 7, then $B$ has an obstructing cycle or a reducible configuration in $\cD$ (this is where we need the configuration in Figure~\ref{fig:radius3}, which is not contained in any single $B_2(v_i)$).
\end{lem}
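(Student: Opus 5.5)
This is a computer-assisted statement, and I would prove it in the same style as Lemma \ref{lem:zero-deg7,8}: first enumerate cartwheels for each vertex, then combine them along the shared triangle. Since every vertex of $B$ has degree at most 7, we have $B_2^8(v_i)=\bar B_2^8(v_i)=B_2(v_i)$ for each $i$. By Lemma \ref{limitcartwheel}, each $B_2(v_i)$ determines $T(v_i)$. We may also assume that no vertex has degree at most 4, since such a vertex is itself in $\cD$ by Lemma \ref{lem:3-4}; so all degrees in $B$ lie in $\{5,6,7\}$.

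\textbf{Step 1: single-cartwheel enumeration.} Run the cartwheel enumeration of Section \ref{sect:free-cartwheel} for a center of degree 7 with final charge exactly 0, all degrees in $\{5,6,7\}$, and at least two neighbors of degree 7 that are consecutive around the center (these two neighbors together with the center form the $\Tseven$). Discard every cartwheel that contains an obstructing cycle or a configuration from $\cD$. What remains is a finite list $L$ of \emph{surviving} cartwheels. By Case 3 of Lemma \ref{lem:positive-comp}, a degree-7 center whose neighbors all have degree at most 6 already yields a configuration, so the survivors are exactly those with degree-7 neighbors. This is consistent with $\Tseven$, and I expect $L$ to be fairly short.

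\textbf{Step 2: combining the three cartwheels.} For each ordered choice of survivors $(W_1,W_2,W_3)$ for $v_1,v_2,v_3$, glue them along the common triangle $v_1v_2v_3$ using the combination procedure of Section \ref{sec:combine}. Each $B_2(v_j)$ contains $B_1(v_i)$ for $i\ne j$, and much of $B_2(v_i)$ besides. So the three cartwheels overlap heavily, and they must agree on the rotation and degrees of every vertex in the overlap. Most triples will be incompatible and are discarded outright. For each compatible union, check two things: whether it contains an obstructing cycle (which by hypothesis would only strengthen the conclusion), and whether it contains some configuration of $\cD$. Unlike in Lemma \ref{lem:zero-deg7,8}, this includes the radius-3 configuration of Figure~\ref{fig:radius3}, which has degrees 5 and 7 and only fits in the union. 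The lemma holds exactly when every compatible union contains one of these. To cut the search I would branch on the degrees around the triangle first, and use the rotational symmetry of $\Tseven$ together with reflection.

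\textbf{Main obstacle.} The hard part is ensuring that the combination is exhaustive and sound. We must enumerate all ways the three balls can overlap in a genuine triangulation without obstructing cycles, including identifications of ring vertices that are forced by the absence of short separating cycles. At the same time we must not create spurious merges. I would handle this as Lemma \ref{lem:zero-deg7,8} does: treat $B_2(v_1)$ as a base, then extend it by the vertices of $B_2(v_2)$ and $B_2(v_3)$ that it lacks, guided by the dart representation. Whenever two boundary vertices might coincide, we branch, and each branch either forms an obstructing cycle (permitted outcome) or is kept distinct. The remaining risk is that some compatible union contains no configuration of $\cD$. If so, the plan is to add further D-reducible 7-regular-ish configurations to $\cD$, in the way Figure~\ref{fig:radius3} was added, until none are left.
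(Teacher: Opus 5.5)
Your outline matches the paper's proof. The paper takes the bad cartwheels $\mathcal{C}_{\textsf{all}}$ and deletes those with a vertex of fixed degree $8$ or $9$. For each degree-$7$ center and each pair of consecutive degree-$7$ neighbors, it freely combines the center's cartwheel with two further cartwheels from the list, one at a time via Lemma~\ref{lem:free-homo-conf-trans}. It then asserts that every combination is blocked by $\mathcal{D}$ (Algorithm~\ref{alg:for-lem:777}).

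The gap is in the step you flag as the main obstacle. You branch on possible coincidences of boundary vertices and assert that each branch ``either forms an obstructing cycle or is kept distinct.'' That dichotomy is not justified. Two vertices of the union can be at distance $5$, for instance a far second neighbor of $v_1$ and a far second neighbor of $v_2$. Identifying them only produces a $5$-cycle, which is obstructing only if each side contains at least two vertices (compare the remark after Lemma~\ref{lem:induced-config}). You give no argument that this always holds, so a coincidence branch that is not obstructing is left unhandled. Relatedly, the identifications genuinely forced in the overlap come from rotations and prescribed degrees. Absence of short separating cycles forces distinctness, not identification. Checking the glued union itself for obstructing cycles is also unnecessary, and it is not well defined for the possibly non-planar pseudo-configurations a free combination can produce.

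The idea you are missing is that no such branching is needed. The free combination of Lemma~\ref{lem:free-homo-conf} is the most general homomorphic image respecting the dart identifications. Each $B_2(v_i)$ is a free cartwheel by Lemma~\ref{lem:cartwheel}, so whatever their actual overlap in $G$, the free combination maps homomorphically into $B$. Now suppose some $D\in\mathcal{D}$ has a homomorphism into the free combination. Composing gives a homomorphism $D\to G$. Every member of $\mathcal{D}$ has diameter at most $4$ and satisfies condition (ii) of Lemma~\ref{lem:induced-config} (property (D0)). Hence Lemma~\ref{lem:1-1-cut} makes this map an induced embedding unless $G$ has an obstructing cycle on vertices of $B$. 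So one unbranched free combination per choice of cartwheels and darts suffices. Extra coincidences in $G$ are absorbed by the diameter-$4$ property of the reducible configurations rather than enumerated.
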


\begin{figure}
    \centering
    \includegraphics[width=0.78\linewidth]{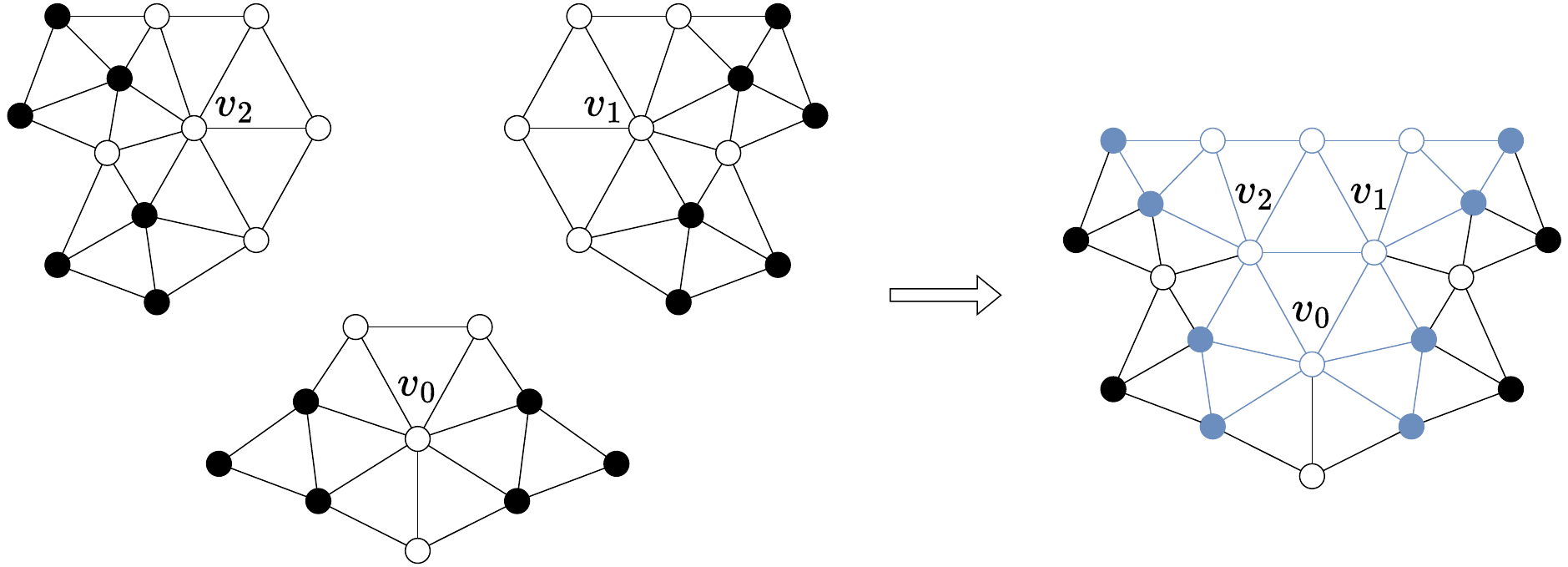}
    \caption{An example where combining three degree-bounded balls around $T_{7^3}$ needs our D-reducible configuration of radius 3.}
\label{fig:777combineRadius3}
\end{figure}

\begin{lem}
\label{lem:zero-deg7,7}
\showlabel{lem:zero-deg7,7}  
Let $v$ be a degree-7 vertex and suppose that all vertices
in $B_3(v)$ have degree at most 7.
Let $V_0$ be a vertex-set of $G$
consisting of a vertex $v$ of degree 7 together with one or two degree-7 neighbors as defined in one of the following cases:
\begin{itemize} 
    \item[(i)] 
    If $v$ has only one degree 7 neighbor $v'$ and all other neighbors have lower degrees, then $V_0=\{v,v'\}$.
    \item[(ii)] Otherwise, $V_0$ consists of three degree-7 vertices on a path of length exactly two. 
\end{itemize}
\noindent   
   Suppose that each vertex $u\in V_0$ has final charge $0$.
   Then $B=\bigcup_{u\in V_0}B_2(u)$
   contains an obstructing cycle or a reducible configuration from $\cD$ or a $\Tseven$.
\end{lem}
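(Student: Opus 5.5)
The plan is to prove Lemma \ref{lem:zero-deg7,7} by a computer-assisted enumeration, in the same way as Lemma \ref{lem:zero-deg7,8}. The computation then only has to cover the maximum-degree-7 setting.

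\textbf{Step 1: enumerate single cartwheels.} Run the cartwheel enumeration of Section \ref{sect:free-cartwheel} for a center $u$ of degree 7 with $T(u)=0$. All vertices in $\bar B^8_2(u)$ are restricted to degrees between $5$ and $7$. This is legitimate: every $u\in V_0$ lies in $B_1(v)$, so $B_2(u)\subseteq B_3(v)$ has maximum degree 7. In particular $B^8_2(u)=\bar B^8_2(u)=B_2(u)$, and by Lemma \ref{limitcartwheel} this ball determines $T(u)$. We discard any cartwheel containing an obstructing cycle, a configuration from $\cD$, or a $\Tseven$.
\begin{itemize}
\item Cartwheels containing a $\Tseven$ are discarded because that is an allowed outcome of the lemma.
\item In fact we may discard any cartwheel in which $u$ has two adjacent degree-7 neighbors, since these form a $\Tseven$ with $u$.
\end{itemize}
What survives is a list of degree-7, charge-0 cartwheels. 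In each of them the degree-7 neighbors of $u$ are pairwise non-consecutive around $u$.

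\textbf{Step 2: combine cartwheels according to the case.} In case (i) we glue a surviving cartwheel at $v$ to one at $v'$ along the common edge $vv'$. In case (ii) we take a path $v_1v_2v_3$ of degree-7 vertices and glue three cartwheels. We may take $v_2=v$ if $v$ has at least two degree-7 neighbors. Otherwise $v$ has exactly one such neighbor $v'$, which (outside case (i)) has another degree-7 neighbor, and we take $v_2=v'$. Case (ii) can also arise when $v$ has no degree-7 neighbor at all; I would treat that situation by noting it is not covered by (i) or (ii), so the lemma presumably assumes $v$ has a degree-7 neighbor.

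To keep the enumeration finite and small, I would mimic the ``as close as possible'' normalization of Lemma \ref{lem:zero-deg7,8}(iii): choose $v_1,v_3$ to be degree-7 neighbors of $v_2$ that are closest in the cyclic order. Since they are non-adjacent, they are at cyclic distance 2 or 3, which gives only a few relative positions. The gluing uses the combination procedure of Section \ref{sec:combine}. We identify the shared vertices of overlapping balls, require consistent degrees, embeddings and facial triangles, and reject any identification that creates an obstructing cycle, using Lemma \ref{lem:induced-config} for inducedness. For each consistent combination we search for a member of $\cD$ and for a $\Tseven$; the combined graph can reveal a $\Tseven$ that spans two balls. The lemma holds if every combination yields one of these outcomes.

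\textbf{Main obstacle.} I expect the hard step to be the three-ball gluing in case (ii). The overlaps $B_2(v_1)\cap B_2(v_3)$ are large, so the number of pairwise-consistent triples can explode. I would first combine $v_2$ with $v_1$ and prune, and only then add $v_3$. Throughout I would use the adaptive grouping of undetermined degrees from the cartwheel enumeration, so that groups of combinations are checked together instead of one at a time.
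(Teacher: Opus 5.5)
Your plan is essentially the paper's proof. The paper takes the surviving charge-$0$ cartwheels $\mathcal{C}_{\textsf{all}}$ and filters them down to degrees at most $7$ (rather than re-enumerating), then deletes those containing a $\Tseven$. It then checks by computer that every free combination is blocked by $\cD\cup\{\Tseven\}$: along $vv'$ in case (i), and in case (ii) identifying $vv'$ and $vv''$ with center darts of two further cartwheels, one at a time with pruning. Three points need adjusting.

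\textbf{Closest pair versus all pairs.} The paper's check for (ii) (Algorithm~\ref{alg:for-lem:zero-deg7,7(ii)}) runs over \emph{every} pair of degree-$7$ neighbors of $v$. Unlike Lemma~\ref{lem:zero-deg7,8}(iii), this lemma has no ``as close as possible'' clause, so restricting to a closest pair proves only a weaker statement. That weaker statement is enough for Lemma~\ref{lem:zero-7}, where all of $B_1(v)$ has charge $0$, but it is not the lemma as stated. Dropping the normalization costs little: once $\Tseven$'s are removed, the degree-$7$ neighbors of $v$ are pairwise non-consecutive, so there are at most three pairs.

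\textbf{Your case split.} Since every vertex of $B_3(v)$ has degree at most $7$, ``exactly one degree-$7$ neighbor'' is always case (i). So your branch $v_2=v'$ never arises, and in (ii) $v$ is always the middle vertex of the path. The case where $v$ has no degree-$7$ neighbor should not be assumed away, because Lemma~\ref{lem:zero-7} invokes this lemma for an arbitrary such $v$. It is already disposed of by your own Step~1: the enumeration certifies that no surviving charge-$0$ cartwheel has all neighbors of the center of degree at most $6$. This is Lemma~\ref{lem:positive-comp}(3), the last assertion in Algorithm~\ref{alg:enum_bad_cartwheel}.

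\textbf{No need to enumerate overlaps.} You do not need to enumerate overlap patterns or reject identifications that create obstructing cycles. The free combination maps homomorphically onto $B$ by Lemma~\ref{lem:free-homo-conf}. Lemma~\ref{lem:1-1-cut} then turns any homomorphic image of a member of $\cD$ into an induced copy in $B$ unless $B$ has an obstructing cycle. A homomorphic image of $\Tseven$ is automatically a facial triangle of three degree-$7$ vertices. This is also what guarantees completeness without listing overlaps by hand.
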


\begin{proof}[Sketch of computer-assisted proof of Lemma \ref{lem:777} and \ref{lem:zero-deg7,7}] 
The process of checking this proof is similar to Lemma \ref{lem:zero-deg7,8} handling the degree-$8$ cases.
\end{proof}

Note that in one case checked in the proof of Lemma \ref{lem:777}, we find the radius-3 configuration from Figure \ref{fig:radius3}; see Figure \ref{fig:777combineRadius3}.

By using Lemmas \ref{lem:777} and \ref{lem:zero-deg7,7}, we can prove the following result. 
\begin{lem}
    \label{lem:zero-7}
    Let $v$ be a degree-7 vertex in $G$.
    If every vertex in $B_3(v)$ has final charge $0$ and all vertices in $B_5(v)$ have degrees of at most 7, then $B_5(v)$ contains an obstructing cycle or a reducible configuration in $\cD$.
\end{lem}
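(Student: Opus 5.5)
We combine Lemma \ref{lem:zero-deg7,7} and Lemma \ref{lem:777}, following the pattern of the proof of Lemma \ref{lem:zero-8}. Throughout, assume for contradiction that $B_5(v)$ contains neither an obstructing cycle nor a reducible configuration from $\cD$. All vertices of $B_5(v)$ then have degree at most 7, and, since degrees 3 and 4 are themselves in $\cD$, degree at least 5.

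\textbf{Step 1: find a $\Tseven$ near $v$.} First check that the hypotheses of Lemma \ref{lem:zero-deg7,7} hold at $v$:
\begin{itemize}
\item $B_3(v)\subseteq B_5(v)$, so all vertices in $B_3(v)$ have degree at most 7.
\item Any $V_0$ chosen as in the lemma lies in $B_1(v)\subseteq B_3(v)$, so every vertex of $V_0$ has final charge $0$.
\end{itemize}
We must also rule out the case where $v$ has no degree-7 neighbor, which the lemma does not cover. In that case all neighbors of $v$ have degree at most 6. Since $v$ has degree 7 and charge 0, Case 3 of Lemma \ref{lem:positive-comp} applies. No local obstructing cycle exists because $\bar B_2^8(v)=B_2(v)\subseteq B_5(v)$. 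So $B_2(v)$ contains a configuration from $\cD$, a contradiction.

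Otherwise, with $V_0$ chosen according to case (i) or (ii) of Lemma \ref{lem:zero-deg7,7}, the union $B=\bigcup_{u\in V_0}B_2(u)\subseteq B_3(v)$ must contain a $\Tseven$ with vertices $v_1v_2v_3$.

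\textbf{Step 2: apply Lemma \ref{lem:777} to that triangle.} Every $v_i$ lies in $B_3(v)$, since each $u\in V_0$ is within distance 1 of $v$ and the triangle lies in $B_2(u)$. We then need:
\begin{itemize}
\item Each $v_i$ has final charge $0$. This follows because $B_3(v)$ is flat.
\item All vertices of $B'=\bigcup_i B_2(v_i)$ have degree at most 7. This needs $B'\subseteq B_5(v)$, i.e.\ $d(v,v_i)\le 3$ for each $i$.
\end{itemize}
Lemma \ref{lem:777} then yields an obstructing cycle or a configuration from $\cD$ inside $B'\subseteq B_5(v)$, a contradiction.

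\textbf{Main obstacle: the distance bookkeeping.} The point that needs care is confirming that the $\Tseven$ from Lemma \ref{lem:zero-deg7,7} really satisfies $d(v,v_i)\le 3$, so that $B_2(v_i)\subseteq B_5(v)$. A triangle inside $B_2(u)$ with $d(u,v)\le1$ gives exactly this, with no slack. If a triangle vertex could sit on the outer boundary of $B_2(u)$ at a larger distance, the radii would fail. I would therefore verify that the computer-produced $\Tseven$ always has its vertices within $B_2(u)$ for some $u\in V_0$. I expect this to hold, and likely with a better bound, since a $\Tseven$ found by the check involves degree-7 vertices adjacent to $V_0$. The rest is a direct chaining of the two lemmas.
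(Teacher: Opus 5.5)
Your proposal is correct and follows the paper's proof, which likewise combines Lemma~\ref{lem:zero-deg7,7} (any $\Tseven$ it produces lies in $B\subseteq B_3(v)$) with Lemma~\ref{lem:777} (applied to a $\Tseven$ in $B_3(v)$, so that $\bigcup_i B_2(v_i)\subseteq B_5(v)$); the only organizational difference is that the paper case-splits on whether $B_3(v)$ contains any $\Tseven$ at all. Your explicit treatment of the case where $v$ has no degree-7 neighbor, via Case~3 of Lemma~\ref{lem:positive-comp}, covers a case the paper's proof passes over silently, and the ``distance bookkeeping'' you flag is not an obstacle: $B_2(u)$ consists by definition of vertices at distance at most $2$ from $u$, so the statement of Lemma~\ref{lem:zero-deg7,7} already puts every vertex of the $\Tseven$ within distance $3$ of $v$, and there is nothing to verify in the computer output.
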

\begin{proof}
    If $B_3(v)$ does not contain $\Tseven$, we can apply Lemma \ref{lem:zero-deg7,7} to find an obstructing cycle or a reducible configuration from $\mathcal{D}$ because every vertex in $B_1(v)$ has the final charge $0$ and all vertices in $B_3(v)$ have degrees of at most $7$.

    Thus we may assume that $B_3(v)$ contains $\Tseven$ with vertices $v_1v_2v_3$.
    Then we can apply Lemma \ref{lem:777} to $v_1v_2v_3$ because every vertex in $v_1v_2v_3 \subseteq B_3(v)$ has final charge $0$ and the vertices in $B = \bigcup_{i=1}^3 B_2(v_i) \subseteq B_5(v)$ have degrees of at most 7.
\end{proof}

Combining Lemma \ref{lem:positive-hand}, \ref{lem:zero-8}, and \ref{lem:zero-7} implies the remaining case of Theorem \ref{thm:main-theorem}.

\paragraph{Theorem \ref{thm:main-theorem}.}
\emph{Let $G$ be a triangulation and $v\in V(G)$. Suppose that $B^8_2(v)$ contains no vertices of degree less than $5$. 
\begin{itemize}
    \item[(iii)] If $d(v)\le8$ and all vertices in 
    $B_{12}(v)$ 
    have degree at most 8, and have final charge $0$, then either    
    (1) $B_{12}(v)$ contains an obstructing cycle, or
    (2) $B_{12}(v)$ contains a reducible configuration in $\mathcal{D}$. 
\end{itemize}
}
\begin{proof}
    If $d(v)=8$, we can directly apply Lemma \ref{lem:zero-8} to $v$.
    We consider the case that $d(v)=7$.
    If $B_5(v)$ contains no vertex of degree $8$, we can apply Lemma \ref{lem:zero-7} to $v$.
    Otherwise, let $v'$ be a vertex of degree $8$ in $B_5(v)$.
    We can apply Lemma \ref{lem:zero-8} to $v'$ because every vertex $u \in B_3(v') \subseteq B_8(v)$ has final charge 0 and all vertices in $B_5(v') \subseteq B_{10}(v)$ have degree at most 8.
    It remains to consider the case where $d(v)= 5$ or $6$.
    If all vertices in $B_2(v)$ have degree at most $6$, we can apply Lemma \ref{lem:positive-hand}.
    Otherwise, let $v'$ be a vertex of degree $7$ or $8$ in $B_2(v)$.
    From the discussion for the case where $d(v)=7$ or $8$, if every vertex $u \in B_8(v')$ has final charge $0$ and all vertices in $B_{10}(v')$ have degree at most 8, then the claim holds.
    This is exactly the case because $B_8(v') \subseteq B_{10}(v)$ and $B_{10}(v') \subseteq B_{12}(v)$.
\end{proof}

\section{Homomorphisms, free combinations, and the maximum discharge along an edge}
\label{sec:combine}

In this section, we will
discuss in more detail how we use computer to model and bound the discharge over an edge in an arbitrary triangulation as stated in Lemma \ref{lem:vsends-noconf}. We shall develop a formalism that shall be reused in other parts of the computer proof in the appendix.

In our analysis, we will often need to combine near-triangulations and configurations. 
One important example is when we want to check if several of our discharging rules in Figure \ref{fig:rules} could
possibly be combined so as to send charge along the same edge $uv$. We want this
to be equivalent to asking if the near-triangulations of these rules can be combined around $uv$, including restrictions on the degrees on the outer cycles. We will consider the near-triangulations as embedded with a given rotation (that is, clockwise orientation) around the vertices. This means that all the rules
in Figure \ref{fig:rules} have two symmetric variants, with the exception of the two rules that are symmetric: The first one consists of a single edge, and the 
second one is the fourth last rule 
in Figure \ref{fig:rules}. As is the case for our discharging rules, we assume that the near-triangulations
discussed below are all without cut-vertices.
(Conveniently, our analysis will never combine near-triangulations having a cut-vertex, even though they do appear in some of our reducible configurations.)
This implies that the embedding is unique.

At the end of this section, we want to prove Lemma \ref{lem:vsends-noconf} stating that the maximal discharge over any edge is $8$. The lemma discusses what can be done when considering how the discharging rules can be embedded and combined in a triangulation $G$, but, with a computer, we cannot consider all possible triangulations. Instead, we will consider the so-called ``free combinations'' that we can model on a computer with no reference to $G$, and such that if rules can be combined in $G$, then they can also be combined in their free combination, and we will say that the \emph{free combination succeeds}.
Therefore, restrictions on which discharging rules can be freely combined impose upper bounds on what can be combined in any triangulation.

Finally, we lower the upper bound 8 in Lemma \ref{lem:vsends-noconf} to 5 in Lemma \ref{lem:vsends} by limiting the combinations of discharging rules by allowing only those that do not give rise to obstructing cycles or reducible configurations. Again, the computer will only look at free combinations.

\subsection{Homomorphic images of configurations}
\label{sect:embedding}

For now, we assume that all configurations have a single specified degree $\delta(v)$ for each vertex. When we describe reducible configurations and configurations used in discharging rules, we often specify degree ranges on the boundary, but this is only a shorthand for concrete degrees, and we shall return to this issue in Section \ref{sect:ranges}. For simplicity, we assume that our configurations have no cut-vertices in the configurations. We shall return to cut-vertices in Section \ref{sect:config-in-combine}.

An \emph{orientation-preserving homomorphism} or just
\emph{homomorphism} of a near-triangulation $Z$ into a triangulation $G$ is a map $\phi: V(Z) \to V(G)$ such that 
adjacent vertices in $Z$ are mapped to adjacent vertices in $G$, 
and the inner facial triangles in $Z$ are mapped to facial triangles of $G$ preserving their orientation. In other words, we require that $\phi$ preserves clockwise orientation around any vertex.
Thus, $\phi$ is defined on $V(Z)$, but it can be extended in an obvious way so that it is also defined on $E(Z)$.
If $\phi$ is 1-1, then we call $\phi$ an \emph{embedding}. (Note that we require that embeddings preserve clockwise orientation.) If the near-triangulation $Z$ is a configuration $(Z,\delta)$, then for any vertex $v$ in $Z$, we further require $d_G(\phi(v))=\delta(v)$. 

\begin{lem}\label{lem:locally 1-1}
Let $(Z,\delta)$ be a configuration without cut-vertices. If 
$\phi$ is a homomorphism from $(Z,\delta)$ to $G$, then $\phi$ is locally 1-1 around each vertex, that is, for any vertex $v\in V(Z)$, the mapping restricted to $v$ and all its neighbors is 1-1. 
\end{lem}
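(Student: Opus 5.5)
Fix a vertex $v\in V(Z)$ and consider the closed neighbourhood $N_Z[v]$. Injectivity has to be shown for two kinds of pairs: $v$ against a neighbour $u$, and two distinct neighbours $u,u'$ against each other. The first kind is immediate, since $vu\in E(Z)$ forces $\phi(v)\phi(u)\in E(G)$ and $G$ has no loops. So everything comes down to showing that distinct neighbours of $v$ have distinct images.

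The key observation is how the neighbours of $v$ sit in $Z$. Because $Z$ is a near-triangulation without cut-vertices, they form a path or a cycle $u_1,u_2,\ldots,u_k$ in rotation order. Consecutive vertices $u_i,u_{i+1}$ span an inner facial triangle $vu_iu_{i+1}$. The only exception is a boundary vertex $v$, where exactly one gap (the outer face) is missing, so the sequence is a path. Under $\phi$ each inner triangle $vu_iu_{i+1}$ goes to a facial triangle of $G$ at $\phi(v)$ with the same orientation. So $\phi(u_{i+1})$ is the clockwise successor of $\phi(u_i)$ in the rotation at $\phi(v)$. Hence $\phi$ maps the sequence $u_1,\ldots,u_k$ to a run of \emph{consecutive} neighbours $w_j,w_{j+1},\ldots,w_{j+k-1}$ of $\phi(v)$ in $G$, with indices taken modulo $d_G(\phi(v))$. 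This is the step where orientation-preservation and the absence of cut-vertices are essential. With a cut-vertex the neighbourhood would split into two runs that could overlap.

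Two cases remain, and the degree condition $d_G(\phi(v))=\delta(v)$ finishes both. If $v$ is an inner vertex, then $k=d_Z(v)=\delta(v)=d_G(\phi(v))$. The run of $k$ consecutive neighbours wraps around exactly once, so every neighbour of $\phi(v)$ is hit exactly once, and the images are distinct. If $v$ is on the outer face, (Z2) gives $k=d_Z(v)<\delta(v)=d_G(\phi(v))$. A run of fewer than $d_G(\phi(v))$ consecutive neighbours in the cyclic order cannot repeat a neighbour, so again the images are distinct. Note that the neighbours of $\phi(v)$ in $G$ are themselves distinct, because $G$ is simple.

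The main obstacle is justifying the consecutive-run claim rigorously. The step that turns one inner triangle $vu_iu_{i+1}$ into one step of the rotation at $\phi(v)$ needs care. I would argue it directly from the definition: the image $\phi(v)\phi(u_i)\phi(u_{i+1})$ is a facial triangle of $G$ with clockwise order preserved, and in a triangulation the facial triangle containing $\phi(v)$ with $\phi(u_i)$ as clockwise predecessor is unique, which forces $\phi(u_{i+1})$. Everything after that is counting.
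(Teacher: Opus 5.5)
Your proof is correct and uses the same idea as the paper's one-line argument. The orientation-preserving images of the triangles around $v$ run through consecutive neighbours of $\phi(v)$, and $d_Z(v)\le\delta(v)=d_G(\phi(v))$ prevents that run from repeating a neighbour; you spell out the details the paper leaves implicit: the $v$-versus-neighbour case via looplessness of $G$, the path/cycle structure of $N_Z(v)$ from the absence of cut-vertices, and the separate counts for inner and boundary vertices.
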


\begin{proof}
Since $\phi$ preserves orientation of adjacent triangles around $v$ and $d_Z(v)\le \delta(v)=d_G(\phi(v))$, it cannot happen that two neighbors of $v$ would be mapped to the same neighbor of $\phi(v)$ in $G$. Therefore, $\phi$ is locally 1-1 around $v$.
\end{proof}

We now show that, if $Z$ is of small diameter and $G$ has no obstructing cycle, all of whose vertices are in $\phi(Z)$, then every homomorphism from $Z$ to $G$ is an embedding.

\begin{lem}\label{lem:1-1} \showlabel{lem:1-1}
Let $(Z,\delta)$ be a configuration 
of diameter at most $4$ and with no cut-vertex. If 
$\phi$ is a homomorphism from $(Z,\delta)$ to $G$, and $G$ has no obstructing cycle 
whose vertices are in $\phi(Z)$, then $\phi$ is 1-1 
(so $\phi$ is an embedding showing that $(Z,\delta)$ is contained in $G$).
Moreover, if $(Z,\delta)$ further satisfies condition (ii) of Lemma \ref{lem:induced-config}, then $(Z,\delta)$ is induced in $G$.
\end{lem}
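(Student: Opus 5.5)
We argue by contradiction: suppose two distinct vertices $x,y\in V(Z)$ have $\phi(x)=\phi(y)$, and choose such a pair with $d_Z(x,y)$ minimal. By Lemma~\ref{lem:locally 1-1}, $\phi$ is injective on every closed neighborhood, so $d_Z(x,y)\ge 3$. Since $Z$ has diameter at most $4$, we get $d_Z(x,y)\in\{3,4\}$. Let $P=(x=p_0,p_1,\dots,p_\ell=y)$ be a shortest path in $Z$. Then $\phi(P)$ is a closed walk $W$ in $G$ of length $\ell\le 4$. By minimality of the pair, the vertices $\phi(p_0),\dots,\phi(p_{\ell-1})$ are distinct: two of them coinciding would give a closer identified pair. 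Hence $W$ is a cycle $C$ of length $3$ or $4$ in $G$ with all vertices in $\phi(Z)$. The aim is to show that $C$ is separating, hence obstructing (a separating $3$- or $4$-cycle always has a vertex on each side), which contradicts the hypothesis.

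To show that $C$ separates, we use the local structure of $\phi$ along $P$. Since $Z$ is a $2$-connected near-triangulation, the path $P$ together with a boundary arc of $Z$, or with a chain of triangles of $Z$, lets us follow the rotation at each internal vertex $p_i$. At each $p_i$, $\phi$ maps the neighbors of $p_i$ on each side of $P$ injectively and orientation-preservingly into the rotation of $\phi(p_i)$. So every vertex of $C$ has $G$-neighbors on both sides of $C$ locally, namely images of $Z$-neighbors of $p_i$ to its left and to its right. These images cannot all lie on $C$: a short cycle can absorb only a few of them, and $\phi$ is locally injective. If $C$ were non-separating, one side of $C$ would be empty, so $C$ would bound a face, that is, $C$ is a facial triangle. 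Then $\ell=3$ and the three consecutive vertices $p_0,p_1,p_2$ would have images forming a face, with $\phi(p_3)=\phi(p_0)$. Local injectivity at $p_1$ and $p_2$, together with the degree condition $d_G(\phi(p_i))=\delta(p_i)$, rules this out: the triangle $p_1p_2$-face of $Z$ on that side would force $p_3$ and $p_0$ to be the same neighbor in the rotation at $\phi(p_1)$ or $\phi(p_2)$, contradicting $d_Z(p_0,p_3)=3$.

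The main obstacle is making the "both sides" argument rigorous when $\phi$ is only locally injective. Without a global embedding, "left of $P$" in $Z$ does not automatically become "inside $C$" in $G$. I would handle this by a winding (turning-number) argument. Traverse $C$ and count, at each vertex $\phi(p_i)$, how many edges of $G$ lie strictly between the incoming and outgoing edges on each side. These counts are at least the corresponding counts in $Z$ at $p_i$, and those are positive because $P$ is a shortest path: a count of zero would mean a triangle shortcut, contradicting that $P$ is shortest. For a $3$- or $4$-cycle, nonzero counts on both sides at some vertex imply that $C$ is not a face boundary, so $C$ separates.

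Having established injectivity, $\phi$ is an embedding realizing (Z4) and (Z5), so $(Z,\delta)$ is contained in $G$. The final sentence of the statement then follows directly from Lemma~\ref{lem:induced-config}, applied with the same hypothesis that there is no obstructing cycle with all vertices in $\phi(Z)$.
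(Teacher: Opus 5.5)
Your overall route is the paper's. You take a closest identified pair at $Z$-distance $3$ or $4$ (distances $\le 2$ are excluded by Lemma~\ref{lem:locally 1-1}) and a shortest path $P=p_0\cdots p_\ell$, whose image is a $3$- or $4$-cycle $C$ on vertices of $\phi(Z)$. The induced part then follows from Lemma~\ref{lem:induced-config} once containment is established, as you say.

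The gap is in the step showing that $C$ separates. You argue that if $C$ is non-separating, then an empty side makes $C$ bound a face, so $C$ is a facial triangle and $\ell=3$. This is false for $\ell=4$. In a triangulation, a side of a $4$-cycle containing no vertex is not a face but two facial triangles separated by a chord. So the case $\ell=4$ with a chord $\phi(p_0)\phi(p_2)$ or $\phi(p_1)\phi(p_3)$ on an empty side is never ruled out. Your counting version has the same defect. For $C=abcd$ with chord $ac$ on an empty side, the counts on that side are $1,0,1,0$, so nonzero counts on both sides at \emph{some} vertex do not imply separation.

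There are also smaller slips:
\begin{itemize}
\item You have no control at the closing vertex $\phi(p_0)=\phi(p_\ell)$, so ``every vertex of $C$'' should be only the internal vertices $\phi(p_1),\dots,\phi(p_{\ell-1})$.
\item At a boundary vertex $p_i$ of $Z$, positivity on the outer side comes from $\delta(p_i)>d_Z(p_i)$, not from $P$ being shortest.
\item Your $\ell=3$ argument uses the triangle of $Z$ on the relevant side of $p_1p_2$. No such triangle exists when $p_1p_2$ is a boundary edge of $Z$ and the empty side is the outer one.
\end{itemize}

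Here is a clean repair. For $1\le i\le \ell-1$, the $Z$-neighbors of $p_i$ map injectively and in order onto a cyclic interval of the rotation at $\phi(p_i)$. This interval is the whole rotation if $p_i$ is inner, and a proper interval if $p_i$ is on the boundary (since $\delta(p_i)>d_Z(p_i)$). Hence if $\phi(p_{i-1})$ and $\phi(p_{i+1})$ were consecutive around $\phi(p_i)$, then $p_{i-1},p_{i+1}$ would be consecutive around $p_i$ in $Z$. They would then share an inner face and be adjacent, contradicting that $P$ is shortest.

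Now suppose some side of $C$ contains no vertex.
\begin{itemize}
\item If $\ell=3$, that side is the face $\phi(p_0)\phi(p_1)\phi(p_2)$.
\item If $\ell=4$, that side consists of two faces cut off by one chord. The chord $\phi(p_0)\phi(p_2)$ makes $\phi(p_0)\phi(p_1)\phi(p_2)$ a face, and the chord $\phi(p_1)\phi(p_3)$ makes $\phi(p_1)\phi(p_2)\phi(p_3)$ a face.
\end{itemize}
In each case two path-neighbors are consecutive around $\phi(p_1)$ or $\phi(p_2)$, a contradiction. So both sides of $C$ contain vertices, and $C$ is an obstructing cycle with all vertices in $\phi(Z)$.

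The paper states this interior-vertex claim in one line. With this repair, your write-up actually justifies it.
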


\begin{proof}
  If $\phi$ is not 1-1, then $Z$ has two distinct vertices $x,y$ mapped to the same vertex by $\phi$. By assumption, $Z$ has a path $P$ of length at most 4 joining $x,y$. Lemma \ref{lem:locally 1-1} implies that $\phi(P)$ is a cycle of length at most 4. The same lemma implies that $P$ cannot have length 2 and assuming $P$ is shortest possible, the cycle $\phi(P)$ has at least one vertex in its interior. Hence $\phi(P)$ is an obstructing cycle, a contradiction.
 
  Similarly, if $\phi(Z)$ is not induced in $G$, there is an edge joining two vertices $\phi(u)$ and $\phi(v)$, where $u,v$ are nonadjacent in $Z$. If $P$ is a shortest $(u,v)$-path in $Z$, then $\phi(P)$ together with the edge $\phi(u)\phi(v)$ would be an obstructing cycle (where we need condition (ii) of Lemma \ref{lem:induced-config} if that cycle is of length 5). This completes the proof.
\end{proof}

Even though our graphs are undirected, it is often convenient to think of an edge $xy$ as \emph{oriented} in the sense of having a \emph{tail} $x$ and a \emph{head} $y$. If a homomorphism $\phi$ maps $xy$ to $uv$, then we write $\phi(xy)=uv$ and we assume that $\phi(x)=u$ and $\phi(y)=v$. 

It is important to notice that for a 2-connected near-triangulation $Z$, a homomorphism $\phi$ from $Z$ into a triangulation $G$ is unique as soon as we have decided the image of an oriented edge; for using the incidences between facial triangles and their sides, this recursively determines the images of all facial triangles.

We note that this also implies that when we are looking for a possible homomorphic image of a configuration $(Z,\delta)$ in $G$, we just have to decide the image $\phi(xy)$ of a single oriented edge $xy$ in $Z$. Via the above incidences, this determines where the rest should go, so either we complete the image of $(Z,\delta)$ with the given image of $\phi(xy)$, or we discover that there is no such homomorphism $\phi$ from $(Z,\delta)$ to $G$. 

Since embeddings are simply 1-1 homomorphisms, to check for an embedding, we need to verify whether there is a homomorphism and, if so, whether it is 1-1. This is how our 4-coloring algorithm can look for reducible configurations in $G$. We only consider the reducible configurations in our fixed set $\cD$, 
including their mirror images. For each, we can decide on an oriented edge, and try to match it to each of the two orientations of the $m$ edges in $G$.

\paragraph{Configurations contained in configurations.}
We define a homomorphism $\phi$ from a configuration $(Z,\delta)$ 
to a configuration $(Z^*,\delta^*)$ just like we did it to a triangulation $G$ but with the additional requirement that for all vertices $v$ in $Z$, $\delta^*(\phi(v))=\delta(v)$.  If the homomorphism is 1-1, then it is an embedding, and then $(Z,\delta)$ is contained in $(Z^*,\delta^*)$.  

A typical application scenario is as follows.
Suppose a configuration $(Z,\delta)$ of diameter at most 4 (e.g., one of our reducible configurations) is contained via $\phi$ in $(Z^*,\delta^*)$. Suppose further that we have a homomorphism $\phi^*$ from $(Z^*,\delta^*)$ to a triangulation $G$. 
Now, $\phi^*\circ \phi$ is a homomorphism from $(Z,\delta)$ to $G$.
Since $(Z,\delta)$ is of diameter at most 4, we conclude by Lemma \ref{lem:1-1} that either $G$ has an obstructing cycle with vertices in 
$V(\phi^*(\phi(Z)))\subseteq V(\phi^*(Z^*))$, 
or $\phi^*\circ\phi$ embeds $(Z,\delta)$ in $G$.

\subsection{Pseudo-triangulations and pseudo-configurations}
\label{sect:pseodo-config}

In this section, we introduce pseudo-triangulations and pseudo-configurations. 

In our computer-assisted proofs, we use the computer to enumerate models
of all relevant local neighborhoods in a 
triangulation, e.g., all possible combinations of discharging rules sending charge along the same edge in the same direction. 
For these computer models, it turns out to be much simpler
if we allow the computer to operate with some more general classes of graphs. We will
define pseudo-triangulations 
generalizing triangulations and near-triangulations, and pseudo-configurations generalizing configurations.
These graphs are multigraphs, allowing loops and multiple edges. The pseudo-triangulations 
will locally look like triangulations or near-triangulations, but
they do not have to be connected or planar (indeed, the computer will never check whether they are connected or planar). Pseudo-configurations are pseudo-triangulations with a degree function on boundary vertices in the same
way that configurations are near-triangulations with a degree function on the boundary vertices.

\paragraph{Dart representations and pseudo-triangulations.}
To define pseudo-triangulations, we will use dart representations, whose definition is inspired by the classic
representation of maps.
A \emph{map} is a 2-dimensional cell complex that represents a 2-cell embedding of a graph into a surface (possibly with one or more boundary components). Our representation is analogous to the concept of a map representation developed by Klein \cite{Klein1879} and formalized by Tutte \cite{Tutte1971}; for a more recent work, we refer to \cite{KMNZ1,KMNZ2}.
However, contrary to this work, we will allow a dart to be its own reverse. In particular, we get a one-dart loop that may be thought of as a terminal object in category theory (in formal program specification \cite{BarrWels95}), and which will play an important role in our constructions.

Formally, our \emph{dart representation} has a set $V$ of vertices and a set $D$ of oriented edges or \emph{darts}. Each dart $e\in D$ has
four pointers: $\head(e)$ to a vertex $v$ which we call its \emph{head vertex}, $\reverse(e)\in D$ to a reverse dart which we say has the \emph{opposite direction} unless $\reverse(e)=e$, $\successor(e)$ 
to the succeeding dart with the same head $v$, 
and $\predecessor(e)$ to the preceding dart with the same head $v$. 
Both $\successor(e)$ and $\predecessor(e)$ can be $\nil$, which means that $e$ has no successor or predecessor.
These $\nil$-pointers are essential for our interpretation of a boundary.
The \emph{tail} of $e$ is defined as $\tail(e)=\head(\reverse(e))$. 

If $\tail(e)=\head(e)$, then $e$ is a \emph{loop}. If $e=\reverse(e)$, then clearly $e$ is a ``one-dart" loop. But, we also get a ``two-dart" loop if 
$e \neq f=\reverse(e)$ and $\head(e)=\head(f)$.

From the perspective of the
computer program, the above dart representation should be taken formally as having
two types of objects, vertices and darts, where the darts
have four associated pointers, one to a vertex and three to darts. This format is what defines a dart representation.

Below, we define the requirements we aim to satisfy; however, while the computer manipulates the dart representations, these requirements will be temporarily suspended.

First, we have some \emph{basic requirements} (M1)--(M5):
\begin{itemize}
     \item[(M1)] There are no isolated vertices, i.e., for each vertex $v\in V$, there is a dart $e$ with $\head(e)=v$.  
     \item[(M2)] $\reverse: D\to D$ is an involution, that is, $e=\reverse(\reverse(e))$.
    \item[(M3)] $\successor$ and $\predecessor$ map $D$ to the set $D\cup\{\nil\,\}$. They are inverse to each other when non-$\nil$, that is, if we have two (non-$\nil$\,) darts $e$ and $f$, then
    $e=\predecessor(f)$ if and only if $f=\successor(e)$.
    \item[(M4)] Heads are consistent in the sense that if $e$ has successor $f\ne\nil$, then $\head(e)=\head(f)$.
    \item[(M5)] If $\successor(e) \ne \nil$, then there are darts $f$ and $g$ such that $f=\reverse(\successor(e))$, $g=\reverse(\successor(f))$, and $e=\reverse(\successor(g))$. 
\end{itemize}
If $e,f,g$ in (M5) are distinct and their three heads $a,b,c$ are also distinct, then we call $e,f,g$ an \emph{inner facial triangle} with vertices $a,b$, and $c$.

The requirement (M5) 
implies that if $e$ has a successor, then
$\reverse(e)$ has a predecessor; namely the dart $g$ above. If a dart $e$ has no successor or has no predecessor, we call it a \emph{boundary dart} and its head a \emph{boundary vertex}.
Then its reverse dart is also a boundary dart, and hence the tail of $e$ is also a boundary vertex. All other vertices and darts are called \emph{inner}. 

By (M3), the $\successor$ and $\predecessor$ pointers partition the darts into doubly-linked
lists, each of which may or may not be cyclic. If a list is acyclic, we view it as ordered following successors, starting from the dart $e$ with $\predecessor(e)=\nil$.
By the head consistency requirement (M4), the darts in each list all have the same head. 
In addition to the basic requirements (M1)--(M5), we have one last requirement for a dart representation:
\begin{itemize}
    \item[(M6)] \emph{Single-list condition:}
    For each vertex $v$, there is exactly one list with $v$ as its head. This is called the \emph{incidence list} of $v$. The number of darts in the incidence list is the \emph{degree} $d(v)$ of $v$.
    The list is cyclic if $v$ is an inner vertex. If $v$ is a boundary vertex, then its incidence list is acyclic with  $\nil$ at both ends.
\end{itemize}
When (M1)--(M6) are satisfied, it follows that a vertex is inner if and only if its incidence list is cyclic. If our dart representation satisfies all the above requirements, then we call it a \emph{pseudo-triangulation}
(although ``pseudo-near-triangulation" might have been a better word).

We note that any triangulation or near-triangulation without cut-vertices has a unique representation as above, where each edge gives two darts that are reverse to each other, each dart representing one orientation of the edge. Moreover, $\successor$ corresponds to the clockwise orientation of the darts around the vertices. Also note that (M6) implies that we cannot represent a near-triangulation with a cut-vertex.

We now extend the definition of orientation-preserving homomorphisms to dart representations. 
An \emph{orientation-preserving homomorphism} or just \emph{homomorphism} from a 
dart representation $Z$ to another dart representation $Z'$ is a mapping $\phi$ from vertices and darts of $Z$ to vertices and darts in $Z'$ such that for any dart $e$ in $Z$:
\begin{itemize}
\item $\head(\phi(e))=\phi(\head(e))$;
\item $\reverse(\phi(e))=\phi(\reverse(e))$;
\item if $\successor(e)\neq\nil$, then $\successor(\phi(e))=\phi(\successor(e))$;
\item if $\predecessor(e)\neq\nil$, then $\predecessor(\phi(e))=\phi(\predecessor(e))$.
\end{itemize}
Note that, if we have a \emph{connected} dart representation $Z$ satisfying (M6), then its homomorphism $\phi$ into a dart representation $Z'$ is uniquely determined as soon as we have decided the image $e'\in E(Z')$ of a dart $e\in E(Z)$. The reason is that, starting with $e$, we can trace all of $Z$ just using pointers to heads, reverse darts, successors, and predecessors, and a homomorphism has to follow the corresponding pointers in the image. A homomorphism with $\phi(e)=e'$ may not be possible, so at the end, we do have to check that the constructed mapping really is a homomorphism.

We say that $Z'$ is a \emph{minimal image of $Z$} if
all vertices and darts in $Z'$ are images of vertices and darts in $Z$, and moreover, for any dart $e'$ in $Z'$, the dart $e'$ has a non-$\nil$ successor/predecessor if and only if there exists a dart $e\in E(Z)$ with $\phi(e)=e'$, where 
$e$ has non-$\nil$ successor/predecessor.

The following lemma is now straightforward:

\begin{lem}\label{lem:hom-basics}
    Let $Z,Z'$ be dart representations such that $Z'$ is a minimal image of $Z$. If $Z$ satisfies the basic requirements (M1)--(M6) for pseudo-triangulations, then so does $Z'$.
\end{lem}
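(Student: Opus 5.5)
The plan is to verify (M1)--(M6) for $Z'$ one at a time. Each verification transports the property from a preimage in $Z$ through $\phi$, using only the four homomorphism identities ($\head$, $\reverse$, $\successor$, $\predecessor$ commute with $\phi$ wherever defined) and the two clauses of minimality.

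\emph{(M1) and (M2).} For (M1), every vertex $v'$ of $Z'$ is $\phi(v)$ for some vertex $v$ of $Z$. By (M1) for $Z$ there is a dart $e$ with $\head(e)=v$, and then $\head(\phi(e))=v'$. For (M2), every dart $e'$ of $Z'$ is $\phi(e)$ for some $e$, so
$\reverse(\reverse(e'))=\phi(\reverse(\reverse(e)))=\phi(e)=e'$.

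\emph{(M3), (M4), (M5).} Here I would first record the key consequence of minimality. If $\successor(e')\neq\nil$, then some preimage $e$ of $e'$ has $\successor(e)\neq\nil$, and for that preimage $\successor(e')=\phi(\successor(e))$. Suppose now $f'=\successor(e')\ne\nil$. Taking such an $e$ and putting $f=\successor(e)$, we get $\predecessor(f)=e$ by (M3) in $Z$, hence $\predecessor(f')=\phi(e)=e'$. The converse direction is symmetric. Note that the value $\successor(e')$ is a pointer of $Z'$ and so does not depend on the chosen preimage; this is why well-definedness causes no trouble. Head consistency (M4) follows the same way: $\head(e')=\phi(\head(e))=\phi(\head(\successor(e)))=\head(f')$. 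For (M5), the triangle $e,f,g$ given in $Z$ maps to a triple $\phi(e),\phi(f),\phi(g)$, and the three relations $f=\reverse(\successor(e))$ etc.\ are preserved under $\phi$ because each $\successor$ used is non-$\nil$ in $Z$.

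\emph{(M6), the expected main obstacle.} The difficulty is that several lists of $Z$ with different heads may map to the same vertex $v'$, and one must rule out that they produce two distinct lists at $v'$. My approach is to note that the lists at $v'$ are the orbits of $\successor/\predecessor$ in $Z'$ among darts with head $v'$. Pick a preimage vertex $v$ of $v'$, with incidence list $L$ in $Z$.
\begin{enumerate}
\item Show that $\phi(L)$ is a union of consecutive segments of one list of $Z'$.
\item Show that every dart $e'$ with head $v'$ lies in that same list. Here $e'=\phi(e)$, but $\head(e)$ may be a different preimage $w$ of $v'$, so a genuine argument is needed and I am not sure it goes through as stated.
\item If cyclic/acyclic status at $v'$ is the only remaining point, read it off from minimality: a dart at $v'$ has a $\nil$ end iff every preimage has a $\nil$ end.
\end{enumerate}
For step~2 I would try the following. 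Either minimality, as intended in the paper, is strong enough to force all preimages of $v'$ to behave identically (their images are determined by tracing $\successor$ and $\reverse$ pointers), or (M6) has to be read as a property that the construction guarantees. If the direct argument fails, I would try to show that two distinct lists at $v'$ would force, via (M5) transported triangles, some dart of $Z'$ with no preimage having a non-$\nil$ successor, contradicting minimality. I expect this last step to need the most care, and I would check the fallback argument before relying on it.
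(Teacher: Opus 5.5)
Your verification of (M1)--(M5) is correct. It is the routine check the paper has in mind, since the paper gives no written proof. The essential point, which you handle, is to pick a preimage $e$ of $e'$ with $\successor(e)\neq\nil$ before transporting (M3)--(M5).

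The gap is (M6), and neither your step~2 nor the fallback can close it, because (M6) is not inherited by minimal images at all. Take any pseudo-triangulation $Z$ with two distinct vertices $a,b$, for instance $a=x_1$, $b=x_5$ in the triangulated strip with triangles $x_1x_2x_3$, $x_2x_3x_4$, $x_3x_4x_5$. Let $Z'$ have the same darts with the same $\reverse$, $\successor$, $\predecessor$ pointers, but with $b$ deleted and every head equal to $b$ replaced by $a$. The map that fixes all darts and sends $b\mapsto a$ is a homomorphism. Since it is a bijection on darts preserving every pointer, including the $\nil$'s, $Z'$ is a minimal image of $Z$. But the former incidence lists of $a$ and of $b$ are still separate $\successor/\predecessor$ orbits, now both with head $a$, so (M6) fails. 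Minimality only records which pointers are $\nil$. It cannot detect a vertex identification that is not accompanied by dart identifications, and that is exactly what step~2 would need to exclude.

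The paper proceeds accordingly, despite the ``(M1)--(M6)'' in the statement. It applies this lemma to the free image $Z^*$ only to obtain the basic requirements (M1)--(M5). It then says it ``remains to prove'' the single-list condition, and does so in Lemma~\ref{le:Z*-pseudo-triangulation} from the construction itself. In that construction, two vertices are only ever united as the heads of two darts being identified. Hence the invariant is preserved that each class of identified vertices is the head set of exactly one class of darts linked by identifications and $\successor/\predecessor$ pointers.

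Your remark that (M6) ``has to be read as a property that the construction guarantees'' is the right diagnosis. Drop (M6) from the general lemma, or add the hypothesis that every vertex identification made by $\phi$ is witnessed by such a chain of dart identifications and pointer links. Then prove (M6) for $Z^*$ via that invariant.
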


Note that the definition of a homomorphism implies that inner triangles of the pseudo-triangulation $Z$ are mapped to inner triangles of a pseudo-triangulation $H$. Note also that our general definition of homomorphisms of pseudo-triangulations coincides with the original one if $Z$ is a near-triangulation and $H$ is a triangulation.

\paragraph{The terminal pseudo-triangulation: a special one-dart loop.}
Recall that a {loop} is a dart $e$ with $\head(e)=\tail(e)=\head(\reverse(e))$ where $e$ may or may not be its own reverse. 
We note that we have a special pseudo-triangulation consisting
of a single vertex $v$ and a one-dart loop $e$ with
$v=\head(e)$ and $e=\reverse(e)=\successor(e)=\predecessor(e)$.
It trivially
satisfies (M1)--(M6) (with (M6) as a degenerate case of a facial triangle where all sides are the same), and it is the image of every pseudo-triangulation using the unique
homomorphism that maps all darts to the single dart and all vertices to the single vertex. As such, it plays the role of a terminal object in category theory, and we refer to it as the \emph{terminal pseudo-triangulation} (see \cite{BarrWels95}). While this one-dart loop has the terminal  role in category theory, we note that in the theory of combinatorial embeddings \cite{MT}, they typically only allow two-dart loops because no dart is allowed to be its own reverse, but then there is no terminal pseudo-triangulation.

An example showing that the above terminal loop is not always a minimal image 
(although it is always an image) is if we consider a near-triangulation with a single boundary edge between two vertices $u$ and $v$. 
Its dart representation has two darts $e_1$ and $e_2$ that are reverse to each other, and whose heads are $\head(e_1)=y$ and $\head(e_2)=x$. 
Both are on the boundary, so all successors and predecessors are $\nil$, but
then a minimal image must
also have $\nil$-pointers for all successors and predecessors.
Consequently, an image need not contain a minimal image.

We finally make two simple observations concerning loops.
\begin{itemize}
    \item If there
    are no loops, and 
    if there is a dart $e$ with non-$\nil$ successor, then in (M5), the edges $e$, $f$, $g$ are all distinct, forming an inner facial triangle.
    \item If there is a loop in a pseudo-triangulation, then it is mapped to a loop in any homomorphic image. 
\end{itemize}

\paragraph{Pseudo-configurations.}
A \emph{pseudo-configuration} $(Z,\delta)$ is a pseudo-triangulation $Z$ without loops together with a degree function $\delta$ such that for each $v\in V(Z)$, $\delta(v)=d_Z(v)$ if $v$ is an inner vertex, while 
$\delta(v)>d_Z(v)$ if $v$ is a boundary vertex.  

A \emph{homomorphism} from one pseudo-configuration $(Z,\delta)$ to another 
$(Z',\delta')$
is a homomorphism $\phi$ from
$Z$ to $Z'$ such that for every vertex $v\in Z$, $\delta'(\phi(v))=\delta(v)$.
The requirement that degrees have to be preserved makes homomorphisms between pseudo-configurations far more restrictive than homomorphisms between pseudo-triangulations.

One of the important properties of homomorphisms between pseudo-configurations is that they can be composed to new homomorphisms, that is, if
$\phi$ is a homomorphism from $(Z,\delta)$ to $(Z',\delta')$ and 
$\phi'$ is a homomorphism from $(Z',\delta')$ to $(Z'',\delta'')$, then $\phi'\circ\phi$ is a homomorphism from 
$(Z,\delta)$ to $(Z'',\delta'')$.

We shall, for example, use this when $(Z,\delta)$ is a reducible configuration,
$(Z',\delta')$ is a pseudo-configuration that is the free combination of discharging rules, 
and $Z''$ is an arbitrary triangulation $G$, and $\delta''=d_G$.
Then $\phi''=\phi'\circ\phi$ is a homomorphism from $(Z,\delta)$ to $G$, and if 
$(Z,\delta)$ is of diameter at most 4, then $\phi''$ is 1-1 by Lemma \ref{lem:1-1}, implying that $\phi''$ is a planar embedding of $(Z,\delta)$ in~$G$.
Crucially, this holds even if 
$Z'$ itself is not planar.
(We believe that $Z'$ must be planar in this case, but we do not need that, and we do not check it.)

\subsection{Requesting identification of darts}

We will now present the basic subroutine that shall be used for the free combination of configurations.
We are given a pseudo-configuration $(Z,\delta)$ and a \emph{request to identify a set of pairs of darts} 
\[S=\{(e_1,f_1),\ldots, (e_k,f_k)\}.\]
We shall refer to each individual pair $(e_1,f_1)$ as an \emph{identification request}. Consider a pseudo-configuration $(Z',\delta')$ that is
the homomorphic image of $(Z,\delta)$ via some homomorphism $\phi'$.
We say that this homomorphic image \emph{respects} $S$ if for $i=1,\ldots,k$, we have
$\phi'(e_i)=\phi'(f_i)$.
Whenever a homomorphic image respecting $S$ exists,
we can construct one, $(Z^*,\delta^*)$, together with a homomorphism 
$\phi^*: (Z,\delta) \to (Z^*,\delta^*)$ (see Lemma \ref{lem:free-homo-trian} below) such that the following holds. 
If there exists any other $(Z',\delta')$ that is a homomorphic image of $(Z,\delta)$
via some homomorphism $\phi'$ that respects $S$, then there is a homomorphism ${\phi^*}' : (Z^*,\delta^*) \to (Z',\delta')$ such that ${\phi^*}' \circ \phi^*=\phi'$.
We call $(Z^*,\delta^*)$ the \emph{free homomorphic image} respecting $S$.

\paragraph{Free combinations of discharging rules.}
As mentioned earlier, we shall define a \emph{free combination} of discharging rules so that if the rules can be combined in some triangulation to 
send charge over an oriented edge $st$, then we can also form their free combination. Let $(Z_1,\delta_1),\ldots,(Z_k,\delta_k)$ be the configurations of these discharging rules with $s_it_i$ the discharging edge of
$(Z_i,\delta_i)$. When we say they 
\emph{can be combined in a triangulation $G$ to send a charge over the dart $st$}, 
we mean that for each $i=1,2, \ldots ,k$, we have an embedding
$\phi_i$ from $(Z_i,\delta_i)$ to $G$ with $\phi_i(s_it_i)=st$. 

Translating into the setup of pseudo-configurations with identifications, 
we consider the pseudo-configuration $(Z^\sqcup,\delta^\sqcup)$ which is the disjoint union of $(st,d_G)$ and all the $(Z_i,\delta_i)$ ($1\le i\le k$). 
Then our identification requests are
$(st,s_it_i)$ for $i=1,\ldots,k$.

The union of the embeddings $\phi_i$ shows us that we have a homomorphism $\phi^\sqcup$ from $(Z^\sqcup,\delta^\sqcup)$ to $G$ that respects all our identification requests, so we know that we will successfully construct a free homomorphic image $(Z^*,\delta^*)$ of $(Z^\sqcup,\delta^\sqcup)$ respecting the identifications. Then
$(Z^*,\delta^*)$ is the previously claimed free combination of $(Z_1,\delta_1),\ldots,(Z_k,\delta_k)$ identifying the darts $s_it_i$.

We note that our discharging rules actually have specified degree ranges, not just individual degrees. We shall return to this issue in Section \ref{sect:ranges}.

\subsubsection{The free homomorphism from a pseudo-triangulation respecting identification requests.}
\label{subsub:homomorphism-tri}

Given a pseudo-triangulation $Z$ (without degree constraints) and some identification requests $(e_1,f_1),\ldots,(e_k,f_k)$, we will  construct a special homomorphism $\phi^*$ defined on $Z$ into a pseudo-triangulation $Z^*$ 
respecting the identification requests, that is, $\phi^*(e_i)=\phi(f_i)$ for $i=1,\ldots,k$ 
We note that there always is such a $Z^*$; namely, the terminal pseudo-triangulation which identifies all darts in a single one-dart loop. However, the homomorphism $\phi^*$ that we construct will respect the identification requests, but otherwise identify as few vertices and darts as possible. 
At the end, our image $Z^*$ will be unique up to isomorphism, and we will call $(Z^*,\delta^*)$ is the free homomorphic image respecting our identification requests. 

We start the construction by placing all given identification requests in a \emph{queue of unprocessed identification requests}.

The construction of $\phi^*$ is
iterative, identifying more
and more vertices and darts. We will maintain the identifications in a union-find data structure, specifically a forest of rooted trees. The nodes of the trees are the vertices and darts in $Z$, and we
let $\phi^*$ of a vertex/dart be its root in the forest. 
When we are done constructing $Z^*$, the roots will be the vertices and darts of $Z^*$.

As we construct $Z^*$, we may add new
dart identification requests $(e,f)$ to our queue of unprocessed identification requests. The guarantee for any $(e,f)$ in the queue is that when we are done constructing $\phi^*$, we will have $\phi^*(e)=\phi^*(f)$.

\drop{As we start to identify vertices and darts, it may for a while be  unclear which dart we are talking about if we just write $xy$. Instead, as in our dart representation, we think a bit more formally
of a dart $e$ as an object with the above mentioned pointers $\head(e)$, $\reverse(e)$, $\successor(e)$, and $\predecessor(e)$.}

As we construct $Z^*$, we will also have to reassign some of the pointers $P\in\{\head,\reverse,\successor,\predecessor\}$.
For every dart $e$ in $Z$, we will use $P(e)$ to denote the pointer value in $Z$,
while $P^*(e)$ denotes the current value in the construction of $Z^*$. Initially, $P^*(e)=P(e)$ for all darts in $e$. 
The $\head$ and $\reverse$ pointers will only be changed in the very end, but for $P\in\{\successor,\predecessor\}$,  we shall maintain the invariant that if $e^*$ is a current root, then $P^*(e^*)\neq\nil$ if and only if $P^*(e^*)=P(e)\neq\nil$
for some $e$ in $Z$
with root $e^*$.

When we start, each vertex and dart forms its own tree, so $\phi^*$ is just the identity function, which is a trivial homomorphism. However, the identity does not respect any of our identification requests.

We now take an arbitrary identification request $(e,f)$ from our queue of unprocessed identification requests.
If we already have $\phi^*(e)=\phi^*(f)$, then no further processing is needed for this request. 

Below, we describe the processing of an identification request $(e,f)$ where we do not already have $\phi^*(e)=\phi^*(f)$. Let $e^*=\phi^*(e)$ and $f^*=\phi^*(f)$.
Our first step is to
do the identification
itself by making $e^*$ a child of $f^*$ 
in the forest of rooted trees.
Thereby $f^*$ becomes the root representative of both $e$ and $f$ with $\phi^*(e)=\phi^*(f)
=f^*$. However, for $\phi^*$ to become a homomorphism, we need to make several other identifications.
First of all, the heads should also be identified, so if $\phi^*(\head(e)) \neq \phi^*(\head(f))$, we make $\phi^*(\head(e))$ a child of $\phi^*(\head(f))$, so that we also get
$\phi^*(\head(e))=\phi^*(\head(f))$.

If we did not have $e^*=f^*$, we would also generate up to three new identification requests, called \emph{homomorphic} identification requests. We always generate the identification request
$(\reverse(e^*),\reverse(f^*))$.
For $P\in\{\successor,\predecessor\}$, if neither $P^*(e^*)$ nor $P^*(f^*)$ is $\nil$, we generate the homomorphic identification request $(P^*(e^*),P^*(f^*))$; otherwise we make no identification request on the successors. However, if $P^*(f^*)=\nil$ and $P^*(e^*)\neq\nil$, we set $P^*(f^*)=P^*(e^*)$. 

The homomorphic identification requests are all added to our queue of identification requests to be processed. When we process a homomorphic identification request, we refer to it as a \emph{homomorphic identification}.

We continue processing identification requests from our queue until it is empty.  This must happen eventually, since we generate new identification requests only when we perform a new identification, thereby reducing the total number of representative roots by one.

We are now done with the construction of $\phi^*$.
The vertices/darts in $Z$ are mapped by $\phi^*$ to their representative roots, and these roots
form the set of vertices/darts in $Z^*$. We now need to assign final pointers to each of these darts $e^*$ in $Z^*$. Recall that we use a superscript $^*$ to indicate pointers in $Z^*$. 
For $P\in\{\head,\reverse\}$,
we simply set $P^*(e^*)=\phi^*(P^*(e^*))=\phi^*(P(e^*))$.
For $P\in\{\successor,\predecessor\}$,
we set $P^*(e^*)=\phi^*(P^*(e^*))$ if $P^*(e^*)\neq\nil$; otherwise we leave
$P^*(e^*)=\nil$.

We will refer to $\phi^*$ and $Z^*$ as the \emph{free homomorphism} and \emph{free homomorphic image} of $Z$ respecting $S$. So far, we have not even proved that $\phi^*$ is a homomorphism. We shall prove that $\phi^*$ is not only a homomorphism, but it is the uniquely most general one in the sense that any other homomorphism $\phi'$ from $Z$ respecting $S$ can be obtained from $\phi^*$ be composition with another homomorphim $\phi^{*\prime}$, that is,  $\phi'=\phi^{*\prime}\circ \phi^*$. 
It is the latter aspect that makes us call $\phi^*$ and $Z^*$ ``free".

The pseudo-code for constructing $\phi^*$ and $Z^*$ respecting $S$ is found in Algorithm \ref{alg:homomorphism-tri}.

\paragraph{Analysis.}
We shall now analyze the process outlined above. At the moment, it is not even clear if
$Z^*$ is a pseudo-triangulation, but it is a dart representation.

\begin{lem}\label{lem:Z*}
The constructed map $\phi^*$ is a homomorphism from $Z$ to the constructed $Z^*$
which is the minimal image 
(as defined before Lemma  \ref{lem:hom-basics})
of $Z$ under $\phi^*$.
\end{lem}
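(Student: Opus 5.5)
We verify the four homomorphism conditions for $\phi^*$ directly from the invariants of the union-find process, and then check minimality. The central object is an invariant that holds once the queue is empty. Call it the \emph{closure property}. For any two darts $e,f$ of $Z$ with $\phi^*(e)=\phi^*(f)$:
\begin{itemize}
\item $\phi^*(\head(e))=\phi^*(\head(f))$ and $\phi^*(\reverse(e))=\phi^*(\reverse(f))$;
\item for $P\in\{\successor,\predecessor\}$, if both $P(e)$ and $P(f)$ are non-$\nil$, then $\phi^*(P(e))=\phi^*(P(f))$.
\end{itemize}

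We prove the closure property by induction over the sequence of union operations. The tree of each root $e^*$ is built by merges. Each merge of $e^*$ under $f^*$ identifies the heads immediately. It also enqueues $(\reverse(e^*),\reverse(f^*))$, and the pair $(P^*(e^*),P^*(f^*))$ whenever both are non-$\nil$.

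The invariant stated in the construction says: if $P^*(e^*)\ne\nil$, then $P^*(e^*)=P(e)$ for some $e$ in the tree of $e^*$, and a root has a non-$\nil$ pointer exactly when some member does. Combined with transitivity along the tree, this gives the claim. Two darts $e,f$ in the same class with both $P(e),P(f)$ non-$\nil$ are linked through a chain of merges. At each merge, the stored $P^*$ value of each side is identified with the original $P$ value of some member of that side. So by induction on tree size, all non-$\nil$ original $P$-values within a class end up in one class, once the queue has been processed (every enqueued request is eventually processed). The reverse pointer works the same way, using that $\reverse$ is never $\nil$.

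The main care point is the bookkeeping when $P^*(f^*)=\nil$ but $P^*(e^*)\neq\nil$ and we copy the pointer. We must check that the invariant ``the root's pointer equals the original pointer of some member, up to $\phi^*$'' is preserved. I expect this to be the delicate step, and I would handle it with the induction just described.

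Given closure, the final pointer assignment $P^*(e^*)=\phi^*(P(e^*))$ is well defined, and it agrees with $\phi^*(P(e))$ for every $e$ in the class with $P(e)\ne\nil$. This yields the four homomorphism conditions:
\begin{itemize}
\item $\head^*(\phi^*(e))=\phi^*(\head(e))$;
\item $\reverse^*(\phi^*(e))=\phi^*(\reverse(e))$;
\item $\successor^*(\phi^*(e))=\phi^*(\successor(e))$ whenever $\successor(e)\ne\nil$, and likewise for $\predecessor$.
\end{itemize}
Vertices need only head consistency. A vertex of $Z$ is only ever merged as the head of merged darts, so vertex classes are consistent.

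Minimality then follows directly. Every vertex and dart of $Z^*$ is a root, hence an image. By the maintained invariant, $P^*(e^*)\neq\nil$ if and only if some $e$ with root $e^*$ has $P(e)\neq\nil$, which is exactly the minimal-image condition. Termination is already argued in the text, since each new request arises only from a merge that reduces the number of roots.
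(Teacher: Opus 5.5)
Your proof is correct and is essentially the paper's argument. Your closure property for successor/predecessor is exactly the paper's auxiliary invariant: every member's non-$\nil$ original pointer ends up identified with the root's stored pointer $P^*(e^*)=P(f)$. You prove it by the same case analysis over merges (when both pointers are non-$\nil$ a homomorphic request is enqueued, and when one is $\nil$ the pointer is copied), and you read minimality off the maintained ``non-$\nil$ iff some member is non-$\nil$'' invariant, just as the paper does. One small correction: for $P\in\{\successor,\predecessor\}$ the final assignment must be $P^*(e^*)=\phi^*(P^*(e^*))$, using the stored and possibly copied value, rather than $\phi^*(P(e^*))$, since the root's own original pointer may be $\nil$.
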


\begin{proof}
    For $\phi^*$ to be a homomorphism to
    the final $Z^*$, we need to prove for each dart $e$ in $Z$, and for each
    pointer $P$ that if $P(e)\neq\nil$, then $\phi^*(P(e))=P^*(\phi^*(e))$. The minimality also requires for  $e^*\in Z^*$ that $P^*(e^*)=\nil$ if 
    $P(e)=\nil$ for every dart $e$ in $Z$ with $\phi^*(e)=e$.

    The simplest case is for $P\in\{\head,\reverse\}$ where we do not have issues with $\nil$. Suppose $e^*=\phi^*(e)$.
    This implies that $e$ has been transitively identified with $e^*$ (resulting in the tree containing $e$ having root $e^*$), and for each of these identifications $(e_1,e_2)$, we ensured that
    we would end up with $\phi^*(P(e_1))=\phi^*(P(e_2))$. We conclude that we end up with $\phi^*(P(e))=\phi^*(P(e^*))$. After the construction of $\phi^*$, 
    for each root $e^*$, we sat 
    $P^*(e^*)=\phi^*(P(e^*))$, so we
    conclude that 
    $\phi^*(P(e))=P^*(\phi^*(e))$, as desired.

    We now consider the more complicated case where 
    $P\in\{\successor,\predecessor\}$. 
    Our construction maintained the invariant that     if $e^*$ is a current root then $P^*(e^*)\neq\nil$ if and only if $P^*(e^*)=P(f)\neq\nil$
    for some $f$ in $Z$
    with root $e^*$. To prove that we get a homomorphism, we need the following additional invariant:
    \begin{quote}
        Consider any dart $e\in Z$ with $P(e)\neq \nil$ and root $e^*$.
       Then $P^*(e^*)=P(f)\neq\nil$ for
    some $f$ with root $e^*$, and then we will end up with $\phi^*(P(e))=\phi^*(P(f))$.
    \end{quote}
    To see this, let $e,e^*,f$ be as above, and suppose we  identify $(e_1,e_2)$ with $\phi^*(e_1)=e^*$ and $\phi^*(e_2)=g^*\neq e^*$. 
    This is the case where we will make $e^*$ a child of $g^*$.

    Suppose we had $P^*(g^*)=P(h)\neq\nil$. This value will not change, but we generate the identification request $(P^*(e^*),P^*(g^*))=
    (P(f),P(h))$, so we will get $\phi^*(P(f))=\phi^*(P(h))$. The invariant on $e,e^*,f$ implied that we would get $\phi^*(P(e))=\phi^*(P(f))$, so we conclude that we end up with $\phi^*(e)=\phi^*(h)$, recovering the invariant for $e,g^*,h$.

    Suppose instead that $P^*(g^*)=\nil$. This is the case where we set $P^*(g^*)=P^*(e^*)=P(f)$. This immediately recovers the invariant for $e,g^*,f$.

    We could also have identified
    $(e_2,e_1)$, making $g^*$ a child of $e^*$. Since we had $P^*(e^*)\neq\nil$, 
    we would not change $P^*(e^*)$, so the
    invariant on $e,e^*,f$ is preserved. Since the invariant is maintained in all cases, it remains true when we are done constructing $\phi^*$. 

    Let $e,e^*,f$ be as described in the invariant when we
    are done constructing $\phi^*$. Since
    $P^*(e^*)\neq \nil$, the final step is to set 
    $P^*(e^*)=\phi^*(P^*(e^*))=\phi^*(P(f))=\phi^*(P(e))$, as required for $\phi^*$ to be a homorphism from $Z$ to $Z^*$.

    Finally we note that $Z^*$ is a minimal image since we maintain that  $P^*(e^*)=\nil$ 
    if $P^*(f)=\nil$ for every $f$ with $\phi^*(f)=e^*$.
\end{proof}

\begin{lem}\label{lem:Z*'}
The constructed $\phi^*$ and $Z^*$ are unique up to an isomorphism.
Moreover, suppose there is a homomorphism $\phi'$ from $Z$ to some pseudo-triangulation $Z'$ and that $\phi'$ respects the identifications in $S$.
Then there also exists a homomorphism $\phi^{*\prime}$ from $Z^*$ to $Z'$ such that $\phi' = \phi^{*\prime}\circ\phi^*$. 
\end{lem}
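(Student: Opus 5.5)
The plan is to prove the universal property first and obtain uniqueness from it. Fix a homomorphism $\phi'\colon Z\to Z'$ that respects $S$. We define $\phi^{*\prime}$ on a root $x^*$ of the union-find forest by $\phi^{*\prime}(x^*)=\phi'(x)$, where $x$ is any vertex or dart of $Z$ with $\phi^*(x)=x^*$. The whole proof reduces to showing that this is well defined and is a homomorphism. Once that is done, $\phi'=\phi^{*\prime}\circ\phi^*$ holds by construction.

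\emph{Well-definedness.} We prove by induction over the sequence of operations performed by the algorithm that every pair of darts placed in the queue, and every pair of vertices or darts merged in the forest, is identified by $\phi'$. The initial requests in $S$ are respected by hypothesis. Suppose we process a request $(e,f)$ with $\phi'(e)=\phi'(f)$, and let $e^*=\phi^*(e)$, $f^*=\phi^*(f)$. By induction $\phi'(e^*)=\phi'(e)$ and $\phi'(f^*)=\phi'(f)$, so merging the roots $e^*,f^*$ is consistent with $\phi'$. Since $\phi'$ commutes with $\head$, merging the head roots is also consistent. For the generated request $(\reverse(e^*),\reverse(f^*))$ we use that $\phi'$ commutes with $\reverse$. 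The successor and predecessor requests $(P^*(e^*),P^*(f^*))$ are the delicate case, because $P^*$ has been reassigned during the run. We use the invariant from the proof of Lemma~\ref{lem:Z*}: $P^*(e^*)=P(g)$ for some $g$ with root $e^*$, and similarly $P^*(f^*)=P(h)$. Then $\phi'(P(g))=P'(\phi'(g))=P'(\phi'(f^*))$, and symmetrically for $h$, so both components have the same image under $\phi'$. Reassigning a $\nil$ pointer $P^*(f^*)=P^*(e^*)$ creates no identification and keeps the invariant. I expect this bookkeeping of the reassigned pointers to be the main obstacle. We handle it with the invariant already established in the proof of Lemma~\ref{lem:Z*}, strengthened to say that all elements of a tree have equal $\phi'$-images.

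\emph{Homomorphism property.} For a dart $e^*$ of $Z^*$ and $P\in\{\head,\reverse\}$, we have $P^*(e^*)=\phi^*(P(e^*))$. Hence $\phi^{*\prime}(P^*(e^*))=\phi'(P(e^*))=P'(\phi'(e^*))=P'(\phi^{*\prime}(e^*))$. For $P\in\{\successor,\predecessor\}$ with $P^*(e^*)\ne\nil$, minimality (Lemma~\ref{lem:Z*}) gives some $f$ with root $e^*$ and $P(f)\neq\nil$ such that $P^*(e^*)=\phi^*(P(f))$. Since $\phi'$ is a homomorphism, $P'(\phi'(f))=\phi'(P(f))$, and this yields the required identity.

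\emph{Uniqueness.} The set of pairs identified by $\phi^*$ is exactly the set of pairs identified by every respecting homomorphism. Indeed, $\phi^*$ respects $S$, and by the universal property any pair identified by $\phi^*$ is identified by every respecting $\phi'$. This set does not depend on the processing order. The pointers of $Z^*$ are determined by the minimal-image condition of Lemma~\ref{lem:Z*}. Two runs of the algorithm therefore give quotients of $Z$ by the same equivalence relation with the same pointer structure. The maps $\phi^{*\prime}$ between the two results are mutually inverse, so $\phi^*$ and $Z^*$ are unique up to isomorphism.
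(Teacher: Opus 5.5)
Your proof is correct and follows essentially the same route as the paper. You show by induction over the processed requests that every identification the algorithm makes is also made by $\phi'$, define $\phi^{*\prime}$ on representatives and verify it is a homomorphism using the minimality of $Z^*$, and get uniqueness from the fact that $\phi^*$ identifies exactly the pairs identified by every respecting homomorphism. Your use of the invariant from Lemma~\ref{lem:Z*} for the reassigned successor/predecessor pointers is more careful than the paper, which handles that step with a one-line appeal to the definition of a homomorphism.
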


\begin{proof}
Let $\phi'$ be an arbitrary homomorphism from $Z$ respecting $S$, as stated in the lemma.
We claim that in the construction of $\phi^*$, there can never 
be a first homomorphic identification request $(e,f)$ such that 
$\phi'(e)\neq \phi'(f)$. This follows because if we, for all previously processed identification requests, had
$\phi'(e)=\phi'(f)$, then the definition of a homomorphism ensures that all the homomorphic identification requests
$(e',f')$ generated from 
$(e,f)$ also have $\phi'(e')=\phi'(f')$.
We thus have $\phi^*(e)=\phi^*(f)$ if and only if $\phi'(e)=\phi'(f)$ for every possible homomorphism. The identifications under
$\phi^*$ are thus unique, but then the minimal homomorphic image $Z^*$ must be unique up to an isomorphism.

The map $\phi^{*\prime}$ is defined
by setting $\phi^{*\prime}(\phi^*(x))=\phi'(x)$ for every vertex or dart $x$ in $Z$.
We need to argue that $\phi^{*\prime}$ is a homomorphism, but this follows because $Z^*$ was a minimal image. The argument holds for all pointers; therefore, we focus only on the successor. If $e^*$ has
successor $f^*$ in $Z^*$, then we
know that there exists an $e$ with successor $f$ in $Z$ with $e^*=\phi^*(e)$ and $f^*=\phi^*(f)$. However,
$\phi'$ is a homomorphism, so we
also have $\phi'(f)=\successor'(\phi'(e))$, so
indeed $\phi^{*\prime}(f^*)=\successor'(\phi^{*\prime}(e^*))$.
\end{proof}
From Lemma \ref{lem:Z*}, we have that $Z^*$ is a minimal homomorphic image, and since $Z$ is a pseudo-triangulation, we get from
Lemma \ref{lem:hom-basics} that $Z^*$ satisfies the basic requirements for being a pseudo-triangulation. It remains to prove that $Z^*$ 
satisfies the single-list condition.
\begin{lem}\label{le:Z*-pseudo-triangulation}
 $Z^*$ is a pseudo-triangulation. 
\end{lem}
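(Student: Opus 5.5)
The plan is to reduce everything to the single-list condition (M6), since the rest is already in hand. By Lemma~\ref{lem:Z*}, $Z^*$ is the minimal image of $Z$ under the homomorphism $\phi^*$, so Lemma~\ref{lem:hom-basics} gives the basic requirements (M1)--(M5) for $Z^*$. I will check (M6) directly, using two facts: how vertex identifications arise in the construction, and that homomorphisms preserve non-$\nil$ successor and predecessor pointers.

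\emph{Step 1: shape of the lists.} By (M3) in $Z^*$, $\successor^*$ and $\predecessor^*$ are mutually inverse where non-$\nil$. Hence the darts of $Z^*$ split into doubly-linked lists, and each list is either a cycle or a finite path with $\nil$ at both ends. No ``rho''-shaped list can occur, since a dart with two predecessors would contradict (M3). By (M4) every list has a single head. So (M6) reduces to one claim: for each vertex $w^*$ of $Z^*$, all darts with head $w^*$ lie in the same list. Once that holds, ``inner iff cyclic'' is just the definition of inner and boundary vertices.

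\emph{Step 2: images of lists.} Let $x$ be a vertex of $Z$ with incidence list $L_x$; it exists and is unique by (M6) for $Z$. Consecutive darts of $L_x$ are linked by non-$\nil$ successor pointers, and $\phi^*$ preserves these. So $\phi^*(L_x)$ is a connected sequence of $\successor^*$-links and lies inside a single list of $Z^*$ with head $\phi^*(x)$.

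\emph{Step 3: connecting the preimages.} Inspecting the construction (Algorithm~\ref{alg:homomorphism-tri}), vertex roots are merged only at the moment a dart identification $(e,f)$ is processed, namely $\head(e)$ with $\head(f)$. Therefore, if $\phi^*(x)=\phi^*(y)=w^*$, there is a chain $x=x_0,x_1,\dots,x_m=y$ of vertices of $Z$ in which each consecutive pair is the pair of heads of some processed identification $(e_i,f_i)$. These satisfy $e_i\in L_{x_i}$, $f_i\in L_{x_{i+1}}$ and $\phi^*(e_i)=\phi^*(f_i)$ at the end. So the lists of $Z^*$ containing $\phi^*(L_{x_i})$ and $\phi^*(L_{x_{i+1}})$ share a dart and coincide, and inductively all $\phi^*(L_x)$ with $\phi^*(x)=w^*$ lie in one list. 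Every dart of $Z^*$ with head $w^*$ is the image of some dart $e$ of $Z$, and $\head(e)$ maps to $w^*$ by the homomorphism property. Hence all darts with head $w^*$ are in that one list, which proves (M6).

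The main obstacle I expect is Step 3: making rigorous that head identifications happen only together with dart identifications. In particular I must check that the final reassignment $\head^*(e^*)=\phi^*(\head(e^*))$ introduces no identifications beyond those recorded in the union-find forest. I would verify this against the pseudo-code. A secondary point is that the single-vertex one-dart loop case is consistent with this reading of (M6); it is, as a degenerate cyclic list of length one.
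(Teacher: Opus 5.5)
Your proof is correct and uses the same key observation as the paper's: every vertex identification is made together with the identification of two darts having those vertices as heads, so the corresponding incidence lists merge, while (M1)--(M5) come from Lemma~\ref{lem:hom-basics}. The paper phrases this as an invariant maintained through the construction (each vertex class is the head set of exactly one dart class, with dart classes generated by identifications, successor and predecessor pointers). You instead argue afterwards from the final $Z^*$, via the chain of head-unions and the homomorphism property of $\phi^*$, which spells out the paper's brief ``final cleaning'' step.
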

\begin{proof}
It remains to prove the single-list condition (M6) that all darts with head $y$ are in a single, possibly cyclic, incidence
list.

To prove this, we 
partition darts and vertices from $Z^*$ into
equivalence classes.
Vertices are in the same equivalence class if they have been identified. Darts
are in the same equivalence class if they are connected via identifications, successor, or predecessor pointers. Note that we do \emph{not} consider reverse pointers.

When we start from $Z$, we have
no identifications, so each vertex has its own equivalence class. The darts are partitioned into incidence lists, connected via successor and predecessor pointers.

We claim that the construction of $Z^*$ maintains the invariant that each vertex equivalence class
is the head set of
exactly one dart equivalence class. This is trivially the case when
we start from $Z^*=Z$ since $Z$ satisfies the single-list condition (M6).

In the construction of $Z^*$, we now perform a series of dart identifications. Note that these do not immediately change successor or predecessor pointers. Identifying darts could unite dart equivalence classes, but every time we identify darts, we also identify their heads, so the
invariant is maintained.

In the final cleaning, we keep only a representative for each set of identified darts or vertices, redirecting successor and predecessor pointers to go between representatives, but it is done so that the invariant is maintained.

Having only representatives,  each vertex class consists of a single representative vertex. Each dart equivalence
class is representatives connected via successor and predecessor pointers, that is, an incidence list. The invariant now implies that no two incidence lists
can have the same head.
\end{proof}

Summing up, we have proved the following result.

\begin{lem}\label{lem:free-homo-trian}
Suppose that we are given a pseudo-triangulation $Z$ and a set $S$ of dart identification requests. Our construction yields
a homomorphism $\phi^*$ from $Z$ to a a minimal image $Z^*$
that is a pseudo-triangulation. Both $\phi^*$ and $Z^*$ are unique up to an isomorphism. 
Moreover, consider any other homomorphism $\phi'$ from $Z$ to a 
pseudo-triangulation $Z'$ that respects $S$. Then there
exists a homomorphism $\phi^{*\prime}$ from $Z^*$ to $Z'$ such that 
$\phi^{*\prime}\circ\phi^*=\phi'$.
\end{lem}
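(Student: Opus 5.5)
The statement is a summary of what Lemmas~\ref{lem:Z*}, \ref{lem:Z*'} and \ref{le:Z*-pseudo-triangulation} already establish, so the proof assembles them in order. I assume $Z$ is a pseudo-triangulation.

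First, we check that the construction terminates and is well defined. Each processed identification request that is not already satisfied merges two root classes, and only such a merge generates new requests (at most three). The number of roots is finite and strictly decreases with every merge, so only finitely many merges occur. Hence only finitely many requests are ever generated, the queue empties, and the final pointer assignment produces a dart representation $Z^*$ together with a map $\phi^*$.

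Next, Lemma~\ref{lem:Z*} gives that $\phi^*$ is a homomorphism and that $Z^*$ is the minimal image of $Z$ under $\phi^*$. Since $Z$ satisfies (M1)--(M6), Lemma~\ref{lem:hom-basics} then gives the basic requirements for $Z^*$. Lemma~\ref{le:Z*-pseudo-triangulation} upgrades this to the single-list condition (M6), so $Z^*$ is a pseudo-triangulation.

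Respecting $S$ holds because every request in $S$ was placed in the queue, and the queue invariant states that each queued pair ends up with equal images.

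Uniqueness up to isomorphism and the factorization $\phi'=\phi^{*\prime}\circ\phi^*$ for any homomorphism $\phi'$ respecting $S$ are exactly Lemma~\ref{lem:Z*'}. The key point there is the inductive argument on the first processed request. It shows that the identifications made by $\phi^*$ are precisely those forced on every homomorphism respecting $S$. Consequently, they do not depend on the order in which the queue is processed, and minimality then pins down $Z^*$.

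The only delicate step is this order-independence, which justifies speaking of \emph{the} free image. It is handled by noting that $\phi^*(x)=\phi^*(y)$ holds iff $\phi'(x)=\phi'(y)$ for every respecting $\phi'$. This condition is intrinsic, so any two runs of the construction give the same partition, and the $\successor/\predecessor$ nil-ness is determined by minimality.
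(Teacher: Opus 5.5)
Your proposal is correct and takes the same approach as the paper. The paper presents this lemma as a summary of Lemma~\ref{lem:Z*} (homomorphism onto a minimal image), Lemma~\ref{lem:hom-basics} together with Lemma~\ref{le:Z*-pseudo-triangulation} (pseudo-triangulation, including the single-list condition), and Lemma~\ref{lem:Z*'} (uniqueness and factorization), and it argues termination exactly as you do, by the strict decrease in the number of root representatives.
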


Note that if $Z'$ has no loops, then $Z^*$ cannot have a loop since any loop in $Z^*$ would be mapped to loop in $Z'$.

\subsubsection{The free homomorphism from a pseudo-configuration respecting identification requests}
\label{subsub:homomorphism-conf}

We are now given a pseudo-configuration $(Z,\delta)$ from
which we want to construct a free homomorphism $\phi^*$ into a pseudo-configuration $(Z^*,\delta^*)$ such that $\phi^*$ 
respects a given set $S=\{(e_1,f_1),\ldots, (e_k,f_k)\}$ of input identification requests. Contrary to 
the case with pseudo-triangulations, there will be many cases where the identifications fail to produce a pseudo-configuration.
However, we will prove that we never fail if there exists a pseudo-configuration $(Z',\delta')$ that is a homomorphic image of $(Z,\delta)$ respecting $S$. In that case, we will even show that there is a homomorphism $\phi^{*\prime}$ from
our $(Z^*,\delta^*)$ to $(Z',\delta')$.
Below we first present the construction of 
$(Z^*,\delta^*)$. Later we present the analysis showing that it has the desired properties.

For ease of reading, we will use $uv$ for a dart with head $v$ and tail $u$. Since we may have parallel edges, $u$ and $v$ may not determine the arc uniquely, but the identity will be clear from the context, e.g., if we say that $uv$ is the first arc in the incidence list of $v$.

Our first step is to apply Lemma \ref{lem:free-homo-trian}. This yields 
the pseudo-triangulation $Z^*$ and a homomorphism $\phi^*$ from $Z$ to $Z^*$ respecting $S$, but we do not expect to $Z^*$ to the final answer. Rather we expect to manipulate $Z^*$ over several rounds, possibly reporting an error.
Recall that pseudo-configurations have no loops, so we fail with a \emph{loop error} if $Z^*$ has a loop, that is, a dart $e$ with $\head(e)=\tail(e)=\head(\reverse(e))$.
We now check if the degree
constraints are satisfied. First of all, for all vertices $u$ and $v$ with
$\phi^*(u)=\phi^*(v)$, we need
$\delta(u)=\delta(v)$; otherwise we report a \emph{degree mismatch} error.

If there is no
degree mismatch error, then we define $\delta^*(\phi^*(x))=\delta(x)$.
We note that it may be more natural to check degrees during the construction of $Z^*$ in Lemma \ref{lem:free-homo-trian} which repeatedly identify darts and vertices: whenever we identify two vertices, we just check that they have the same degree. However, the check for loops has to await the completion of $Z^*$ when the reverse pointers have attained their final values.

We want $(Z^*,\delta^*)$ to be a degree respecting
configuration in the sense that for all inner vertices $v$, $d_{Z^*}(v)=\delta^*(v)$ and for all boundary vertices 
$d_{Z^*}(v)<\delta^*(v)$. We are going to look iteratively for vertices violating of these constraints, either fixing them, 
as described below,
or reporting an error. The simplest case is if there is an inner vertex $v$ with $d_{Z^*}(v)<\delta^*(v)$. If so, we report a \emph{sub-degree} error.

Suppose there is a vertex
$v$ with $d_{Z^*}(v)>\delta^*(v)$.
Then there must
be distinct 
incident darts $e$ and $f$ of successor
distance exactly $\delta^*(v)$, that is, in the incidence list of $v$, we get from $e$ to $f$ following successor pointers $\delta^*(v)$ times. If so, we make a \emph{degree} identification request on $(e,f)$. We process this degree identification request as an input identification request in Lemma \ref{lem:free-homo-trian} where it  triggers new homomorphic identification requests in $Z^*$ until we have a new homomorphic image. When done we have to check again both for degree mismatches and loops, reporting an error if any of these are found. 

The final issue is if
there is a boundary vertex $v$ with $\delta^*(v)=d_{Z^*}(v)$. We will fix this as follows. Let $uv$ and $wv$ be the first and last dart in the incidence lists of $v$. We want
to make $v$ an inner vertex with incident inner facial triangle $uvw$. In particular,
this implies that $u$, $v$, and $w$ must be distinct; otherwise, we report a \emph{boundary} error. We now \emph{add new boundary darts $uw$ and $wu$}.
This completes the neighbor cycle around $v$ and adds the last inner triangle $uvw$. We have to update the successor/predecessor pointers accordingly.  First, we make $uv$ the successor of $wv$ and $wv$ the predecessor of $uv$, so now the
list of incident darts is cyclic and $v$ has become internal with the correct degree $\delta(v)$. We take the new dart $uw$ and make it
the new predecessor of $vw$ around $w$, and $vw$ the successor of $uw$. Symmetrically, we make its reversal
$wu$ the successor of $vu$
and $vu$ the predecessor  $uw$. Finally, $uw$ gets the predecessor set to nil and $wu$ gets the successor set to $\nil$.
Note that this way, $u,w$ are still boundary vertices while $v$ is not.
The corresponding pseudo-code is presented in Algorithm \ref{alg:add_boundary_darts}.

We repeat applying the above cases as long
as there are vertices $v$ in $Z^*$ violating the degree constraints. 
If no error is reported,
then $\phi^*$ is our \emph{free homorphism} respecting $S$, and $(Z^*,\delta^*)$ its \emph{free homomorphic image}.  We will say that we \emph{successfully constructed a free homomorphism $\phi^*$ with  image $(Z^*,\delta^*)$}. 

We shall argue that $(Z^*,\delta^*)$ is unique. To appreciate this, let us consider a simple example. Suppose
we have a boundary face $uvw$. More precisely, $uv$ and $wv$ is the first and last dart around $v$, 
$vw$ and $uv$ is the first and last dart around $v$,
and  $wu$ and $vu$ is the first and last dart around $u$. Suppose, moreover, that each of the vertices $x\in\{u,v,w\}$ has $d_{Z^*}(x)=\delta^*(x)$. What will
happen is that $uvw$ becomes
an inner triangle where
for each $x\in\{u,v,w\}$,
what was the first incident dart becomes successor of what was the last incident dart. To get to this state  will, however, take multiple iterations that can happen in different ways. First, pick
a vertex, say $v$, and make it
internal by adding a new
parallel boundary edge between $u$ and $w$. Now,
$d_{Z^*}(u)=\delta^*(u)+1$
and $d_{Z^*}(w)=\delta^*(w)+1$. We can now do a degree identification of the two parallel boundary darts incident to $w$. The resulting homomorphic identifications will also identify the two parallel boundary darts incident to $u$, and now $uvw$ is the claimed inner triangle.

The pseudo-code for this is found in Algorithm \ref{alg:homomorphism-conf}. Note that Algorithm \ref{alg:homomorphism-conf} is designed for the more general case
where we instead of specifying a single degree specify a range of
possible degrees. 
However, by treating a single value $\delta(v)$ as the range $[\delta(v), \delta(v)]$ and calling the function with this input, we obtain the same procedure described in this section.

 \begin{lem}\label{lem:free-homo-conf}
Suppose that we are given a pseudo-configuration $(Z,\delta)$ and a set $S$ of dart identification requests. Suppose there is a homomorphism $\phi'$ from $(Z,\delta)$ to a pseudo-configuration $(Z',\delta')$ such that $\phi'$ respects $S$.
Then we can 
successfully construct a free homomorphism $\phi^*$ with image $(Z^*,\delta^*)$ respecting $S$. Up to an isomorphism,
the image $(Z^*,\delta^*)$ 
is unique, independent of  $\phi'$ and $(Z',\delta')$, and it is guaranteed that there
exists a homomorphism $\phi^{*\prime}$ from $(Z^*,\delta^*)$ to $(Z',\delta')$ so that 
$\phi^{*\prime}\circ\phi^*=\phi'$.
\end{lem}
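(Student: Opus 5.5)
The approach is to mirror the proof of Lemma~\ref{lem:Z*'}. I will show that every identification the construction performs is forced in every degree-respecting image $(Z',\delta')$, and that every error would contradict the existence of $\phi'$. The central invariant concerns the current homomorphism $\phi^*:Z\to Z^*$ maintained during the construction. The invariant says there is a homomorphism $\psi:Z^*\to Z'$ of pseudo-triangulations with $\psi\circ\phi^*=\phi'$, and that $\delta'(\psi(x))=\delta(y)$ whenever $x=\phi^*(y)$. Initially this holds by Lemma~\ref{lem:free-homo-trian}, applied to the underlying pseudo-triangulations, since $\phi'$ respects $S$. At the end, $\psi$ becomes $\phi^{*\prime}$.

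First I check the error conditions against the invariant. A loop in $Z^*$ maps under $\psi$ to a loop in $Z'$, but pseudo-configurations have none. A degree mismatch, meaning $\phi^*(u)=\phi^*(v)$ with $\delta(u)\neq\delta(v)$, would give $\delta'(\phi'(u))\ne\delta'(\phi'(v))$ although $\phi'(u)=\phi'(v)$. For the sub-degree error, take an inner vertex $v$ of $Z^*$ with a cyclic incidence list of length $d<\delta^*(v)$. By Lemma~\ref{lem:locally 1-1}-style reasoning, $\psi$ maps this cyclic list onto a cyclic sequence of successors in $Z'$, so $\psi(v)$ is inner in $Z'$ and its degree divides $d$. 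Hence $\delta'(\psi(v))\le d<\delta^*(v)$, a contradiction.

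Next I show the repair steps preserve the invariant. For a degree identification of darts $e,f$ at successor distance $\delta^*(v)$, the successor chain from $e$ to $f$ maps to a successor chain in $Z'$ of length $\delta'(\psi(v))$ around $\psi(v)$. Since $\psi(v)$ has degree exactly $\delta'(\psi(v))$, a boundary vertex of $Z'$ cannot carry such a chain, so $\psi(v)$ is inner and the chain closes up, giving $\psi(e)=\psi(f)$. Thus $\psi$ respects the new request, and Lemma~\ref{lem:free-homo-trian}, applied to the original $Z$ with the enlarged request set, yields the new $\psi$. For a boundary vertex $v$ with $d_{Z^*}(v)=\delta^*(v)$, the image $\psi(v)$ has $d_{Z'}(\psi(v))\ge$ the number of distinct images of its darts. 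By local injectivity this number is $\delta^*(v)=\delta'(\psi(v))$, so $\psi(v)$ must be inner, and $\psi(wv)$ has successor $\psi(uv)$. Axiom (M5) in $Z'$ then supplies darts $\psi(u)\psi(w)$ and their reverses. I extend $\psi$ by sending the new darts $uw,wu$ to these, which also shows $u,v,w$ are mapped to distinct vertices, ruling out the boundary error. Distinctness of $\psi(u),\psi(v),\psi(w)$ follows because $Z'$ has no loops. Here $u\neq w$ must be argued from the loop-free triangle $\psi(u)\psi(v)\psi(w)$.

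For termination, I need a bound. Each degree identification reduces the number of darts, while each boundary completion turns a boundary vertex into an inner one and adds just two darts. So I would use the potential ``number of boundary vertices'' together with the dart count, noting that inner vertices never become boundary again. Uniqueness, meaning independence of the order of the operations, follows because every step is forced in every image. In particular it is forced in the final result of any other run, which is itself a valid $(Z',\delta')$. Two runs therefore admit homomorphisms each way commuting with the maps from $Z$, and since both are minimal images they are isomorphic.

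The main obstacle is the local-injectivity step: showing that $\psi$ maps incidence lists of $Z^*$ injectively onto those of $Z'$, so that degree counts transfer. This requires the degree constraint $d_{Z'}(\cdot)\le\delta'$, or equality for inner vertices. I would first establish it as a separate claim about homomorphisms into pseudo-configurations, adapting Lemma~\ref{lem:locally 1-1}.
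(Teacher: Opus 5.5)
Your route is the paper's. You carry a homomorphism $\psi\colon Z^*\to Z'$ with $\psi\circ\phi^*=\phi'$ through the construction, and refute each error by pushing it into $Z'$. You show each repair preserves $\psi$, bound the run by boundary-vertex count plus dart count, and get uniqueness by letting each run's output serve as $(Z',\delta')$ for the other.

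However, two steps are wrong as written. Both come from treating the current $Z^*$ as if it were still the free homomorphic image of $Z$.

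\emph{First}, after a degree identification you reapply Lemma~\ref{lem:free-homo-trian} ``to the original $Z$ with the enlarged request set.'' Once a boundary completion has occurred, $Z^*$ contains darts $uw,wu$ and successor links that do not come from $Z$. A later degree identification can involve exactly such darts; the paper's own example identifies a newly added boundary dart with its parallel twin. Then the request has no preimage in $Z$, and recomputing from $Z$ would discard the completions. The paper instead applies Lemma~\ref{lem:free-homo-trian} to the \emph{current} pseudo-triangulation $Z^*$, with the single request $(e,f)$ and the homomorphism $\psi$. This gives $\phi^{**}\colon Z^*\to Z^{**}$ and $\psi'\colon Z^{**}\to Z'$, and $\phi^*$ is replaced by $\phi^{**}\circ\phi^*$.

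\emph{Second}, for the same reason the final outputs are not minimal images of $Z$, so ``both are minimal images, hence isomorphic'' is not available. The paper uses that completions add no vertices, so $V(Z_i^*)=\phi_i^*(V(Z))$. Hence the mutual homomorphisms are bijective on vertices, and it then compares dart counts. Alternatively, let $\alpha\colon Z_1^*\to Z_2^*$ and $\beta\colon Z_2^*\to Z_1^*$ be the mutual homomorphisms. Then $\beta\circ\alpha$ fixes $\phi_1^*(Z)$, and every added dart is determined by its successor or predecessor link to an earlier dart, so $\beta\circ\alpha=\mathrm{id}$. The same holds for $\alpha\circ\beta$.

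Finally, the ``main obstacle'' you name is not one, and its general form is false. A vertex of the intermediate $Z^*$ with $d_{Z^*}(v)>\delta^*(v)$ wraps around its image, so incidence lists of $Z^*$ are not mapped injectively in general. You only need injectivity at a boundary vertex with $d_{Z^*}(v)=\delta^*(v)$, and there your own chain argument from the degree-identification step suffices. The walk of $\delta'(\psi(v))-1$ successor steps from $\psi(uv)$ to $\psi(wv)$ cannot fit in a boundary list of $Z'$, which has fewer than $\delta'$ darts. So $\psi(v)$ is inner of degree $\delta'$, and the walk closes with $\psi(uv)$ following $\psi(wv)$. This is how the paper argues, with no separate lemma. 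Your use of (M5) in $Z'$ to define $\psi$ on the new darts is correct, and more explicit than the paper's.
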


\begin{proof}
The first part of our construction is to ignore the degrees and just apply Lemma \ref{lem:free-homo-trian} to $Z$ and $Z'$. This yield a homomorphism $\phi^*$ with  image $Z^*$ respecting $S$ and with a homomorphism $\phi^{*\prime}$ from 
$Z^*$ to $Z'$ such that
$\phi^{*\prime}\circ\phi^*=\phi'$.

Now that $S$ is respected, the rest of the construction
aims to manipulate $Z^*$ to become a degree-respecting configuration 
$(Z^{**},\delta^{**})$ (which will play the role of $(Z^*,\delta^*)$ in the statement of Lemma \ref{lem:free-homo-conf}), 
or produce an error if this is not possible. However, here $(Z',\delta')$ serves as a witness that the construction is possible, hence that an error should not occur.

As we do the modifications to $Z^*$
we will maintain that $Z^*$
is a pseudo-triangulation with homomorphisms $\phi^*$ from $Z$ to $Z^*$ and $\phi^{*\prime}$ from $Z^*$ to $Z'$ such that
$\phi^{*\prime}\circ \phi^*=\phi'$.

Assuming the above homomorphisms, we claim that  $Z^*$ cannot have a loop; for since $Z'$ is in a configuration, it has no loops, but then $Z^*$ cannot have a loop
since $\phi^{*\prime}$ would map such a loop in $Z'$.

Likewise we argue that there cannot be a degree mismatch; for consider
any two vertices $u$ and $v$ with $\phi^*(u)=\phi^*(v)$. Then we must
have $\phi'(u)=
\phi^{*\prime}(\phi^*(u))=
\phi^{*\prime}(\phi^*(v))=\phi'(v)$, and the definition of homorphisms between configurations imply that 
$\delta(u)=\delta'(\phi'(u))=\delta'(\phi'(v))=\delta(v)$.

Let us now consider an inner vertex $v$ in 
$Z^*$. It has a cyclic list $u_1v,\ldots,u_dv$ of darts around $v$ with
$d=d_{Z^*}(v)$. Because $\phi^{*\prime}$ is a homomorphism, $\phi^{*\prime}(u_1v),\ldots,\phi^{*\prime}(u_dv)$ must be a cyclic walk around $\phi^{*\prime}(v)$ following successors in $Z'$. Therefore $\phi^{*\prime}(v)$ is an inner vertex in $Z'$ with $d_{Z'}(\phi^{*\prime}(v))\leq d_{Z^*}(v)$. But since $Z'$ is a configuration, we have $d_{Z'}(\phi^{*\prime}(v))=\delta'(\phi^{*\prime}(v))=\delta^*(v)$. Thus, we have a contradiction 
if we report an
error saying $d_{Z^*}(v)<\delta^*(v)$.

Now let $v$ be any vertex in 
$Z^*$ with $d_{Z^*}(v)>\delta^*(v)$. Let $uv$ and $wv$ the the incident darts from our construction that are exactly $\delta^*(v)$ apart in the successor order around $v$.
Then in $Z'$ we must get from $\phi^{*\prime}(uv)$ to $\phi^{*\prime}(wv)$
following successors $\delta^*(v)$ times around $\phi^{*\prime}(v)$. However, $\delta^*(v)=\delta'(\phi^{*\prime}(v))$, and since $Z'$ is a configuration, this implies  $\phi^{*\prime}(uv)=\phi^{*\prime}(wv)$.

We know that $\phi^{*\prime}(uv)=\phi^{*\prime}(wv)$, so as in our construction, we can apply Lemma \ref{lem:free-homo-trian} with $Z=Z^*$ and $\phi'=\phi^{*\prime}$ and an input identification request $(uv,wv)$.
The result is a homomorphic image $Z^{**}$ with homomorphisms $\phi^{**}$ from $Z^*$ to $Z^{**}$ and $\phi^{**'}$ from
$Z^{**}$ to $Z'$. We can now let
$Z^{**}$ play the role of $Z^*$,
with $\phi^{**}\circ \phi^*$ the new $\phi^*$ and $\phi^{**\prime}$ the new $\phi^{*\prime}$.

Finally, we have the last case in our construction with a boundary vertex  $v$ of $Z^*$
where $uv$ and $wv$ are the first and the last in the successor list of darts with head $v$ and the length of the list is exactly $\delta^*(v)$. The image in $Z'$ with the homomorphy $\phi^{*\prime}$ has the same successor sequence
from $\phi^{*\prime}(uv)$ to $\phi^{*\prime}(wv)$ of length $\delta^*(v)$ but since $Z'$ is a configuration and $\delta'(\phi^{*\prime}(v))=\delta'(v)$, we conclude that the list is cyclic in $Z'$ with $\phi^{*\prime}(uv)$ the successor of $\phi^{*\prime}(wv)$.

Since $Z'$ is loop free, we know it has an inner
facial triangle with 
distinct vertices $u'=\phi^{*\prime}(u)$, $v'=\phi^{*\prime}(v)$, and $w'=\phi^{*\prime}(w)$ including an
edge (consisting of two darts) between $u'$ and $w'$. 
This implies that $u$, $v$, and $w$ are distinct,
so we have a contradiction if we report a boundary error because they are not distinct.

In our construction, we  added the boundary dart  $uw$ with reversal $wu$ to $Z^*$, assigning all the relevant predecessors and successors so that the homomorphism $\phi^{*\prime}$ can be extended to map $uw$ and
$wu$ to the darts in $u'w'$ and $w'u'$. The homomorism $\phi^*$ from
$Z$ to $Z^*$ is not affected by this addition,
for in the image of $Z$,
we only changed boundary  $\nil$-pointers from $uv$
and $wv$.

We have thus proved that all our modifications
of $Z^*$ maintain the homomorphisms $\phi^*$ and $\phi^{*\prime}$. It is also clear that if we terminate, it is because we respect the degrees in $\delta^*$ so that
$(Z^*,\delta^*)$ is a pseudo-configuration.

The next question is whether we are sure to terminate. First consider the case where we add a new boundary dart $uw$ with reversal $wu$ around $v$ that thereby becomes inner.
The homomorphic image of an inner vertex cannot be a boundary vertex, so each vertex can only turn from boundary to inner once. The number of boundary darts added is therefore at most twice the number of boundary vertices in $Z$.  All other operations are dart identifications, each of which reduces the number of dart identifiers in $Z^*$. The total number of dart identifications 
is therefore bounded by  twice the number of boundary vertices plus 
the number of dart identifiers in $Z$.

Finally, we want to argue that the outcome $(Z^*,\delta^*)$ is unique up to an isomorphism. 
Suppose we have obtained two outcomes $(Z^*_1,\delta^*_1)$ and
$(Z^*_2,\delta^*_2)$. We can then let $(Z^*_2,\delta^*_2)$ play the role of $(Z',\delta')$ for $(Z^*_1,\delta^*_1)$ and vice versa. We now know that $\phi^{*\prime}_1\circ \phi^*_1=\phi^*_2$, so any vertex from $Z$ identified by $\phi^*_1$ must also be identified by $\phi^*_1$, and vice versa. We also note from
our construction that $V(Z^*_i)=\phi_i^*(V(Z))$ (this does not hold for the darts, as we may add darts). We conclude that $|V(Z^*_1)|=|V(Z^*_2)|$. This further implies that $\phi^{*\prime}_1$ must be 1-1 from $V(Z^*_1)$ to $V(Z^*_2)$, but then 
$\phi^{*\prime}_1$ is also
1-1 from $E(Z^*_1)$
to $E(Z^*_1)$, implying $|E(Z^*_1)|\leq |E(Z^*_2)|$. A symmetric argument implies
$|E(Z^*_2)|\leq |E(Z^*_1)|$, so
$|E(Z^*_1)|=|E(Z^*_2)|$, and therefore $\phi^{*\prime}_1$
is a bijective homomorphism, which is equivalent to saying it is an isomorphism.
\end{proof}

Often, it is important that we can handle dart identification requests
in batches or sequentially, one at a time. To this end, we can use the construction of Lemma \ref{lem:free-homo-conf} to prove:

\begin{lem}\label{lem:free-homo-conf-trans}
Let a pseudo-configuration $(Z,\delta)$ and two sets $S_1$ and $S_2$ of dart identification requests be given. Suppose that there is a homomorphism $\phi'$ from $(Z,\delta)$ to a 
pseudo-configuration $(Z',\delta')$ such that $\phi'$ respects $S_1\cup S_2$.
As in 
Lemma \ref{lem:free-homo-conf}, we can successfully construct a free homomorphism $\phi^*$ from $(Z,\delta)$ to $(Z^*,\delta^*)$ respecting $S_1\cup S_2$
by first constructing a free homomorphism $\phi_1^*$ from $(Z,\delta)$ to $(Z^*_1,\delta^*_1)$ respecting $S_1$, and second
constructing a free homomorphism $\phi_2^*$ from $(Z^*_1,\delta^*_1)$ to $(Z^*_{12},\delta^*_{12})$
respecting 
\[S^*_2=\{(\phi_1^*(e),\phi_1^*(f)\mid (e,f)\in S_2\}.\]
The composition $\phi^*_2\circ \phi^*_1$ is
a free homomorphism from $(Z,\delta)$ to $(Z^*_{12},\delta^*_{12})$
respecting $S_1\cup S_2$, and
therefore $(Z^*_{12},\delta^*_{12})$ is isomorphic to $(Z^*,\delta^*)$.
\end{lem}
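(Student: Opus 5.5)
The plan is to combine the universal property in Lemma~\ref{lem:free-homo-conf} with uniqueness of free images, applied in two stages. Throughout, $\phi'$ from $(Z,\delta)$ to $(Z',\delta')$ respecting $S_1\cup S_2$ serves as the witness that no stage fails.

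\emph{Stage one.} Since $\phi'$ respects $S_1\subseteq S_1\cup S_2$, Lemma~\ref{lem:free-homo-conf} applies to $(Z,\delta)$ with requests $S_1$. It yields a successful free homomorphism $\phi_1^*$ onto $(Z_1^*,\delta_1^*)$, together with a homomorphism $\psi_1$ from $(Z_1^*,\delta_1^*)$ to $(Z',\delta')$ satisfying $\psi_1\circ\phi_1^*=\phi'$.

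\emph{Stage two.} We check that $\psi_1$ respects $S_2^*$. For $(e,f)\in S_2$ we have
\[
\psi_1(\phi_1^*(e))=\phi'(e)=\phi'(f)=\psi_1(\phi_1^*(f)).
\]
Applying Lemma~\ref{lem:free-homo-conf} again, now to $(Z_1^*,\delta_1^*)$ with requests $S_2^*$, gives a successful $\phi_2^*$ onto $(Z_{12}^*,\delta_{12}^*)$ and a homomorphism $\psi_2$ to $(Z',\delta')$ with $\psi_2\circ\phi_2^*=\psi_1$. In the same way, a single application of Lemma~\ref{lem:free-homo-conf} to $S_1\cup S_2$ gives $\phi^*$ onto $(Z^*,\delta^*)$ and $\psi$ with $\psi\circ\phi^*=\phi'$.

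\emph{Identifying the two images.} The composition $\phi_2^*\circ\phi_1^*$ is a homomorphism of pseudo-configurations respecting $S_1$ (since $\phi_1^*$ does) and $S_2$ (by construction of $S_2^*$). I would exhibit homomorphisms in both directions between $(Z^*,\delta^*)$ and $(Z^*_{12},\delta^*_{12})$ commuting with the maps from $Z$.

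First, apply the universal property of $\phi^*$ with $(Z^*_{12},\delta^*_{12})$ as witness. This gives $\alpha$ with $\alpha\circ\phi^*=\phi_2^*\circ\phi_1^*$.

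Conversely, $\phi^*$ respects $S_1$, so the universal property of $\phi_1^*$ gives $\beta_1$ from $Z_1^*$ to $Z^*$ with $\beta_1\circ\phi_1^*=\phi^*$. Then $\beta_1$ respects $S_2^*$, because $\beta_1\phi_1^*(e)=\phi^*(e)=\phi^*(f)$ for $(e,f)\in S_2$. The universal property of $\phi_2^*$ therefore gives $\beta$ with $\beta\circ\phi_2^*\circ\phi_1^*=\phi^*$.

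It remains to conclude that $\alpha$ is an isomorphism. I would reuse the counting argument from the end of the proof of Lemma~\ref{lem:free-homo-conf}. From $\alpha\circ\phi^*=\phi_2^*\circ\phi_1^*$ and $\beta\circ\phi_2^*\circ\phi_1^*=\phi^*$, the two maps out of $Z$ identify exactly the same vertices. Both images have vertex set equal to the image of $V(Z)$, because the construction never adds vertices. Hence $\alpha$ is a bijection on vertices, which gives $|E(Z^*)|\le|E(Z^*_{12})|$ and, symmetrically via $\beta$, the reverse inequality. So $\alpha$ is bijective, hence an isomorphism. This also shows that $\phi_2^*\circ\phi_1^*$ is free, since it inherits the universal property through $\alpha$.

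\emph{Main obstacle.} The delicate point is this last step. Darts can be added during the construction (boundary completions), so the image is not simply the image of $Z$, and one must check that a homomorphism bijective on vertices is also injective on darts. I would argue this as in the previous proof: both sides are minimal-type images whose added darts are forced boundary completions, and the dart-count inequalities in both directions close the argument.
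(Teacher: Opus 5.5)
Your proposal is correct and follows essentially the route the paper intends. The paper gives no separate proof and only says the lemma follows from the construction of Lemma~\ref{lem:free-homo-conf}: the witness argument there makes each stage succeed, and the closing vertex-identification and dart-counting argument gives the isomorphism, which is exactly what you do with $\alpha$ and $\beta$.

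One correction on the step you flag. Dart-injectivity does not come from minimality or from the count inequalities (which presuppose it); it comes from local injectivity. Homomorphisms of pseudo-configurations preserve $\delta$, so each incidence list (a path of fewer than $\delta(v)$ darts, or a cycle of exactly $\delta(v)$ darts) maps injectively into the incidence list of the image vertex. Hence a vertex-injective homomorphism is dart-injective, and the two count inequalities then make $\alpha$ bijective.
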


\subsection{Degree ranges}\label{sect:ranges}

For our discharging rules, it is important that we can specify, for boundary vertices $v$, not just a degree $\delta(v)$ but a range of degrees $[\delta^-(v),\delta^+(v)]$. Indeed
we may have $\delta^+(v)=\infty$ implying that $v$ can embed to a vertex of arbitrarily high degree.
We shall use  $(Z,\delta^-,\delta^+)$ to denote such a configuration with a range of degrees. For an inner vertex $v$, we have only one degree option $d_Z(v)=\delta^-(v)=\delta^+(v)$. For a boundary vertex $v$, we need
$\delta^-(v)>d_Z(v)$. We can think of $(Z,\delta^-,\delta^+)$ as representing the set of configurations $(Z,\delta)$
such that $\delta(v)\in[\delta^-(v),\delta^+(v)]$ for all vertices $v$ in $Z$.

We will now describe how to handle a set $S$ of dart identification requests for a configuration $(Z,\delta^-,\delta^+)$ with degree ranges.
The desired output is set of 
configurations with degree
ranges $(Z_1^*,\delta^{*-}_1,\delta^{*+}_1),\ldots,(Z_k^*,\delta^{*-}_k,\delta^{*+}_k)$ satisfying that
$(Z^*,\delta^*)$ belongs
to some $(Z_i^*,\delta^{*-}_i,\delta^{*+}_i)$, 
$1\leq i\leq k$, if and only if
there is a $(Z,\delta)\in (Z,\delta^-,\delta^+)$ such that $(Z^*,\delta^*)$ is the free homomorphic image of $(Z,\delta)$ respecting $S$ as described in Lemma \ref{lem:free-homo-conf}. If there is no such $(Z^*,\delta^*)$, then the construction is given up.

We shall basically mimic our previous algorithm to construct such a homomorphic
image for configuration with specified individual degrees (not ranges).
The first step is again to apply Lemma \ref{lem:free-homo-trian}.
This yields the pseudo-triangulation $Z^*$ and a homomorphism $\phi^*$ from $Z$ to $Z^*$ respecting $S$.
Again we check for loops, giving up if any is found.
For the degree ranges, for each $v^*$ in $Z^*$, we let the degree range $[\delta^{*-}(v^*),\delta^{*-}(v^*)]$ be the intersection of the degree ranges $[\delta^{-}(v),\delta^{+}(v)]$ of
all $v$ with $\phi^*(v) = v^*$. If the intersection is empty, we give up. This corresponds to the degree mismatch when we only had a single specified degree for each vertex.
The corresponding pseudo-code is presented in Algorithm \ref{alg:dartIdentification}.

We now have a $(Z^*,\delta^{*-},
\delta^{*-})$ that respects $S$, but it is typically not a configuration yet, so we need to modify it as we did in the specified single degree case. However, many of modifications we perform on  
a vertex $v$ depend on its specified degree, and we may therefore have to \emph{subdivide} the interval $[\delta^{*-}(v), 
\delta^{*+}]$.
More precisely, first
we create a copy $(Z^{*\prime},\delta^{*\prime-}, 
\delta^{*\prime+})$
of $(Z^*,\delta^{*-}, 
\delta^{*+})$.
Next, for some $d\in [\delta^{*-}(v),\delta^{*+}(v))$,  we
set $\delta^{*+}(v)=d$ and $\delta^{*\prime-}(v)=d+1$.
In combination, 
$(Z^*,\delta^{*-},\delta^{*+})$ and $(Z^{*\prime},\delta^{*\prime-}, 
\delta^{*\prime+})$ cover exactly the  same single degree configurations as before, and we can now continue the construction with a branch for each of them. However, we want to minimize the number of subdivisions so that we do not end up with more configurations  $(Z_i^*,\delta^{*-}_i,\delta^{*+}_i)$ than necessary.

As a first simple case, recall from the specified single degree case that we gave up with a sub-degree error if there was an inner vertex $v$ with $d_{Z^*}(v)<\delta^*(v)$. With degree ranges, we can give up with the same error if 
$d_{Z^*}(v)<\delta^{*-}(v)$.
As we modify our configuration, we will always check for such errors, so below we assume for
all inner $v$ that $d_{Z^*}(v)\geq \delta^{*-}(v)$.
The corresponding pseudo-code is presented in Algorithm \ref{alg:inner_subdegree_error}.

Since we are trying to avoid subdividing ranges, we prioritize looking for a vertex $v$ with a single
specified degree $\delta^*(v)=\delta^{*-}(v)=\delta^{*+}(v)$ with degree issues, as described below.

One degree issue is if a vertex $v$ has a single specified degree $d_{Z^*}(v) > \delta^*(v)$, then
as in the case without degree ranges, we identify 
two incident darts $e$ and $f$ of successor
distance exactly $\delta^*(v)$. This identification triggers
a new cascade of homomorphic identification requests in $Z^*$ until we have a new homomorphic image. The process involves vertex identifications, and every time we identify two vertices $u$ and $v$, the resulting
range is the intersection of the ranges of $u$ and $v$, and we give up if this intersection is empty. At the end, we also check for loops.
The corresponding pseudo-code is presented in the if clause of Algorithm \ref{alg:fix_single_degree_issue}.

Another degree issue is if 
a boundary vertex $v$ has a single specified degree $\delta^{*}(v)=d_{Z^*}(v)$.
If $uv$ and $wv$ are the incident boundary arcs, then this is the case where we added a  
new boundary edge between $u$ and $w$, turning $v$ into an inner vertex.
The corresponding pseudo-code is presented in the else-if clause of Algorithm \ref{alg:fix_single_degree_issue}.

If no vertex with a single specified degree has degree-issue, we look for a vertex $v$ with $\delta^{*-}(v)<\delta^{*+}(v)$ and $\delta^{*-}(v)\leq d_{Z^*}(v)$
We subdivide this interval into a single degree 
$\delta^{*}(v)=\delta^{*-}(v)$  
and the reminder $[\delta^{*-}(v)+1,
\delta^{*+}(v)]$. We continue with a branch for each of the resulting two configurations.
The corresponding pseudo-code is presented in Algorithm \ref{alg:single_out_lower_deg}.
With each brach we return to see if some of the previous cases apply before we do any additional branching.

Note that when none of the cases apply, then for each inner vertex $v$, we have a single specified degree $\delta^*(v)=\delta^{*-}(v)=\delta^{*+}(v)=d_{Z^*}(v)$ and for
each other vertex $v$, we have $\delta^{*-}(v)>d_{Z^*}(v)$, so indeed this is a pseudo-configuration with specified degree ranges.
The overall algorithm for resolving degree-issues is presented in Algorithm \ref{alg:resolve_degree_issues}.

The above process terminates (in exponential time, but here, for computer checks, we are not concerned about efficiency) even if some vertices $v$ have $\delta^{*+}(v)=\infty$.
To see this, first note that when we subdivide a range, the branches correspond to disjoint degree combinations. Also, when we branch,
we single out a specified single degree $\delta^*(v)\leq d_{Z^*}(v)$. Thus it suffices to bound the maximal possible degree. It is easy to see that the sum of boundary degrees can only decrease, and for inner vertices, the degree can only decrease.
Thus we can never get a degree that is bigger than the the sum of the boundary degrees or the maximal in-degree in the input pseudo-triangulation $Z$.

Since we are just mimicking the single degree case, it should be clear that the branching leads to the desired set of 
configurations.  Thus we have

\begin{lem}\label{lem:free-homo-conf-ranges}
Assume that we are given a pseudo-configuration $(Z,\delta^-,\delta^+)$ with degree ranges and a set $S$ of dart identification requests.
We will construct configurations with degree
ranges $(Z_1^*,\delta^{*-}_1,\delta^{*+}_1),\ldots,(Z_k^*,\delta^{*-}_k,\delta^{*+}_k)$ with homomorphisms $\phi^*_i$ from $Z$ to $Z^*_i$ such that
$(Z^*,\delta^*)$ belongs
to some $(Z_i^*,\delta^{*-}_i,\delta^{*+}_i)$, 
$1\leq i\leq k$, if and only if
there is a $(Z,\delta)\in (Z,\delta^-,\delta^+)$ such that $(Z^*,\delta^*)$ is the free homomorphic image by $\phi^*_i$ of $(Z,\delta)$ respecting $S$ (as described in Lemma \ref{lem:free-homo-conf}). If there is no such $(Z^*,\delta^*)$, then the construction will tell that there are no solutions. 
\end{lem}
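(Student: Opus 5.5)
The plan is to prove Lemma~\ref{lem:free-homo-conf-ranges} by reducing it to the single-degree case, Lemma~\ref{lem:free-homo-conf}. The argument tracks the branching process and maintains one invariant at every node of the branching tree. Each current object is a pseudo-triangulation $Z^*$ with degree ranges, together with a homomorphism $\phi^*$ from $Z$. The invariant: the union, over all live branches, of the sets of single-degree pseudo-configurations $(Z^*,\delta^*)$ represented by a branch equals the set of free homomorphic images (in the sense of Lemma~\ref{lem:free-homo-conf}) of those $(Z,\delta)\in(Z,\delta^-,\delta^+)$ for which the construction succeeds.

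\textbf{Base case.} Start with the output of Lemma~\ref{lem:free-homo-trian}, which ignores degrees, so it is identical for every choice of $\delta$. Intersecting ranges over each fiber of $\phi^*$ is exactly the degree-mismatch test of the single-degree algorithm, applied simultaneously to all $\delta$:
\begin{itemize}
\item a $\delta$ survives iff it is constant on fibers with value in the intersection;
\item an empty intersection means no $\delta$ survives;
\item the loop test does not depend on $\delta$ at all.
\end{itemize}

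\textbf{Inductive step.} Check each operation.
\begin{itemize}
\item \emph{Subdivision.} It splits a range into two disjoint pieces whose union is the original, so the represented sets are partitioned and the invariant is preserved.
\item \emph{Operations at a single specified degree.} The degree identification, the sub-degree error (triggered when $d_{Z^*}(v)<\delta^{*-}(v)$, which for inner $v$ excludes every admissible $\delta$), and the boundary completion are each applied only at a vertex whose degree is fixed. Hence they are precisely the step the single-degree algorithm of Lemma~\ref{lem:free-homo-conf} would perform for every $\delta$ represented in the branch, and they affect all such $\delta$ identically. One also checks that any range intersection arising during the cascaded identifications plays the role of the degree-mismatch check for each represented $\delta$.
\end{itemize}

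\textbf{Order of operations.} The key point is that Lemma~\ref{lem:free-homo-conf} asserts the free image is unique regardless of the order of steps. This follows from its uniqueness clause, or from Lemma~\ref{lem:free-homo-conf-trans} for batching. So, for a fixed $\delta$, the order imposed by our prioritization (single-degree issues first, then branching) still yields its free image, or fails exactly when the single-degree construction fails. Here failure means no homomorphic image respecting $S$ exists; by Lemma~\ref{lem:free-homo-conf}, any error shows this.

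\textbf{Termination and correctness of outputs.} Termination uses the argument already sketched before the lemma:
\begin{itemize}
\item subdivisions only single out degrees $\delta^*(v)\le d_{Z^*}(v)$;
\item inner degrees never increase, and the sum of boundary degrees never increases;
\item identifications reduce the number of dart representatives, and each vertex turns from boundary to inner at most once.
\end{itemize}
Hence the branching tree is finite. When no case applies, each leaf is a pseudo-configuration with degree ranges: inner vertices have the exact degree, and boundary vertices satisfy $\delta^{*-}>d_{Z^*}$. Every single-degree member of a leaf satisfies all the degree constraints, so no further step of the single-degree algorithm would change it. It is therefore the free image for any $\delta$ mapping into it.

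\textbf{Main obstacle.} The hardest step is the converse direction. One must show that every member $(Z^*,\delta^*)$ of a leaf range actually arises from some $(Z,\delta)$, so that no spurious degree assignments appear. The plan is to take $\delta(v):=\delta^*(\phi^*_i(v))$ and verify that it lies in the original ranges, since the leaf ranges are intersections of the original ranges. Then rerun the single-degree algorithm on this $\delta$, following the same branch, and conclude by uniqueness that it produces $(Z^*,\delta^*)$.
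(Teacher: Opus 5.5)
Your proposal is correct and takes the same route as the paper. The paper justifies the lemma only by saying that the range algorithm mimics the single-degree construction of Lemma~\ref{lem:free-homo-conf} on each branch, plus the termination remark preceding the lemma; you fill in that simulation, including the converse via $\delta(v):=\delta^*(\phi^*_i(v))$ and order-independence via the uniqueness clause of Lemma~\ref{lem:free-homo-conf}. Two small points to tidy: state your invariant per branch as ``the current object is an intermediate stage of a valid single-degree run for every $\delta$ the branch represents'' (intermediate objects are not yet free images), and note that the sub-degree check applies to all inner vertices, not only fixed-degree ones, which is harmless since $d_{Z^*}(v)<\delta^{*-}(v)$ excludes every represented $\delta$.
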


The pseudo-code for the algorithm behind Lemma \ref{lem:free-homo-conf-ranges}
is found in Algorithm \ref{alg:homomorphism-conf}.

While this is not important for correctness, we note the
representation $(Z_1^*,\delta^{*-}_1,\delta^{*+}_1),\ldots,(Z_k^*,\delta^{*-}_k,\delta^{*+}_k)$ that we construct for Lemma \ref
{lem:free-homo-conf-ranges}, is minimal in the following sense.
For any $(Z,\delta)\in (Z,\delta^-,\delta^+)$ with
a homomorphic image respecting $S$, there is exactly
one homomorphism $\phi_i^*$ that is valid in the sense that
for every $v\in Z$, if $v^*=\phi_i^*(v)$ is internal to the image $Z^*_i$, then $d_{Z^*_i}(v^*)=\delta(v)$, and if $v^*$ is on the boundary of $Z_i^*$, then $d_{Z_i^*}(v^*)<\delta(v)$. In fact, $(Z_i^*,\delta^{*-}_i,\delta^{*+}_i)$ is exactly the union of the images of all
$(Z,\delta)\in (Z,\delta^-,\delta^+)$ for which $\phi^*_i$ is valid.

\subsubsection{Free combination of discharging rules with degree ranges}
\label{subsub:combine_rules}

We want to ask if
a given set of discharging rules 
$R_1,\ldots,R_\ell$ can possibly be combined to 
send charge over an oriented edge or dart $e=st$ in some triangulation $G$. 

Recall that a rule $R_i$ consists of a configuration with degree ranges $(Z_i,\delta^-_i,\delta^+_i)$, a charge $r_i$, and an oriented edge $s_it_i$. For the purpose of
combining rules, we will just view $s_it_i$ as a dart $e_i$ with $\tail(e_i)=s_i$ and $\head(e_i)=t_i$. Rule $R_i$ applies to $e=st$ in $G$ if and only if there is a homomorphism
$\phi'_i$ from $Z_i$ to $G$ such that $\phi'_i(e_i)=e$ and such that for each $v_i\in Z_i$,             $d_G(\phi'(v_i))\in [\delta^-(v_i),\delta^+(v_i)]$. If rules $R_1,\ldots,R_\ell$ all apply to $e$, then they
send charge $\sum_{i=1}^\ell r_i$ over $e$. 

To see if the rules $R_1,\ldots,R_\ell$ apply, we can equivalently look at the disjoint union
\[(Z^\sqcup,\delta^{\sqcup-},\delta^{\sqcup+})= (Z_1,\delta^-_1,\delta^+_1)\sqcup\cdots\sqcup(Z_\ell,\delta^-_\ell,\delta^+_\ell).\]
We then ask if it contains
a pseudo-configuration 
$(Z,\delta)\in (Z^\sqcup,\delta^{\sqcup-},\delta^{\sqcup+})$
with a homomorphism
$\phi'$ to $G$ mapping each $e_i$ to $e$. 

We now apply Lemma \ref{lem:free-homo-conf-ranges} to $(Z^\sqcup,\delta^{\sqcup-},\delta^{\sqcup+})$ with identification requests 
\[S=\{(e_i,e_{i+1})\mid i=1,\dots,\ell-1\}.\]
Since $\phi'$ respects 
$S$, the construction will be successful, resulting in free homomorphic images  
$(Z_1^*,\delta^{*-}_1,\delta^{*+}_1),\ldots,(Z_k^*,\delta^{*-}_k,\delta^{*+}_k)$, one of which contains the homomorphic image $(Z^*,\delta^*)$ of $(Z,\delta)$. Note that 
Lemma \ref{lem:free-homo-conf-ranges} also provides the free homomorphisms $\phi^*_i$ from $Z$ to $Z^*_i$.

We shall view the free combination of the discharging rules $R_1,\ldots,R_\ell$
as a set of 
combined discharging rules $R^*_i$ $(1\le i \le \ell)$ where
\[R^*_i = ((Z_i^*,\delta^{*-}_i,\delta^{*+}_i),e^*_i,r^*_i)\textnormal{ with } e^*_i = \phi_i^*(e_0)\textnormal{ and }r^*_i=\sum_{i=1}^\ell r_i.\]
Each of these $R^*_i$ is called a \emph{free combination} of $R_1,\ldots,R_\ell$.
If the above free combination of $R_1,\ldots,R_\ell$ fails in the sense of producing an empty set of combined discharging rules, then we know 
that these rules cannot be combined to send charge over any oriented edge $st$ in any triangulation $G$.
The converse is not true, for we only know that each $Z^*_i$ is a pseudo-triangulation. In principle, it may not even be planar.

Instead of combining many discharging rules directly as suggested above, we will add discharging rules to the combination, one at a time,
as justified by Lemma \ref{lem:free-homo-conf-trans}. For example, suppose that we have computed the combined discharging rules $R^*_1\ldots,R^*_k$ for $R_1\ldots,R_\ell$. To add a new rule $R_{\ell+1}$ to the combinations, for $i=1,\cdots,k$, we
compute the set of combined discharging rules for the two rules $R^*_i$ and $R_{\ell+1}$, and then we take the union of all these sets.

\drop{To upper-bound the charge along any oriented edge, we look for the free combination yielding the maximal charge (charge two for the first rule and charge one for every other rule). 
With 43 rules, 84 including two orientations of all but two of them, there could potentially be $2^{84}$ combinations to consider. What saves
us is that most discharging rules cannot be combined, and if some rules cannot be combined, then neither can any superset of these rules.}

Returning to the problem of computing the maximal discharge over any edge, 
let $R_1,\ldots,R_s$ be the complete set of discharging rules from Figure \ref{fig:rules} including their mirror versions. For $i=0,\ldots,s$, we want to
construct $\cR_i$ consisting of all the free combinations of all the subsets of rules from $R_1,\ldots,R_i$. At the end, we will return the
maximal charge of any rule in $\cR=\cR_s$.

We define a ``neutral" discharging rule $R_0 = ((Z_0,\delta^-_0,\delta^+_0),e_0,0)$,
where $Z_0$ has two vertices $s_0$ and $t_0$ and two darts; namely $e_0=s_0t_0$ and its reverse $t_0s_0$.
Moreover, $\delta_0^-(s_0)=\delta^-_0(t_0)=1$ and $\delta^+_0(s_0)=\delta_0^+(t_0)=\infty$. 
Note that the free combination of $R_0$ and any rule $R_i$ is a set consisting of a rule isomorphic to $R_i$. 

We now initialize with 
$\cR^*_0=\{R_0\}$. Then, for $i=1,\ldots,s$,
we start by setting $\cR^*_i=\cR^*_{i-1}$. Next,
for every combined rule
$R^*_{i-1}\in \cR^*_{i-1}$, we
construct and add all free combinations of $R^*_{i-1}$ and $R_i$ to $\cR^*_i$. Finally, we set $\cR^*=\cR_s$.

In summary, we have proved the following.

\begin{lem}\label{lem:free-comb-rules}
Based on a set of discharging rules 
$\cR$, we can construct a
complete set $\cR^*$ of free combinations of subsets of discharging rules from $\cR$ such that if there are
some discharging rules $R_1,\ldots,R_\ell\in\cR$ that can be combined
to send charge $r$ over some dart $e$ in some triangulation $G$, then $R_1,\ldots,R_\ell$ has a free
combination $R^*\in\cR^*$
that sends this charge over $e$. More precisely, with
\[R^*=((Z^*,\delta^{*-},\delta^{*+}),e^*,r^*)\textnormal,\]
we have $r^*=r$ and a homomorphism $\phi^{*\prime}$ from $Z^*$ to $G$ mapping
$e^*$ to $e$ and each $v^*\in Z^*$ to 
a vertex $v$ in $G$ with $\delta^{*-}(v^*)\leq d_G(v)\leq \delta^{*+}(v)$. 
\end{lem}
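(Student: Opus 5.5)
The plan is an induction on $i$ that follows the iterative construction of $\cR^*_0\subseteq\cR^*_1\subseteq\cdots\subseteq\cR^*_s=\cR^*$. The invariant I would maintain is the following. For every subset $I\subseteq\{1,\ldots,i\}$ of rules that can all be applied to a common dart $e$ of some triangulation $G$, via embeddings $\phi'_j$ with $\phi'_j(e_j)=e$ and degrees inside the prescribed ranges, there is an $R^*\in\cR^*_i$ with charge $r^*=\sum_{j\in I}r_j$. Moreover, there is a homomorphism $\phi^{*\prime}$ from $Z^*$ to $G$ that maps $e^*$ to $e$ and maps each $v^*$ to a vertex whose $G$-degree lies in $[\delta^{*-}(v^*),\delta^{*+}(v^*)]$. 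The lemma is the case $i=s$.

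The base case $i=0$ uses the neutral rule $R_0$. Its two-vertex graph maps onto $st$, and its degree ranges $[1,\infty]$ impose no restriction, so $I=\emptyset$ is covered with charge $0$.

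For the inductive step, fix $I\subseteq\{1,\ldots,i\}$ that applies to $e$ in $G$. If $i\notin I$, the statement follows directly from the induction hypothesis, because $\cR^*_{i-1}\subseteq\cR^*_i$. If $i\in I$, apply the hypothesis to $I\setminus\{i\}$. This gives a combined rule $R^*_{i-1}$ together with a homomorphism $\psi$ from $Z^*_{i-1}$ to $G$ with $\psi(e^*_{i-1})=e$ and degrees in range. Form the disjoint union of $R^*_{i-1}$ and $R_i$. Taking the union of $\psi$ and $\phi'_i$ gives a homomorphism from this disjoint union to $G$. That map respects the single identification request $(e^*_{i-1},e_i)$, since both darts go to $e$.

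The main step is to pass from this homomorphism into $G$ to the existence of a suitable free image. I would first fix the concrete degree function $\delta=d_G\circ(\psi\sqcup\phi'_i)$. This $\delta$ lies in the product of degree ranges, so the pair $(Z^\sqcup,\delta)$ is one member of the family $(Z^\sqcup,\delta^{\sqcup-},\delta^{\sqcup+})$. I then view $G$ as the pseudo-configuration $(G,d_G)$ and invoke Lemma~\ref{lem:free-homo-conf}. It yields a free image $(Z^{**},\delta^{**})$, together with a map $\phi^{*\prime}$ to $G$ factoring the given homomorphism.

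Next, Lemma~\ref{lem:free-homo-conf-ranges} guarantees that this free image belongs to one of the range-configurations $(Z_k^*,\delta_k^{*-},\delta_k^{*+})$ that the algorithm adds to $\cR^*_i$. The degree ranges of that output contain $\delta^{**}$, and $\delta^{**}$ equals the $G$-degrees of the image vertices. Hence the degree condition in the invariant holds. The charges add to $r^*_{i-1}+r_i$, which equals $\sum_{j\in I}r_j$. Finally, $e^*$ is the common image of $e^*_{i-1}$ and $e_i$, so it maps to $e$.

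I expect the main obstacle to be a technicality: $G$ is a triangulation rather than a pseudo-configuration with boundary, so the hypotheses of Lemma~\ref{lem:free-homo-conf} must be checked for it. A triangulation has no boundary darts, and every vertex $v$ is inner with $d_G(v)$ equal to its degree. So $(G,d_G)$ satisfies (M1)--(M6), has no loops, and is a valid pseudo-configuration with an empty boundary, and the lemma applies.

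I also need Lemma~\ref{lem:free-homo-conf-trans} to justify combining rules one at a time rather than identifying all chosen rules simultaneously. Composing free homomorphisms sequentially gives the same result as the batch construction up to isomorphism. Together with Lemma~\ref{lem:free-homo-conf-ranges}, which keeps track of the ranges, this lets the invariant pass from $i-1$ to $i$.
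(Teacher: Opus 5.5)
Your proof is correct and follows the paper's argument. The homomorphisms of the rules into $G$ witness that the identification request is respected, so Lemma~\ref{lem:free-homo-conf} and Lemma~\ref{lem:free-homo-conf-ranges} guarantee that one of the constructed free combinations maps to $G$ with $e^*\mapsto e$ and degrees in range, built up one rule at a time from the neutral rule $R_0$. Your explicit induction on $i$ is a cleaner packaging than the paper's batch-then-sequential argument, and it makes the appeal to Lemma~\ref{lem:free-homo-conf-trans} unnecessary, since each step only needs a homomorphism from $R^*_{i-1}\sqcup R_i$ into $G$.
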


The corresponding pseudo-code is found in Algorithm \ref{alg:combine_rules}.

Together with the pseudo-code we have pointers to the $\Cpp$ implementations. As stated in Lemma \ref{comp:vsends-noconf}, when this $\Cpp$ program was executed on our discharging rules depicted in Figure \ref{fig:rules}, the maximal edge charge found was 8. This completes the proof of 
{\bf Lemma \ref{lem:vsends-noconf}}.

\medskip

Later we shall need the following lemma on the possible ranges for our discharging rules from Figure \ref{fig:rules}.

\begin{lem}\label{lem:ranges}
For every free combination
\[R^*=((Z^*,\delta^{*-},\delta^{*+}),e^*,r^*)\]
of our discharging rules from Figure \ref{fig:rules}, for every vertex 
$v^*\in Z^*$, the degree range is one of the following three types:
\begin{itemize}
    \item a single degree, that is, 
    $\delta^{*-}(v^*)=\delta^{*+}(v^*)$,
    \item the range $[5,6]$, or
    \item an infinite range with $\delta^{*-}(v^*)<\delta^{*+}(v^*)=\infty$.
\end{itemize}
\end{lem}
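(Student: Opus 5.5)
We argue by induction along the construction of $\cR^*$ in Algorithm \ref{alg:combine_rules}, tracking how degree ranges evolve. Call a range \emph{admissible} if it is a single degree, the range $[5,6]$, or an infinite range $[a,\infty)$. The base case is the input. The neutral rule $R_0$ has ranges $[1,1]$ on inner vertices (there are none) and $[1,\infty)$ on $s_0,t_0$. By inspection of Figure \ref{fig:rules} and the drawing conventions described after Definition \ref{dfn:rule}, every vertex of every rule has a range of one of three types: a single degree, $[5,\delta^+]$ with a minus sign, or $[\delta^-,\infty)$ with a plus sign. The key observation to check by inspection is that every minus-signed vertex in our rules is of the form $[5,6]$. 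The drawing shape is that of degree $6$, so no rule uses $[5,7]$ or $[5,8]$. If that inspection failed, I would instead enlarge the admissible family to $[5,k]$ and adjust the statement.

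For the inductive step, I show that every operation in the constructions behind Lemma \ref{lem:free-homo-conf-ranges} maps admissible ranges to admissible ranges. There are three kinds of operations.

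\textbf{Intersection.} When vertices are identified, ranges are intersected. Intersecting two admissible ranges gives an admissible one or the empty set (and then we give up). The cases are:
\begin{itemize}
\item a single degree with anything gives a single degree or empty;
\item $[5,6]\cap[5,6]=[5,6]$;
\item $[5,6]\cap[a,\infty)$ is $[5,6]$ if $a\le5$, $\{6\}$ if $a=6$, and empty if $a\ge7$;
\item $[a,\infty)\cap[b,\infty)=[\max(a,b),\infty)$.
\end{itemize}

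\textbf{Subdivision.} In Algorithm \ref{alg:single_out_lower_deg}, we split $[\delta^-,\delta^+]$ into the single degree $\{\delta^-\}$ and the remainder $[\delta^-+1,\delta^+]$. For an infinite range, the remainder is again infinite. For $[5,6]$, the split gives $\{5\}$ and $\{6\}$, both single degrees.

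\textbf{Other modifications.} Degree identifications and adding boundary darts never change ranges, except through further identifications, which are again intersections.

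Combining rules one at a time (justified by Lemma \ref{lem:free-homo-conf-trans}) starts from disjoint unions of admissible configurations. So every member of $\cR^*_i$ has only admissible ranges, and the claim follows for $\cR^*=\cR^*_s$.

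The main obstacle is not the closure argument, which is routine, but making sure the inspection of the 84 oriented rules is correct: no upper-bounded range other than $[5,6]$ appears. As a fallback, I would have the program assert the trichotomy on every output of Algorithm \ref{alg:combine_rules}, which turns the lemma into a checked computational fact.
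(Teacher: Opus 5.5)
Your proposal is correct and follows essentially the same route as the paper. The paper's proof also checks that the trichotomy holds for the original rules, that it is preserved when ranges are intersected as vertices are identified, and that splitting off the lowest degree keeps every range within the three types; your case analysis, including the explicit treatment of the neutral rule $R_0$, just spells out what the paper calls trivial.
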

\begin{proof}
The statement is true for all the original rules from Figure \ref{fig:rules}, and is also trivially closed under the intersection of intervals (when different vertices are identified). The only other way we get new intervals is by subdividing an interval, splitting off the lowest degree, but we still remain within the three types above.
\end{proof}

\drop{
\paragraph{\mikkel{Old stuff below}}
We are now ready to sketch the proof of
\\
\paragraph{Lemma \ref{lem:vsends-noconf}.}
\emph{Consider an arbitrary triangluation $G$ with an oriented edge
$uv$. Then $\phi(u,v) \leq 8$.}
\\
\begin{proof}[Sketch of computer proof] 
The basic point is that if a specific subset $\{R_1,\ldots,R_k\}$ of the rules from Figure \ref{fig:rules} can be combined to send charge via $uv$ in $G$, then the
corresponding configurations
$(Z_i,\delta_i)$ 
are contained in $G$ (here  $\delta_i$ may cover degree ranges for vertifes in the boundary of $Z_i$). The configurations all meet at $uv$ and its adjacent triangles. 
This implies that we can successfully make the corresponding free combinations, and this is exactly what we can test on a computer. 
\\
In more detail, we combine them one by one via $uv$, starting from $R_1$. Since the configurations have ranges of external degrees, each time we combine, we may split into multiple possible combined triangulations (there could be none), and when
we combine in $R_i$, we have to try combining it with all the possible combinations made over $R_1,\ldots, R_{i-1}$. If we run out of options, it means that $R_1, \ldots, R_k$ cannot be combined.
\\
With 43 rules, 84 including two orientations of all but two of them, there could potentially be $2^{84}$ combinations, but what saves us is that most discharging rules cannot be combined. A simple branching program was used, where we consider the rules in order, branching on whether it is in or not. 
If a new rule cannot be combined into the branch, then we just continue on the branch without it. At the end, we return the maximal charge found over all branches. 
\\
The pseudo-code for this is found in \mikkel{where?}
\end{proof}
}

\section{Reducible configurations in pseudo-configurations}\label{sect:config-in-combine}

We shall often need to check if a reducible configuration $(Z,\delta)$ embeds into a given pseudo-configuration $(Z^*,\delta^*)$. 

\paragraph{Cut-vertices.}
So far, we have only defined homomorphisms for configurations without cut-vertices. However, a
configuration $(Z,\delta)$
may have a cut-vertex $v$ (in principle, there could be more cut-vertices, but we never have more than one in our reducible configurations in $\cD$). By (Z3) such a cut-vertex should
have exactly two outer neighbors, that is, $\delta(v)=d_Z(v)+2$. By (Z5), when we embed $(Z,\delta)$ into a triangulation, then these two outer neighbors should be non-consecutive around $v$ as illustrated in Figure \ref{fig:extended configuration}.

A different way of describing the requirements (Z3) and (Z5)
is that we consider 
extending 
$Z$ with one of the outer neighbors in one of the two possible positions between the two blocks of $Z$ incident to $v$. The
result $Z^u$ has no cut-vertex, and now any 
embedding of $Z^u$ into a triangulation includes
an embedding of $Z$ satisfying (Z3) and (Z5).

Using degree ranges, we 
extend $\delta$ to 
cover $u$. We have no
special degree restrictions on $u$ so we set $\delta^{u-}(u)=d_{Z^u}(u)+1$ and $\delta^{u+}(u)=\infty$. All other
vertices $v\in Z$ have $\delta^{u-}(v)=\delta^{u+}(v)=\delta(v)$. Now $(Z^u,\delta^{u-},\delta^{u+})$ is configuration without cut-vertices, so $(Z^u,\delta^{u-},\delta^{u+})$ is also a pseudo-configuration. 
Recall that we have two possible placements of the outer neighbor, hence two corresponding
extensions 
$(Z^{u_1},\delta^{u_1-},\delta^{u_1+})$ and  
$(Z^{u_2},\delta^{u_2-},\delta^{u_2+})$.

We now define that 
\emph{$\phi^*$ is a homomorphism from the configuration $(Z,\delta)$
with cut-vertex $v$ into a
pseudo-configuration $(Z^*,\delta^*)$}
if and only 
if $\phi^*$ can be extended to a homomorphism from
$(Z^{u_1},\delta^{u_1-},\delta^{u_1+})$ or $(Z^{u_2},\delta^{u_2-},\delta^{u_2+})$ into $(Z^*,\delta^*)$. The reason that we have to consider both of these extensions is that the original cut-vertex $v$ could map to a boundary vertex of $(Z^*,\delta^*)$, and then it would only be one of the outer neighbors of $v$ that would map into
$(Z^*,\delta^*)$. We note that this definition easily extends to cases with multiple cut-vertices, but with $c$ cut-vertices, we would have to consider $2^c$
different extensions. 

Having defined homomorphisms from configurations $(Z,\delta)$, it is easily seen that Lemma \ref{lem:1-1} holds for this case as well (nothing in the proof rules out cut-vertices except that homormophisms were not defined with cut-vertices). Thus we have
\begin{lem}\label{lem:1-1-cut} \showlabel{lem:1-1-cut}
Let $(Z,\delta)$ be a configuration of diameter at most $4$. If 
$\phi$ is a homomorphism from $(Z,\delta)$ to $G$, and $G$ has no obstructing cycle 
whose vertices are in $\phi(Z)$, then $\phi$ is 1-1 
(so $\phi$ is an embedding showing that $(Z,\delta)$ is contained in $G$).
Moreover, if $(Z,\delta)$ further satisfies condition (ii) of Lemma \ref{lem:induced-config}, then $(Z,\delta)$ is induced in $G$.
\end{lem}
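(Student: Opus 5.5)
The plan is to follow the proof of Lemma~\ref{lem:1-1} essentially verbatim. The only new ingredient is making precise what a homomorphism from a configuration with a cut-vertex means. By definition, $\phi$ extends to a homomorphism $\phi^u$ from one of the two cut-free extensions $(Z^{u_i},\delta^{u_i-},\delta^{u_i+})$ into $G$. Here $u=u_i$ is the added outer neighbour of the cut-vertex $c$, placed between the two blocks of $Z$ at $c$.

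The first step is local injectivity, the analogue of Lemma~\ref{lem:locally 1-1}. For every vertex $x$ of $Z$ other than $c$, the neighbours of $x$ in $Z$ form a consecutive run of darts around $x$ inside a single block. Since $d_G(\phi(x))=\delta(x)\ge d_Z(x)$ and orientation is preserved, these neighbours are mapped injectively. For $c$ itself I would work in $Z^u$: there $c$ has $d_{Z^u}(c)=d_Z(c)+1<\delta(c)=d_G(\phi(c))$, and its darts form one consecutive list in $Z^u$. So $\phi^u$ is injective on $c$ and all its neighbours, including the neighbours of $c$ lying in both blocks.

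The second step is global injectivity. Suppose $x\neq y$ in $Z$ have $\phi(x)=\phi(y)$. Take a shortest $x$--$y$ path $P$ in $Z$; it has length at most $4$ by the diameter assumption. Consecutive vertices of $P$ are distinct in the image because edges map to edges and $G$ has no loops. By local injectivity, vertices at distance $2$ along $P$ (both neighbours of the middle vertex) are also distinct. Hence the length is at least $3$. Choosing $x,y$ with minimal $Z$-distance among identified pairs, $\phi(P)$ is a cycle of length $3$ or $4$ in $G$.

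It remains to show this cycle is separating, with a vertex of $G$ on each side; then it is obstructing with all vertices in $\phi(Z)$, a contradiction. This is the step I expect to be delicate, as it was in Lemma~\ref{lem:1-1}. The argument I would use: a short cycle of a planar triangulation that does not separate bounds a disc that is a union of faces. If $\phi(P)$ bounded a face or a small disc on one side, then the degree constraints $d_G=\delta$ together with the local injectivity would force two vertices of $P$ at distance $\le2$ in $Z$ to be identified, or would force a shorter identified pair, contradicting minimality. The cut-vertex only matters if $P$ passes through $c$. There the two $Z$-edges of $P$ at $c$ lie in different blocks and, by (Z5) encoded through $Z^u$, are separated on both sides by edges of $G$ not in $Z$. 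This gives vertices of $G$ on both sides of $\phi(P)$ near $\phi(c)$, which suffices.

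The last step is inducedness, handled exactly as in Lemma~\ref{lem:induced-config} and Lemma~\ref{lem:1-1}. An extra edge $\phi(x)\phi(y)$ with $x,y$ non-adjacent closes a shortest path into a cycle of length at most $5$. The argument of Lemma~\ref{lem:induced-config}, which explicitly allowed cut-vertices and used condition (ii) for length $5$, shows this cycle is obstructing, again a contradiction.
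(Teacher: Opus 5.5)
Your proposal is correct and follows the paper's route: the paper only observes that the proof of Lemma~\ref{lem:1-1} carries over verbatim once a homomorphism from a configuration with a cut-vertex is defined as one extending to $Z^{u_1}$ or $Z^{u_2}$, which is exactly how you obtain local injectivity at $c$, and inducedness then follows as in Lemma~\ref{lem:induced-config}. For the step you flag as delicate, the clean mechanism is as follows: if one side of $\phi(P)$ contains no vertex, then $G$ has a face containing both $\phi(P)$-edges at some interior vertex $\phi(p_i)$ of the path, and since the dart run at $p_i$ (taken in $Z^u$ if $p_i=c$) is either the full rotation or shorter than $\delta(p_i)$, this forces $p_{i-1}p_{i+1}\in E(Z)$, contradicting that $P$ is shortest; this already covers paths through $c$, so your separate argument at $\phi(c)$ is unnecessary (and as stated it is not quite right, since both $P$-edges at $c$ may lie in one block, and an outer neighbour of $\phi(c)$ could be the vertex of a $4$-cycle $\phi(P)$ opposite $\phi(c)$).
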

Often the question is if there is \emph{any} reducible configuration $(Z,\delta)\in \cD$
that has a homomorphism $\phi^*$ into $(Z^*,\delta^*)$. For uniformity of code, it is convenient to 
convert all $(Z,\delta)\in\cD$ to configurations with degree ranges $(Z',\delta^{\prime-},\delta^{\prime+})$. If $Z$
has no cut-vertex, we just
use $(Z',\delta^{\prime-},\delta^{\prime+})=
(Z,\delta,\delta)$. If $Z$ has a cut-vertex, we check both the extended configurations $(Z^{u_1},\delta^{u_1-},\delta^{u_1+})$ and $(Z^{u_2},\delta^{u_2-},\delta^{u_2+})$ described above.
When we have a configuration with degree
$(Z',\delta^{\prime-},\delta^{\prime+})$,
we need to check that the homomorphism $\phi^*$ satisfies $\delta^{\prime-}(v)\leq \delta^*(\phi^*(v))\leq \delta^{\prime+}(v)$ for all $v\in Z'$.

To facilitate the checking for homomorphisms, it is convenient to
select a special dart $f=xy$ for each
reducible configuration $(Z,\delta)$. We select it to maximize $(\delta(y), \delta(x))$ lexicographically.
When we have our reducible configuration with degree ranges $(Z',\delta^{\prime-},\delta^{\prime+})$,
we have $\delta^{\prime-}(x)=\delta^{\prime+}(x)=\delta(x)$ and $\delta^{\prime-}(y)=\delta^{\prime+}(y)=\delta(y)$. Since $Z'$ has no cut-vertex, a homomorphism $\phi^*$ from $Z'$ to $Z^*$ is uniquely determined by the image $f^*$ of $f$ in $Z^*$. The rest of $\phi^*$ is then determined by the homomorphic requirements.
Constructing $\phi^*$ from
$\phi^*(f)=f^*$ fails if it cannot map all of $Z'$ into $Z^*$. Assuming that $\phi^*$ was constructed successfully into $Z^*$, we have to check that $\delta^{\prime-}(v)\leq \delta^*(\phi^*(v))\leq \delta^{\prime+}(v)$ for all $v\in Z'$.

Note that we only have to check for $f^*\in Z^*$ such that $\delta^*(\head(f^*))=\delta(t)$ and $\delta^*(\tail(f^*))=\delta(s)$. This restriction on $f^*$ was quite important for speed.

\paragraph{Center vertices and centered reducible configurations.}\label{para:center-vertices}
The target pseudo-configuration $(Z^*,\delta^*)$ may have a distinguished \emph{center} vertex $c^*\in Z^*$.
Recall from Lemma \ref{lem:reduc} (D1) that
our reducible configurations $(Z,\delta)$ in $\cD$ have at most one vertex of degree above $8$. 
If $Z$ has such a high degree vertex $c$, $\delta(c)>8$, then we
want the homomorphism $\phi^*$ from $(Z,\delta)$ to $(Z^*,\delta^*)$ to map
$c$ to $c^*$. To emphasize
this property, we say that the 
homomorphism $\phi^*$ and the image
of $(Z,\delta)$ are \emph{centered}.

Note that the special edge $f=st$ from $(Z,\delta)$ was chosen above to maximize $\delta(t)$, so $t$ is the high degree vertex of $Z$ if any. If so, we should only consider mapping $f$ to darts $f^*$ with
$\head(f^*)=c^*$.

The centering is important when we have a homomorpism $\phi$ from a reducible configuration $(Z,\delta)$
 to an extended degree-bounded ball
$\bar B_2^8(v)$, for if the image
is centered in $\bar B_2^8(v)$,
then it must be contained in the non-extended $B_2^8(v)$ which is what we normally require.

The pseudo-code to check if there is homomorphism $\phi^*$ from a reducible configuration $(Z,\delta)$ to a centered pseudo-configuration $(Z^*,\delta^*)$
is described in Algorithm \ref{alg:contain_conf}. 

\subsection{Configuration with degree ranges blocked by reducible configurations}\label{sect:block}

We say that our set of reducible configurations $\cD$ 
\emph{blocks} a configuration with degree ranges $(Z^*,\delta^{*-},\delta^{*+})$ and center $c^*$ if every $
(Z^*,\delta^*)\in(Z^*,\delta^{*-},\delta^{*+})$ contains the homomorphic image of some reducible configuration from $\cD$. Unfortunately there could be infinitely many $(Z^*,\delta^*)\in (Z^*,\delta^{*-},\delta^{*+})$
because degree ranges could be infinite if $\delta^{*-}(v^*)<
\delta^{*+}(v^*)=\infty$ for some $v^*\in Z^*$. However, we claim that for each $v^*\in Z^*$, we only need to consider
a bounded range, as described in the lemma below.

\begin{lem}\label{lem:bounded-ranges}
Consider a configuration  $(Z^*,\delta^{*-},\delta^{*+})$
with degree ranges and center $c^*$. Define
$\delta^{*\pm}$ such
that $\delta^{*\pm}(c^*)=\delta^{*-}(c^*)$ if $\delta^{*+}(c^*)\leq 12$; otherwise $\delta^{*\pm}(c^*)=\delta^{*+}(c^*)$. For vertices $v^*\neq c^*$,
$\delta^{*\pm}(v^*)=\delta^{*-}(v^*)$ if $\delta^{*+}(v^*)\leq 8$; otherwise $\delta^{*\pm}(v^*)=\delta^{*+}(v^*)$. Then
$(Z^*,\delta^{*-},\delta^{*+})$
is
blocked by $\cD$ if $(Z^*,\delta^{*\pm},\delta^{*+})$
is blocked by $\cD$.
\end{lem}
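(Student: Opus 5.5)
The idea is a monotonicity argument. Raising the specified degree of a vertex beyond every degree that can occur in a configuration of $\cD$ at that position can only destroy homomorphisms from $\cD$, never create them. So the collapsed ranges $[\delta^{*\pm},\delta^{*+}]$ already contain a ``worst case'' for each original choice of degrees.

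Fix an arbitrary $(Z^*,\delta^*)\in(Z^*,\delta^{*-},\delta^{*+})$; we must exhibit a homomorphic (centered) image of some member of $\cD$ in it. I define a companion degree function $\delta'$ by vertex:
\begin{itemize}
\item $\delta'(v^*)=\delta^*(v^*)$ whenever $\delta^{*\pm}(v^*)=\delta^{*-}(v^*)$, that is, when the range was left untouched;
\item $\delta'(v^*)=\delta^{*+}(v^*)$ otherwise.
\end{itemize}
In the second case $\delta^{*+}(v^*)$ may be $\infty$; I adopt the convention, implicit in the statement, that $\infty$ is treated as a formal degree exceeding every finite degree. Then $\delta'(v^*)\in[\delta^{*\pm}(v^*),\delta^{*+}(v^*)]$ and $\delta'(v^*)\ge\delta^*(v^*)>d_{Z^*}(v^*)$ on the boundary. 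Inner vertices have single degrees and are unchanged. Hence $(Z^*,\delta')$ is a member of $(Z^*,\delta^{*\pm},\delta^{*+})$. By hypothesis it contains a centered homomorphic image $\phi^*$ of some $(Z,\delta)\in\cD$; if $Z$ has a cut-vertex, this is via one of the extensions $Z^{u_1},Z^{u_2}$.

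The key step is to show that no vertex of the image of $\phi^*$ has $\delta'\neq\delta^*$. Suppose $v^*=\phi^*(x)$ with $\delta'(v^*)\ne\delta^*(v^*)$. Then the range at $v^*$ was collapsed, so $\delta'(v^*)=\delta^{*+}(v^*)$, which exceeds $8$, or exceeds $12$ if $v^*=c^*$.

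There are two cases. If $x$ is a vertex of $Z$, the homomorphism condition forces $\delta(x)=\delta'(v^*)$.
\begin{itemize}
\item By (D1) of Lemma~\ref{lem:reduc}, only the center of a configuration in $\cD$ may have degree above $8$, and then at most $12$.
\item Centering forces a center of degree above $8$ to map to $c^*$.
\item So if $v^*\ne c^*$ we need $\delta(x)\le 8<\delta'(v^*)$, and if $v^*=c^*$ we need $\delta(x)\le 12<\delta'(c^*)$. Either way we have a contradiction.
\end{itemize}
The remaining possibility is that $x$ is the auxiliary outer neighbour $u$ of a cut-vertex, with range $[d_{Z^u}(u)+1,\infty]$. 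Here the check $\delta^{u-}(u)\le\delta^*(\phi^*(u))\le\delta^{u+}(u)$ has only a lower bound. That lower bound is satisfied for $\delta^*(v^*)$ as well, since $\delta^*(v^*)\ge\delta^{*-}(v^*)>d_{Z^*}(v^*)\ge d_{Z^u}(u)$ by local injectivity (Lemma~\ref{lem:locally 1-1}).

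Since $Z^*$ itself, with its darts, heads, reverses, successors and predecessors, is the same in both degree functions, the homomorphism conditions for $\phi^*$ are identical. All degree checks on image vertices give the same verdict under $\delta^*$ as under $\delta'$, and centering is unaffected. Therefore $\phi^*$ is also a centered homomorphism of $(Z,\delta)$ into $(Z^*,\delta^*)$. As $\delta^*$ was arbitrary, $(Z^*,\delta^{*-},\delta^{*+})$ is blocked.

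The main obstacle is bookkeeping rather than depth. First, the statement must be read with a legitimate meaning for a degree value $\infty$, or for ``any degree $\ge 9$'' (respectively $\ge 13$). Second, one must confirm that the auxiliary cut-vertex neighbours, whose ranges are unbounded above, do not break the argument. Third, the bound ``only the center may exceed $8$, and then by at most $12$'' has to be applied exactly as (D1) states it, together with the centering convention for the special dart.
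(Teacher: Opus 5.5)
Your proof is correct and is essentially the paper's argument: your $\delta'$ coincides with the paper's $\delta^{*\prime}=\max(\delta^*,\delta^{*\pm})$, and the contradiction via (D1) together with centering is the same. Your explicit treatment of the auxiliary cut-vertex neighbour and of the value $\infty$ supplies details the paper leaves implicit. For that auxiliary-vertex step, the bound $d_{Z^*}(v^*)\ge d_{Z^u}(u)$ follows directly from successor chains being injective in an acyclic incidence list, rather than from Lemma~\ref{lem:locally 1-1}, which is stated for triangulations.
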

\begin{proof}
Suppose that $(Z^*,\delta^{*\pm},\delta^{*+})$
is blocked by $\cD$, and consider any $(Z^*,\delta^*)\in (Z^*,\delta^{*-},\delta^{*+})$.
Define $\delta^{*\prime}$ such that
$\delta^{*\prime}(v)=\delta^*(v)$ if $\delta^*(v)\geq \delta^{*\pm}$; otherwise,
$\delta^{*\prime}(v)=\delta^{*\pm}(v)$. Then $(Z^*,\delta^{*\prime})\in (Z^*,\delta^{*\pm},\delta^{*+})$. Therefore, by assumption,
we have a homomorphism $\phi^*$
from some $(Z,\delta)\in \cD$
to $(Z^*,\delta^{*\prime})$.
However, if for any $v^*\in Z^*$,
we have $\delta^{*\prime}(v^*)>\delta^*$, then 
$\delta^{*\prime}(v^*)>12$ if
$v^*=c^*$; otherwise $\delta^{*\prime}(v^*)>8$. In either case, we conclude that
$v^*$ is not the image of any $v\in Z$. This is clear in the
case where $v^*\neq c^*$, and for the center, it follows because reducible configurations from $\cD$ have no vertices of degree above 12 (c.f. Lemma \ref{lem:reduc} (D1)). We conclude that $\phi^*$ is also
a homomorphism from $(Z,\delta)$ to $(Z^*,\delta^*)$.
The same consideration holds for the center $c^*$.
\end{proof}

The pseudo-code to check if a configuration with degree ranges is blocked by $\cD$ 
is described in Algorithm \ref{alg:block_by_reduc}.

\subsection{The maximal charge over an edge without reducible configurations}
\label{sub:max_without_reduc}

We want to bound the maximal possible discharge
over an oriented edge $uv$ assuming that $B^8_2(v)$ and no obstructing cycle and that no reducible configurations
from $\cD$. 
The discharge $r$ over $uv$ is determined by the concrete discharging rules $R_1,\ldots,R_\ell$
embedding into $G$ sending charge over $uv$. Recall from Lemma \ref{lem:free-comb-rules} that we constructed a
complete set $\cR^*$ of free combinations of subsets of discharging rules from $\cR$. Then $R_1,\ldots,R_\ell$
has a free combination $R^*\in\cR^*$
that sends the charge $r^*=r$ over $uv$. More precisely, with
\[R^*=((Z^*,\delta^{*-},\delta^{*+}),e^*,r^*)\textnormal,\]
we have $r^*=r$ and a homomorphism $\phi^{*\prime}$ from $Z^*$ to $G$ mapping
$e^*$ to $uv$ and each $w^*\in Z^*$ to 
a vertex $w$ in $G$ with $\delta^{*-}(w^*)\leq d_G(w)\leq \delta^{*+}(w^*)$. 

We shall always use the head of a  (combined) discharging rule as the center 
of its configuration, that is, above
the center of $Z^*$ is $c^*=\head(e^*)$ the
which is mapped to $v$. Then $Z^*$ maps into
$\bar B^8_2(v)$ because every rule discharging
over $uv$ is mapped into $\bar B^8_2(v)$ (c.f. proof of Lemma \ref{limitcartwheel}).

We now now define $(Z^*,\delta^*)\in (Z^*,\delta^{*-},\delta^{*+})$ such that 
$\delta^*(w^*)=d_G(\phi^{*\prime}(w^*))$
for every $w^*\in Z^*$.
Suppose there is a reducible configuration
$(Z,\delta)\in \cD$ with a homomorphism $\phi^*$ into $(Z^*,\delta^*)$ mapping a vertex with degree above 8, if any, to the center $c^*=\head(e^*)$. Then we have the situation discussed above where $\phi^{*\prime}\circ \phi^*$ maps $(Z,\delta)$ to $B^8_2(v)$.
By assumption $B^8_2(v)$
has no obstructing cycles, so $(Z,\delta)$ is embedded into $B^8_2(v)$ contradicting
that $B^8_2(v)$ has no reducible configuration. We conclude conversely that if $R^*=((Z^*,\delta^{*-},\delta^{*+}),e^*,r^*)$ is the free combination of the discharging rules sending charge over $uv$, then its configuration  $(Z^*,\delta^{*-},\delta^{*+})$ with center $\head(e^*)$ is not blocked by $\cD$. 

For brevity, we say that a \emph{(combined)
discharging rule $R^*=((Z^*,\delta^{*-},\delta^{*+}),e^*,r^*)$ is blocked by $\cD$} if
its configuration  $(Z^*,\delta^{*-},\delta^{*+})$ with center $\head(e^*)$ is blocked by $\cD$. 
We conclude that
the maximal possible discharge
over an oriented edge $uv$ assuming that $B^8_2(v)$ has no obstructing cycle and no reducible configurations
from $\cD$ is bounded by the maximal
discharge $r^*$ in a freely combined discharging rule $((Z^*,\delta^{*-},\delta^{*+}),e^*,r^*)\in \cR^*$ 
that is not blocked by $\cD$.

\paragraph{Checking if a combined discharging rule is blocked.}
Recall from Lemma \ref{lem:ranges} that for any free combination  $((Z^*,\delta^{*-},\delta^{*+}),e^*,r^*)$
of our concrete discharging rules $\cR$ from Figure \ref{fig:rules}, for every vertex 
$v^*\in Z^*$, the degree range is one of the following three types:
\begin{itemize}
    \item a single degree, that is, 
    $\delta^{*-}(v^*)=\delta^{*+}(v^*)$,
    \item the range $[5,6]$, or
    \item an infinite range with $\delta^{*-}(v^*)<\delta^{*+}(v^*)=\infty$.
\end{itemize}
Recall from Lemma \ref {lem:bounded-ranges} that to check if 
$(Z^*,\delta^{*-},\delta^{*+})$ is blocked, it suffices
to check if $(Z^*,\delta^{*\pm},\delta^{*+}))$is blocked. In the above case with  $\delta^{*+}=\infty$, we get $\delta^{*\pm}=\delta^{*+}(v^*)$. It follows that the maximal degree range in 
$(Z^*,\delta^{*\pm},\delta^{*+}))$
is 2. It is thus a finite problem to check if a combined rewriting
rule $((Z^*,\delta^{*-},\delta^{*+}),e^*,r^*)$ is blocked by $\cD$.

\paragraph{Generating all non-blocked freely combined discharging rules.}
Finally, we note that if the discharging rule $R^*=((Z^*,\delta^{*-},\delta^{*+}),e^*,r^*)$
is blocked by $\cD$, then so is any free
combination of $R^*$ and other discharging rules. Instead of first generating the complete set of discharging rules $\cR^*$ from Lemma \ref{lem:free-comb-rules}, and then dropping those that are blocked by $\cD$, it is more efficient to only
generate those that are not blocked as follows.

Let $R_1,\ldots,R_s$ be all discharging rules, and let $R_0$ be the neutral discharging rule. 

For each $i=0,\ldots,s$, we want 
to construct $\cR^{*-\cD}_i$ consisting of all free combinations not blocked by $\cD$ for all subsets of rules from $R_1,\ldots,R_i$. At the end, we just return the maximal charge found in 
$\cR^{*-\cD}_s$

First, we set $\cR^{*-\cD}_0=\{R_0\}$.
Then, for $i=1,\ldots,s$,
we start by setting $\cR_i=\cR_{i-1}$. Next,
for every combined rule
$R^*_{i-1}\in \cR^{*-\cD}_{i-1}$, 
we construct all the free combinations $R^*=((Z^*,\delta^{*-},\delta^{*+}),e^*,r^*)$
of $R^*_{i-1}$ and $R_i$.
If $(Z^*,\delta^{*-},\delta^{*+})$ is not blocked by $\cD$,
then we add $R^*$ to
$\cR^{*-\cD}_i$. Finally, we set $\cR^{*-\cD}=\cR^{*-\cD}_s$.
In summary, we have proved:
\begin{lem}\label{lem:free-comb-rules-noD}
Based on a set of discharging rules 
$\cR$ from Figure \ref{fig:rules}, we can construct a
complete set $\cR^{*-\cD}$ of free combinations not blocked by $\cD$ of subsets of discharging rules from $\cR$.
\end{lem}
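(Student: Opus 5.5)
The statement to prove is Lemma~\ref{lem:free-comb-rules-noD}. Write $\cR^*_i$ for the unfiltered sets of Lemma~\ref{lem:free-comb-rules} and $\cR^{*-\cD}_i$ for the filtered ones. The proof will be an induction on $i$ that tracks these two families side by side. The claim is: for every subset $Q\subseteq\{R_1,\ldots,R_i\}$, every free combination of $Q$ that is not blocked by $\cD$ is, up to isomorphism, in $\cR^{*-\cD}_i$. Since Lemma~\ref{lem:free-comb-rules} already shows that $\cR^*_s$ is complete for all free combinations, it then suffices to show that the filtering only ever discards blocked rules, and never discards a rule that some unblocked rule depends on.

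The key step is a monotonicity property: if a combined rule $R^*$ is blocked by $\cD$, then every free combination of $R^*$ with a further rule $R_j$ is also blocked. To prove it, let $R^{**}=((Z^{**},\delta^{**-},\delta^{**+}),e^{**},r^{**})$ be a free combination of $R^*$ and $R_j$.
\begin{enumerate}
\item By the construction in Lemma~\ref{lem:free-homo-conf-ranges}, there is a homomorphism $\psi$ from $Z^*$ to $Z^{**}$ with $\psi(e^*)=e^{**}$, so $\psi$ maps the center $\head(e^*)$ to the center $\head(e^{**})$.
\item The degree ranges only shrink: for every $w\in Z^*$, the range of $\psi(w)$ is contained in the range of $w$, since ranges are intersected at identifications and only split at subdivisions.
\item Take any $(Z^{**},\delta^{**})$ in the range family of $R^{**}$ and pull it back along $\psi$. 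Setting $\delta^*(w)=\delta^{**}(\psi(w))$ gives a member of the range family of $R^*$, because inner vertices of $Z^*$ stay inner with the same degree and boundary degree bounds are preserved by the minimal-image property.
\item Since $R^*$ is blocked, some $(Z,\delta)\in\cD$ has a centered homomorphism $\phi$ into $(Z^*,\delta^*)$. The composition $\psi\circ\phi$ is then a centered, degree-respecting homomorphism into $(Z^{**},\delta^{**})$, so $R^{**}$ is blocked.
\end{enumerate}

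With monotonicity in hand, the induction runs as follows. Any unblocked free combination of a subset $Q\ni R_i$ arises, by Lemma~\ref{lem:free-homo-conf-trans}, as a free combination of some free combination $R'$ of $Q\setminus\{R_i\}$ with $R_i$. By monotonicity $R'$ is unblocked, so by induction $R'\in\cR^{*-\cD}_{i-1}$, and the algorithm therefore adds the combination to $\cR^{*-\cD}_i$. Subsets not containing $R_i$ are handled by the initialization $\cR^{*-\cD}_i\supseteq\cR^{*-\cD}_{i-1}$. The base case is the neutral rule $R_0$, which is unblocked because its ranges are unbounded and no configuration of $\cD$ fits in two vertices. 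Finally, finiteness of the blocking test follows from Lemma~\ref{lem:bounded-ranges} together with Lemma~\ref{lem:ranges}.

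The main obstacle is step 3 of the monotonicity argument: verifying that pulling back degrees along $\psi$ really yields a legitimate member of the range family of $R^*$. This needs care at boundary vertices of $Z^*$ that become inner in $Z^{**}$, where the configuration conditions on $\delta^*$ have to be checked from the way the construction closes boundary triangles.
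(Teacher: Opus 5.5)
Your proposal is correct and follows the paper's argument. The paper likewise rests on the observation that any free combination of a rule blocked by $\cD$ with further rules is again blocked, and reruns the incremental construction of Lemma~\ref{lem:free-comb-rules} while discarding blocked combinations at each step; you additionally justify this monotonicity, which the paper only asserts. Your Step~3 is not an obstacle: $(Z^*,\delta^{*-},\delta^{*+})$ is already a pseudo-configuration with degree ranges, so membership in its range family is just a pointwise range condition. That condition follows directly from the containment in your Step~2, with no separate treatment needed for boundary vertices of $Z^*$ that become inner in $Z^{**}$.
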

The pseudo-code for this is found in Algorithm \ref{alg:combine_rules}. Together with the pseudo-code, we have pointers to the $\Cpp$ implementations. As stated in Lemma \ref{comp:vsends}, 
when executing this $\Cpp$ program on our discharging rules depicted in Figure \ref{fig:rules} together with the reducible configurations from $\cD$, then the maximal discharge found was 5. 
This completes the proof of {\bf Lemma \ref{lem:vsends}.}

\section{Free cartwheels}
\label{sect:free-cartwheel}
\showlabel{sect:free-cartwheel}
 
We will now make a generic description of what we call \emph{free cartwheels} and \emph{free extended degree-bounded cartwheels}. These are configurations that will be used in our computer checks to model the cartwheels and $B_2(v)$ and the extended degree-bounded cartwheels $\bar B^8_2(v)$ in any near-triangulation $G$, assuming that there are no obstructing cycles or vertices of degree below 5.

We start from a single vertex $v$
with some degree $\delta(v)\geq 5$.

Next, we do the free completion to get
the \emph{wheel} around $v$. It is the near-triangulation $W(v)$ consisting of $v$ in the center with its neighbors $v_1,\ldots,v_{\delta(v)}$ around
the boundary in this clockwise order.
To get a configuration, we select degrees $\delta(v_i)$ for the neighbors.

Next, we do the free completion again, to construct a generic \emph{free cartwheel $C(v)$ around $v$}, which is also a near-triangulation. We
start from the above wheel $W(v)$. Next, for each neighbor $v_i$, we pick
a degree $\delta(v_i)\geq 5$. The free cartwheel $C(v)$ is the union $\bigcup_{u\in V(W(v))}W(u)$ of the wheels around these vertices. Here, the wheels only intersect minimally: if $u$ and $w$ have distance 1 in $W(v)$, then there are two other vertices $x_1$ and $x_2$ such that
the intersection of $W(u)$ and $W(w)$ is the two triangles $uwx_1$ and $uwx_2$. If two vertices are non-neighbors in $W(v)$, then they are non-consecutive in the neighbors of $v$, and then their wheels only intersect in $v$. Finally, we select degrees $\delta(u)$ for all vertices on the boundary.

For a \emph{free extended degree-bounded cartwheel} $\bar C^8(v)$, we use the same generation
as above, but we only add the 
wheel around a $v_i$ if its degree is at most 8, that is, $\bar C^8(v)=W(v)\cup \bigcup_{u\in \{v_1,\ldots,v_{\delta(v)}\}, \delta(v_i)\leq 8}W(v_i)\subseteq C(v)$.

We note that for the shape of the $\bar C^8(v)$, if the degree of $v_i$ is 9 or more, the exact degree does not matter, so we just record the degree as 9+, representing all the degrees in the range $[9,\infty]$. In fact, for all vertices except the center $v$, we will only pick degrees in $\{5,\ldots,8,9+\}$. 
For the center $v$, the maximal degree used will be $11$, since otherwise we can apply Lemma \ref{lem:12+}. The number of possible free extended degree bounded cartwheels $\bar C^8(v)$, allowing the degree range 9+, is therefore finite.

We note that centers play an important role. When we talk about a homomorphism from a free extended degree-bounded cartwheel $\bar C^8(v)$ to an extended degree-bounded cartwheel $\bar B^8_2(v')$ in a triangulation $G$, then it is understood that we map center to center, that is, $v$ maps to $v'$. Moreover, as in Section \ref{sect:config-in-combine}, a homomorphism
from a reducible configuration from $\cD$ to $\bar C^8(v)$ has to map any vertex of degree above 8 to the center, which means that the reducible configuration would further map into the non-extended $B^8_2(v')$.

The following lemma states that the free extended degree bounded cartwheels $\bar C^8(v)$ can be used to model all the extended degree bounded cartwheels $\bar B_2^8(v)$ that we may find in any triangulation. It also implies the important statement of {\bf  Lemma \ref{lem:pre-cartwheel}} that no local obstructing cycles and no reducible configurations in $\bar B_2^8(v)$ imply that there are no obstructing cycles in $\bar B_2^8(v)$.

\begin{lem}\label{lem:cartwheel}\showlabel{lem:cartwheel}
For any vertex $v$ in a triangulation $G$, suppose that the extended degree-bounded cartwheel $\bar B^8_2(v)$ has no vertices of degree less than 5 and no local obstructing cycle. Then $\bar B^8_2(v)$ is isomorphic to a free extended degree-bounded cartwheel $\bar C^8(v)$. 
If, in addition, $B^8_2(v)$ does not contain the Birkhoff diamond in Figure \ref{fig:BirkhoffFranklin}(a), then $\bar B^8_2(v)$ does not have obstructing cycles.
\end{lem}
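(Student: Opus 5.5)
The plan is to build a homomorphism from a suitable free extended degree-bounded cartwheel onto $\bar B^8_2(v)$ and then use the absence of short obstructing cycles to show it is injective.

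\textbf{Step 1: the free cartwheel and its homomorphism.} Let $v_1,\dots,v_d$ be the neighbours of $v$ in clockwise order, with $d=d_G(v)\ge 5$. Take $\bar C^8(v)$ with the same centre degree. Give each $v_i$ the degree $d_G(v_i)$, recorded as $9+$ when $d_G(v_i)\ge 9$. Give every remaining boundary vertex its degree in $G$. Map the wheel $W(v)$ onto the wheel of $v$ in $G$; this is possible since every vertex has degree at least $5$ and the wheel has no repeated vertices. For each $v_i$ with $d_G(v_i)\le 8$, extend the map to $W(v_i)$ by following the rotation at $v_i$ in $G$. Because the map is determined by faces and incidences, it is a well-defined orientation-preserving homomorphism $\phi$. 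Its image is exactly $\bar B^8_2(v)$: every vertex of $\bar B^8_2(v)$ is reached by a path of length at most $2$ whose middle vertex has degree at most $8$, so it is a neighbour of some $v_i$ whose wheel was added. The degree function is preserved by construction.

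\textbf{Step 2: injectivity on vertices.} Suppose two distinct vertices $x,y$ of $\bar C^8(v)$ have the same image. In the free cartwheel, $x$ and $y$ are joined through $v$ by a path $x\,a\,v\,b\,y$ of length at most $4$, with $a,b$ neighbours of $v$ of degree at most $8$ whenever the corresponding segment leaves $W(v)$. The image of this path is a closed walk of length at most $4$ through $v$. Since $\phi$ is locally injective (Lemma~\ref{lem:locally 1-1}), it contains a cycle $C$ of length at most $4$. The private part $C-u-w$ consists of $v$ and neighbours of degree at most $8$, so it lies in $B^8_2(v)$, and at most two consecutive vertices of $C$ lie outside $B^8_2(v)$. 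Hence $C$ is local in the sense of Definition~\ref{def:limitcart}.

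It remains to show $C$ is obstructing. Since $x\ne y$ and the wheels in the free cartwheel meet minimally, the walk winds around at least one face of $G$ that is not covered. Each side of $C$ therefore contains a vertex: a neighbour of $v$ on one side, and the missing wheel vertices on the other. The main case analysis is here. If $x,y$ lie in wheels of consecutive neighbours, the identification yields a separating triangle or $4$-cycle, and the vertex degrees ($\ge 5$) force vertices on both sides. If $x,y$ lie in wheels of non-consecutive neighbours, the cycle has length at most $4$ and separates neighbours of $v$. In every case this contradicts the hypothesis. Injectivity on darts then follows from injectivity on vertices together with simplicity of $G$, so $\bar B^8_2(v)\cong\bar C^8(v)$.

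\textbf{Step 3: no obstructing cycles without the Birkhoff diamond.} Let $C$ be an obstructing cycle with all vertices in $\bar B^8_2(v)$. Every vertex of $\bar B^8_2(v)$ is within distance $2$ of $v$ through degree-$\le 8$ vertices. So if $C$ has at least three vertices outside $B^8_2(v)$, or two non-consecutive ones, I would shortcut through $v$. I would replace a subpath of $C$ by a path through $v$ and its low-degree neighbours, giving a cycle $C'$ of length at most $5$ that is still obstructing. I would argue that one of the two cycles formed by $C$ and the shortcut separates vertices, or reduce to a shorter obstructing cycle, and iterate until the cycle is local, which contradicts the hypothesis.

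The hard case is a $5$-cycle where the shortcut leaves only one vertex on one side. This is the configuration of a degree-$5$ vertex whose neighbourhood is the $5$-cycle. In that situation the relevant vertices around $v$ produce two adjacent degree-$5$ vertices in a $4$-wheel arrangement, which is the Birkhoff diamond inside $B^8_2(v)$. This step, checking that every non-local obstructing $5$-cycle degenerates either to a local one or to a Birkhoff diamond, is the one I expect to be the main obstacle. I would first try to enumerate the positions of $C$ relative to the rings $N(v)$ and the second neighbourhood; if that fails, I would fall back on a computer check over free cartwheels, in the style of Section~\ref{sect:free-cartwheel}.
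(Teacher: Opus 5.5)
Your Steps 1 and 2 follow essentially the paper's route. The paper also shows that the wheels at $v$ and at its neighbours of degree at most $8$ are genuine wheels meeting only as in $\bar C^8(v)$, with each violation yielding a local obstructing $3$- or $4$-cycle through $v$.

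The gap is in how you conclude. Bijectivity on vertices plus injectivity on darts does not give $\bar B^8_2(v)\cong\bar C^8(v)$. The reason is that $\bar B^8_2(v)$ is the \emph{induced} subgraph $G[V_1\cup V_2]$, and a priori $G$ may contain edges between image vertices that are non-adjacent in $\bar C^8(v)$. Inner vertices of $\bar C^8(v)$ already carry all their $G$-edges, so such an edge $uw$ must join two non-consecutive boundary vertices. The paper rules this out as a separate step. Close $uw$ with the paths from $u$ and $w$ to $v$ through neighbours of degree at most $8$. This gives a cycle of length at most $5$ whose only vertices outside $B^8_2(v)$ can be the consecutive pair $u,w$, so it is local. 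One then checks that it is obstructing (for length $5$, two vertices on each side). This is the argument of Lemma~\ref{lem:induced-config}, using that the path through $v$ does not run along the outer boundary of $\bar C^8(v)$. Nothing in your injectivity argument covers this, and Step 3 needs it as well.

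Step 3 is not yet a proof. Replacing a subpath by a detour through $v$ need not keep the cycle obstructing. You give no reason the iteration ends in a local cycle, and the exceptional case is only guessed at. A computer fallback is unnecessary. The missing idea is to use the isomorphism, including inducedness.

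The image of $\bar C^8(v)$ is a closed disc whose only inner vertices are $v$ and its neighbours of degree at most $8$. By inducedness, every boundary vertex of this disc has a $G$-neighbour outside $\bar B^8_2(v)$. Now take an obstructing cycle $R\subseteq\bar B^8_2(v)$, which you may assume chordless. The side of $R$ inside the disc contains a set $S$ of vertices of $B_1(v)$, and $|S|\ge 2$ because all degrees are at least $5$. Counting edges leaving $S$, as for the ring of a free completion, gives $|R|=\sum_{u\in S}(d_G(u)-d_{G[S]}(u))-t$, where $t$ is the length of the outer facial walk of $G[S]$. There are three cases:
\begin{itemize}
\item If $v\notin S$, then $S$ is a path of $k\ge2$ consecutive neighbours and $|R|\ge k+4\ge 6$.
\item If $v\in S\neq B_1(v)$, then $|R|\ge d_G(v)+1\ge 6$.
\item If $S=B_1(v)$, then $|R|\ge d_G(v)$. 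Hence $|R|\le 5$ forces $v$ and all its neighbours to have degree $5$, and then $B_1(v)$ contains the Birkhoff diamond.
\end{itemize}
The last case is exactly where the Birkhoff hypothesis is used.
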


\begin{proof}
   We claim for any vertex $u$ that if $B_1(u)$ has no obstructing triangle containing $u$, then $B(u)$ is isomorphic to a wheel $W(u)$. Firstly, the same neighbor cannot
   appear twice, as this would imply a parallel edge. Moreover,
   if there were an edge $u_iu_j$ between two non-consecutive neighbors around $u$, then $uu_iu_j$ (if it exists) would be a separating triangle. Now let $u$ be $v$ or any neighbor of $v$ of degree at most $8$. Then $u$ is in the non-extended degree bounded cartwheel $B_2^8(v)$, and therefore  $uu_iu_j$ must be a local
   obstructing cycle. Since we exclude
   local obstructing cycles, it follows that $B_1(u)$ is isomorphic to a wheel.

   In what follows, let $v_1,\ldots,v_{\delta(v)}$ be the
   neighbors of $v$ in the clockwise cyclic order.
   With this cyclic understanding
   $v_{i+1}=v_{1}$ if $i=\delta(v)$.

   Let $v_i$ be a neighbor of $v$ of degree at most $8$. We know
   that $B_1(v)$ and $B_1(v_i)$ are isomorphic to wheels. We also know that they intersect in $v,v_{i-1},v_i,v_{i+1}$. Suppose they also intersected in some other
   vertex $v_j$. Then the edge $v_iv_j$
   would be in $B_1(v)$, contradicting that it is isomorphic to a wheel.

  Now consider two consecutive neighbors $v_i$ and $v_{i+1}$ of
  degree at most 8.  They intersect in $v_i$, $v_{i+1}$, $v$, and some vertex $x$ outside $B_1(v)$ forming the triangle $v_iv_{i+1}x$.
  Suppose they also intersect in a fifth vertex $y$ outside $B_1(v)$.
  Then $vv_iyv_{i+1}$ forms an obstructing cycle where only $y$ is outside $B^8_2(v)$, contradicting that we had no local obstructing cycles.

  Finally, consider two non-consecutive neighbors $v_i$ and $v_{i+1}$ of
  degree at most 8.  Inside $B_1(v)$, they intersect only in $v$. If they also intersected in some vertex $y$ outside $B_1(v)$, then 
  Then $vv_iyv_j$ forms an obstructing cycle where only $y$ is outside $B^8_2(v)$, contradicting that we had no local obstructing cycles.

  This completes the proof that the extended degree bounded cartwheel 
  $\bar B_2^8(v)$ contains a free extended degree bounded cartwheel $\bar C^8(v)$.
  It remains to prove that $\bar C^8(v)$ is induced. 

  Suppose that there was an edge $uw$ in $\bar B_2^8(v)$ which is not in
  $\bar C^8(v)$. Then $u$ and $w$ must be non-consecutive boundary vertices of 
  $\bar C^8(v)$. By the definition of the
  extended degree-bounded cartwheel, $u$ has a path $P_u$ to $v$ where only the first vertex can be outside the non-extended degree-bounded cartwheel $B_2^8(v)$. Let $P_w$ be a corresponding path from $w$ and $P_{uw}$ the path from $u$ to $w$ that is the symmetric between
  $P_u$ and $P_w$. Then we have the cycle $uv\, P_{uw}$, which is of length at most 5 and is trivially local; is it obstructing?
  If it has length 5, we need to argue that it has at least two
  vertices on each side. This follows
  from the proof of Lemma \ref{lem:induced-config} since $P_{uw}$ is not contained in the outer boundary of $\bar C^8(v)$. Thus, we conclude that $\bar C^8(v)$ is induced and hence isomorphic to   $\bar B_2^8(v)$.

  Assume that $B_2^8(v)$ does not contain
  the Birkhoff diamond from Figure \ref{fig:BirkhoffFranklin}(a).  Our final step is to prove that
  $\bar B_2^8(v)$ 
  does not have any
  obstructing cycle. Note that if $\bar B^8_2(v)=B^8_2(v)$, then this follows from
  our assumption that there are no local obstructing cycles. Since we have no vertices of degree less than $5$, an obstructing cycle $R$ needs to have at least two vertices on each side. We define the interior as the side not containing vertices outside $\bar B_2^8(v)$. 
  Then the interior of $R$ must be contained in  $B_1(v)$.   
  For a contradiction, we
  will show that if $R$ has more than one interior vertex, then its length is at least $6$.

  Let us first assume that $v$ itself is interior to $R$ together with at least one of
  its neighbors. Since all degrees are at least 5, the length of $R$ is at least the degree of $v$, and it can only have length $\delta(v)$ if 
  the interior is exactly $B_1(v)$. It follows that $R$ can only have length at most 5
  if $B_1(v)$ is 5-regular, but then
  $B_1(v)\subseteq B_2^8(v)$ contains the Birkhoff diamond, contradicting our assumption. Thus $R$ must have length $\geq 6$.

  The only alternative for the interior of a cycle $R$ in   $\bar B_2^8(v)$ is that it 
  is a path of $d$ consecutive neighbors of $v$ where $1<d<\delta(v)-2$, but
  since they are all of degree at least 5, if such a path has 
  length $d$, then the ring has length at least $3+2d>6$ for $d>1$.
  Thus, we conclude that $\bar B_2^8(v)$ has no obstructing cycles.
\end{proof}

\subsection{Free cartwheels with  limited degrees}
\label{sect:free-cw-enum}
We will now describe a finite computer check to prove statements such as Lemma \ref{lem:positive-comp}. The lemma concerns the extended degree-bounded ball $\bar B^8_2(v)$. It states that if certain degree conditions are satisfied for the vertices in $\bar B_2^8(v)$ (degrees are measured in the underlying graph $G$), and $v$ gets a certain final charge (at least 0), then either $\bar B^8_2(v)$ contains an obstructing cycle or a centered reducible configuration from $\cD$.
Recall from Section \ref{sect:config-in-combine} that centered implies that the reducible configuration is contained in $B^8_2(v)$.
To prove such a statement, we claim that it suffices to
check all free extended degree-bounded cartwheels $\bar C^8(v)$ satisfying the same constraints: check if $\bar C^8(v)$ has the right final charge for $v$, and if so, check that $\bar C^8(v)$ contain a reducible configuration from $\cD$.
By the following lemma, it suffices to consider free cartwheels of degree at most $9$. 
\begin{lem}\label{lem:max9}
Consider a free extended degree-bounded cartwheel $C$ centered at $v$. Suppose $C$ has a vertex $u$ $(u\neq v)$ of degree $\delta(u)>9$ and let $C'$ be the result of setting  $\delta'(u)=9$. 
Then $v$ has exactly the same charge in $C$ and $C'$ and $C$ and $C'$ contain exactly the same centered reducible configurations from $\cD$.    
\end{lem}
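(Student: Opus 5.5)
The plan is to treat the two claims separately: first the charge of $v$, then the centered reducible configurations. Both rest on one observation, read off from Figure \ref{fig:rules}. Every degree range appearing in a rule is either a single value at most $8$, the range $[5,6]$, or of the form $[d,\infty]$ with $d\le 9$. In particular, no rule distinguishes between degree $9$ and any larger degree. I would first verify this by inspecting the 43 rules, checking that every ``$+$'' vertex has lower bound at most $9$ and every bounded range has upper bound at most $8$. Since the rules are also collected on GitHub, this inspection can be done mechanically.

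For the charge, recall from Lemma \ref{limitcartwheel} and Definition \ref{dfn:finalcharge} that $T(v)$ equals $10(6-\delta(v))$ plus the net flow over the edges $vv_i$. Only $\delta(v)$ enters the initial charge, and $u\neq v$, so lowering $\delta(u)$ does not change it. For the flow, I would take any rule $R$ and any placement of $G(R)$ in $C$ sending charge over some $vv_i$. If $u$ is not in the image, $\delta(u)$ is irrelevant to that application. If $u$ is the image of a rule vertex $w$, then two facts matter. First, $\delta^-_R(w)\le 9$ and $\delta^+_R(w)=\infty$ whenever $\delta^+_R(w)\ge 9$. Second, $\delta^-_R(w)\le 8$ fails as soon as the range is bounded, because $\delta(u)>9$. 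Together these show that $\delta(u)\in[\delta^-_R(w),\delta^+_R(w)]$ holds if and only if $9$ does. I also need that the shape of $C$ is unchanged. In a free extended degree-bounded cartwheel, the wheel around a non-center vertex is only added when its degree is at most $8$, and both $\delta(u)$ and $9$ exceed $8$. So $u$ is a boundary vertex in both $C$ and $C'$, with the same external structure, and the set of applicable rules, hence $\phi(v_i,v)$ and $\phi(v,v_i)$, coincides. One subtlety is whether $u$ could be $s(R)$. By the remark in the proof of Lemma \ref{lem:vsends}, $\delta^+_R(s(R))\le 8$, so $u$ sends nothing in either case.

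For the configurations, a centered homomorphism from $(Z,\delta_Z)\in\cD$ into $C$ maps only vertices of degree at most $8$ to non-center vertices, by (D1) and the definition of centering. The target vertex $u$ has degree above $8$ in both $C$ and $C'$, and it is not the center. So no vertex of $Z$ maps to $u$ in either case. The underlying pseudo-triangulation is the same, and all other degrees agree. Hence the centered homomorphisms into $C$ and into $C'$ are exactly the same maps, using the homomorphism definition of Section \ref{sect:config-in-combine}. For a cut-vertex configuration, the auxiliary vertex of the extensions $Z^{u_1},Z^{u_2}$ has range $[d+1,\infty]$. It may map to $u$, and by the same threshold argument its range contains $\delta(u)$ if and only if it contains $9$, provided $d+1\le 9$. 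I would check that $d+1\le 9$ holds, which is immediate since the rings have bounded degree.

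The main obstacle is the rule-inspection step, namely confirming that no rule or reducible configuration uses a lower bound of $10$ or more at a non-center, non-source vertex. I would settle this with a small computer scan of the rule file and $\cD$.
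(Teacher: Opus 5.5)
Your proof is correct and is essentially the paper's argument written out in more detail: no rule's degree range separates $9$ from larger values, and by (D1) together with centering, no vertex of a configuration in $\cD$ can be mapped to the non-center vertex $u$. You skip one step the paper includes, namely checking that $C'$ is still a legitimate cartwheel: the boundary vertex $u$ must satisfy $d_C(u)\le 5<9$.

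That same bound is also the right justification for your cut-vertex remark, rather than any bound on ring size. A homomorphism sending the auxiliary vertex to $u$ maps its acyclic incidence list onto consecutive darts at $u$. So its $Z^u$-degree is at most $d_C(u)\le 5$, and its lower bound is at most $6\le 9$.
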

\begin{proof} First, we note that by the construction of extended degree-bounded cartwheels, $u$ has to be a boundary vertex, and it must have $d_C(u)\leq 5$, so decreasing $\delta(u)$ to 9 cannot
violate the constraint that $d_C(u)<\delta(u)$.

Finally, we note that no discharging rules distinguish between degrees above 9, and neither do any reducible configurations, since $u$ is not the center.
\end{proof}

For every extended degree-degree bounded ball $\bar B^8_2(v)$ in a triangulation $G$, there is a unique free extended degree-bounded ball $C$ centered at $v$ and with fixed degrees that homomorphically maps to $\bar B^8_2(v)$. More precisely, there is a homomorphism $\phi$ such that the degree of $w\in C$ is $d_G(\phi(w))$. However, we define the free extended degree-bounded ball \emph{corresponding} to $\bar B^8_2(v)$ as the one $C'$ where we take any non-center degree $\delta(u)$ bigger than 9 and reduce it to $\delta'(u)=9$ (for all other vertices, the degrees are unchanged so $\delta'(u)=\delta(u)$).
\begin{lem}\label{lem:cartwheel-x}\showlabel{lem:cartwheel-x}
Consider the extended degree-bounded cartwheel $\bar B^8_2(v)$ of a vertex $v$ in a triangulation $G$. Let $\bar C^8(v)$ be the corresponding extended degree-bounded cartwheel
(with degree at most 9).
If $\bar B^8_2(v)$ has no local obstructing cycle and no reducible configuration, then 
$v$ has the same final charge in $\bar C^8(v)$ as in $\bar B^8_2(v)$ and $\bar C^8(v)$ has no reducible
configuration.
\end{lem}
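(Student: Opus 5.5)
The plan is to reduce everything to Lemma \ref{lem:cartwheel} and Lemma \ref{lem:max9}, together with the locality statement Lemma \ref{limitcartwheel}.

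First, I would check that the hypotheses of Lemma \ref{lem:cartwheel} hold. The context of this section assumes degrees at least 5, and there is no local obstructing cycle. Also, if $B^8_2(v)$ contained a Birkhoff diamond, that would be a reducible configuration from $\cD$, which is excluded. So Lemma \ref{lem:cartwheel} applies: $\bar B^8_2(v)$ is isomorphic to a free extended degree-bounded cartwheel $C$ with the true degrees $\delta(w)=d_G(\phi(w))$, and $\bar B^8_2(v)$ has no obstructing cycles at all.

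Second, I would compare the charge in $\bar B^8_2(v)$ with the charge in $C$. By Lemma \ref{limitcartwheel}, the final charge $T(v)$ is determined by the structure of $\bar B^8_2(v)$ together with its degrees. The isomorphism preserves center, orientation and degrees, so $v$ has the same final charge in $C$ as in $G$. Applying Lemma \ref{lem:max9} once for each non-center vertex of degree above 9, lowering it to 9 changes neither the charge of $v$ nor the set of centered reducible configurations. Hence $\bar C^8(v)=C'$ has the same final charge.

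Third, I would argue that $\bar C^8(v)$ contains no reducible configuration, by contradiction. By Lemma \ref{lem:max9} it suffices to treat $C$. Suppose some $(Z,\delta)\in\cD$ maps homomorphically, and centered, into $C$. Composing with the isomorphism gives a homomorphism into $\bar B^8_2(v)\subseteq G$ that preserves degrees. By centering, any vertex of degree above 8 goes to $v$, so the image lies in $B^8_2(v)$. There is no obstructing cycle there, and $Z$ has diameter at most 4 and satisfies (ii) by (D0). Lemma \ref{lem:1-1-cut} then makes this an induced embedding, contradicting the assumption that $B^8_2(v)$ has no reducible configuration.

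The main obstacle I expect is bookkeeping rather than ideas. The centering convention has to match what "contains a reducible configuration" means for $\bar B^8_2(v)$. The exceptional radius-3 configuration and configurations with cut-vertices also need handling, which I would do through the extension $Z^u$ of Section \ref{sect:config-in-combine}. Finally, Lemma \ref{lem:max9} requires that a degree-9+ vertex is never the center, and this holds because the center keeps its true degree.
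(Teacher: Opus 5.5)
Your proposal is correct and follows the same route as the paper's proof. You use Lemma~\ref{lem:cartwheel} to obtain a free extended degree-bounded cartwheel isomorphic to $\bar B^8_2(v)$, and then Lemma~\ref{lem:max9} to cap the non-center degrees at 9 while preserving both the center's charge and the centered reducible configurations. Your explicit upgrade of a homomorphic image in $C$ to an induced embedding in $B^8_2(v)$, via centering and Lemma~\ref{lem:1-1-cut}, fills in a step that the paper treats as immediate from the isomorphism.
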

\begin{proof}
The proof is a straightforward combination of Lemma \ref{lem:cartwheel} and \ref{lem:max9}. We assume that $\bar B^8_2(v)$ has no local obstructing cycle or reducible configuration.
Then by Lemma \ref{lem:cartwheel} we get an isomorphic free extended degree bounded 
cartwheel $C$. Since $C$ is isomorphic to $\bar B^8_2(v)$, it has the same final charge for $v$ and it has no centered reducible configurations. Both of these properties are preserved when we reduce degrees above $9$ to $9$ as in Lemma \ref{lem:max9}.
\end{proof}
With Lemma \ref{lem:cartwheel-x}, it is clear that if we want to show for all  extended degree-bounded balls $\bar B^8_2(v)$ satisfying certain degree conditions (not distinguishing between degrees above 9)
where $v$ gets a certain final charge (at least 0) that $\bar B^8_2(v)$ contains an obstructing cycle or a centered reducible configuration from $\cD$,
then it suffices to prove that all free extended degree-bounded cartwheels $\bar C^8(v)$ with non-center degrees at most 9 satisfying the same degree constraints and getting the same final charge have no
reducible configuration from $\cD$.
We can skip vertices of degree 3 and 4 
since they are themselves reducible in $\cD$ so all non-center vertices have degrees between 5 and 9. Moreover, to prove Lemma \ref{lem:positive-comp}, we only need to consider centers with degrees between 7 and 12. For brevity, we shall refer to these as \emph{free cartwheels with limited degrees.}

Note that there are only finitely many free cartwheels with limited degrees, but still there are far too many for a computer to consider. For example, if the center has degree 11 and degrees in the first neighborhood are 8, we have 44 vertices in the second neighborhood, each of which having 5 degrees to chose from, leaving us $5^{44}>2^{100}$ options which is way too many for any computer today.

\subsection{Enumerating bad cartwheels with tail ranges}
\label{sub:overcharged_cartwheel_enumeration}

A \emph{bad cartwheel} is a free cartwheel with limited degrees that has no centered reducible configuration in $\cD$ and where the final center charge is non-negative. Any counterexample to Lemma \ref{lem:positive-comp} would correspond to a bad cartwheel. Other bad cartwheels are important to the proof of Lemma~\ref{lem:zero-deg7,8}, and we would like the computer to enumerate all bad cartwheels\footnote{The program to enumerate bad cartwheels was also used to identify reducible configurations for our set $\cD$. More precisely, we iteratively asked the computer to find bad cartwheels, looking for reducible configurations within them to add to the set $\cD$. For this reason, the code has been tested and tried many times.}.

Again, we have an issue with the numbers being to large. To handle this, we will use degree
ranges. The ranges will only be used for vertices $w$ in the second neighborhood of the center $v$, and we only use degree ranges
of the form $[d,9]$ for some $d \in \{5,6,7,8\}$.
We call them \emph{tail ranges}.
These can be represented as configurations $(Z_C,\delta_C^-,\delta_C^+)$ with degree ranges.
We refer to them as \emph{free cartwheels with limited degrees and tail ranges}, noting that every $(Z_C,\delta_C) \in (Z_C,\delta_C^-,\delta_C^+)$ is a free cartwheel with fixed limited degrees.

The output of our enumeration will not be individual bad cartwheels, but a set of 
free cartwheels with limited degrees and tail ranges that is guaranteed to include all bad cartwheels. If, for example, none of these have a center degree below 9, then the first case of Lemma \ref{lem:positive-comp} follows.

Our algorithm iteratively refines cartwheels with tail ranges, considering different fixings of the degrees within the ranges. In general, we want to refine as little as possible to avoid generating too many combinations.
What drives the refinements is the need to identify reducible configurations or determine which discharging rules could apply. We will prune, removing configurations 
that cannot contain bad cartwheels if either we conclude that the final charge must be negative or if they are blocked by reducible configurations in $\mathcal{D}$.

We note here that what is special about only using tail ranges is that when we
want to check if  $(Z_C,\delta_C^-,\delta_C^+)$ is blocked by $\cD$, then by Lemma \ref{lem:bounded-ranges}, we can replace each tail range by a single degree 9, and then all degrees are fixed when we look for reducible configurations.
While this keeps the complexity down, we also note that a non-center vertex of degree 9 is not in any reducible configuration, so we cannot have too many of them.

\drop{Note that we can cut 
$(Z_C,\delta_C^-,\delta_C^+)$ from our output if we can guarantee that
no $(Z_C,\delta_C) \in (Z_C,\delta_C^-,\delta_C^+)$ is bad in the sense
of having non-negative final center charge and no centered reducible configuration from $\cD$.
Such a guarantee  typically requires us 
to limit the degree ranges to fixed degrees so that we rule out reducible configurations and better see which discharging rules apply.}

\newcommand{\cRnoD}{\mathcal{R}^{*-\mathcal{D}}\xspace}
\newcommand{\cartwheelRange}{(Z_C, \delta_C^-, \delta_C^+)\xspace}

\subsubsection{Charges along an edge and a bound on the final charge}
\label{sect:hom-to-ranges}

Suppose we have discharging rule $R = ((Z,\delta^-,\delta^+),e,r)$ and free cartwheel with limited degrees and tail ranges $(Z_C,\delta_C^-,\delta_C^+)$ with a specified dart $e_C$. Considering all
the fixed degree versions
$(Z_C,\delta_C) \in (Z_C,\delta_C^-,\delta_C^+)$, we will want to know if $R$ never applies, sometimes applies, or always applies to $e_C$.

We say that a rule $R$ \emph{always applies} to a dart $e_C$ in $\cartwheelRange$ if there exists a homomorphism $\phi^*$ from $Z$ to $Z_C$ satisfying $\phi^*(e)=e_C$ and for every $v$ in $Z$,
\[
[\delta^-(v), \delta^+(v)] \supseteq [\delta_C^{-}(\phi^*(v)), \delta_C^{+}(\phi^*(v))]
\]
holds.
This means that $R$ applies to $e_C$ for every $(Z_C,\delta_C) \in (Z_C, \delta_C^-, \delta_C^+)$.
We say that a rule $R$ \emph{sometimes applies} to a dart $e_C$ in $\cartwheelRange$ if there exists a homomorphism $\phi^*$ from $Z$ to $Z_C$ satisfying $\phi^*(e)=e_C$ and for every $v$ in $Z$,
\[
[\delta^-(v), \delta^+(v)] \cap [\delta_C^{-}(\phi^*(v)), \delta_C^{+}(\phi^*(v))] \neq \emptyset
\]
holds.
This means that $R$ applies to $e_C$ for some $(Z_C,\delta_C) \in (Z_C, \delta_C^-, \delta_C^+)$.
Using the above definitions, we also say that a rule $R$ \emph{never applies} to a dart $e_C$ in $\cartwheelRange$ if $R$ does not sometimes apply to a dart $e_C$.
This means that $R$ does not apply to $e_C$ for any $(Z_C,\delta_C) \in (Z_C, \delta_C^-, \delta_C^+)$.
The pseudocode for checking a rule $R$ always/never applies to a specified dart in a given pseudo configuration is presented in Algorithms \ref{alg:ineivtably_apply} and \ref{alg:never_apply}, respectively.

\paragraph{Pruning by an upper bound of the final charge.}
As an application of the above definitions, we explain how to calculate the upper bound of the final charge for all $(Z_C, \delta_C) \in (Z_C, \delta_C^-, \delta_C^+)$, given a free cartwheel with limited degrees and tail ranges $(Z_C, \delta_C^-, \delta_C^+)$.
Let $v$ be the center of $Z_C$ and $v_1, \ldots, v_d$ be the neighbors of the center.

For each $1 \leq j \leq d$, we calculate the upper bound of the amount of charge sent from the neighbor $v_j$ to the center $v$ by taking the maximum of $r^*$ for all combined rules $((Z^*, \delta^{*-}, \delta^{*+}), e^*, r^*) \in \cRnoD$ that sometimes apply to $v_jv$ in $\cartwheelRange$.
We denote this value by $r_j$.
Note that we do not use all combined rules $\mathcal{R}^*$ in Lemma \ref{lem:vsends} but use only those in $\cRnoD$ in Lemma \ref{lem:vsends-noconf} because we only consider free cartwheels with limited degrees and tail ranges that contain no centered reducible configuration from $\mathcal{D}$.
The pseudocode corresponding to the algorithm is presented in Algorithm \ref{alg:amount_of_possible_charge_send}.

For each $1 \leq j \leq d$, we also calculate the lower bound of the amount of charge sent from the center $v$ to the neighbor $v_j$ by taking the sum of $r$ for all rules $((Z, \delta^-, \delta^+), e, r) \in \mathcal{R}$ that always apply to $vv_j$.
We denote this value by $s_j$.
The pseudocode corresponding to the algorithm is presented in Algorithm \ref{alg:amount_of_charge_send}.

Then, combining the initial charge $10 \cdot (6-d)$, we calculate $10 \cdot (6-d) + \sum_{1 \leq j \leq d} r_j - \sum_{1 \leq j \leq d} s_j$ as an upper bound of the final charge.
During the process of enumerating bad cartwheels, we can cut the current free cartwheel with limited degrees and tail ranges $\cartwheelRange$ if this value is negative.
The following lemma shows that this pruning method is valid.

\begin{lem}
\label{lem:estimate_charge_easy}
    Let $\cartwheelRange$ be a free cartwheel with limited degrees and tail ranges.
    Let $(Z_C, \delta_C) \in \cartwheelRange$ be a free cartwheel with limited degrees that contains no centered reducible configurations in $\mathcal{D}$.
    Then, the final charge of the center $v$ in $(Z_C, \delta_C)$ is at most $10 \cdot (6-d) + \sum_{1 \leq j \leq d} r_j - \sum_{1 \leq j \leq d} s_j$ calculated as above.
\end{lem}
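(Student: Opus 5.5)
We will compare the actual final charge $T(v)=10(6-d)+\sum_j\phi(v_j,v)-\sum_j\phi(v,v_j)$, computed in the fixed-degree cartwheel $(Z_C,\delta_C)$, with the claimed bound term by term. It then suffices to prove the two per-edge inequalities $\phi(v_j,v)\le r_j$ and $\phi(v,v_j)\ge s_j$ for every $j$. Here we work inside $(Z_C,\delta_C)$, viewed as a model of $\bar B^8_2(v)$. By Lemma \ref{limitcartwheel}, every rule acting on an edge incident to $v$ maps into $\bar B^8_2(v)$, and the free extended cartwheel reproduces that neighborhood, so $\phi$ is determined within $Z_C$.

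\emph{Lower bound on outgoing charge.} Let $R=((Z,\delta^-,\delta^+),e,r)\in\cR$ always apply to $vv_j$. Then there is a homomorphism $\phi^*:Z\to Z_C$ with $\phi^*(e)=vv_j$, and each image range $[\delta_C^-(\phi^*(w)),\delta_C^+(\phi^*(w))]$ lies inside $[\delta^-(w),\delta^+(w)]$. Since $\delta_C(\phi^*(w))$ lies in the cartwheel's range, it also lies in $R$'s range, so $R$ applies to $vv_j$ in $(Z_C,\delta_C)$. Distinct rules contribute separately to $\phi(v,v_j)$, and all contributions are nonnegative, so $\phi(v,v_j)\ge s_j$. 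One subtlety needs checking: the homomorphism must be an actual application of the rule. A rule's near-triangulation has diameter at most 4, and the cartwheel has no obstructing cycles (Lemma \ref{lem:cartwheel}), so Lemma \ref{lem:1-1} shows that homomorphisms and embeddings coincide here.

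\emph{Upper bound on incoming charge.} Let $R_1,\dots,R_\ell$ be the rules that actually send charge over $v_jv$ in $(Z_C,\delta_C)$, so that $\phi(v_j,v)=\sum_i r(R_i)$. By Lemma \ref{lem:free-comb-rules}, applied with $Z_C$ in the role of $G$ (only the homomorphism property is used, so a pseudo-configuration target suffices), these rules have a free combination $R^*=((Z^*,\delta^{*-},\delta^{*+}),e^*,r^*)$ with $r^*=\phi(v_j,v)$. This free combination comes with a homomorphism $\phi^{*\prime}:Z^*\to Z_C$ that maps $e^*$ to $v_jv$ and respects the degree ranges.

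Next we show $R^*\in\cRnoD$, that is, $R^*$ is not blocked by $\cD$ with center $\head(e^*)$. Suppose it were blocked. Then the fixed-degree member $(Z^*,\delta^*)$ with $\delta^*(w)=\delta_C(\phi^{*\prime}(w))$ would contain a centered homomorphic image of some $D\in\cD$. Composing with $\phi^{*\prime}$, which sends the center to $v$, gives a centered homomorphism of $D$ into $(Z_C,\delta_C)$. By (D0) and Lemma \ref{lem:1-1-cut} this is an embedding, since there are no obstructing cycles. That contradicts the hypothesis that $(Z_C,\delta_C)$ has no centered reducible configuration.

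So $R^*\in\cRnoD$. Moreover, $\phi^{*\prime}$ witnesses that $R^*$ sometimes applies to $v_jv$ in $\cartwheelRange$, because each $\delta_C(\phi^{*\prime}(w))$ lies in both ranges, making their intersection nonempty. Hence $\phi(v_j,v)=r^*\le r_j$. This is the same argument as in Section \ref{sub:max_without_reduc}, transplanted to the cartwheel target.

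\emph{Main obstacle.} The delicate step is the upper bound. It needs the completeness of $\cRnoD$, meaning that pruning blocked combinations during the incremental construction never discards a combination that could be realized. This holds because blocking is inherited by further combinations. It also needs the bookkeeping of centers, so that the center of the combined rule really maps to $v$. Summing the two per-edge inequalities over $j$ and adding $10(6-d)$ gives the lemma.
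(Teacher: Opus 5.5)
Your proof is correct and matches the paper's argument. The free combination of the rules actually sending charge over $v_jv$ in $(Z_C,\delta_C)$ cannot be blocked, since otherwise a centered reducible configuration would map into $(Z_C,\delta_C)$; so it lies in $\mathcal{R}^{*-\mathcal{D}}$ and sometimes applies, giving $\phi(v_j,v)\le r_j$. Rules that always apply to $vv_j$ certainly apply in $(Z_C,\delta_C)$, giving $\phi(v,v_j)\ge s_j$.

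Your detours through Lemma~\ref{lem:1-1} and Lemma~\ref{lem:1-1-cut} are harmless but unnecessary: for pseudo-configurations, both rule application and containment of a centered reducible configuration are already defined via homomorphisms, so the contradiction is immediate once you compose homomorphisms.
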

\begin{proof}
    For $i < j \leq d$, let $\mathcal{R}_j$ be the set of rules applied to $v_jv$ in $(Z_C, \delta_C)$.
    Let $R^*_j = (Z^*_j, \delta_j^{*-}, \delta_j^{*+}, e_j^*, r_j^*) \in \mathcal{R}^*$ be the combined rule of $\mathcal{R}_j$.
    Then, some $(Z^*_j, \delta_j^*) \in (Z^*_j, \delta_j^{*-}, \delta_j^{*+})$ has a homomorphic image into $(Z_C, \delta_C)$ with $e_j^*$ mapping to $v_jv$.
    Since $(Z_C, \delta_C)$ contains no centered reducible configuration in $\mathcal{D}$, the same holds for $(Z^*_j, \delta_j^*)$ with center $\head(e_j^*)$.
    Thus, $R^*_j \in \cRnoD$ holds and $R^*_j$ sometimes applies to $v_jv$ in $\cartwheelRange$, so $r_j \geq r^*_j$.
    Since the set of rules applied to $vv_j$ is a superset of the rules always applied to $vv_j$ in $\cartwheelRange$, $s_j$ is the lower bound of the amount of charge sent from $v$ to $v_j$ in $C$.
    Then, the claim holds.
\end{proof}

\subsubsection{Enumerating degrees of neighbors}
\label{subsub:enum-degree-neighbor}
We describe the procedure to enumerate free cartwheels with limited degrees with tail ranges that include all bad cartwheels.
First, we iterate through the possible degrees of center $v$.
Specifically, we choose $d \in \{7,8,9,10,11\}$ and set $\delta_C^-(v)=d$ and $\delta_C^+(v)=d$.
Second, we enumerate all possible assignments of degree for neighbors of $v$.
Since there are five choices $5,6,7,8$, or $9$ for each neighbor, there are $5^{d}$ possible combinations in total.
However, we only consider the assignments unique up to rotation, reducing the number of combinations\footnote{The number is calculated by $\frac{1}{d}\sum_{i=1}^d 5^{\gcd(i,d)}$ by Burnside's lemma \cite{burnside1909theory}, also called Cauchy-Frobenius lemma \cite{frobenius}.}.
We will use $v_1, \ldots, v_{d}$ to denote neighbors of $v$.
Fixing the degrees of $v_i$ determines the shape of $C$, as described in Section \ref{sect:free-cartwheel}.
Recall that we only add second neighbors of $v$, that are also adjacent to $v_i$ satisfying $\delta_C^-(v_i)=\delta_C^+(v_i) < 9$.
However, the degrees of the second neighbors of $v$ remain undetermined.
Thus, we assign the range $[5, 9]$ to $[\delta_C^-(u),\delta_C^+(u)]$ for all second neighbors $u$ of $v$.
The resulting free cartwheel with limited degrees and tail ranges $(Z_C, \delta_C^-, \delta_C^+)$ represents the set of all free cartwheels with limited degrees $(Z_C, \delta_C) \in (Z_C, \delta_C^-, \delta_C^+)$.
The pseudocode corresponding to the algorithm is presented in Algorithm \ref{alg:enum_wheel}.

Here, we apply the pruning method by an upper bound of the final charge, described in the previous section.
This pruning method has a significant impact, especially when the center degree $d$ is higher (e.g., $d=9,10,11$).
One typical example of this pruning is the proof of Lemma \ref{lem:12+}.
For a vertex of degree at least 12, we only have to consider the degree of neighbors of $v$
to find a positively charged vertex.
Since all degrees are $5$, we can find a reducible configuration in $\mathcal{D}$.
We also cut $\cartwheelRange$ if it is blocked by $\mathcal{D}$ because being blocked implies that every $(Z_C, \delta_C) \in (Z_C, \delta_C^-, \delta_C^+)$ contains some centered reducible configuration from $\mathcal{D}$.
The pseudocode corresponding to the algorithm, including pruning, is presented in Algorithm \ref{alg:enum_possible_bad_wheels}.

\subsubsection{Fixing rules applied from neighbors to the center}
\label{subsub:fix_in_rules}

In the subsequent steps, we describe the operation for one fixed $(Z_{C_0}, \delta_{C_0}^-, \delta_{C_0}^+)$ generated in the previous section because we can handle each in completely parallel.
Let $\mathcal{C}_0$ be a singleton set of $(Z_{C_0}, \delta_{C_0}^-, \delta_{C_0}^+)$.
In the following, we generate sets of free cartwheels with limited degrees and tail ranges $\mathcal{C}_i$ for $i=1,\ldots,d$ in this order.
Note that $\mathcal{C}_d$ is an intermediate set; the final output of the algorithm will be derived from $\mathcal{C}_d$ in a subsequent step.

For the initial $(Z_{C_0}, \delta_{C_0}^-, \delta_{C_0}^+) \in \mathcal{C}_0$, we do not know which rules apply to $v_iv$ for any neighbor $v_i$.
When constructing $\mathcal{C}_i$ from $\mathcal{C}_{i-1}$, we will branch by deciding the exact set of rules $\mathcal{R}_i$ that apply to $v_iv$. The set $\cR_i$ applies to a cartwheel $(Z_{C_0}, \delta_{C_0})\in
(Z_{C_0}, \delta_{C_0}^-, \delta_{C_0}^+)$ if and only if one of the free combinations $R^*_i$ of $\cR_i$ applies to
$(Z_{C_0}, \delta_{C_0})$. However, if $R^*_i$ is blocked by $\cD$, then so is $(Z_{C_0}, \delta_{C_0})$. Therefore, we only have to consider free combinations
$R^*_i \in \cRnoD$ obtained in Lemma \ref{lem:free-comb-rules-noD}.

The procedure for constructing $\mathcal{C}_i$ from $\mathcal{C}_{i-1}$ is as follows.
For each $\cartwheelRange \in \mathcal{C}_{i-1}$ and for each $R_i^*=((Z_R^*, \delta_R^{*-}, \delta_R^{*+}), e_R^*, r_R^*) \in \cRnoD$, we make the branching so that $\mathcal{R}_i$ is the set of rules associated with $R_i^*$ unless $R_i^*$ never applies to $v_iv$.
Let $\phi^*$ be the homomorphism from $Z^*$ to $Z_C$ with $\phi^*(e_R^*)=v_iv$.
We create a copy $(Z_{C'}, \delta_{C'}^-, \delta_{C'}^+)$ of $\cartwheelRange$.
For every vertex $v_R \in Z_R^*$, we assign $[\delta_{C'}^-(v_C), \delta_{C'}^+(v_C)] = [\delta_{C}^-(v_C), \delta_{C}^+(v_C)] \cap [\delta_R^{*-}(v_R), \delta_R^{*+}(v_R)]$, where $v_C=\phi^*(v_R)$.
If some vertices in $(Z_{C'}, \delta_{C'}^-, \delta_{C'}^+)$ have degree-ranges except for the tail range, we refine them to fixed degrees and enumerate all combinations.
Here, we say these pseudo-configurations are obtained from $(Z_{C'}, \delta_{C'}^-, \delta_{C'}^+)$ by \emph{enumerating concrete degrees except for tail ranges}.
We will also apply this operation later.
Finally, the pseudocode for enumerating concrete degrees, excluding tail ranges, and for calculating degree intersections according to a combined rule is presented in Algorithms \ref{alg:concrete_degree} and \ref{alg:update_degree_by_rule}, respectively.

\paragraph{Pruning.}

We apply three pruning methods when adding $\cartwheelRange$ to $\mathcal{C}_i$ for $i=1,\ldots,d$.

Recall that if $\cartwheelRange \in \mathcal{C}_{i}$, then for every $j\leq i$, we have decided the exact set $\cR_j$ of rules applying to $v_jv$. We constructed 
$\cartwheelRange$ so that every
rule in $\cR_j$ applies
to every 
$(Z_C,\delta_C)\in\cartwheelRange$, but the restriction
implies that we can ignore any
such $(Z_C,\delta_C)$ if a rule from $\cR\setminus\cR_j$ applies
to $v_jv$.

This has two implications. One is that we have decided the exact discharge $r^*_j$ over $v_jv$. The other more interesting one is for pruning; namely, that we can 
cut $\cartwheelRange$ if there exists a rule in $\mathcal{R} \setminus \mathcal{R}_j$ that always applies to $v_jv$.
The pseudocode for this pruning is presented in Algorithm \ref{alg:prune_non_assoc_rule}.

We also apply a similar pruning method for an upper bound of the final charge in Section \ref{sect:hom-to-ranges}.
The first implication above fixes the discharge $r^*_j$ over $v_jv$ for $j\leq i$, so this tightens the calculation of an upper bound on the final charge to $10 \cdot (6-d) + \sum_{1 \leq j \leq i} r_j^* + \sum_{i < j \leq d} r_j - \sum_{1 \leq j \leq d} s_j$, where again $r_j$ is the sum of dischargings from rules that \emph{sometimes} apply to $v_jv$ while $s_j$ is the sum of dischargings from rules that \emph{always} apply to $vv_j$.
If this value is less than $0$, we cut $\cartwheelRange$.
The pseudocode for calculating this value is presented in Algorithm \ref{alg:upperbound_of_charge}.
Corresponding to Lemma
\ref{lem:estimate_charge_easy}, we have the following lemma stating the validity of this pruning.

\begin{lem}
\label{lem:estimate_charge}
    Let $\cartwheelRange$, $\mathcal{R}_j$ and $r^*_j$ ($1 \leq j \leq i$) be defined as above.
    Let $(Z_C, \delta_C) \in \cartwheelRange$ be a free cartwheel with limited degrees such that the set of rules applying to $v_jv$ equals $\mathcal{R}_j$ for $1 \leq j \leq i$, and contains no centered reducible configurations in $\mathcal{D}$.
    Then, the final charge of the center $v$ in $(Z_C, \delta_C)$ is at most $10 \cdot (6-d) + \sum_{1 \leq j \leq i} r_j^* + \sum_{i < j \leq d} r_j - \sum_{1 \leq j \leq d} s_j$ calculated as above.
\end{lem}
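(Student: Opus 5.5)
The plan mirrors the proof of Lemma~\ref{lem:estimate_charge_easy}. The final charge of $v$ in $(Z_C,\delta_C)$ is $T(v)=10(6-d)+\sum_{j=1}^d \phi(v_j,v)-\sum_{j=1}^d\phi(v,v_j)$. We therefore bound each incoming term from above and each outgoing term from below, splitting the incoming terms into the indices $j\le i$ and the indices $j>i$.

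For $j\le i$, the hypothesis says that the set of rules applying to $v_jv$ in $(Z_C,\delta_C)$ is exactly $\cR_j$. Hence $\phi(v_j,v)$ is the sum of $r(R)$ over $R\in\cR_j$. This equals $r^*_j$, the charge of the combined rule $R^*_j\in\cRnoD$ chosen at branch $j$. Indeed, by construction (Lemma~\ref{lem:free-comb-rules}), a free combination of a set of rules carries the sum of their charges. This gives equality in these terms.

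For $j>i$, let $\cR_j$ now denote the actual set of rules applied to $v_jv$ in $(Z_C,\delta_C)$. By Lemma~\ref{lem:free-comb-rules}, $\cR_j$ has a free combination $R^*_j=((Z^*_j,\delta^{*-}_j,\delta^{*+}_j),e^*_j,r^*_j)$ with $r^*_j=\phi(v_j,v)$. It comes with a homomorphism into $(Z_C,\delta_C)$ that maps $e^*_j$ to $v_jv$ and respects the degree ranges.

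The key step is to show that $R^*_j$ is not blocked by $\cD$, so that it lies in $\cRnoD$. Suppose it were blocked. Consider the fixed-degree member $(Z^*_j,\delta^*_j)$ whose degrees are those of the images in $(Z_C,\delta_C)$. It would then contain the image of some $D\in\cD$, centered at $\head(e^*_j)$. Composing that homomorphism with the map into $(Z_C,\delta_C)$ would give a homomorphism of $D$ into $(Z_C,\delta_C)$ centered at $v$, since $\head(e^*_j)$ maps to $v$. This contradicts the assumption that $(Z_C,\delta_C)$ has no centered reducible configuration.

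Next, the homomorphism witnesses that $R^*_j$ sometimes applies to $v_jv$ in $\cartwheelRange$. The reason is that each vertex's actual degree lies both in the rule's range and in the cartwheel's range, so the two ranges intersect. Therefore $r^*_j\le r_j$, since $r_j$ is the maximum over such combined rules.

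For the outgoing terms, every rule that always applies to $vv_j$ in $\cartwheelRange$ applies in particular to $(Z_C,\delta_C)$, because of the range containment in the definition. All charges are positive, so $\phi(v,v_j)\ge s_j$ for every $j$.

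Summing the three estimates gives the claimed bound. The only delicate point is the non-blocking argument: it needs the centering convention, namely that the combined rule's center is its head and maps to the cartwheel center. It also needs the fact that homomorphisms compose while preserving the center and degrees, as established in Section~\ref{sect:config-in-combine}.
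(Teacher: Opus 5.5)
Your proof is correct and follows the argument the paper intends. The paper gives no separate proof and only points to Lemma~\ref{lem:estimate_charge_easy}, whose proof handles the indices $j>i$ and the outgoing terms $s_j$ exactly as you do; your equality $\phi(v_j,v)=r^*_j$ for $j\le i$ is the one addition needed (and you could note that $R^*_j\in\mathcal{R}^{*-\mathcal{D}}$ also needs every intermediate combination in the incremental construction to be unblocked, which follows by the same composition argument).
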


Finally, we cut $\cartwheelRange$ if it is blocked by $\mathcal{D}$.
The pseudocode for all pruning methods and constructing $\mathcal{C}_d$ is presented in Algorithms \ref{alg:prune} and \ref{alg:fix_in_rules}, respectively. We note that the same code can be used for the initial pruning corresponding to the special case where $i=0$.
In the following, we apply these pruning methods at each branch.

\subsubsection{Refinement}
\label{subsub:refinement}
\newcommand{\cCalways}{\mathcal{C}_{\textsf{always}}\xspace}
\newcommand{\cCnever}{\mathcal{C}_{\textsf{never}}\xspace}
Up to this point, we have determined the set of rules applied to $v_iv$ for every neighbor $v_i$ to obtain $\mathcal{C}_d$.
The free cartwheels with limited degrees and tail ranges in $\mathcal{C}_d$ may have second neighbors whose degrees have not been determined yet (i.e., their degree-ranges remain a tail range $[d, 9]$ for some $d$).
We construct the final output $\mathcal{C}$ of the algorithm from $\mathcal{C}_d$ by selectively refining some of these degrees, but not all of them.

We first outline the overall flow of the algorithm to construct the final set $\mathcal{C}$ from $\mathcal{C}_d$.
Initially, we add every free cartwheel with limited degree and tail ranges in $\mathcal{C}_d$ into a queue.
While the queue is not empty, we pop one free cartwheel with limited degrees and tail ranges $\cartwheelRange$ from the queue.
Then, we iterate over all pairs of a rule $R \in \mathcal{R}$ and a neighbor $v_i$.
For each pair $(R, v_i)$, we determine whether to refine $\cartwheelRange$ into two sets of pseudo-configurations $\cCalways$: where the rule $R$ always applies to $vv_i$, and $\cCnever$: where the rule $R$ never applies to $vv_i$.
If we determine not to refine for a given pair, we proceed to the next pair.
Otherwise, we perform the refinement, and apply the three previously described pruning methods to the free cartwheels with limited degrees and tail ranges in $\cCalways$ and $\cCnever$.
We push the surviving free cartwheels with limited degrees and tail ranges to the queue.
If $\cartwheelRange$ is not refined for any pair, we add it to the final set $\mathcal{C}$.
We repeat until the queue becomes empty.
The pseudocode corresponding to the overall algorithm for constructing $\mathcal{C}$ from $\mathcal{C}_d$ is presented in Algorithm \ref{alg:fix_out_rules}.

It remains to describe the criteria for refinement and the refinement procedure itself.
The fundamental idea is that if $R$ always applies, then it is sure to decrease
the final charge at $v$, increasing the chance that we conclude that the final charge is negative. However, for the cases where $R$ never applies, we also want an advantage; namely, a vertex in the second neighborhood with fixed degree below $9$; for this increases the chance that a cartwheel is blocked by a reducible configuration (reducible configurations can not use non-center vertices with degree 9). To ensure we get this advantage when $R$ never applies, we will use only $R$ that "dominantly" applies, as defined below.

Let $R=((Z_R, \delta_R^-, \delta_R^+), e_R, r_R) \in \mathcal{R}$ be a rule and $v_i$ be a neighbor.
We check whether $R$ always applies to $vv_i$ in $(Z_C, \delta_C^-, \delta_C^+)$.
If so, no refinement is performed.
Next, we check whether $R$ sometimes applies to $vv_i$. If so, we consider the homomorphism $\phi$ from $(Z_R, \delta_R^-, \delta_R^+)$ to $(Z_C, \delta_C^-, \delta_C^+)$ satisfying $\phi(e_R)=vv_i$. We say that $R$ \emph{dominantly applies} if every vertex $u_R$ in $Z_R$ with $\delta^+(u_R) < \infty$ is mapped to a vertex $u$ with $\delta_C^+(u) < 9$ by $\phi$. It is only if $R$ dominantly applies that we perform the refinement.
The pseudocode for checking whether a rule applies dominantly and for determining whether a refinement occurs is presented in Algorithms \ref{alg:dominantly_apply} and \ref{alg:should_refine}, respectively.
Note that every $(Z_R, \delta_R) \in (Z_R, \delta_R^-, \delta_R^+)$ has a diameter of at most 4, so a homomorphism $\phi$ here is injective by Lemma \ref{lem:1-1} since a free cartwheel with limited degrees and tail ranges has no obstructing cycle.

We now explain the refinement procedure.
Let $\phi$ be the homomorphism from above.
For a vertex $u_R$ in $Z_R$, let $u=\phi(u_R)$.
Let $U_R$ be the set of vertices $u_R$ in $Z_R$ such that $\delta_C^+(u)=9$ and $\delta_C^-(u) < \delta_R^-(u_R)$.
Furthermore, let $U=\phi(U_R)$.

We now show for every vertex $u_R\in Z_R\setminus U_R$ and $u=\phi(u_R)$, the inclusion $[\delta_C^-(u), \delta_C^+(u)] \subseteq [\delta_R^-(u_R), \delta_R^+(u_R)]$ holds.
For every vertex $u \not \in U$ with $\delta_C^+(u) < 9$, $u$ has a fixed degree $\delta_C^-(u)=\delta_C^+(u) < 9$ because the only degree-ranges we have are tail ranges.
Since $R$ sometimes applies, the inclusion holds.
For the remaining vertices $u \not \in U$, $\delta_C^+(u)=9$ trivially holds.
Since $u \not \in U$, $\delta_R^-(u_R) \leq \delta_C^-(u)$ holds.
Since $R$ dominantly applies, $\delta_R^+(u_R)=\infty$ holds.
This implies that $\delta_C^+(u) \leq \delta_R^+(u_R)$.
The inclusion thus holds for every $u_R\in Z_R\setminus U_R$ and $u=\phi(u_R)$.

We can also show that every 
$u_R\in U_R$ and $u=\phi(u_R)$ satisfies
$\delta_C^+(u) \leq \delta_R^+(u_R)$.
This is because $\delta_R^+(u_R)=\infty$ holds since $\delta_C^+(u)=9$ and $R$ dominantly applies.

To ensure that $R$ always applies, we create a copy $(Z_{C'}, \delta_{C'}^-, \delta_{C'}^+)$ of $(Z_C, \delta_C^-, \delta_C^+)$ and assign $\delta_{C'}^-(u)=\delta_R^-(u_R)$ for every vertex $u_R\in U_R$ and $u=\phi(u_R)$.
Now $R$ always applies to
$(Z_{C'}, \delta_{C'}^-, \delta_{C'}^+)$, and we can finally set
$\cCalways=\{(Z_{C'}, \delta_{C'}^-, \delta_{C'}^+)\}$.

We note that $(Z_{C'}, \delta_{C'}^-, \delta_{C'}^+)$ is exactly the set of cartwheels from
$(Z_C, \delta_C^-, \delta_C^+)$ for which $R$ applies to $vv_i$. Conversely, this means that $R$ never applies to any 
$(Z_C, \delta_C^-, \delta_C^+)\setminus (Z_{C'}, \delta_{C'}^-, \delta_{C'}^+)$. To cover this complement set with cartwheels of limited degrees and tail ranges, we do as follows.
For each $u \in U$, we create a copy $(Z_{C}^u, \delta_{C}^{u-}, \delta_{C}^{u+})$ of $(Z_C, \delta_C^-, \delta_C^+)$, and assign $\delta_{C}^{u+}(u)=\delta_R^-(u_R)-1$, which as desired is strictly less than 9.
For each such copy, we generate the set of pseudo-configurations obtained from $(Z_{C}^u, \delta_{C}^{u-}, \delta_{C}^{u+})$ by enumerating concrete degrees except for tail ranges.
We define $\cCnever$ as the union of them.
The pseudocode for the refinement process, obtaining $\cCalways$, and obtaining $\cCnever$ is presented in Algorithms \ref{alg:refinement}, \ref{alg:refine_always}, and \ref{alg:refine_never}, respectively.

To see that this algorithm terminates, note that if a free cartwheel with limited degrees and tail ranges is refined into $\cCalways$ and $\cCnever$ by a pair $(R, v_i)$, no resulting free cartwheel with limited degrees and tail ranges in $\cCalways$ or $\cCnever$ will be refined by the same pair again.
This is because the application of $R$ to $vv_i$ is definitely resolved (i.e., it either always applies or never applies) for all free cartwheels with limited degrees and tail ranges in $\cCalways$ and $\cCnever$.
Thus, the refinement process occurs at most $|\mathcal{R}| \cdot d$ times for a single free cartwheel with limited degrees and tail ranges in $\mathcal{C}_d$.
Therefore, the queue eventually becomes empty, ensuring that the algorithm terminates.

From the initial singleton set $\mathcal{C}_0$, we obtain the final output set $\mathcal{C}$.
The pseudocode for constructing $\mathcal{C}$ is presented in Algorithm \ref{alg:enum_bad_cartwheel}.
Let $\mathcal{C}_{\textsf{all}}$ be the union of all such output sets $\mathcal{C}$ obtained from every possible initial set $\mathcal{C}_0=\{(Z_{C_0}, \delta_{C_0}^-, \delta_{C_0}^+)\}$.
Note that the initial sets are enumerated by all combinations of the center degree $d$ and degrees of its neighbors, as described in Section \ref{subsub:enum-degree-neighbor}.
The pseudocode for constructing $\mathcal{C}_{\textsf{all}}$ is presented in Algorithm \ref{alg:enum_all_bad_cartwheel}.

Note that some free cartwheel with limited degrees $C \in (Z_C, \delta_C^-, \delta_C^+) \in \mathcal{C}_{\textsf{all}}$ may contain centered reducible configurations from $\mathcal{D}$.

\begin{lem}
    Let $C$ be a bad free cartwheel with limited degrees such that the center degree $d$ is in $\{7,8,9,10,11\}$.
    Then, the final resulting set $\mathcal{C}_{\textsf{all}}$ contains some free cartwheel with limited degrees and tail ranges $(Z_C, \delta_C^-, \delta_C^+)$ where $C \in (Z_C, \delta_C^-, \delta_C^+)$.
    Moreover, if some $(Z_C, \delta_C^-, \delta_C^+) \in \mathcal{C}_{\textsf{all}}$, then some free cartwheel with limited degrees $C \in (Z_C, \delta_C^-, \delta_C^+)$ is bad.
\end{lem}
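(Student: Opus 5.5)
The plan is to prove the first assertion by maintaining an invariant through every stage of the enumeration. For a fixed bad free cartwheel $C$ with limited degrees and center degree $d\in\{7,8,9,10,11\}$, I claim that at each stage some element of the current collection contains $C$. The stages are the initial enumeration of Section~\ref{subsub:enum-degree-neighbor}, the sets $\mathcal{C}_0,\ldots,\mathcal{C}_d$, and the refinement queue.

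\emph{Initial stage.} The neighbor degrees of $C$ appear among the enumerated assignments, up to rotation. All second neighbors receive the full tail range $[5,9]$, so $C$ lies in some initial $(Z_{C_0},\delta_{C_0}^-,\delta_{C_0}^+)$. The pruning steps cannot remove this element. Lemma~\ref{lem:estimate_charge_easy} shows that the upper bound on the final charge is at least the true final charge of $C$, which is nonnegative. Being blocked by $\cD$ would force $C$ to contain a centered reducible configuration, which is impossible since $C$ is bad.

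\emph{Passage from $\mathcal{C}_{i-1}$ to $\mathcal{C}_i$.} Let $\cR_i$ be the exact set of rules applying to $v_iv$ in $C$. By Lemma~\ref{lem:free-comb-rules} these rules have a free combination $R_i^*$ mapping homomorphically into $C$ with the correct degrees. By the argument of Section~\ref{sub:max_without_reduc}, $R_i^*$ is not blocked, because $C$ has no centered reducible configuration; hence $R_i^*\in\cRnoD$. The branch corresponding to $R_i^*$ intersects degree ranges with ranges containing the degrees of $C$. Enumerating concrete degrees outside tail ranges then keeps one copy containing $C$.

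For the prunings at this stage I would argue as follows:
\begin{itemize}
\item No rule outside $\cR_j$ can always apply, since it would then apply in $C$.
\item The charge bound is valid by Lemma~\ref{lem:estimate_charge}.
\item Blocking is excluded exactly as before.
\end{itemize}

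\emph{Refinement stage.} For a pair $(R,v_i)$, either $R$ applies to $vv_i$ in $C$ or it does not. If it applies, then $C$ lies in the $\cCalways$ copy, since the degrees of $C$ at $U$ are at least $\delta_R^-(u_R)$. If it does not apply, then, by the inclusion argument given in the text, some $u\in U$ has degree below $\delta_R^-(u_R)$ in $C$. Hence $C$ lies in the copy with $\delta^{u+}(u)=\delta_R^-(u_R)-1$, after concrete degrees are enumerated. Pruning is handled as above. Termination was already shown, so $C$ ends in an element of $\mathcal{C}$, and therefore of $\mathcal{C}_{\textsf{all}}$.

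For the second assertion, I would take an output element $(Z_C,\delta_C^-,\delta_C^+)$ and try to exhibit a bad member of it. The natural candidate is obtained by raising every remaining tail range to its top value $9$.
\begin{itemize}
\item \emph{No reducible configuration.} By Lemma~\ref{lem:bounded-ranges} and the fact that non-center vertices of degree $9$ lie in no configuration of $\cD$, this candidate is blocked exactly when the element is blocked. It was not pruned, so the candidate has no centered reducible configuration.
\item \emph{Nonnegative charge.} Here I would use the exact rule sets $\cR_j$ for inward charge. For outward charge, once no refinement remains, each rule either always applies or fails to apply dominantly. In the latter case some vertex with finite upper range sits on a tail vertex. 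I would then check that setting that tail vertex to $9$ makes the rule not apply. In that case the outward charge of the candidate equals the sum $\sum_j s_j$, so the computed bound is exact and nonnegative.
\end{itemize}

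I expect the main obstacle to be this last exactness step. Rules that sometimes apply but not dominantly, together with the interplay between the sets $\cR_j$ and the degree-$9$ choice, may break the equality between the computed bound and the true final charge. If so, the second assertion would have to be weakened to the statement that each surviving element is consistent with all pruning tests.
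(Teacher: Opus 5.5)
Your argument for the first assertion is correct and is the paper's argument; you are in fact more explicit than the paper about why a refinement step keeps $C$ covered. For the second assertion you pick the same witness as the paper: the member that gives every vertex $u$ the degree $\delta_C^+(u)$, so every range ending in $9$ becomes $9$. Your blocking argument for it is also fine.

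The gap is that you stop at the charge step and even suggest the assertion may need weakening. It does not. The missing ingredient is a property of the rule set. Every vertex of every rule in $\mathcal{R}$ has either an unbounded range $[\delta^-,\infty]$ or a finite upper bound at most $8$; the paper's proof invokes this, and the ``$9+$'' abstraction of Lemma~\ref{lem:max9} also rests on it.

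Now suppose $R$ sometimes but not always applies to $vv_i$ in a surviving element. No refinement was triggered, so $R$ does not dominantly apply. Then the homomorphism $\phi$, which is unique once $e_R\mapsto vv_i$ is fixed, sends some $u_R$ with $\delta_R^+(u_R)<\infty$ to a vertex $u$ with $\delta_C^+(u)\ge 9$. By the property above, $\delta_R^+(u_R)\le 8$. Your witness gives $u$ degree $\delta_C^+(u)\ge 9>\delta_R^+(u_R)$, so $R$ does not apply there. Rules that never apply do not apply either. Hence the outward charge of the witness along $vv_i$ is exactly $s_i$.

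On the inward side you do not need exactness, and this removes your worry about the interplay with the sets $\mathcal{R}_j$. After \texttt{updateDegreeByRule}, the range at every image vertex of $R_j^*$ is contained in the corresponding range of $R_j^*$, hence in that of each rule of $\mathcal{R}_j$. All later steps only shrink ranges. So every rule of $\mathcal{R}_j$ always applies, and the witness receives at least $r_j^*$ along $v_jv$. It may receive more if some rule outside $\mathcal{R}_j$ that only sometimes applies happens to fire at the $9$-setting, which is harmless. Therefore the witness has final charge at least $10(6-d)+\sum_j r_j^*-\sum_j s_j$. This is nonnegative because the element passed the charge-pruning test in its final form, with all $d$ combined rules fixed. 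So the witness is bad, exactly as in the paper's proof, and the second assertion holds as stated.
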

\begin{proof}
    Let $(Z_C, \delta_C)$ be a free cartwheel with limited degrees such that the center degree $d$ is in $\{7,8,9,10,11\}$, it contains no centered reducible configuration from $\mathcal{D}$ and, the final charge of the center $v$ in $(Z_C, \delta_C)$ is at least 0, and let $v_1,\ldots,v_{d}$ be neighbors of $v$.
    In the initial step of the algorithm, we enumerated all combinations of the degrees of the center and its neighbors, so we have the corresponding degrees of $(Z_C, \delta_C)$.

    First, we prove the correctness of the algorithm without pruning.
    Subsequently, we will show that the three pruning methods are valid.
    
    Let $\mathcal{R}_i$ be the set of rules applied to $v_iv$ in $(Z_C, \delta_C)$.
    Let $R_i^* = (Z_i^*, \delta_i^{*-}, \delta_i^{*+}, e_i^*, r_i^*)$ be one free combination of $\mathcal{R}_i$ such that
    some $(Z_i^*, \delta_i^*) \in (Z_i^*, \delta_i^{*-}, \delta_i^{*+})$ has a homomorphism into $(Z_C, \delta_C)$ with $e_i^*$ mapping to $v_iv$.
    Since $(Z_C, \delta_C)$ contains no centered reducible configuration from $\mathcal{D}$, the same holds for $(Z_i^*, \delta_i^*)$ with center $\head(e_i^*)$.
    
    It implies that $\mathcal{D}$ does not block $(Z_i^*, \delta_i^{*-}, \delta_i^{*+})$.
    Then, $R^*_i$ belongs to $\cRnoD$ obtained in Lemma \ref{lem:free-comb-rules-noD}.
    For each $i=1,\ldots,d$, we branch by deciding the exact set of rules that apply to $v_iv$, and update the degree-ranges by one free combination of them in $\cRnoD$.
    As a result, we obtain $(Z_C, \delta_C^-, \delta_C^+) \in \mathcal{C}_d$ such that $(Z_C, \delta_C) \in (Z_C, \delta_C^-, \delta_C^+)$.

    When we generate the final set $\mathcal{C}$ from $\mathcal{C}_d$, we only refine the degree-ranges of some vertices.
    It does not change the set of free cartwheels with limited degrees represented before and after the refinement.
    Thus, $(Z_C, \delta_C^-, \delta_C^+) \in \mathcal{C}$ such that $(Z_C, \delta_C) \in (Z_C, \delta_C^-, \delta_C^+)$.

    We will show that three pruning methods are valid (i.e., these methods do not remove $(Z_C, \delta_C^-, \delta_C^+)$ such that $(Z_C, \delta_C) \in (Z_C, \delta_C^-, \delta_C^+)$.).
    Since Lemmas \ref{lem:estimate_charge_easy}, \ref{lem:estimate_charge} hold, and the final charge of $v$ in $(Z_C, \delta_C)$ is at least 0, the pruning by an upper bound of the final charge is valid.
    Since $(Z_C, \delta_C)$ contains no centered reducible configuration from $\mathcal{D}$, the pruning about blocking by $\mathcal{D}$ is also valid.
    The pruning method, when rule $R \in \mathcal{R} \setminus \mathcal{R}_j$ always applies, is also valid since $\mathcal{R}_j$ is the exact set of rules applying to $v_jv$.

    Finally, we show that some free cartwheel with limited degrees $(Z_C, \delta_C) \in (Z_C, \delta_C^-, \delta_C^+) \in \mathcal{C}$ is bad.
    Let $(Z_C, \delta_C^-, \delta_C^+) \in \mathcal{C}$.
    For every $u$ in $Z_C$ has a fixed degree or a tail range.
    We construct $(Z_C, \delta_C)$ by setting $\delta_C(u)=9$ if $\delta_C^+(u)=9$, and $\delta_C(u)=\delta_C^-(u)$ otherwise.
    By the pruning by blocking, $(Z_C, \delta_C^-, \delta_C^+)$ contains no centered reducible configuration from $\mathcal{D}$.
    Then, the same holds for $(Z_C, \delta_C)$.
    Let $(Z_i^*, \delta_i^{*-}, \delta_i^{*+}, e_i^*, r_i^*) \in \cRnoD$ be a free combination of $\mathcal{R}_i$ for $(Z_C, \delta_C^-, \delta_C^+)$.
    By combining the pruning about an upper bound of the final charge, $10 \cdot (6 - \delta(v)) + \sum_{i=1}^d r_i^* - \sum_{i=1}^d s_i \geq 0$, where $s_i$ is the sum of charges for all rules $R \in \mathcal{R}$ always apply to $vv_i$ in $(Z_C, \delta_C^-, \delta_C^+)$.
    In every free cartwheel with limited degree in $(Z_C, \delta_C^-, \delta_C^+)$, the amount of charge $v_i$ sent to $v$ is at least $r_i^*$.
    All we have to prove is that the sum of charges for all rules $R \in \mathcal{R}$ applied to $vv_i$ equals $s_i$ also in $(Z_C, \delta_C)$.

    Let $R=(Z_R, \delta_R^-, \delta_R^+, e_R, r_R) \in \mathcal{R}$ be a rule that sometimes applies but not always applies to $vv_i$ in $(Z_C, \delta_C^-, \delta_C^+)$ for some neighbor $v_i$.
    Showing that $R$ does not apply to $vv_i$ in $(Z_C, \delta_C)$ leads to the above claim.
    Since $(Z_C, \delta_C^-, \delta_C^+)$ is in $\mathcal{C}$, the refinement for all pairs of a rule in $\mathcal{R}$ and a neighbor $v_i$ does not occur anymore.
    This implies that $R$ does not dominantly apply to $vv_i$.
    Then, there is a vertex $u_R$ with $\delta_R^+(u_R) < \infty$ that is mapped to a vertex $u$ with $\delta_C^+(u)=9$ by the homomorphism from $Z_R$ to $Z_C$ with $e_R$ mapping to $vv_i$.
    As we said before, if $\delta_R^+(u_R) < \infty$, then $\delta_R^+(u_R) < 9$ holds.
    Then, $\delta_C(u)=9$ by defintion of $\delta_C$, so $R$ does not apply to $vv_i$ in $(Z_C, \delta_C)$.
\end{proof}

For each free cartwheel with limited degrees and tail ranges $C$ in the final output $\mathcal{C}_{\textsf{all}}$, we calculate the upper bound of the final charge of $v$ by Lemma \ref{lem:estimate_charge}.
We checked that all of them are 0, not positive.
Moreover, we checked that the center degree $d$ of each $C$ in $\mathcal{C}_{\textsf{all}}$ is $7$ or $8$.
Moreover, we checked that the center of $C$ in $\mathcal{C}_{\textsf{all}}$ is adjacent to at least one vertex of degree more than 6.
These checks are verified by computer in Algorithm \ref{alg:enum_bad_cartwheel}, described as Lemma \ref{lem:for-lem:positive-comp}.
This completes the proof of \textbf{Lemma \ref{lem:positive-comp}}.

\subsection{Free combinations of free cartwheels}
\label{subsect:combine-cartwheel}
\showlabel{subsect:combine-cartwheel}
In this section, we describe the algorithms used to prove Lemma \ref{lem:zero-deg7,8}, \ref{lem:777}, and \ref{lem:zero-deg7,7}.
In these lemmas, we want to combine cartwheels in order to find reducible configurations. We only need to consider cartwheels that do not already have reducible configurations, so our starting point is the set $\cC=\mathcal{C}_{\textsf{all}}$ of cartwheels that survived from the last section.
Note that the number of free cartwheels in $\mathcal{C}_{\textsf{all}}$ is 728 for center degree 8, and $9366$ for center degree $7$.

In the previous section, we considered degrees up to 9, but in this section, we only consider vertices of degree up to 8. In particular, this means that we can eliminate from $\cC$ any configuration $(Z,\delta^-,\delta^+)$
which for some vertex $v\in Z$ has fixed degree
$\delta^-(v)=\delta^+(v)=9$. The only ranges we
have for cartwheels in $\cC$ are tail ranges $[d,9]$. We could reduce those to $[d,8]$, but this would not be a good idea unless $d=8$. The point is that to see if
$(Z,\delta^-,\delta^+)$ is blocked by the reducible configurations in $\cD$, by Lemma \ref{lem:bounded-ranges}, we can reduce a tail range to the single value 9 while we have to do all possible combinations over all other degree ranges. The role of a tail range is simply to say that a vertex cannot be used in any of the reducible configurations.
After this pruning, the vertices in configurations $\cC$ 
have either fixed degrees at most 8 or tail ranges.
The pseudocode corresponding to this process is presented in Algorithm \ref{alg:delete_degree_k}.

For combining cartwheels, we shall use a general routine that takes a configuration $(Z,\delta^-,\delta^+)$ with a specified dart $e$
and look for all possible 
free combinations with 
a cartwheel $(Z',\delta^{\prime -},\delta^{\prime+})\in \cC$ identifying $e$ with some dart $e'$ incident with the center of $(Z',\delta^{\prime -},\delta^{\prime+})$.
We eliminate all free combinations blocked by a reducible configuration from $\cD$, returning only those combinations that are not blocked.
The pseudo-code for this is found in \ref{alg:combine_each_cartwheel}.

\paragraph{A vertex of degree 8.} 
We shall now describe the algorithmic check needed to prove 
prove Lemma \ref{lem:zero-deg7,8}.

Consider every cartwheel
$(Z,\delta^-,\delta^+)\in \cC$ with center $v$ of degree 8. If $v$ has a neighbor of degree 8, 
then for all such degree-8 neighbors $v'$, we consider all the above mentioned free combinations with cartwheels from $\cC$, identifying $v,v'$ with a dart to the center.

A computer check found that all combinations were blocked, and from this, we want to conclude that Lemma \ref{lem:zero-deg7,8} (i) is true. However, Lemma \ref{lem:zero-deg7,8} (i) is about cartwheels in 
$G$, not free cartwheels.

In more detail, for Lemma \ref{lem:zero-deg7,8} (i), we consider the situation in $G$ where all vertices
in $B_3(v)$ have degree at most 8 and $v'$ is a degree-8 neighbor of $v$,
$v$ and $v'$ have final charge $0$. With
$B=B_2(v)\cup B_2(v')$, the lemma claims that $B$ has an obstructing cycle or a reducible configuration in $\cD$.
Suppose for a contradiction that $B$ has no obstructing cycle or reducible configuration in $\cD$.
Then, by Lemma \ref{lem:cartwheel}, $\bar B_2^8(v)=B_2(v)$ is isomorphic to a free cartwheel $(Z,\delta)$ with center $v$. The free cartwheel inherits from 
$B_2(v)$ that $v$ has charge $0$ and no reducible configuration in $\cD$, so $(Z,\delta)\in (Z,\delta^-,\delta^+)\in \cC$. Likewise, we get
a $(Z',\delta')\in
(Z',\delta^{\prime -},\delta^{\prime+})\in\cC$ isomorphic to $B_2(v')$.
By Lemma \ref{lem:free-homo-conf}, this implies that we from the free combination $(Z^*,\delta^*)$ of $(Z,\delta)$ and $(Z',\delta')$ get a homomorphism into $B$. But since the free combinations were blocked by $\cD$, we know that there is a reducible configuration from $\cD$ with a homomorphism into $(Z^*,\delta^*)$ and hence further on into $B$ where we know it is embedded because $B$ has no obstructing cycles. This completes the formal
proof of 
{\bf Lemma \ref{lem:zero-deg7,8} (i)} based on the computer verified check in Algorithm \ref{alg:for-lem:zero-deg7,8(i)} that the free combinations were all blocked by reducible configurations in $\cD$.
In the rest of this section, we will just assume this understanding of the relation between the cartwheels in $G$ and our free cartwheels from $\cC$.

The proof of Lemma \ref{lem:zero-deg7,8} (ii) is very similar. Here, $v$ has no neighbor of degree 8 and only one neighbor, $v'$, of degree 7. In this case, again we look at all free combinations with cartwheels in $\cC$, identifying $vv'$ with a dart to the center, checking that they are all blocked. We checked this with a computer in Algorithm \ref{alg:for-lem:zero-deg7,8(ii)}, completing the proof of {\bf Lemma \ref{lem:zero-deg7,8} (ii)}.

Finally, we have the case of Lemma \ref{lem:zero-deg7,8} (iii). Here $v$ has no neighbors of degree 8 and at least two neighbors of degree $7$. We consider any two such neighbors $v'$ and $v''$ minimizing the successor distance from $v'v$ to $v''v$ around $v$. In this case, 
we want to look at all free combinations with 
$(Z',\delta^{\prime -},\delta^{\prime+}),(Z'',\delta^{\prime\prime -},\delta^{\prime\prime+})\in\cC$, identifying $vv'$ with a center dart in $Z'$ and $vv''$ with a center dart in $Z''$. However,
recall from Lemma \ref{lem:free-homo-conf-trans} that we can take the free combinations, one at a time. For efficiency, it is better to first do the
free combinations $(Z^*,\delta^{*-},\delta^{*+})$ of $(Z,\delta^- ,\delta^+)$ with $(Z',\delta^{\prime -},\delta^{\prime+})$, removing those blocked by $\cD$, and then only combine the survivors with $(Z'',\delta^{\prime\prime-},\delta^{\prime\prime+})$.
In the full combinations, we again remove everyone that is blocked by $\cD$.
This routine for combining cartwheels from $\mathcal{C}$ twice is presented in Algorithm \ref{alg:combine_each_cartwheel_twice}.
A computer check in Algorithm \ref{alg:for-lem:zero-deg7,8(iii)} guarantees that the remaining configurations all contain
the exceptional configuration $X$ from Figure \ref{fig:exception-zero-concentrate} with $v$ in the center. This completes the proof of
{\bf Lemma \ref{lem:zero-deg7,8} (iii)}.

\paragraph{Maximum degree 7.}
We shall now show the algorithmic check needed to prove Lemma \ref{lem:777} and \ref{lem:zero-deg7,7}.
In this section, we consider only degrees up to $7$. In particular, all center degrees are 7.
Specifically, we eliminate from $\mathcal{C}$ any configuration $(Z, \delta^-, \delta^+)$ that has some vertex $v$ in $Z$ with fixed degree $\delta^-(v)=\delta^+(v)=8$.
The only degree-ranges we have for cartwheels in $\mathcal{C}$ are tail ranges $[d,9]$.
Similarly to the case with degree $8$,
we reduce the degree-range 
$[d,9]$ to $[d,7]$ only if $d=7$.

For the proof of Lemma \ref{lem:777}, $v$ has two consecutive neighbors $v', v''$ of degree $7$.
Note that $v,v'$ and $v''$ form $\Tseven$.
Let $(Z', \delta^{'-}, \delta^{'+})$ and $(Z'', \delta^{''-}, \delta^{''+})$ be cartwheels in $\mathcal{C}$ respectively.
Then, we want to look at all free combinations with
$(Z', \delta^{\prime-}, \delta^{\prime+}), (Z'', \delta^{\prime\prime-}, \delta^{\prime\prime+}) \in \mathcal{C}$, identifying $vv'$ with a center dart in $Z'$ and $vv''$ with a center dart in $Z''$.
For efficiency, we take the free combinations, one at a time, similarly to the proof of Lemma \ref{lem:zero-deg7,8} (iii).
We verified that all combinations are blocked by $\mathcal{D}$ by executing Algorithm \ref{alg:for-lem:777} on a computer. 
This completes the proof of \textbf{Lemma \ref{lem:777}}.

We now delete all the cartwheels that contain $\Tseven$ from $\mathcal{C}$, that is, 
 we remove $(Z_C, \delta_C^-, \delta_C^+) \in \mathcal{C}$ if there is a triangle $v_1v_2v_3$ with $\delta_C^-(v_i)=\delta_C^+(v_i)=7$ for all $i=1,2,3$.

To prove  
Lemma \ref{lem:zero-deg7,7}, we consider an arbitrary cartwheel
$(Z_C, \delta_C^-, \delta_C^+)$ from the reduced set $\mathcal{C}$.

For the proof of Lemma \ref{lem:zero-deg7,7} (i), $v$ has exactly one neighbor $v'$ of degree $7$.
In this case, we look at free combinations with all cartwheels $(Z', \delta^{\prime-}, \delta^{\prime+})$ from $\mathcal{C}$, identifying $vv'$ with a center dart in $Z'$. We remove all combinations that are blocked by $\mathcal{D} \cup \{\Tseven\}$ by executing Algorithm \ref{alg:for-lem:zero-deg7,7(i)}, and check that none remain.
This completes the proof of \textbf{Lemma \ref{lem:zero-deg7,7} (i)}. 

For the proof of Lemma \ref{lem:zero-deg7,7} (ii), $v$ has at least two neighbors of degree $7$.
We consider any two such neighbors $v'$ and $v''$.
We want to look at all free combinations with $(Z', \delta^{\prime-}, \delta^{\prime+}), (Z'', \delta^{\prime\prime-}, \delta^{\prime\prime+}) \in \mathcal{C}$, identifying $vv'$ with a center dart in $Z'$ and $vv''$ with a center dart in $Z''$. As in the proof of Lemma \ref{lem:zero-deg7,8} (iii), we do the combinations, one at a time, each time removing the combination if it is blocked by $\mathcal{D} \cup \{\Tseven\}$. This is done by executing Algorithm \ref{alg:for-lem:zero-deg7,7(ii)} on a computer.
This completes the proof of \textbf{Lemma \ref{lem:zero-deg7,7} (ii)}.

This was the last lemma for which a computer check was needed, so this completes the proof of {\bf Theorem~\ref{thm:main-theorem}}, our main technical result on reducible configurations.

\section{Recursing deterministically with D-reducible configurations}\label{sect:D-reducible}

We will now show that we can derandomize the simple randomized algorithm from Section \ref{sec:algorithm-sketch}. 
Recall the situation. We have a linear number of non-touching D-reducible configurations. Using induction, we have 4-colored
all vertices outside these reducible configurations.

We will now start doing Kempe changes on the colored vertices. If the ring $R$ of a non-colored configuration $Z$ gets a 0-extendible coloring, then we extend this 4-coloring of $R$ to $Z$, and then $Z$ is no longer among the non-colored configurations (strictly speaking, it is $(Z,d_G)$ that is the reducible configuration).

In general, if the  coloring of a ring is  $i$-extendible, then we may have to
do $i$ improving Kempe changes to make it $0$-extendible. We know from Lemma \ref{lem:reduc} that all colorings of the rings of our reducible configurations are 25-extendible. 

In Section \ref{sec:algorithm-sketch}, we proved
that $O(\log n)$ random Kempe changes sufficed to get all reducible configurations colored with high probability. 

We will derandomize the construction using the method of conditional expectations.
The main step is to show that we can deterministically find a Kempe change that improves the level of coloring for a constant fraction of the rings.
The key is to prove the following lemma:

\begin{lem}\label{lem:det-Kempe-change}
    Let $Z_1,,\ldots, Z_r$ be some (not all) non-colored reducible configurations in $G$. The coloring of their rings  $R_1,\ldots,R_r$ are not yet $0$-extendible.
    
    We can then, deterministically in linear time, find and execute a Kempe change that is improving for at least a fraction $p=\frac1{3\cdot 2^{18}}$ of the rings, and also tell which rings got  improved.
\end{lem}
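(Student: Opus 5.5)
We derandomize the random Kempe change from Section~\ref{sec:algorithm-sketch} with the method of conditional expectations. First, we fix the pair of complementary color pairs. There are three choices, $\{\{1,2\},\{3,4\}\}$, $\{\{1,3\},\{2,4\}\}$ and $\{\{1,4\},\{2,3\}\}$. For each ring $R_j$ that is not $0$-extendible, some improving Kempe change exists, because the coloring is $i$-extendible for some $i\ge 1$. That change uses one specific pairing, and we call this pairing good for $R_j$; since the ring colorings are known, we can find it in constant time per ring from the precomputed level tables of the constantly many configurations in $\cD$. By pigeonhole, some pairing is good for at least $r/3$ rings, and we take that one. We then compute the $ab$- and $cd$-colored Kempe chains of the whole colored graph in linear time by a graph search. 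For each ring we record the at most $|R_j|\le 18$ distinct chains that meet it. For each good ring, we enumerate the at most $2^{18}$ swap patterns on its chains and mark the patterns that are improving. This is a constant amount of work per ring, and it is valid because the level of a ring coloring after the change depends only on the new coloring of the ring.

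Second, we fix the coin flips one chain at a time. Let $X$ be the number of good rings improved when each relevant chain is swapped independently with probability $1/2$. A good ring has at least one improving pattern out of at most $2^{18}$, so $\mathbb{E}[X]\ge (r/3)\,2^{-18}=pr$. We process the chains in an arbitrary order. For the current chain $K$, we compute the conditional expectation of $X$ under ``swap $K$'' and under ``do not swap $K$'', given the decisions already made, and we keep the option whose value is at least as large. The conditional expectation is a sum over rings. The contribution of ring $R_j$ is the fraction of the improving patterns consistent with the fixed decisions on its chains, and this fraction depends only on the decisions for chains meeting $R_j$. So deciding $K$ changes only the terms of the rings that $K$ meets.

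The main obstacle is keeping this process linear. A single chain may meet many rings, but the total number of (chain, ring) incidences is at most $\sum_j |R_j| = O(n)$. For each ring we maintain its current set of consistent improving patterns, as a bitmask over $2^{18}$ patterns or simply as a list. When $K$ is processed, we update each ring it meets in $O(2^{18})=O(1)$ time. The total time is therefore linear, and the conditional expectation never decreases. At the end all decisions are fixed, and $X\ge pr$ deterministically. We then perform the swaps in linear time. The rings whose final pattern lies in their improving set are exactly the improved rings, and we report them. Chains that meet no listed ring can be left unswapped.

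Finally, we must check that the swaps of different chains do not interfere across rings. This holds because the $ab$- and $cd$-chains partition the colored vertices and the rings are disjoint from the uncolored configurations. Each ring's new coloring is therefore determined by the swap bits of its own chains, which is exactly what the per-ring analysis assumed.
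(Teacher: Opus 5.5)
The gap is in your first step, where you say the pairing that is good for $R_j$ can be found ``in constant time per ring from the precomputed level tables'', before any Kempe chains are computed. In the paper's definition of $i$-extendibility the quantifiers run as follows: for every plane $G'$ having $R$ as a facial cycle and every coloring $\phi_0$ extending $\phi$, there exists a Kempe change that lowers the level, and that change includes its choice of complementary pairs. So which pairing admits an improving change depends on the Kempe-chain structure of the current coloring outside the ring. The paper says this explicitly for the quadratic algorithm: which Kempe change works depends on the graph and the coloring outside the ring. A table that stores only the level of each ring coloring therefore does not determine the good pairing. If the pairing you select has no improving swap pattern for $R_j$ in the actual graph, then $R_j$ contributes $0$ to $\mathbb{E}[X]$, and your bound $\mathbb{E}[X]\ge (r/3)2^{-18}$ is lost.

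The repair is what the paper does. Compute the chain decompositions for all three pairings, which takes three linear-time searches. For each ring and each pairing, test the at most $2^{18}$ swap patterns on the chains meeting it. The definition, applied with $G'$ the currently colored part of $G$, guarantees that at least one pairing has an improving pattern. Then take the pairing that is good for the most rings. Alternatively, you could use that Algorithm~\ref{alg:check-red_a} in fact certifies a stronger property: for each non-extendible ring coloring, a single pairing works against every planar chain structure. You could then store that pairing with each coloring, but you would have to state and use this stronger notion explicitly, since it is not the one in the definition. With this change the rest of your argument goes through and follows the paper's route. Your estimator, which counts all improving patterns still consistent with the decisions made, is a valid and slightly sharper version of the paper's one-pattern-per-ring quantity $\sum 2^{-u_i}$. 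Your chain--ring incidence accounting also gives linear time without needing rings to be vertex-disjoint.
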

\begin{proof}
We assume that for every one of our reducible configurations $D \in \cD$ (c.f. Lemma \ref{lem:reduc}), we store the Kempe level of each coloring of its ring. This is a huge constant amount of information, but we need it to determine if a given Kempe change will improve.

For each of the three color combinations, that is, for $i\in\{1,2,3\}$, we use the color pairs $\{0,i\}$ and $\{1,2,3\}\setminus \{i\}$, and we identify all the bi-colored Kempe chains. Each vertex now knows which Kempe chain it belongs to.

We now consider the perspective of a given ring $R_i$. Its coloring is only affected by the swap or no-swap of the Kempe chains intersecting it, and there are at most $|R_i|\le18$ such Kempe chains. We now consider all combinations of swap and no-swap for each
intersecting Kempe chain. We know which vertices are involved, hence the effect on the coloring, so we can 
check if a Kempe change following a given combination of intersecting swap/no-swap is improving.

We repeat this for all three color combinations, and we know that for each ring there is at least one color combination and a combination of intersecting swap/no-swap combinations that leads to improvement, and we pick just one such combination for each ring $R_i$.

For the next Kempe change, if we picked both the color combination and the swap/no-swap for each Kempe chain uniformly at random, then the probability that the change was improving for $R_i$ would be at
least $1/(3\cdot 2^{|R_i|})\geq p=1/(3\cdot 2^{18})$, and hence we would expect a fraction $p$ of all rings to get improved. 

We can get an improvement for at least a fraction $p$ of the rings using the method of conditional expectation as described below.

At the beginning, all our rings are ``live". For each of them, we know the color pair combination for the improving Kempe change, and we pick the color pair that works for most of the rings. Only the rings that get their color combination stay alive; the rest will ``die". We know that at least 1/3 stay alive.

We are now going to go through each Kempe chain using the selected color pairs and decide whether to swap the colors. A ring $R_i$ only stays live as long as all decided swaps were as it wanted them. If the number of undecided Kempe chains intersecting $R_i$ is $u_i$, and if all remaining choices were made at random, then the probability that $R_i$ would survive till the end is $2^{-u_i}$. Thus, conditioned on decisions made, the expected number of survivors is
\[\sum_{R_i\textnormal{ is still live}}2^{-u_i}, \textnormal{ where $u_i$ is
the number of  undecided chains intersecting } R_i.\]
We can now pick any undecided Kempe chain $X$. We know that one of the two choices — swap or no-swap — will not decrease the expectation, and we can determine which one it is by considering only the live rings that intersect $X$. 
Because the active reducible configurations are non-touching, their rings are strictly vertex-disjoint. Therefore, each vertex in the graph belongs to at most one active ring. By maintaining a simple global array mapping each vertex to its host ring (or null), iterating over the vertices of $X$ allows us to identify all intersecting rings in strictly $\mathcal{O}(|X|)$ time. 
Thus, the decision for $X$ is done in time $O(|X|)$, and since the Kempe chains are disjoint, we conclude that the total time is linear.

Since the conditional expectation is non-decreasing as we make our choices, and since we end up with all Kempe chains decided, we conclude that the final set of live rings is at least as big as the original expectation.
\end{proof}

We will now use the above lemma to decide on 25 Kempe changes. We say a ring $R_i$ is \emph{active} if all previous Kempe changes have been improving, and it is not yet 0-extendible. We apply Lemma \ref{lem:det-Kempe-change} to all active rings. We know that a fraction $p$ gets an improving move. For the rings $R_i$ whose coloring become 0-extendible, we extend the coloring to $Z_i$. The active rings that got an improving move but are still not 0-extendible remain active. We repeatedly apply Lemma \ref{lem:det-Kempe-change} to the active rings, finding a new Kempe change that improves by a factor of $p$ for the remaining active rings. After 25 repetitions, we know that there are no active rings left, and that at least a fraction $p^{25}$ of the original rings have been extended. Conversely, this means that the number of empty rings 
has been reduced by at least a factor $1-p^{25}$.

We now restart with all the empty rings, all of which are 25-extendible. If we
started with $r$ reducible configurations, then after at most $\log_{1/(1-p^{25})} r = O(\log r) = O(\log n)$ rounds of 25 Kempe changes, we know that no empty rings remain, hence that we have colored the whole original configuration $G$.

The recursive step, reducing the problem size by a constant factor, is thus completed in $O(n\log n)$ time. This suffices for our $O(n\log n)$ time 4-coloring algorithm.

\section{Recursing with non-crossing obstructing cycles}\label{sect:obstructing-cycles}

We now assume that we are in the case of Theorem \ref{thm:main-linear} where we get a linear number of non-crossing obstructing cycles. The obstructing cycles are chordless, and every obstructing cycle has a public edge, but the rest is private, and the private parts of different obstructing cycles do not touch. The obstructing cycles will play a role similar to that of 0-extendible reducible configurations.

It is now convenient to view our triangulation $G$ as embedded in the plane. To do so, take an arbitrary triangle and make it the outer face. Now, each obstructing cycle has the inside (interior) and outside (exterior).

Any family of non-crossing obstructing cycles with disjoint private parts in a near-triangulation can be represented by a rooted tree structure, whose nodes correspond to the obstructing cycles $R_1,\dots,R_r$ and whose root $R_0$ corresponds to the outer face cycle. In the tree, $R_j$ is a descendant of $R_i$ if the private part of $R_j$ is inside $R_i$. An example is shown in Figure \ref{fig:obstructing cycles}.

\begin{figure}[htb]
    \centering
    \includegraphics[width=0.96\textwidth]{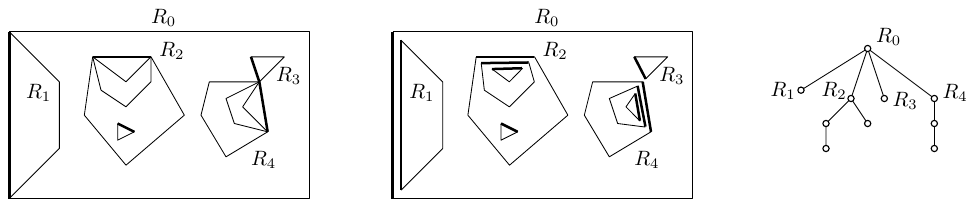}
    \caption{A family of obstructing cycles can be represented by a rooted tree structure. The thick edges represent the public parts of the cycles.}
    \label{fig:obstructing cycles}
\end{figure}

We will now find all obstructing cycles that have no other obstructing cycle from our family in their interior. They correspond to the leaves of the mentioned rooted tree. The idea is to recurse on their interiors, and replace the interiors with a constant size near-triangulation. If $H$ was the original near-triangulation with outer face $R$ and its constant-size replacement is $T$, we want that any coloring of $R$ that extends to a coloring of $T$ can also be extended to $H$. For an obstructing cycle $R$ of length 3, this is trivial -- we let $T$ be just the triangle $R$. In the algorithm, we first color the interior $H$ of $R$, including $R$. Next, we remove the interior and color the rest. This yields a coloring of $R$. The two colorings of the three vertices in $R$ are
equivalent up to a color permutation. Using this permutation, we can combine the exterior coloring with the coloring we found recursively for the interior.
For longer obstructing cycles, things get more complicated. For 4-cycles, there will be four possibilities for $T$, while for the 5-cycle, there are 11 different cases. This is described in the next subsection.

\subsection{Obstructing 4-cycles and 5-cycles}

It has been known since the work of Franklin \cite{Franklin1922} that obstructing cycles give a coloring reduction procedure. 
Robertson, Sanders, Seymour, and Thomas \cite{RSST-STOC} showed how to perform such reductions efficiently. They described things via consistent 3-edge-colorings. Since this is critical for our algorithm, we provide a self-contained description, which also shows how we can use many non-crossing obstructing cycles so that the overall time is still near-linear.

Let us start by observing that a cycle $R=v_1v_2v_3v_4$ of length 4 has four 4-colorings (up to a permutation of colors). Two of them have $v_1,v_3$ colored the same, two have $v_2$ and $v_4$ colored the same (and the coloring using only two colors appears in both cases), and there is a coloring where all vertices are colored differently.

\begin{lem} \label{lem:X4}
Consider a 4-colored near-triangulation $H$ with outer facial 4-cycle $R=v_1 v_2 v_3 v_4$. 
By using a single Kempe change in $H$, we will either:
\begin{itemize}
\item[(i)] specify a value of $j \in \{1,2\}$ and produce both 4-colorings of $R$ where $v_j$ and $v_{j+2}$ have different colors, or
\item[(ii)] specify a value $j \in \{1,2\}$ and produce both 4-colorings of $R$ where $v_j$ and $v_{j+2}$ have the same color.
\end{itemize}
The above colorings (together with the specified $j$) are found in time that is linear in the size of $H$.
\end{lem}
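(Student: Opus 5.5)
We start from the given 4-coloring $\phi_0$ of $H$ and inspect its restriction to $R$. Up to color permutation there are four colorings of $R$: two with $v_1,v_3$ equal (one of these is the 2-coloring, where also $v_2,v_4$ are equal), two with $v_2,v_4$ equal, and one rainbow coloring. The goal is to produce, using at most one Kempe change, the two colorings belonging to one of the four classes ``$v_j,v_{j+2}$ different'' or ``$v_j,v_{j+2}$ same'' for some $j\in\{1,2\}$. The two colorings in such a class are interpreted up to color permutation. For example, ``$v_1,v_3$ different'' consists of the rainbow coloring and the coloring with $v_2=v_4$ but $v_1\neq v_3$. The proof will be a case analysis on the type of $\phi_0|_R$, and in each case a single Kempe chain decides which outcome we report.

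\emph{Case A: $\phi_0|_R$ is rainbow}, say colors $a,b,c,d$ on $v_1,\dots,v_4$. Consider the $ac$-Kempe chain $K$ containing $v_1$. If $K$ avoids $v_3$, swapping $a,c$ on $K$ gives $v_1=v_3$ and $v_2\ne v_4$. Together with the original coloring, this yields both colorings in which $v_2,v_4$ differ, which is outcome (i) with $j=2$. If $K$ contains $v_3$, then by planarity, since $R$ is the outer face and $K$ separates $v_2$ from $v_4$ inside $H$, the $bd$-chain at $v_2$ avoids $v_4$. Swapping it gives $v_2=v_4$ with $v_1\ne v_3$, so we get outcome (i) with $j=1$. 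This is exactly Kempe's argument from Lemma~\ref{lem:3-4}.

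\emph{Case B: $v_1=v_3$ and $v_2\ne v_4$}, with colors $a,b,a,d$. Here the only missing color is $c$. We look at the $bc$-chain at $v_2$ (or, alternatively, the $cd$-chain at $v_4$). If the chain avoids $v_4$, swapping it turns $v_2$ into $c$, which gives the rainbow coloring. Together with the original, these are both colorings with $v_2\ne v_4$, so we get (i) with $j=2$. If the chain reaches $v_4$, it can only do so if $v_4$ has color $b$ or $c$. Since $v_4=d$, we need the $bd$-chain instead: a $bd$-chain from $v_2$ either avoids $v_4$, and swapping gives $v_2=v_4$ (the 2-coloring, hence both colorings with $v_1=v_3$, outcome (ii) with $j=1$), or it connects $v_2$ to $v_4$. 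In the latter case the $ac$-chain at $v_1$ cannot reach $v_3$, so swapping it makes $v_1=c\ne v_3$, giving the rainbow coloring, and we return (i) with $j=2$. The precise choice of which chain to test first is a bookkeeping detail; the point is that each subcase needs one chain computation and at most one swap. Case B with $v_2=v_4$, $v_1\ne v_3$ is symmetric.

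\emph{Case C: $\phi_0|_R$ is the 2-coloring} $a,b,a,b$. Test the $ac$-chain at $v_1$. If it avoids $v_3$, swapping produces $c,b,a,b$, so we have both colorings with $v_2=v_4$, which is (ii) with $j=2$. Otherwise the $bd$-chain at $v_2$ avoids $v_4$, and swapping gives both colorings with $v_1=v_3$, which is (ii) with $j=1$.

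Each case requires computing one or two bichromatic components by BFS and performing at most one swap. Only one swap is actually performed, so all of this runs in time linear in $|H|$. The only nontrivial ingredient is the planarity fact: because $R$ bounds the outer face of $H$, an $xy$-chain joining $v_1$ and $v_3$ blocks every chain of the complementary colors from joining $v_2$ to $v_4$. The main obstacle is the careful enumeration in Case B, ensuring that every subcase ends with a pair that forms exactly one of the four stated classes.
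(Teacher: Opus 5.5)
Your overall route is the paper's. You split on the type of $\phi_0|_R$, keep the original coloring, and get the second coloring by one chain swap justified by the planarity dichotomy: an $ac$-chain joining $v_1,v_3$ and a $bd$-chain joining $v_2,v_4$ cannot coexist. Cases A and C are correct and match the paper's argument.

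Case B, however, contains a false step. Swapping the $bc$-chain at $v_2$ turns $a,b,a,d$ into $a,c,a,d$. This still has $v_1=v_3$ and $v_2\ne v_4$, so it is the same coloring of $R$ up to permutation, not the rainbow one. The $cd$-chain at $v_4$ fails the same way, giving $a,b,a,c$. Recoloring $v_2$ or $v_4$ with the missing color never changes which opposite pairs are equal, so this test can never help. Moreover, as you note yourself, a $bc$-chain can never contain $v_4$ (colored $d$). So your procedure, read literally, always lands in this subcase. It then reports (i) with $j=2$ after producing only one of the two required colorings. This is not a bookkeeping question of which chain to test first; that test must be removed.

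The argument you give afterwards is correct, and it is what Case B should consist of. If the $bd$-chain at $v_2$ misses $v_4$, swapping it gives $a,d,a,d$. Together with $a,b,a,d$, these are both colorings with $v_1=v_3$, so you have (ii) with $j=1$. Otherwise the $ac$-chain at $v_1$ misses $v_3$, and swapping it gives the rainbow coloring $c,b,a,d$, so you have (i) with $j=2$.

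This is the mirror image of the paper's order. The paper first asks whether the $ac$-chain at $v_3$ reaches $v_1$; if not, it swaps that chain to get the rainbow coloring, giving (i) with $j=2$. Otherwise it swaps the $bd$-chain at $v_4$ to get the two-color coloring, giving (ii) with $j=1$. With the $bc$/$cd$ test deleted, your proof is complete and runs in linear time as claimed.
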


\begin{proof}
The first of the two colorings will always be the one we already have (since $H$ is colored). For the second one,
we first consider the case where $R$ has all four colors 1,2,3,4, that is, $v_i$ has color $i$ for $i=1,2,3,4$. Either $H$ has a Kempe chain containing $v_1,v_3$ or a Kempe chain containing $v_2,v_4$, say the former.\footnote{Because the graph is planar, we cannot have both, and it is a well-known fact that one of them must occur.} Then the Kempe chain $C$ with colors 2,4 containing the vertex $v_4$ does not contain $v_2$. Consequently, the Kempe change on $C$ results in a coloring of $R$ where $v_2$ and $v_4$ have the same color. So, we have (i) with $j=1$.

Consider the second case where $R$ has three distinct colors. We may assume that $v_i$ has color $i$ for $i=1,2,4$ whereas $v_3$ has color 1. If the Kempe chain with colors 1,3 containing $v_3$ does not contain $v_1$, then the Kempe change on this Kempe chain gives (i) with $j=2$. So assume that the Kempe chain contains $v_1$. Then the Kempe change with colors 2,4 containing $v_4$ will change the color of $v_4$ but not $v_2$. So, we have (ii) with $j=1$.

In the third and final case, $v_1,v_2,v_3,v_4$ have colors 1,2,1,2. Then an appropriate Kempe change 
will change just one of the colors and give us outcome (ii).
\end{proof}

We can now describe our recurrence over an obstructing 4-cycle $R$. As in Lemma \ref{lem:X4}, let $H$ be the interior of $R$, including $R$. First, we color $H$ recursively and then perform the Kempe change described in the lemma in time $O(|H|)$. This will generate another coloring of $H$ corresponding to case (i) or (ii).

We now remove the interior of $R$. If we are in case (i), we add the edge from $v_j$ to $v_{j+2}$. This triangulates the empty face inside $R$. After this reduction, we color the graph. No matter how this colors $R$, we know that the coloring is extendable to $H$ using one of the colorings we computed recursively.

If we are in case (ii), we identify $v_j$ and
$v_{j+2}$. This creates parallel edges to the other vertices, and we only keep one edge from each parallel pair. After this reduction, we color the rest of the graph. No matter how this colors $R$, we again know that the coloring can be extended to $H$ using one of the colorings we computed recursively.

Let us now look at 4-colorings of a 5-cycle $R=v_1 v_2 v_3 v_4v_5$. Again, we are only interested in how the color classes partition $V(R)$. Note that there are five colorings that use only three colors. The following result is analogous to Lemma \ref{lem:X4}. 

\begin{lem}\label{lem:X5} 
Consider a 4-colored near-triangulation $H$ with the outer facial 5-cycle $R=v_1 v_2 v_3 v_4v_5$, where precisely three colors appear on $R$. 
Using at most six single Kempe changes in $H$, we will either
\begin{itemize}
\item[(i)] specify a value $j \in \{1,2,3,4,5\}$ and produce all four 4-colorings of $R$ where $v_j$ is the only vertex of color 1, say, or
\item[(ii)] specify a value of $j \in \{1,2,3,4,5\}$ and
produce all three 4-colorings of $R$ where $v_j$ and $v_{j+2}$ (index $j+2$ taken modulo $5$) have the same color, or
\item[(iii)] produce all five 3-colorings of $R$.
\end{itemize}
The above colorings (together with the specified $j$) can be found in $O(|H|)$ time.
\end{lem}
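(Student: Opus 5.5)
The plan is to reduce the lemma to a finite case analysis on which ring vertices are joined by Kempe chains, in the spirit of Lemma \ref{lem:X4}. Throughout we work with colorings of $R$ up to permutation of colors, that is, with the partition of $V(R)$ into color classes.

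First I list these partitions. A $3$-coloring of $R$ with exactly three colors has one singleton class $\{v_k\}$ and two classes $\{v_{k+1},v_{k+3}\}$ and $\{v_{k+2},v_{k+4}\}$. So the five $3$-colorings are indexed by $k$, which I denote $\sigma_k$. A coloring using all four colors has exactly one pair class, necessarily $\{v_k,v_{k+2}\}$, which I denote $\tau_k$. Checking the three outcomes against this list:
\begin{itemize}
\item Outcome (i) for $j$ is the set $\{\sigma_j,\tau_{j+1},\tau_{j+2},\tau_{j+3}\}$, namely all partitions in which $v_j$ is a singleton.
\item Outcome (ii) for $j$ is $\{\sigma_{j+3},\sigma_{j+4},\tau_j\}$.
\item Outcome (iii) is $\{\sigma_1,\dots,\sigma_5\}$.
\end{itemize}

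Without loss of generality the given coloring is $\sigma_1$, with $v_1$ colored $a$, $\{v_2,v_4\}$ colored $b$, $\{v_3,v_5\}$ colored $c$, and $d$ unused on $R$. I use only two facts about Kempe chains.
\begin{itemize}
\item A single Kempe change on the chain through a ring vertex $x$ that avoids another ring vertex $y$ separates $x$ from $y$.
\item Planarity forbids crossing chains: if an $\{a,b\}$-chain joins $x,y\in R$, then no $\{c,d\}$-chain joins vertices on opposite arcs of $R-\{x,y\}$.
\end{itemize}

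Aim first for outcome (i) with $j=1$. The $\{b,d\}$-chain at $v_2$ either avoids $v_4$, and swapping it gives $\tau_3$ (pair $\{v_3,v_5\}$), or it joins $v_2$ to $v_4$. Symmetrically, the $\{c,d\}$-chain at $v_3$ either avoids $v_5$, giving $\tau_2$ (pair $\{v_2,v_4\}$), or it joins $v_3$ to $v_5$. These two connections cross, so at most one of them occurs.

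To get $\tau_4$ (pair $\{v_4,v_1\}$, with $v_1$ still a singleton in its partition sense) I need to recolor one of $v_2,v_4$ and one of $v_3,v_5$ to $d$. I do this by two successive single changes, recomputing chains after each one in $O(|H|)$ time.

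If some required change is blocked by a connecting chain, I use that chain as a certificate. For example, if a $\{b,d\}$-chain joins $v_2$ and $v_4$, then no $\{a,c\}$-chain can join $v_1$ to $v_3$. Swapping the $\{a,c\}$-chain at $v_1$ then creates the $3$-coloring $\sigma_5$ or $\sigma_2$. This lets me pivot toward outcome (ii), with the pair $\{v_3,v_5\}$ or $\{v_2,v_4\}$ forced, or toward outcome (iii), by collecting further $\sigma_k$'s through $\{a,x\}$-swaps at the singleton.

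Each branch of this decision tree consists of single Kempe changes. I will organize it as a tree of depth at most six and verify, leaf by leaf, that the colorings accumulated at each leaf form exactly one of the sets (i), (ii), (iii). Since each change and each chain computation costs $O(|H|)$, the running time is $O(|H|)$.

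The main obstacle is this bookkeeping. After a Kempe change the earlier chain structure is destroyed, so every "blocked" inference must be re-derived in the current coloring. I must also make sure that whenever the target outcome fails, the blocking chains really do imply the remaining colorings of an alternative outcome, and that the branch stays within six changes. If the hand case analysis becomes unwieldy, I would fall back on exhausting the $10$ partitions and the possible chain-connection patterns with a small computer check, as done for D-reducibility.
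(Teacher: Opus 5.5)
Your organizing step, that a $\{b,d\}$-chain joining $v_2,v_4$ and a $\{c,d\}$-chain joining $v_3,v_5$ ``cross, so at most one of them occurs'', is false. This is a genuine gap.

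The non-crossing fact you quote holds only for \emph{complementary} pairs, whose chains are vertex-disjoint. A $\{b,d\}$-chain and a $\{c,d\}$-chain may share $d$-colored vertices, so two interleaved paths can simply meet there. Take $H$ to be the wheel with hub $w$ colored $d$ and rim $v_1,\dots,v_5$ colored $a,b,c,b,c$; both chains pass through $w$. Moreover, every 4-coloring of this $H$ uses exactly three colors on $R$, so no sequence of Kempe changes ever yields a $\tau_k$. Outcomes (i) and (ii) are unattainable here, and (iii) is forced.

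This both-connected case is precisely the one your plan leaves open. ``Collecting further $\sigma_k$'s through $\{a,x\}$-swaps'' comes with no criterion for when to aim at (iii), no argument that all five $\sigma_k$ are eventually obtained, and no count against the bound of six. Since the leaf-by-leaf verification is also deferred, the proposal does not yet prove the lemma.

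The missing idea is the paper's rotation step, which starts from the complementary pair $\{a,b\}$ versus $\{c,d\}$.
\begin{itemize}
\item Swap the $\{a,b\}$-chain at the singleton $v_1$, which contains $v_2$. If it misses $v_4$, this turns $\sigma_1$ into $\sigma_2$, and four successful rotations give (iii).
\item At the first failure, relabel so the current coloring is $\sigma_1$. Now the $\{a,b\}$-chain joins $v_1,v_2,v_4$. This forces the $\{c,d\}$-chain at $v_3$ to miss $v_5$, which yields $\tau_2$.
\item If a $\{b,d\}$-chain joins $v_2,v_4$, the $\{a,c\}$-swap at $v_1$ gives $\sigma_5$, hence (ii) with $j=2$.
\item Otherwise you get $\tau_3$. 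From $\tau_3$, either the $\{c,d\}$-chain at $v_5$ misses $v_2,v_3$, giving $\tau_5$ and hence (i) with $j=1$. Or the $\{a,b\}$-chain at $v_4$ misses $v_1$, giving $\sigma_2$ and hence (ii) with $j=3$.
\end{itemize}
This uses at most $3+3=6$ changes.

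Your subcases with at most one $d$-connection can be finished along the same lines in at most three changes. In the both-connected case, the $\{a,b\}$- and $\{a,c\}$-swaps at $v_1$ do succeed, now by genuine complementary non-crossing. So there you must rotate and restart, which is exactly the paper's loop.

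Finally, fix your list for (i). It should be $\{\sigma_j,\tau_{j+1},\tau_{j+2},\tau_{j+4}\}$, because $\tau_{j+3}$ pairs $v_{j+3}$ with $v_j$. Your ``$\tau_4$ (pair $\{v_4,v_1\}$, with $v_1$ still a singleton)'' is self-contradictory, and recoloring $v_2,v_5$ to $d$ actually produces $\tau_5$.
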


\begin{proof}
Assume without loss of generality that $v_1,v_2,v_3,v_4,v_5$ are colored 1,2,3,2,3, respectively. If we can
rotate this coloring one step clockwise (that is, we get the coloring 3,1,2,3,2 up to permutation of colors) by one Kempe change, and then again rotate one step and repeat five times, we have case (iii). So, we may assume that the coloring 3,1,2,3,2 (of $v_1,v_2,v_3,v_4,v_5$, respectively) cannot be obtained by one Kempe change. 
So, the Kempe chain with colors 1,2 containing $v_1,v_2$ also contains $v_4$.
This implies that $H$ has no Kempe chain with colors 3,4 containing $v_3,v_5$. So, one Kempe change produces the coloring 1,2,4,2,3. 

Consider again the coloring 1,2,3,2,3. If there is a Kempe chain with colors 2,4 containing $v_2,v_4$, then a Kempe change changes the colors of $v_1,v_5$ without changing the color of $v_3$, that is, we get 3,2,3,2,1. In summary, we now have the colorings 1,2,3,2,3, 1,2,4,2,3, and 3,2,3,2,1. This is case (ii) with $j=2$. So, we may assume that there is no Kempe chain with colors 2,4 containing $v_2,v_4$. This implies that one Kempe change with colors 2,4 results in the coloring 1,4,3,2,3. Using this, we consider the Kempe chain with colors 3 and 4, which contains $v_5$.
If that does not contain $v_2,v_3$, then one Kempe change results in the coloring 1,4,3,2,4, in which case we have (i) with $j=1$. On the other hand, if there is a Kempe chain with colors 3,4 containing $v_2,v_3,v_5$, then the Kempe chain with colors 1,2 containing $v_4$ can be used to produce the coloring 1,4,3,1,3, in which case we have (ii) with $j=3$.
\end{proof} 

Note that the 4-colorings of $R$ in Lemma \ref{lem:X5}(i), respectively (ii), are precisely the 4-colorings of the near-triangulation with $R$ as outer cycle and with the edges $v_jv_{j+2},v_jv_{j+3}$ being added, respectively $v_j,v_{j+2}$ being identified, whereas (iii) describes the colorings of $R$ after a vertex of degree 5 has been added.

We can now describe our recurrence over an obstructing 5-cycle $R$ using Lemma \ref{lem:X5}. We have the plane graph $H$ with boundary $R$. An interesting new aspect is that, before we recurse on $H$, we have to add a new degree-5 vertex $v$ to $H$ outside $R$, whose neighbors are the vertices in $R$. Now we do the recursive coloring of $H$ with $v$ attached, so that only three colors will be used on $R$. Next, we get the colorings of $H$ corresponding to cases (i)--(iii). We now remove the interior of $R$ from the graph, and the case tells us how to replace it before we color the rest of the graph, e.g., in case (iii), we add a new degree-5 vertex $v$ inside $R$ whose neighbors are the vertices in $R$. Now we color the rest of the graph, and no matter how this colors $R$, we know how to extend the coloring to $H$.

\subsection{The recursive algorithm}

First, recall that we have once and for all, in linear time, marked all edges in obstructing cycles as obstructing, and for the private edges, we have also marked which obstructing cycle they belong to. 

We will now describe the recurrence more carefully. 
To get a proper start to the recursion, we want the original outer triangle to act like an obstructing cycle with a private vertex not touching the private parts of any of the obstructing cycles. To do so, we can sacrifice one of the obstructing cycles we have and use any triangle incident to one of its private vertices.

Generally, for the recursive call, we have some near-triangulation $H$ with some ring $R$ around the outer boundary. This ring corresponds to one of our chordless obstructing cycles.

In $H$,  we do a search to find all outermost obstructing
cycles $R_i\neq R$ in $H$.\footnote{These are the sons of the root $R$ in our rooted tree mentioned before.}
Note here that since $G$ is triangulated and $R$ is chordless with non-empty inside, all vertices in $R$ have neighbors inside $R$, and for any one of the private vertices in $R$, such a neighbor must be outside all the $R_i$ in $H$.

Excluding the inside of each $R_i$, we get what we refer to as the \emph{component} of the recursive call. The component $C$ is found in time proportional to its size. 

We want to make a recursive coloring of $H$, but before doing so, we need to make some modifications. If $R$ is longer than 3, we place a vertex outside $R$ whose neighbors are the vertices in $R$. The resulting $H'$ is a triangulation, and when we 4-color $H'$,
we get a 3-coloring of $R$ (as required for Lemma \ref{lem:X5}). However, the output of our recursive call is not just this 3-coloring. Rather, the output is one of the cases in Lemma \ref{lem:X4} or \ref{lem:X5}, including 
a representation of the constantly many colorings mentioned in the case.

Now, for each $R_i$, let $H_i=Int(R_i)$. We make a recursive coloring of $H_i$. This tells us one of the cases
in Lemma \ref{lem:X4} or \ref{lem:X5}. As described under the lemmas, depending on the case, we will make some changes across the empty face inside $R_i$ in the component $C$: in cases (i) we add the specified edges between vertices in $R_i$, in cases (ii) we identify the specified vertices in $R_i$, and in case (iii) we place the star in the middle of $R_i$. Besides these changes, we also add a star outside $R$ to $H$. The resulting graph is a modified component $C'$.

We now color $C'$ inductively (we could also say recursively, but we want to distinguish this from the recursive structure separated by our obstructing cycles).
Afterwards, we do the Kempe changes specified 
in Lemma \ref{lem:X4} or \ref{lem:X5}, producing a case including all the specified colorings of $C$. There are only constantly many, so we can represent all of them in space proportional to the size of $C$.

When we are done with our recursion starting from $G$, our recursion tree will have a component for each recursive call, and the components are connected via our 
obstructing cycles. For each modified component $C'$, we have the specified colorings. We now construct the final coloring of $G$ in a top-down manner. On top, we have the ring $R$ around our chosen outer boundary triangle, which we color arbitrarily. Now, in the top down process, we have a component $C$ with outer boundary ring $R$ that is colored in a way we know is consistent with one of the colorings of $C$ that we stored, so now we can extend the coloring of $R$ to $C$ using this coloring, which we know has a valid coloring of each $R_i$. Thus, given all the component colorings produced in the recursive call, we can color all of $G$ in linear time.

\subsection{Runtime analysis}

For our runtime analysis, let $c$ be the number of obstructing cycles, hence the number of recursive calls. We only recurse via obstructing cycles if $c=\Omega(n)$.

As the recursion is currently described, it makes the problem bigger, not smaller; that is, the total number of vertices in the components that we color inductively may be larger than $n$.

To be more precise, in a recursive call with ring $R$ in graph $H$ and component $R$, the ring $R$ is also in the parent call as some ring $R'_i$, so we can view the ring with its up to 5 vertices as a copy. In addition, when 
we modify $C$ to $C'$, adding a vertex outside $R$, we may add a vertex inside each $R_i$. There could be many $R_i$, but we can think of the vertex inside $R_i$ as paid for by the recursive call via $R_i$. In total, this means that we get up to $7$ extra vertices per inductive call.

We can fix this by not using induction for small components. More precisely, we know that the average modified component size is bounded by $(n+7c)/c=O(1)$. We say that a modified component is \emph{big} if it is bigger
than $d=100(n+7c)/c=O(1)$. The number $b$ of big components is at most $c/100$. The modified components that are not big, we 4-color with the classic 
quadratic algorithm from \cite{RSST-STOC}. Each of them is of constant size, hence colored in constant time, so the total time spent on coloring small modified components 
is only linear in the size of the graph.

We claim that the total number of vertices in the big modified components is $n-\Omega(c)$, which implies that our induction has reduced the number of nodes by a constant factor.

At first, this may seem trivial because we only had to pay at most 7 extra per component, but recall that a component paid one for the vertex that the parent call had to place in its center, and the parent of a small component could be big. 

Consider the recursion tree again, and identify all cases
where we have a big component $C$ with a ring $R_i$ such that the recursion of the graph $H_i$ in $R_i$ only uses
small components. Since $R_i$ is obstructing, if $R_i$ is of length 5, we know that $H_i$ has at least two vertices inside $R_i$, but when we modify $C$ to $C'$, in case Lemma \ref{lem:X5} (iii), we have to place a vertex in $R_i$. The gain is
thus at least 1. In all other cases, we do not add any other vertex to $C'$, and we know that we remove at least one vertex from $H$ inside $R_i$. Let
$x$ be the number of obstructing cycles in $H_i$, $R_i$ included. If $x=1$, we just saw that we gain at least $1$ when we remove $H_i$. If $x>1$, we will argue that
the number of vertices in $H_i-R_i$ is at least $2x-1$.
We know that the obstructing cycles have a private part with at least one vertex and that the private parts do not touch. Also, since the graph $G$ is a triangulation, we know that the private part has a neighbor on both sides, and it is not part of any other obstructing cycle. We know that the private part of $R_i$ has a neighbor inside not part of any obstructing cycle. If we now add obstructing cycles inside $R_i$ top-down, we know that each has at least one private vertex, which is new, and which has an inside neighbor that is also new. Thus, for $x>1$, we have at least $2x-1$ vertices in $H_i-R_i$. We may still have to pay for placing a vertex in $R_i$ when we modify $C'$, but for $x>1$, the saving is at least $2x-2$. We conclude that for all $x >1$, the savings are at least $x$. Therefore, if $c_1$ is the total number of small components with no big descending component, then we get savings of at least $c_1$ from the big components.

Now, for each big component $C$, we let it pay both for its own ring $R$, including putting a vertex inside, but we also go to its nearest big ancestor $C^+$, and let it pay for the ring $R^+_i$ containing $C$, including possibly putting a vertex in $R_i^+$. The total payment is thus at most 12, adding up to $12b$ over all big vertices.

For all small components with descending big components, we know that the ring has a private vertex that also has a neighbor inside that is not part of descending components, so if we have $c_2$ of these small components, then we can save at least $2c_2$. We have $c=c_1+c_2+b$, and we save $c_1+2c_2-12 b$. Since $b\leq 100 b$, we conclude that we save at least $c/2$, hence at most $n-c/2$ vertices are transferred to the induction, as desired.

The recursive step, reducing the problem size by a constant factor, is thus completed in $O(n)$ time, which is more than sufficient for our $O(n\log n)$ time 4-coloring algorithm. This completes the description of the algorithm, hence the proof of {\bf Theorem \ref{thm:linear-time}}.

\section{Future work}
The only bottleneck that prevents us from 4-coloring in linear time is the implementation of the $O(\log n)$ Kempe changes that we do to extend the recursive 4-coloring to the reducible configurations. Each Kempe change involves the whole graph and takes linear time.

Nevertheless, in future work, we expect to show that 
all $O(\log n)$ Kempe changes can be identified and implemented in $O(n)$ time. The idea is to spend $O(n)$ time to preprocess the graph so that we can  identify and simulate each Kempe change in $O(n/\log n)$ time. This allows
us to implement all $O(\log n)$ Kempe changes in $O(n)$ total time, implying that we can 4-color in linear time.

Creating such a representation for fast simulation of Kempe changes is a challenge of data structures. This is a very different type of research than the new understanding of 4-coloring 
developed in this paper, leading to a combinatorial
algorithm 4-coloring any planar graph of order $n$ in $O(n\log n)$ time.

\section*{Acknowledgment} Gemini wanted us to write ``The authors gratefully acknowledge the use of advanced large language reasoning models (including Google Gemini and DeepThink) which assisted in the static analysis, semantic auditing, and logical cross-checking of the formal pseudocodes and $\Cpp$ implementation. All mathematical proofs, algorithmic designs, and final conclusions remain the sole intellectual creation and responsibility of the authors.'' It was only a few elementary mistakes that were caught by Gemini, but it feels good to know that the proof and the relation to the $\Cpp$ programs have had this extra
layer of verification which is particularly good at catching typical human mistakes.

\bibliographystyle{alpha}
\bibliography{4CT}

@book {MT,
    AUTHOR = {Mohar, Bojan and Thomassen, Carsten},
     TITLE = {Graphs on {S}urfaces},
    SERIES = {Johns Hopkins Studies in the Mathematical Sciences},
 PUBLISHER = {Johns Hopkins University Press, Baltimore, MD},
      YEAR = {2001},
     PAGES = {xii+291},
   MRCLASS = {05C10 (57M15)},
  MRNUMBER = {1844449},
}

@book {DvorakBook,
    AUTHOR = {Zden\v{e}k Dvo\v{r}\'ak},
     TITLE = {Graph {M}inors, {T}heory and {A}pplications},
    SERIES = {Springer Monographs in Mathematics},
 PUBLISHER = {Springer Cham},
      YEAR = {2025},
     PAGES = {xiv+383},
       DOI = {10.1007/978-3-031-87469-7},
}

@article{reed1997tree,
  title={Tree width and tangles: {A} new connectivity measure and some applications},
  author={Reed, Bruce},
  journal={Surveys in combinatorics},
  volume={241},
  pages={87--162},
  year={1997}
}

@book {Diestel_book,
    AUTHOR = {Diestel, Reinhard},
     TITLE = {Graph {T}heory},
    SERIES = {Graduate Texts in Mathematics},
    VOLUME = {173},
   EDITION = {Fifth},
 PUBLISHER = {Springer, Berlin},
      YEAR = {2018},
     PAGES = {xviii+428},
  MRNUMBER = {3822066},
}

@book {Heesch69,
    AUTHOR = {Heesch, Heinrich},
     TITLE = {Untersuchungen zum {V}ierfarbenproblem},
    SERIES = {Hochschulskriptum},
    VOLUME = {810/a/b},
 PUBLISHER = {Bibliographisches Institut, Mannheim-Vienna-Z\"urich},
      YEAR = {1969},
     PAGES = {290},
   MRCLASS = {05.55},
  MRNUMBER = {0248048},
}

@article {steinberger2010unavoidable,
    AUTHOR = {Steinberger, John P.},
     TITLE = {An unavoidable set of {D}-reducible configurations},
   JOURNAL = {Trans. Amer. Math. Soc.},
  FJOURNAL = {Transactions of the American Mathematical Society},
    VOLUME = {362},
      YEAR = {2010},
    NUMBER = {12},
     PAGES = {6633--6661},
   MRCLASS = {05C15},
  MRNUMBER = {2678989},
       DOI = {10.1090/S0002-9947-2010-05092-5},
       URL = {http://dx.doi.org/10.1090/S0002-9947-2010-05092-5},
}

@article {gonthier08,
    AUTHOR = {Gonthier, Georges},
     TITLE = {Formal Proof—{T}he {F}our-{C}olor {T}heorem},
   JOURNAL = {Notices Amer. Math. Soc.},
  FJOURNAL = {Notices of the American Mathematical Society},
    VOLUME = {55},
      YEAR = {2008},
    NUMBER = {11},
     PAGES = {1382-–1393},
}

@article {kempe1879,
    AUTHOR = {Kempe, Alfred B.},
     TITLE = {On the Geographical Problem of the Four Colours},
   JOURNAL = {Amer. J. Math.},
  FJOURNAL = {American Journal of Mathematics},
    VOLUME = {2},
      YEAR = {1879},
    NUMBER = {3},
     PAGES = {193--220},
}

@article {heawood1890,
    AUTHOR = {Heawood, John P.},
     TITLE = {Map-Colour Theorem},
   JOURNAL = {Quarterly Journal of Pure and Applied Mathematics},
  FJOURNAL = {Quarterly Journal of Pure and Applied Mathematics},
    VOLUME = {24},
      YEAR = {1890},
     PAGES = {332--338},
}

@article {AllaireSwart78,
    AUTHOR = {Allaire, Frank and Swart, Edward Reinier},
     TITLE = {A systematic approach to the determination of reducible
              configurations in the four-color conjecture},
   JOURNAL = {J. Combin. Theory Ser. B},
  FJOURNAL = {Journal of Combinatorial Theory. Series B},
    VOLUME = {25},
      YEAR = {1978},
    NUMBER = {3},
     PAGES = {339--362},
   MRCLASS = {05C15},
  MRNUMBER = {516267},
       DOI = {10.1016/0095-8956(78)90010-2},
       URL = {http://dx.doi.org/10.1016/0095-8956(78)90010-2},
}

@book {AppelHaken89,
    AUTHOR = {Appel, Kenneth and Haken, Wolfgang},
     TITLE = {Every planar map is four colorable},
    SERIES = {Contemporary Mathematics},
    VOLUME = {98},
      NOTE = {With the collaboration of J. Koch},
 PUBLISHER = {American Mathematical Society, Providence, RI},
      YEAR = {1989},
     PAGES = {xvi+741},
      ISBN = {0-8218-5103-9},
   MRCLASS = {05C15},
  MRNUMBER = {1025335},
       DOI = {10.1090/conm/098},
       URL = {http://dx.doi.org/10.1090/conm/098},
}

@article {4ct2,
    AUTHOR = {Appel, K. and Haken, W. and Koch, J.},
     TITLE = {Every planar map is four colorable. {II}. {R}educibility},
   JOURNAL = {Illinois J. Math.},
  FJOURNAL = {Illinois Journal of Mathematics},
    VOLUME = {21},
      YEAR = {1977},
    NUMBER = {3},
     PAGES = {491--567},
   MRCLASS = {05C15},
  MRNUMBER = {0543793},
       URL = {http://projecteuclid.org/euclid.ijm/1256049012},
}

@article {4ct1,
    AUTHOR = {Appel, K. and Haken, W.},
     TITLE = {Every planar map is four colorable. {I}. {D}ischarging},
   JOURNAL = {Illinois J. Math.},
  FJOURNAL = {Illinois Journal of Mathematics},
    VOLUME = {21},
      YEAR = {1977},
    NUMBER = {3},
     PAGES = {429--490},
   MRCLASS = {05C15},
  MRNUMBER = {0543792},
       URL = {http://projecteuclid.org/euclid.ijm/1256049011},
}

@article {birkhoff,
    AUTHOR = {Birkhoff, George D.},
     TITLE = {The Reducibility of Maps},
   JOURNAL = {Amer. J. Math.},
  FJOURNAL = {American Journal of Mathematics},
    VOLUME = {35},
      YEAR = {1913},
    NUMBER = {2},
     PAGES = {115--128},
  MRNUMBER = {1506176},
       URL = {https://doi.org/10.2307/2370276},
}

@article {RSST,
    AUTHOR = {Robertson, Neil and Sanders, Daniel and Seymour, Paul and
              Thomas, Robin},
     TITLE = {The four-colour theorem},
   JOURNAL = {J. Combin. Theory Ser. B},
  FJOURNAL = {Journal of Combinatorial Theory. Series B},
    VOLUME = {70},
      YEAR = {1997},
    NUMBER = {1},
     PAGES = {2--44},
   MRCLASS = {05C15},
  MRNUMBER = {1441258},
       URL = {https://doi.org/10.1006/jctb.1997.1750},
}

@article {Franklin1922,
    AUTHOR = {Franklin, Philip},
     TITLE = {The {F}our {C}olor {P}roblem},
   JOURNAL = {Amer. J. Math.},
  FJOURNAL = {American Journal of Mathematics},
    VOLUME = {44},
      YEAR = {1922},
    NUMBER = {3},
     PAGES = {225--236},
  MRNUMBER = {1506473},
       URL = {https://doi.org/10.2307/2370527},
}

@inproceedings {RSST-STOC,
    AUTHOR = {Robertson, Neil and Sanders, Daniel P. and Seymour, Paul and
              Thomas, Robin},
     TITLE = {Efficiently four-coloring planar graphs},
 BOOKTITLE = {Proceedings of the {T}wenty-eighth {A}nnual {ACM} {S}ymposium
              on the {T}heory of {C}omputing ({P}hiladelphia, {PA}, 1996)},
     PAGES = {571--575},
 PUBLISHER = {ACM, New York},
      YEAR = {1996},
      ISBN = {0-89791-785-5},
   MRCLASS = {05C85 (05C15)},
  MRNUMBER = {1427555},
       DOI = {10.1145/237814.238005},
       URL = {https://doi.org/10.1145/237814.238005},
}

@book{BarrWels95,
  author       = {Michael Barr and
                  Charles Wells},
  title        = {Category theory for computing science {(2.} ed.)},
  series       = {Prentice Hall international series in computer science},
  publisher    = {Prentice Hall},
  year         = {1995},
  isbn         = {978-0-13-323809-9},
  bibsource    = {dblp computer science bibliography, https://dblp.org}
}

@inproceedings{CM19,
  author    = {Shiri Chechik and
               Doron Mukhtar},
  editor    = {Timothy M. Chan},
  title     = {Optimal Distributed Coloring Algorithms for Planar Graphs in the {LOCAL}
               model},
  booktitle = {Proceedings of the Thirtieth Annual {ACM-SIAM} Symposium on Discrete
               Algorithms, {SODA} 2019, San Diego, California, USA, January 6-9,
               2019},
  pages     = {787--804},
  publisher = {{SIAM}},
  year      = {2019},
  url       = {https://doi.org/10.1137/1.9781611975482.49},
  doi       = {10.1137/1.9781611975482.49},
  bibsource = {dblp computer science bibliography, https://dblp.org}
}

@article{HopcroftT74,
  author       = {John E. Hopcroft and
                  Robert Endre Tarjan},
  title        = {Efficient Planarity Testing},
  journal      = {J. {ACM}},
  volume       = {21},
  number       = {4},
  pages        = {549--568},
  year         = {1974},
  url          = {https://doi.org/10.1145/321850.321852},
  doi          = {10.1145/321850.321852},
  bibsource    = {dblp computer science bibliography, https://dblp.org}
}

@article{CHIBA1981317,
title = {A linear 5-coloring algorithm of planar graphs},
journal = {Journal of Algorithms},
volume = {2},
number = {4},
pages = {317-327},
year = {1981},
issn = {0196-6774},
doi = {https://doi.org/10.1016/0196-6774(81)90031-6},
url = {https://www.sciencedirect.com/science/article/pii/0196677481900316},
author = {Norishige Chiba and Takao Nishizeki and Nobuji Saito}
}

@article{tait1880dual,
  title   = {Note on a theorem in geometry of position},
  author  = {Peter Guthrie Tait},
  journal = {Trans. Roy. Soc. Edinburgh},
  volume  = {29},
  pages   = {657--660},
  year    = {1880}
}

@article{madison1897,
  title   = {Note on the history of the map-coloring problem},
  author  = {Isabel Maddison},
  journal = {Bull. Amer. Math. Soc.},
  volume  = {3},
  pages   = {257},
  year    = {1897}
}

@article {Klein1879,
    AUTHOR = {Klein, Felix},
     TITLE = {Ueber die {A}ufl\"osung gewisser {G}leichungen vom siebenten
              und achten {G}rade},
   JOURNAL = {Math. Ann.},
  FJOURNAL = {Mathematische Annalen},
    VOLUME = {15},
      YEAR = {1879},
    NUMBER = {2},
     PAGES = {251--282},
      ISSN = {0025-5831,1432-1807},
   MRCLASS = {99-04},
  MRNUMBER = {1510011},
       DOI = {10.1007/BF01444143},
       URL = {https://doi-org.proxy.lib.sfu.ca/10.1007/BF01444143},
}

@incollection{Tutte1971,
    AUTHOR = {Tutte, William T.},
     TITLE = {What is a map?},
 BOOKTITLE = {New directions in the theory of graphs ({P}roc. {T}hird {A}nn
              {A}rbor {C}onf., {U}niv. {M}ichigan, {A}nn {A}rbor, {M}ich.,
              1971)},
     PAGES = {309--325},
 PUBLISHER = {Academic Press, New York-London},
      YEAR = {1973},
   MRCLASS = {05C10},
  MRNUMBER = {376413},
MRREVIEWER = {Laszlo\ Lovasz},
}

@article {KMNZ2,
    AUTHOR = {Kawarabayashi, Ken-ichi and Mohar, Bojan and Nedela, Roman and
              Zeman, Peter},
     TITLE = {Automorphisms and isomorphisms of maps in linear time},
   JOURNAL = {ACM Trans. Algorithms},
  FJOURNAL = {ACM Transactions on Algorithms},
    VOLUME = {21},
      YEAR = {2025},
    NUMBER = {1},
     PAGES = {Art. 6, 32},
      ISSN = {1549-6325,1549-6333},
   MRCLASS = {68U05 (68R10)},
  MRNUMBER = {4829829},
}

@incollection {KMNZ1,
    AUTHOR = {Kawarabayashi, Ken-ichi and Klav\'ik, Pavel and Mohar, Bojan
              and Nedela, Roman and Zeman, Peter},
     TITLE = {Isomorphisms of maps on the sphere},
 BOOKTITLE = {Polytopes and discrete geometry},
    SERIES = {Contemp. Math.},
    VOLUME = {764},
     PAGES = {125--147},
 PUBLISHER = {Amer. Math. Soc., Providence, RI},
      YEAR = {2021},
      ISBN = {978-1-4704-4897-4},
   MRCLASS = {05C10 (05C60 05C85 52B10)},
  MRNUMBER = {4239238},
       DOI = {10.1090/conm/764/15358},
       URL = {https://doi-org.proxy.lib.sfu.ca/10.1090/conm/764/15358},
}

@article{burnside1909theory,
  title={Theory of groups of finite order},
  author={Burnside, William},
  journal={Messenger of Mathematics},
  volume={23},
  pages={112},
  year={1909},
  publisher={Cambridge University Press}
}

@article{frobenius,
  title={Ueber die Congruenz nach einem aus zwei endlichen Gruppen gebildeten Doppelmodul},
  author={Frobenius, Georg},
  journal={Crelle's Journal},
  volume={101},
  pages={273--299},
  year={1887},
}

%
\appendix
%



\newpage

\algblock[TryCatchFinally]{try}{endtry}
\algcblock[TryCatchFinally]{TryCatchFinally}{finally}{endtry}
\algcblockdefx[TryCatchFinally]{TryCatchFinally}{catch}{endtry}
	[1]{\textbf{catch} #1}
	{\textbf{end try}}

\algdef{SE}[REPEATN]{RepeatN}{End}[1]{\algorithmicrepeat\ #1 \textbf{times}}{\algorithmicend}

\newcommand{\id}[1]{\operatorname{id}_{#1}}

\section{Appendix: Pseudocodes}
\label{sect:code}\showlabel{sect:code}

Our proof relies heavily on computer checks. We have already presented an algorithmic description of these checks. In this section, we present pseudo-code implementations of these algorithms along with pointers to \Cpp-code 
implementing the pseudo-code. The pseudo-code here serves as the connection between the algorithm and the \Cpp code. It is a precise description of the algorithm's actions, without the reasoning about why and how it is correct. To verify the overall correctness, the reader should first understand the correctness of the algorithms in the body of the paper, next verify that the pseudo-code implements the algorithms, then verify that the \Cpp-code implements pseudo-code, and finally run it on their own trusted computer. 


\subsection{Reducibility of configurations}
The program corresponding to the pseudocode in this section is presented in this GitHub repository \url{https://github.com/edge-coloring/reducibility_checker}.

In the following pseudocodes, the four colors we use to color a graph are in $\{0,1,2,3\}$.

We first present the CheckDReducibility algorithm, which takes a configuration as input and returns whether it is D-reducible.
The definition of D-reducibility is equivalent to that used in the original 4-coloring theorem proof, and the algorithm follows previous work.

In this algorithm, we use two subroutines, AllRingColorings and AllKempeChains, explained later.

\begin{algorithm}[H]
    \caption{CheckDReducibility($(Z, \delta)$)}
    \label{alg:check-red_a}
    \begin{algorithmic}[1]
        \Require {A configuration $(Z, \delta)$}
        \Ensure {\textsc{True} if the configuration is D-reducible, \textsc{False} if it is not}
        \State $\hat{Z} \gets$ the free completion of $Z$
        \State $R \gets$ the ring of $\hat{Z}$
        \State $\mathcal{C} \gets $ AllRingColorings($\hat{Z}$, $R$)
        \State isReducible $\gets$ False
        \Repeat
            \ForAll{$\phi \notin \mathcal{C}$}
                \ForAll {$\mathrm{colorPair} \in \{\{0,1\}, \{0,2\}, \{0,3\}\}$}
                    \State $R_{\phi,\mathrm{colorPair}}$ a cycle created from $R$ by contracting the edges connecting vertices colored in colors $c_1, c_2$, where $c_1,c_2 \in \mathrm{colorPair}$ or $c_1,c_2 \notin \mathrm{colorPair}$
                    \State allKempeIsFeasible $\gets$ True
                    \ForAll{$P \in$ AllKempeChains($R_{\phi,\mathrm{colorPair}}$)}
                        \State changeExists $\gets$ False
                        \ForAll{$P' \subseteq P$}
                            \State $\psi(r) := \mathrm{flip}(\phi(r))$ if $r \in R' \in V(R_{\phi, \mathrm{colorPair}})$ such that $R' \in P'$, $\phi(r)$ otherwise
                            \If{$\psi(r) \in \mathcal{C}$}
                                \State changeExists $\gets$ True
                            \EndIf
                        \EndFor
                        \If{changeExists is False}
                            \State allKempeIsFeasible $\gets$ False
                        \EndIf
                    \EndFor
                    \If{allKempeIsFeasible}
                        \State $\mathcal{C} \gets \mathcal{C} \cup \phi$
                        \Break
                    \EndIf
                \EndFor            
            \EndFor
        \Until{No more updates are made to $\mathcal{C}$}
        \If{$\mathcal{C}$ is the set of all (proper) $4$-colorings}
            \Return{True}
        \EndIf
        \State{\Return{False}}
        
    \end{algorithmic}
\end{algorithm}

The AllRingColorings algorithm receives a free completion $\hat{Z}$ and its ring $R$ as input and returns the set of all $4$-colorings of $R$ such that it can be properly extended to $\hat{Z}$.

\begin{algorithm}[H]
    \caption{AllRingColorings($\hat{Z}$, $R$)}
    \label{alg:check-red_ring-coloring}
    \begin{algorithmic}[1]
        \Require {A free completion $\hat{Z}$ of some configuration and its ring $R$}
        \Ensure {The set of all $4$-colorings of $R$ such that $\hat{Z}$ as a whole can be $4$-colored}
        \State $\mathcal{C} \gets \varnothing$
        \State $C \gets$ an empty map
        \State $n \gets$ the number of vertices in $\hat{Z}$
        \State $v_1, \cdots, v_n \gets$ an arbitrary ordering of the vertices in $\hat{Z}$
        \Procedure{DFS}{$i$}
            \If{$i > n$}
                \State $\phi \gets$ the restriction of $C$ to $V(R)$ \Comment{Extract the ring coloring from the coloring of $Z$}
                \State $\mathcal{C} \gets \mathcal{C} \cup \{\phi\}$
            \EndIf
            \ForAll{$c \in \{0,1,2,3\}$}
                \State isColorProper $\gets$ True
                \ForAll{$j$ such that $0 \le j < i$ and $v_i, v_j$ are adjacent in $Z$}
                    \If{$C[j] = c$}
                        \State isColorProper $\gets$ False
                        \Break
                    \EndIf
                \EndFor
                \If{isColorProper is True}
                    \State $C[i] \gets c$
                    \State DFS($i + 1$)
                \EndIf
            \EndFor
        \EndProcedure
        \State \Return{$\mathcal{C}$}
    \end{algorithmic}
\end{algorithm}

The following algorithm generates a list of all planar Kempe chains.
The \emph{matching parentheses} generated in Line 2 are strings that can be constructed recursively by concatenating two matching parentheses or by adding the characters ``(" and ``)" to each side of a matching parenthesis string, starting with an empty string.

\begin{algorithm}[H]
    \caption{AllPlanarKempeChains($C$)}
    \label{alg:check-red_b}
    \begin{algorithmic}[1]
        \Require {A cycle $C$ of even length $n$.}
        \Ensure {A set $P$ of partitions of $V(C)$ resembling the connections via planar Kempe chains}
        \State $n \gets |C|$
        \ForAll {$S$: matching parentheses of length $n$}
            \State $T \gets$ an empty stack
            \State $L \gets$ an empty list
            \State Add $0$ to top of $T$
            \State $c \gets 1$
            \ForAll{$i = 1,\cdots, n$}
                \If{$S_i =$ '('}
                    \State Add $c$ to top of $T$
                    \State Append $c$ to $L$
                    \State $c \gets c+1$
                \EndIf
                \If{$S_i =$ ')'}
                    \State Append uppermost value of $T$ to $L$
                    \State Pop uppermost value from $T$.
                \EndIf
            \EndFor
            \State $P \gets P \cup \{L\}$
        \EndFor
    \end{algorithmic}
\end{algorithm}

\subsection{Homomorphism}
\label{sect:code-hom}
The program corresponding to the pseudocode in the following sections is presented in this GitHub repository \url{https://github.com/near-linear-4ct/computer-checks}.
Each pseudocode is linked to the corresponding function in our \Cpp program.

\newcommand{\githubbase}{https://github.com/near-linear-4ct/computer-checks/blob/49eecf5b9b59e32e2b0ef6fedb9505e6714fcd7d}
\newcommand{\githublink}[3]{\href{\githubbase/#1\##2}{#3}} 

\newcommand{\enumWheelsWithLink}{\githublink{src/cartwheel.cpp}{L92-L115}{enumWheels}}
\newcommand{\generateCartwheelWithLink}{\githublink{src/cartwheel.cpp}{L117-L162}{generateCartwheel}}
\newcommand{\enumPossibleBadWheelsWithLink}{\githublink{src/cartwheel.cpp}{L164-L180}{enumPossibleBadWheels}}
\newcommand{\fixInRulesWithLink}{\githublink{src/cartwheel.cpp}{L183-L211}{fixInRules}}
\newcommand{\updateDegreeByRuleWithLink}{\githublink{src/cartwheel.cpp}{L213-L227}{updateDegreeByRule}}
\newcommand{\concreteDegreeExceptTailWithLink}{\githublink{src/cartwheel.cpp}{L229-L250}{concreteDegreeExceptTail}}
\newcommand{\pruneWithLink}{\githublink{src/cartwheel.cpp}{L252-L267}{prune}}
\newcommand{\pruneByNonAssociatedRuleWithLink}{\githublink{src/cartwheel.cpp}{L269-L284}{pruneByNonAssociatedRule}}
\newcommand{\upperBoundOfChargeWithLink}{\githublink{src/cartwheel.cpp}{L286-L304}{upperBoundOfCharge}}
\newcommand{\fixOutRulesWithLink}{\githublink{src/cartwheel.cpp}{L307-L350}{fixOutRules}}
\newcommand{\shouldRefineWithLink}{\githublink{src/cartwheel.cpp}{L352-L355}{shouldRefine}}
\newcommand{\refinementWithLink}{\githublink{src/cartwheel.cpp}{L357-L375}{refinement}}
\newcommand{\refineAlwaysWithLink}{\githublink{src/cartwheel.cpp}{L377-L387}{refineAlways}}
\newcommand{\refineNeverWithLink}{\githublink{src/cartwheel.cpp}{L389-L402}{refineNever}}
\newcommand{\enumBadCartwheelsWithLink}{\githublink{src/cartwheel.cpp}{L404-L423}{enumBadCartwheels}}
\newcommand{\centerDartsByDegreeWithLink}{\githublink{src/cartwheel.cpp}{L425-L434}{centerDartsByDegree}}

\newcommand{\deleteDegreeFromKToNWithLink}{\githublink{src/combine_cartwheel.cpp}{L7-L25}{deleteDegreeFromKTo9}}
\newcommand{\checkDegEWithLink}{\githublink{src/combine_cartwheel.cpp}{L53-L80}{checkDeg8}}
\newcommand{\checkEEWithLink}{\githublink{src/combine_cartwheel.cpp}{L82-L91}{check88}}
\newcommand{\checkESWithLink}{\githublink{src/combine_cartwheel.cpp}{L93-L102}{check87}}
\newcommand{\checkSESWithLink}{\githublink{src/combine_cartwheel.cpp}{L104-L133}{check787}}
\newcommand{\containXWithLink}{\githublink{src/combine_cartwheel.cpp}{L161-L172}{containX}}
\newcommand{\checkStriangleWithLink}{\githublink{src/combine_cartwheel.cpp}{L182-L211}{check7triangle}}
\newcommand{\checkDegSWithLink}{\githublink{src/combine_cartwheel.cpp}{L221-L248}{checkDeg7}}
\newcommand{\checkSSWithLink}{\githublink{src/combine_cartwheel.cpp}{L250-L259}{check77}}
\newcommand{\checkSSSWithLink}{\githublink{src/combine_cartwheel.cpp}{L261-L275}{check777}}

\newcommand{\mirrorWithLink}{\githublink{src/configuration.cpp}{L83-L89}{mirror}}
\newcommand{\extendFromCutVerticesWithLink}{\githublink{src/configuration.cpp}{L100-L127}{extendFromCutVertices}}
\newcommand{\findCutPairsWithLink}{\githublink{src/configuration.cpp}{L129-L156}{findCutPairs}}
\newcommand{\removeRingWithLink}{\githublink{src/configuration.cpp}{L158-L203}{removeRing}}
\newcommand{\maximumDegreeDartWithLink}{\githublink{src/configuration.cpp}{L205-L222}{maximumDegreeDart}}

\newcommand{\homomorphismWithLink}{\githublink{src/pseudo_configuration.hpp}{L86-L131}{homomorphism}}

\newcommand{\freeHomomorphismConfigurationWithLink}{\githublink{src/pseudo_configuration.cpp}{L97-L108}{freeHomomorphismConfiguration}}
\newcommand{\dartIdentificationWithLink}{\githublink{src/pseudo_configuration.cpp}{L77-L94}{dartIdentification}}
\newcommand{\resolveDegreeIssuesWithLink}{\githublink{src/pseudo_configuration.cpp}{L110-L141}{resolveDegreeIssues}}
\newcommand{\innerSubdegreeErrorWithLink}{\githublink{src/pseudo_configuration.cpp}{L143-L152}{innerSubdegreeError}}
\newcommand{\vertexSingleDegreeIssueWithLink}{\githublink{src/pseudo_configuration.cpp}{L154-L168}{vertexSingleDegreeIssue}}
\newcommand{\fixSingleDegreeIssueWithLink}{\githublink{src/pseudo_configuration.cpp}{L171-L188}{fixSingleDegreeIssue}}
\newcommand{\addBoundaryDartsWithLink}{\githublink{src/pseudo_configuration.cpp}{L190-L210}{addBoundaryDarts}}
\newcommand{\singleOutLowerDegreeWithLink}{\githublink{src/pseudo_configuration.cpp}{L213-L224}{singleOutLowerDegree}}
\newcommand{\containConfWithLink}{\githublink{src/pseudo_configuration.cpp}{L226-L249}{containConf}}
\newcommand{\dartsByDegreeWithLink}{\githublink{src/pseudo_configuration.cpp}{L251-L268}{dartsByDegree}}
\newcommand{\rootedContainConfWithLink}{\githublink{src/pseudo_configuration.cpp}{L270-L272}{rootedContainConf}}
\newcommand{\blockedByReducibleConfigurationWithLink}{\githublink{src/pseudo_configuration.cpp}{L274-L282}{blockedByReducibleConfiguration}}
\newcommand{\representativeDegreeWithLink}{\githublink{src/pseudo_configuration.cpp}{L284-L313}{representativeDegree}}
\newcommand{\alwaysApplyWithLink}{\githublink{src/pseudo_configuration.cpp}{L315-L317}{alwaysApply}}
\newcommand{\neverApplyWithLink}{\githublink{src/pseudo_configuration.cpp}{L319-L321}{neverApply}}
\newcommand{\amountOfChargeSendWithLink}{\githublink{src/pseudo_configuration.cpp}{L323-L331}{amountOfChargeSend}}
\newcommand{\amountOfPossibleChargeSendWithLink}{\githublink{src/pseudo_configuration.cpp}{L333-L343}{amountOfPossibleChargeSend}}
\newcommand{\dominantlyApplyWithLink}{\githublink{src/pseudo_configuration.cpp}{L345-L351}{dominantlyApply}}
\newcommand{\combineEachCartwheelWithLink}{\githublink{src/pseudo_configuration.cpp}{L354-L370}{combineEachCartwheel}}
\newcommand{\combineEachCartwheelTwiceWithLink}{\githublink{src/pseudo_configuration.cpp}{L373-L388}{combineEachCartwheelTwice}}

\newcommand{\fromVRotationsWithLink}{\githublink{src/pseudo_triangulation.cpp}{L40-L75}{fromVRotations}}
\newcommand{\freeHomomorphismTriangulationWithLink}{\githublink{src/pseudo_triangulation.cpp}{L197-L249}{freeHomomorphismTriangulation}}

\newcommand{\addRuleToCombinationWithLink}{\githublink{src/rule.cpp}{L151-L174}{addRuleToCombination}}
\newcommand{\combineRulesWithLink}{\githublink{src/rule.cpp}{L176-L199}{combineRules}}

\newcommand{\homomorphism}{\texttt{homomorphism}\xspace}
\newcommand{\freeHomomorphismTriangualtion}{\texttt{freeHomomorphismTriangulation}\xspace}
\newcommand{\dartIdentification}{\texttt{dartIdentification}\xspace}
\newcommand{\resolveDegreeIssues}{\texttt{resolveDegreeIssues}\xspace}

\newcommand{\alwaysApply}{\texttt{alwaysApply}\xspace}
\newcommand{\neverApply}{\texttt{neverApply}\xspace}
\newcommand{\dominantlyApply}{\texttt{dominantlyApply}\xspace}
\newcommand{\amountOfChargeSend}{\texttt{amountOfChargeSend}\xspace}
\newcommand{\amountOfPossibleChargeSend}{\texttt{amountOfPossibleChargeSend}\xspace}
\newcommand{\enumWheels}{\texttt{enumWheels}\xspace}
\newcommand{\generateCartwheel}{\texttt{generateCartwheel}\xspace}
\newcommand{\enumPossibleBadWheels}{\texttt{enumPossibleBadWheels}\xspace}
\newcommand{\fixInRules}{\texttt{fixInRules}\xspace}
\newcommand{\updateDegreeByRule}{\texttt{updateDegreeByRule}\xspace}
\newcommand{\concreteDegreesExceptTail}{\texttt{concreteDegreesExceptTail}\xspace}
\newcommand{\pruneByNonAssociatedRule}{\texttt{pruneByNonAssociatedRule}\xspace}
\newcommand{\upperBoundOfCharge}{\texttt{upperBoundOfCharge}\xspace}
\newcommand{\prune}{\texttt{prune}\xspace}
\newcommand{\fixOutRules}{\texttt{fixOutRules}\xspace}
\newcommand{\refinement}{\texttt{refinement}\xspace}
\newcommand{\shouldRefine}{\texttt{shouldRefine}\xspace}
\newcommand{\refineAlways}{\texttt{refineAlways}\xspace}
\newcommand{\refineNever}{\texttt{refineNever}\xspace}
\newcommand{\enumBadCartwheels}{\texttt{enumBadCartwheels}\xspace}
\newcommand{\CARTWHEELDEGREES}{\texttt{CARTWHEEL\_DEGREES}\xspace}

\newcommand{\gIntersection}{g_\textsf{intersection}}
\newcommand{\gInclude}{g_\textsf{include}}
The following routine, \homomorphism, finds a homomorphism $\phi^*$ from $Z$ to $Z^*$ that maps $e$ to $e^*$ and satisfies a specified degree constraint for all mapped pairs of vertices $(v, \phi^*(v))$.
If no degree constraint is imposed, it returns a standard homomorphism between pseudo-triangulations.
One of the common constraints we use is that the intersection of the degree ranges to be non-empty (i.e., $[\delta^-(v), \delta^+(v)] \cap [\delta^{*-}(\phi^*(v)), \delta^{*+}(\phi^*(v))] \neq \emptyset$ ), or the degree range $[\delta^{*-}(\phi^*(v)), \delta^{*+}(\phi^*(v))]$ includes the other degree range $[\delta^-(v), \delta^+(v)]$.
We specify these constraints using the boolean functions $\gIntersection$ and $\gInclude$, respectively.
Standard applications of this routine include verifying homomorphic images of reducible configurations and determining whether a rule applies to a specific dart.
For references, see Algorithms \ref{alg:rooted_contain_conf}, \ref{alg:ineivtably_apply}, \ref{alg:never_apply} and \ref{alg:dominantly_apply}.

Note that $V(Z)$ and $D(Z)$ denote the sets of vertices and darts of $Z$, respectively, when $Z$ is a pseudo-triangulation.
We use a queue data structure. The queue supports 
the following operations.

\begin{itemize}
    \item \texttt{push}($x$): Pushes $x$ into the queue.
    \item \texttt{pop}(): Removes and returns the oldest element from the queue.
    \item \texttt{empty}(): Returns \texttt{true} if the queue is empty, and \texttt{false} otherwise.
\end{itemize}

We adopt the dot notation for these operations for data structures (e.g., \texttt{T.root}($x$) for a union-find instance $T$, \texttt{Q.push}($x$) for a queue instance \texttt{Q}).
For this data structure, we use \texttt{std::queue} in our \Cpp program.

\begin{algorithm}[H]\caption{\homomorphismWithLink($(Z, \delta^-, \delta^+), e, (Z^*, \delta^{*-}, \delta^{*+}) ,e^*, g$)}
    \label{alg:homomorphism}
    \begin{algorithmic}[1]
        \Require{A pseudo configuration $(Z,\delta^-,\delta^+)$ with a dart $e$ and a pseudo configuration $(Z^*, \delta^{*-}, \delta^{*+})$ with a dart $e^*$, a function $g$ that takes two degree-ranges and returns the boolean flag.}
        \Ensure{The homomorphism $\phi^*$ from $Z$ to $Z^*$ mapping $e$ to $e^*$ if it exists and further satisfies $g(\delta^-(v), \delta^+(v), \delta^{*-}(v^*), \delta^{*+}(v^*))=\True$ for all vertices $v \in Z$ with  $v^*=\phi^*(v)$. It returns \texttt{null} if no such homomorphism exists.}
        \State $\phi^* \gets$ a map from $V(Z) \cup D(Z)$ to $V(Z^*) \cup D(Z^*) \cup \{\bot\}$, initially $\phi^*(x)=\bot$ for all $x \in V(Z) \cup D(Z)$.
        \State $\texttt{Q} \gets$ an empty queue
        \State \texttt{Q.push}$((e,e^*))$
        \While{not $\texttt{Q.empty()}$}
            \State $(f, f^*) \gets \texttt{Q.pop()}$
            \If {$\phi^*(f) \neq \bot$}
                \If {$\phi^*(f) \neq f^*$}
                    \State \Return \texttt{null}
                \EndIf
                \Continue
            \EndIf
            \State $\phi^*(f) \gets f^*$
            \State $h \gets \head(f)$
            \State $h^* \gets \head(f^*)$
            \If {$\phi^*(h) \neq \bot$ and $\phi^*(h) \neq h^*$}
                \State \Return \texttt{null}
            \EndIf
            \State $\phi^*(h) \gets h^*$
            \If {$g(\delta^-(h), \delta^+(h), \delta^{*-}(h^*), \delta^{*+}(h^*))=\False$}
                 \State \Return \texttt{null} \Comment{A degree constraint is not satisfied.}
            \EndIf
            \State \texttt{Q.push}$(\reverse(f), \reverse(f^*))$
            \If {$\successor(f) \neq \nil$ and $\successor(f^*)=\nil$}
                \State \Return \texttt{null} \Comment{a homomorphism from $Z$ to $Z^*$ does not exist.}
            \ElsIf {$\successor(f) \neq \nil$ and $\successor(f^*) \neq \nil$}
                \State \texttt{Q.push}$(\successor(f), \successor(f^*))$
            \EndIf
            \If {$\predecessor(f) \neq \nil$ and $\predecessor(f^*)=\nil$}
                \State \Return \texttt{null} \Comment{a homomorphism from $Z$ to $Z^*$ does not exist.}
            \ElsIf {$\predecessor(f) \neq \nil$ and $\predecessor(f^*) \neq \nil$}
                \State \texttt{Q.push}$(\predecessor(f), \predecessor(f^*))$
            \EndIf
        \EndWhile
        \State \Return $\phi^*$
    \end{algorithmic}
\end{algorithm}

\subsection{The free homomorphism from a pseudo-triangulation respecting identification requests}
We present the pseudocode for computing the free homomorphism of a pseudo-triangulation respecting identification requests. Although we have already described this algorithm in Section \ref{subsub:homomorphism-tri}, we provide the pseudocode for clarity.

We employ a union-find data structure, implemented as a forest of rooted trees.
We denote by \texttt{uf}($S$) the union-find structure defined over a set $S$, where
initially each element of $S$ is a root.
This data structure supports the following operations.

\begin{itemize}
    \item \texttt{root}($x$): Returns the root of the tree containing $x$.
    \item \texttt{unite}($x$, $y$): Unites the two trees containing $x$ and $y$ by setting the parent of \texttt{root}($x$) to \texttt{root}($y$).
    \item \texttt{same}($x$, $y$): Returns \texttt{true} if the same tree contains both $x$ and $y$, and \texttt{false} otherwise.
\end{itemize}

\begin{algorithm}[H]
    \caption{\freeHomomorphismTriangulationWithLink($Z$, $\{(e_i, f_i)\}_{i=1}^k$)}
    \label{alg:homomorphism-tri}
    \begin{algorithmic}[1]
        \Require{A pseudo-triangulation $Z$,
        a set of pairs of darts $\{(e_i, f_i)\}_{i=1}^k$ in $D(Z)$.}
        \Ensure{A pseudo-triangulation $Z^*$, and a homomorphism $\phi^* \colon V(Z) \cup D(Z) \to V(Z^*) \cup D(Z^*)$ respecting the identification requests $\{(e_i, f_i)\}_{i=1}^k$.}
        \State $\texttt{uf\_V} \gets \texttt{uf}(V(Z))$
        \State $\texttt{uf\_D} \gets$ \texttt{uf}($D(Z)$)
        \State $\texttt{Q} \gets$ an empty queue
        \ForAll{$i=1,\dots,k$}
            \State $\texttt{Q.push}$$((e_i, f_i))$
        \EndFor
        \While{not \texttt{Q.empty}()}
            \State $(e,f) \gets \texttt{Q.pop}()$ 
            \If {\texttt{uf\_D.same}($e,f$)}
                \Continue
            \EndIf
            \If {not \texttt{uf\_V.same}(\head($e$), \head($f$))}
                \State \texttt{uf\_V.unite}(\head($e$), \head($f$))\label{line:unite}
            \EndIf
            \State $e^* \gets \texttt{uf\_D.root}(e)$
            \State $f^* \gets \texttt{uf\_D.root}(f)$
            \State \texttt{uf\_D.unite}($e^*$, $f^*$) \Comment{Now, $f^*$ becomes the root representative.}
            \State \texttt{Q.push}(\reverse($e^*$), \reverse($f^*$))
            \If {\successor($e^*$) $\neq$ \nil\ and \successor($f^*$) $\neq$ \nil}
                \State \texttt{Q.push}(\successor($e^*$), \successor($f^*$))
            \EndIf
             \If {\predecessor($e^*$) $\neq$ \nil\ and \predecessor($f^*$) $\neq$ \nil}
                \State \texttt{Q.push}(\predecessor($e^*$), \predecessor($f^*$))
            \EndIf
            \If {\successor($e^*$) $\neq$ \nil\ and \successor($f^*$) $=$ \nil}
                \State \successor($f^*$) $\gets$ \successor($e^*$)
            \EndIf
            \If {\predecessor($e^*$) $\neq$ \nil\ and \predecessor($f^*$) $=$ \nil}
                \State \predecessor($f^*$) $\gets$ \predecessor($e^*$)
            \EndIf
        \EndWhile
        \State $V^* \gets$ the set of $v \in V(Z)$ with \texttt{uf\_V.root}($v$) $= v$ 
        \State $D^* \gets$ the set of $d \in D(Z)$ with \texttt{uf\_D.root}($d$) $= d$
        \ForAll{$d \in D^*$}
            \State \head($d$) $\gets$ \texttt{uf\_V.root}(\head($d$))
            \State \reverse($d$) $\gets$ \texttt{uf\_D.root}(\reverse($d$))
            \State \successor($d$) $\gets$ \nil\ if \successor($d$) $=$ \nil\ otherwise, \texttt{uf\_D.root}(\successor($d$))
            \State \predecessor($d$) $\gets$ \nil\ if \predecessor($d$) $=$ \nil\ otherwise, \texttt{uf\_D.root}(\predecessor($d$))
        \EndFor
        \State $\phi^* \colon V(Z) \cup D(Z) \to V^* \cup D^*$ is a map defined $\phi^*(x)=\texttt{uf\_V.root}(x)$ if $x \in V(D)$, and $\texttt{uf\_D.root}(x)$ otherwise.
        \State \Return $Z^*$ defined by $(V^*, D^*)$ and $\phi^*$
    \end{algorithmic}
\end{algorithm}

\subsection{The free homomorphism from a pseudo-configuration respecting identification requests}
\label{code:hom-conf}
In the following, we present the pseudocode for computing the free homomorphism of a pseudo-configuration in Algorithm \ref{alg:homomorphism-conf}.
Here, we focus on the case where vertices are assigned degree ranges, as this includes the case where vertices have single concrete degrees.
Specifically, for a pseudo-configuration with single concrete degrees $(Z, \delta)$, calling the function with input $(Z, \delta, \delta)$ yields the result for $(Z, \delta)$.
The algorithm requires several subroutines.
First, we introduce the subroutine \dartIdentification that identifies the darts of the underlying pseudo-triangulation obtained by forgetting the degree function from the input pseudo-configuration.
Next, it maps the degrees, calculating the intersection of the degree ranges for each set of identified vertices.
This procedure returns \texttt{null} if it attempts to identify vertices whose degree ranges are disjoint (a degree-mismatch error), or the resulting pseudo-configuration has a loop (a loop error).

\begin{algorithm}[H]
    \caption{\dartIdentificationWithLink($(Z, \delta^-, \delta^+)$, $\{(e_i, f_i)\}_{i=1}^k$)}
    \label{alg:dartIdentification}
    \begin{algorithmic}[1]
        \Require{A pseudo-configuration $(Z, \delta^-, \delta^+)$, a set of pairs of darts $\{(e_i, f_i)\}_{i=1}^k$ in $D(Z)$.}
        \Ensure{A pseudo-configuration $(Z^*, \delta^{*-}, \delta^{*+})$, a map $\phi^* \colon V(Z) \cup D(Z) \to V(Z^*) \cup D(Z^*)$ obtained from $Z$ by identifying all pairs of darts $\{(e_i,f_i)\}_{i=1}^k$ or \texttt{null} if the identification causes a loop, degree-mismatch error.}
        \State $Z^*, \phi^* \gets $ \Call{freeHomomorphismTriangulation}{$Z, \{(e_i, f_i)\}_{i=1}^k$}
        \If {$Z^*$ has a loop (i.e., a dart $d$ with $\head(d)=\head(\reverse(d))$) exists.}
            \State \Return \texttt{null} \Comment{a loop error}
        \EndIf
        \State $\delta^{*-}, \delta^{*+} \colon V(Z^*) \to \mathbb{N} \cup \{\infty\}$, initially defined by $\delta^{*-}(v)=1, \delta^{*+}(v)=\infty$ for all $v \in V(Z^*)$.
        \ForAll {$v \in V(Z)$}
            \State $v^* \gets \phi^*(v)$
            \If {$[\delta^{*-}(v^*), \delta^{*+}(v^*)] \cap [\delta^-(v), \delta^+(v)] = \emptyset$} \Comment{Two ranges are disjoint.}
                \State \Return \texttt{null} \Comment{a degree-mismatch error}
            \EndIf
            \State $[\delta^{*-}(v^*), \delta^{*+}(v^*)]=[\delta^{*-}(v^*), \delta^{*+}(v^*)] \cap [\delta^{-}(v), \delta^{+}(v)]$ \Comment{Calculate the intersection.}
        \EndFor
        \State \Return $((Z^*, \delta^{*-}, \delta^{*+}), \phi^*)$
    \end{algorithmic}
\end{algorithm}

We can optimize the \dartIdentification subroutine by using a modified version of Algorithm \ref{alg:homomorphism-tri}.
Specifically, each time we identify two vertices in Line 13 of Algorithm \ref{alg:homomorphism-tri}, we verify that the intersection of the degree-ranges associated with these vertices is non-empty.
To achieve this efficiently, the root representative of each set in $\texttt{uf\_V}$ maintains the intersection of the degree-ranges for all vertices in its tree.
Consequently, at the end of the execution, the degree-ranges of the root representatives correspond exactly to the resulting degree-ranges $\delta^{*-}$ and $\delta^{*+}$ calculated in \dartIdentification subroutine.
This modified procedure is presented in Algorithm \ref{alg:homomorphism-tri-degree}.
The optimized \dartIdentification subroutine calls this modified algorithm, checks for the existence of any loops, and then directly returns the resulting homomorphic image along with its degree-ranges.
However, we do not implement this since the unoptimized algorithm is fast enough.

\begin{algorithm}[H]
\caption{freeHomomorphismTriangulationWithDegreeIntersection($(Z,\delta^-,\delta^+)$, $\{(e_i, f_i)\}_{i=1}^k$)}
    \label{alg:homomorphism-tri-degree}
    \begin{algorithmic}[1]
        \State \texttt{The algorithm in Line 1-11 in Algorithm \ref{alg:homomorphism-tri}}
        \If {not \texttt{uf\_V.same}(\head($e$), \head($f$))}
            \State $h^*_e \gets \texttt{uf\_V.root}(\head(e))$
            \State $h^*_f \gets \texttt{uf\_V.root}(\head(f))$
            \State \texttt{uf\_V.unite}($h^*_e$, $h^*_f$) \Comment{Now, $h^*_f$ becomes the representative root.}
            \If {$[\delta^-(h^*_e), \delta^+(h^*_e)] \cap [\delta^-(h^*_f), \delta^+(h^*_f)] = \emptyset$} \Comment{Two ranges are disjoint.}
                \State \Return \texttt{null} \Comment{a degree-mismatch error}
            \EndIf
            \State $[\delta^-(h^*_f), \delta^+(h^*_f)] \gets [\delta^-(h^*_f), \delta^+(h^*_f)] \cap [\delta^-(h^*_e), \delta^+(h^*_e)]$ \Comment{Calculate the intersection.}
        \EndIf
        \State \texttt{The algorithm in Line 15-44 in Algorithm \ref{alg:homomorphism-tri}}
        \State $\delta^{*-}(v) \gets \delta^-(v)$ and $\delta^{*+}(v) \gets \delta^+(v)$ for every $v \in V^*$
        \State \Return $(Z^*, \delta^{*-}, \delta^{*+})$ and $\phi^*$.
  \end{algorithmic}
\end{algorithm}

Algorithm \ref{alg:homomorphism-conf} is the main routine that creates the free homomorphism respecting identification requests, described in Section \ref{subsub:homomorphism-conf} and Section \ref{sect:ranges}.
We first identify the given pairs of darts using Algorithm \ref{alg:dartIdentification}.
Then, if the identification is successful, we proceed to resolve potential \emph{degree-issues} by calling the \resolveDegreeIssues subroutine.
For a pseudo-configuration with single concrete degrees, we require that $d_Z(v) < \delta(v)$ for any boundary vertex, and $d_Z(v)=\delta(v)$ for any inner vertex $v$.
Generalizing this into degree ranges, a vertex is said to have \emph{degree-issues} if it violates the following condition.
\begin{itemize}
    \item $d_Z(v) < \delta^-(v)$ if $v$ is a boundary vertex and
    \item $d_Z(v) = \delta^-(v) = \delta^+(v)$ if $v$ is an inner vertex.
\end{itemize}

\begin{algorithm}[H]
    \caption{\freeHomomorphismConfigurationWithLink($(Z, \delta^-, \delta^+)$, $\{(e_i, f_i)\}_{i=1}^k$)}
    \label{alg:homomorphism-conf}
    \begin{algorithmic}[1]
        \Require{A pseudo-configuration $(Z, \delta^-, \delta^+)$, a set of pairs of darts $\{(e_i, f_i)\}_{i=1}^k$ in $D(Z)$.}
        \Ensure{A set $\mathcal{Z}$ of a pair of pseudo-configurations $(Z_j^*, \delta_j^{*-}, \delta_j^{*+})$ and a homomorphism $\phi_j^* \colon V(Z) \cup D(Z) \to V(Z_j^*) \cup D(Z_j^*)$ respecting the identification $\{(e_i,f_i)\}_{i=1}^k$.}
        \State $A \gets $\Call{dartIdentification}{$(Z, \delta^-, \delta^+)$, $\{(e_i, f_i)\}_{i=1}^k$}
        \If {$A = \texttt{null}$}
            \State \Return $\emptyset$
        \EndIf
        \State $((Z^* \delta^{*-}, \delta^{*+}), \phi^*) \gets A$
        \State $\tilde{\mathcal{Z}} \gets$ \Call{resolveDegreeIssues}{$Z^*, \delta^{*-}, \delta^{*+}$}
        \State \Return $\{((\tilde{Z}, \tilde{\delta}^{-}, \tilde{\delta}^{+}), \tilde{\phi} \circ \phi^*) \mid ((\tilde{Z}, \tilde{\delta}^{-}, \tilde{\delta}^{+}), \tilde{\phi}) \in \tilde{\mathcal{Z}} \}$
    \end{algorithmic}
\end{algorithm}

The \resolveDegreeIssues routine in Algorithm \ref{alg:resolve_degree_issues} is designed to resolve degree-issues by iteratively identifying or adding darts via several subroutines.
$\id{X}$ denotes the identity map on a set $X$.
We use the four subroutines.

First, the \texttt{innerSubdegreeError} subroutine detects any inner vertex $v$ with $d_Z(v) < \delta^-(v)$, which implies a subdegree error.
If such an error is detected, we immediately give up on resolving degree issues.
We present the \texttt{innerSubdegreeError} subroutine in Algorithm \ref{alg:inner_subdegree_error}.
Note that we decide that $v$ is a boundary vertex if some dart $e$ with $\head(e)=v$ satisfies $\successor(e)=\nil$.
Otherwise, $v$ is considered an inner vertex.

Second, the \texttt{vertexSingleDegreeIssue} subroutine detects a vertex with a single specified degree (i.e., $\delta(v) \coloneq \delta^-(v)=\delta^+(v)$) and degree-issue.
Specifically, it searches for
\begin{itemize}
    \item a vertex $v$ with $\delta(v) < d_Z(v)$, or
    \item a boundary vertex $v$ with $\delta(v) = d_Z(v)$.
\end{itemize}
If no such vertex exists, then this subroutine returns $\texttt{null}$.
We present the \texttt{vertexSingleDegreeIssue} subroutine in Algorithm \ref{alg:vertex_single_degree_issue}.

Third, the \texttt{fixSingleDegreeIssue} subroutine attempts to resolve a degree-issue found by \texttt{vertexSingleDegreeIssue}.
If such a degree-issue cannot be resolved due to errors, such as a loop, subdegree, or degree-mismatch error, this subroutine returns \texttt{null}.
We present the \texttt{fixSingleDegreeIssue} subroutine in Algorithm \ref{alg:fix_single_degree_issue}.
More detailed implementations are given later.

Fourth, the \texttt{singleOutLowerDegree} subroutine is responsible for subdividing degree ranges if it is necessary to resolve degree-issues.
More precisely, this subroutine detects a vertex with $\delta^-(v) < \delta^+(v)$ and $\delta^-(v) \leq d_Z(v)$.
If found, it subdivides degree range $[\delta^-(v), \delta^+(v)]$ into $[\delta^-(v), \delta^-(v)]$ and $[\delta^-(v)+1, \delta^+(v)]$, creating two corresponding copies of the given pseudo-configuration.
This is the only subroutine that performs degree range subdivision.
We present the \texttt{singleOutLowerDegree} subroutine in Algorithm \ref{alg:single_out_lower_deg}.

We iteratively apply these subroutines.
The process terminates successfully when the pseudo-configuration is free of degree-issues:
if \texttt{innerSubDegreeError} returns \False, \texttt{vertexSingleDegreeIssue} returns \texttt{null}, and \texttt{singleOutLowerDegree} returns \texttt{null}.
In this case, the resulting pseudo-configuration is added to the output set.

\begin{algorithm}[H]
    \caption{\resolveDegreeIssuesWithLink($Z, \delta^-, \delta^+$)}
    \label{alg:resolve_degree_issues}
    \begin{algorithmic}[1]
        \Require{A pseudo-configuration $(Z, \delta^-, \delta^+)$ with potential degree-issues.}
        \Ensure{A set of pseudo-configurations $(\tilde{Z}, \tilde{\delta}^-, \tilde{\delta}^+)$ with $\tilde{\phi} \colon V(Z) \cup D(Z) \to V(\tilde{Z}) \cup D(\tilde{Z})$ obtained from $(Z, \delta^-, \delta^+)$ by resolving degree-issues.}
        \State $\mathcal{Z} \gets \emptyset$
        \State $\texttt{Q} \gets$ an empty queue
        \State \texttt{Q.push}($(Z, \delta^-, \delta^+), \id{V(Z) \cup D(Z)}$)
        \While {not \texttt{Q.empty}()}
            \State $((\tilde{Z}, \tilde{\delta}^-, \tilde{\delta}^+), \tilde{\phi}) \gets \texttt{Q.pop}()$
            \If {\Call{innerSubdegreeError}{$\tilde{Z}, \tilde{\delta}^-, \tilde{\delta}^+$} = \True}
                \Continue
            \EndIf
            \State $v \gets $\Call{vertexSingleDegreeIssue}{$\tilde{Z}, \tilde{\delta}^-, \tilde{\delta}^+$}
            \If {$v \neq \texttt{null}$}
                \State $A \gets $ \Call{fixSingleDegreeIssue}{$(\tilde{Z}, \tilde{\delta}^-, \tilde{\delta}^+)$, $v$}
                \If {$A \neq \texttt{null}$}
                    \State $((Z^*, \delta^{*-}, \delta^{*+}), \phi^*) \gets A$ 
                    \State \texttt{Q.push}($(Z^*, \delta^{*-}, \delta^{*+}), \phi^* \circ \tilde{\phi}$)
                \EndIf
                \Continue
            \EndIf
            \State $B \gets $\Call{singleOutLowerDegree}{$\tilde{Z}, \tilde{\delta}^-, \tilde{\delta}^+$}
            \If {$B \neq \texttt{null}$}
                \State $((Z_1, \delta_1^-, \delta_1^+), (Z_2, \delta_2^-, \delta_2^+)) \gets B$
                \State \texttt{Q.push}($(Z_1, \delta_1^-, \delta_1^+), \tilde{\phi}$)
                \State \texttt{Q.push}($(Z_2, \delta_2^-, \delta_2^+), \tilde{\phi}$)
                \Continue
            \EndIf
            \State $\mathcal{Z} \gets \mathcal{Z} \cup \{((\tilde{Z}, \tilde{\delta}^-, \tilde{\delta}^+), \tilde{\phi})\}$ 
        \EndWhile
        \State \Return $\mathcal{Z}$
    \end{algorithmic}
\end{algorithm}

\begin{algorithm}[H]
    \caption{\innerSubdegreeErrorWithLink($Z, \delta^-, \delta^+$)}
    \label{alg:inner_subdegree_error}
    \begin{algorithmic}[1]
        \Require{A pseudo-configuration $(Z, \delta^-, \delta^+)$.}
        \Ensure{\True\ if an inner vertex $v$ with $d_Z(v) < \delta^-(v)$ exists, otherwise \False.}
        \ForAll{$v \in V(Z)$}
            \If {$v$ is an inner vertex and $d_Z(v) < \delta^-(v)$}
                \State \Return \True
            \EndIf
        \EndFor
        \State \Return \False
    \end{algorithmic}
\end{algorithm}

\begin{algorithm}[H]
    \caption{\vertexSingleDegreeIssueWithLink($Z, \delta^-, \delta^+)$}
    \label{alg:vertex_single_degree_issue}
    \begin{algorithmic}[1]
        \Require{A pseudo-configuration $(Z, \delta^-, \delta^+)$.}
        \Ensure{A vertex $v$ with $\delta^-(v)=\delta^+(v)$ and degree-issue if exists, otherwise \texttt{null}.}
        \ForAll{$v \in V(Z)$}
            \If {$\delta^-(v) \neq \delta^+(v)$}
                \Continue
            \EndIf
            \If {$\delta^-(v) < d_Z(v)$}
                \State \Return $v$
            \ElsIf {$v$ is a boundary vertex and $d_Z(v)=\delta^-(v)$}
                \State \Return $v$
            \EndIf
        \EndFor
        \State \Return \texttt{null}
    \end{algorithmic}
\end{algorithm}

The following subroutine \texttt{fixSingleDegreeIssue} resolves a degree-issue, which occurred in a vertex with $\delta(v) \coloneq \delta^-(v)=\delta^+(v)$, as follows.
\begin{enumerate}
    \item When $v$ is a vertex $v$ with $\delta(v) < d_Z(v)$: let $e$ be a dart whose head is $v$. If $v$ is a boundary vertex, we choose the first dart. Let $f$ be the dart reached from $e$ by following \successor\ $\delta(v)$ times. We identify $e$ and $f$ via the \texttt{dartIdentification} subroutine.
    \item When $v$ is a boundary vertex with $\delta(v) = d_Z(v)$: we make $v$ an inner vertex by appropriately adding darts and updating pointers. We employ the \texttt{addBoundaryDarts} subroutine in Algorithm \ref{alg:add_boundary_darts} to handle this case. 
\end{enumerate}
In any case, it may return \texttt{null}, which means that a degree-issue cannot be resolved successfully.

\begin{algorithm}[H]
    \caption{\fixSingleDegreeIssueWithLink($(Z, \delta^-, \delta^+), v$)}
    \label{alg:fix_single_degree_issue}
    \begin{algorithmic}[1]
        \Require{A pseudo-configuration $(Z, \delta^-, \delta^+)$, a vertex $v$ with $\delta^-(v)=\delta^+(v)$ and degree-issue.}
        \Ensure{A pair of pseudo-configuration $(Z^*, \delta^{*-}, \delta^{*+})$ and $\phi^* \colon V(Z) \cup D(Z) \to V(Z^*) \cup D(Z^*)$ obtained from $(Z, \delta^-, \delta^+)$ by resolving degree-issue on $v$ or \texttt{null} if it causes an error.}
        \If {$\delta^-(v) < d_Z(v)$}
            \State $e$ $\gets$ a dart whose head is $v$. If $v$ is a boundary vertex, we choose the first dart (i.e., \predecessor($e$)= \nil)).
            \State $f$ $\gets$ the dart reached from $e$ by following \successor\ pointer $\delta^-(v)$ times.
            \State \Return \Call{dartIdentification}{$(Z, \delta^-, \delta^+), \{(e, f)\}$} \Comment{It may return \texttt{null} due to a loop, degree-mismatch error.}
        \ElsIf {$v$ is a boundary vertex and $\delta^-(v)=d_Z(v)$}
            \State $A \gets$ \Call{addBoundaryDarts}{$(Z, \delta^-, \delta^+), v$}
            \If {$A \neq \texttt{null}$}
                \State $(\tilde{Z}, \tilde{\delta}^-, \tilde{\delta}^+) \gets A$
                \State \Return $((\tilde{Z}, \tilde{\delta}^-, \tilde{\delta}^+)$, $\id{V(Z) \cup D(Z)})$
            \Else
                \State \Return \texttt{null} \Comment{Due to a boundary error.}
            \EndIf
        \EndIf
    \end{algorithmic}
\end{algorithm}

\begin{algorithm}[H]
    \caption{\addBoundaryDartsWithLink($(Z, \delta^-, \delta^+), v$)}
    \label{alg:add_boundary_darts}
    \begin{algorithmic}[1]
        \Require{A pseudo-configuration $(Z, \delta^-, \delta^+)$, a boundary vertex $v$ with $\delta^-(v)=\delta^+(v)=d_Z(v)$.}
        \Ensure{A pseudo-configuration obtained from $(Z, \delta^-, \delta^+)$ by adding darts and making $v$ an inner vertex or \texttt{null} if a boundary error occurs.}
        \State $e_{\text{first}} \gets$ the first dart whose head is $v$ (i.e., \predecessor($e_{\text{first}}$) = \nil).
        \State $e_{\text{last}} \gets$ the last dart whose head is $v$ (i.e., \successor($e_{\text{last}}$) = \nil).
        \State $u \gets $\head(\reverse($e_{\text{first}})$)
        \State $w \gets $\head(\reverse($e_{\text{last}})$)
        \If {$u = w$}
            \State \Return \texttt{null} \Comment{a boundary error}
        \EndIf
        \State $d_{uw} \gets $ a dart whose \head\ is $u$, \reverse\ is $d_{wu}$, \successor\ is \nil, \predecessor\ is \reverse($e_{\text{first}}$).
        \State $d_{wu} \gets $ a dart whose \head\ is $w$, \reverse\ is $d_{uw}$, \successor\ is \reverse($e_{\text{last}}$), \predecessor\ is \nil.
        \State \predecessor($e_{\text{first}}$) $\gets e_{\text{last}}$
        \State \successor($e_{\text{last}}$) $\gets e_{\text{first}}$
        \State \successor(\reverse($e_{\text{first}}$)) $\gets d_{uw}$
        \State \predecessor(\reverse($e_{\text{last}}$)) $\gets d_{wu}$
        \State $\tilde{Z} \gets$ the pseudo-triangulation defined by $V(Z)$ and $D(Z) \cup \{d_{uw}, d_{wu}\}$.
        \State \Return $(\tilde{Z}, \delta^-, \delta^+)$
    \end{algorithmic}
\end{algorithm}

\begin{algorithm}[H]
    \caption{\singleOutLowerDegreeWithLink($Z, \delta^-, \delta^+)$}
    \label{alg:single_out_lower_deg}
    \begin{algorithmic}[1]
        \Require{A pseudo-configuration $(Z, \delta^-, \delta^+)$.}
        \Ensure{Two pseudo-configurations $(Z_1, \delta_1^-, \delta_1^+), (Z_2, \delta^-_2, \delta^+_2)$ if a vertex $v$ with $\delta^-(v) < \delta^+(v)$ and $\delta^-(v) \leq d_Z(v)$ exists, otherwise \texttt{null}}
        \ForAll{$v \in V(Z)$}
            \If {$\delta^-(v) < \delta^+(v)$ and $\delta^-(v) \leq d_Z(v)$}
                \State $(Z_1, \delta^-_1, \delta^+_1) \gets$ a copy of $(Z, \delta^-, \delta^+)$
                \State $\delta^+_1(v) \gets \delta^-(v)$
                \State $(Z_2, \delta^-_2, \delta^+_2) \gets$ a copy of $(Z, \delta^-, \delta^+)$
                \State $\delta^-_2(v) \gets \delta^-(v) + 1$
                \State \Return $((Z_1, \delta_1^-, \delta_1^+), (Z_2, \delta^-_2, \delta^+_2))$
            \EndIf
        \EndFor
        \State \Return \texttt{null}
    \end{algorithmic}
\end{algorithm}

\subsection{Functions for reading configuration, rule, cartwheel files}
\label{sub:IO}
\newcommand{\fromVRotations}{\texttt{fromVRotations}\xspace}
Our \Cpp program reads and writes files representing configurations, rules, and cartwheels in specific formats.
While we omit the detailed format descriptions here, they are provided in the documentation accompanying our program.
In these formats, we do not use the dart representations to express an underlying pseudo-triangulation. 
Instead, we use rotations of neighbors around each vertex.
Additionally, the boundary is explicitly marked.
Since the configurations, rules, and cartwheels we used have no multiple darts between two vertices, this representation contains the same information as the dart representation.
To construct the dart representation from the rotations around each vertex, we use the \fromVRotations subroutine in Algorithm \ref{alg:from_v_rotations}.
The \fromVRotations subroutine takes the input \texttt{rotations}, and constructs a pseudo triangulation represented by \texttt{rotations}.
When the size of \texttt{rotations} is $N$, the vertex set consists of $v_0,\ldots,v_{N-1}$.
When \texttt{rotations[$i$]} is a list of integers $[a_0, \ldots, a_k]$, it means that $v_i$ is adjacent to $v_{a_0}, \ldots, v_{a_k}$ in this rotations.
If $a_i=-1$, it represents the boundary.
It creates all the darts and assigns the appropriate pointers to them.

\newcommand{\concat}{\mathbin{\|}}
In the following, we will use list data structures whose size is changing dynamically.
Let $\texttt{L1}, \texttt{L2}$ be instances of the list.
In adding a new element $x$ to the back of the list $\texttt{L1}$, we write $\texttt{L1.push\_back}(x)$.
In concatenating two lists $\texttt{L1}, \texttt{L2}$, we write $\texttt{L1} \concat \texttt{L2}$.
The size of the list $\texttt{L1}$ is denoted by $|\texttt{L1}|$.
For this data structure, we use \texttt{std::vector} for our \Cpp program.

\begin{algorithm}[H]
    \caption{\fromVRotationsWithLink($N, \texttt{rotations}$)}
    \label{alg:from_v_rotations}
    \begin{algorithmic}[1]
        \Require{We denote the number of vertices by $N$ and a vertex set by $V=\{v_0, \ldots, v_{N-1}\}$. A list \texttt{rotations}, where \texttt{rotations[$i$]} represents the rotation of indices of vertices around $v_i \in V$, where $-1$ marks the boundary.}
        \Ensure{The pseudo triangulation represented by $V$ and $\texttt{rotations}$ or raise an error if $\texttt{rotations}$ has multiple darts between two vertices or the rotations of two vertices have a discrepancy.}
        \State $\texttt{darts} \gets$ an $N \times N$ array of darts, initialized by \nil.
        \ForAll{$a \in \{0, \ldots, N-1\}$}
            \ForAll{$i \in \{0, \ldots, |\texttt{rotations}[a]|-1\}$}
                \State $b \gets \texttt{rotations}[a][i]$
                \If {$b = -1$}
                    \Continue
                \EndIf
                \If {$\texttt{darts}[a][b] \neq \nil$}
                    \State \textbf{raise an error} \Comment{Multiple darts between two vertices.}
                \EndIf
                \State $\texttt{darts}[a][b] \gets$ a fresh dart
            \EndFor
        \EndFor
        \State $D \gets \emptyset$ \Comment{$D$ becomes the dart set.}
        \ForAll{$a \in \{0, \ldots, N-1\}$}
            \State \texttt{size} $\gets$ $|\texttt{rotations}[a]|$
            \ForAll{$i \in \{0, \ldots, \texttt{size}-1\}$}
                \State $b \gets \texttt{rotations}[a][i]$
                \If {$b = -1$}
                    \Continue
                \EndIf
                \State $e \gets \texttt{darts}[a][b]$
                \State $\head(e) \gets v_a$
                \If {$\texttt{darts}[b][a] = \nil$}
                    \State \textbf{raise an error} \Comment{The rotations of two vertices have a discrepancy.}
                \EndIf
                \State $\reverse(e) \gets \texttt{darts}[b][a]$
                \State $s \gets \texttt{rotations}[a][i+1]$ if $i<\texttt{size}-1$ otherwise $\texttt{rotations}[a][0]$
                \State $\successor(e) \gets \texttt{darts}[a][s]$ if $s \neq -1$ otherwise \nil
                \State $p \gets \texttt{rotations}[a][i-1]$ if $i>0$ otherwise $\texttt{rotations}[a][\texttt{size}-1]$.
                \State $\predecessor(e) \gets \texttt{darts}[a][p]$ if $p \neq -1$ otherwise \nil
                \State $D \gets D \cup \{e\}$
            \EndFor
        \EndFor
        \State \Return The pseudo triangulation that consists of $(V, D)$
    \end{algorithmic}
\end{algorithm}

\subsection{Reducible configurations in pseudo-configurations}
\label{code:identify-reducible}
\def\CONFDEGMAX{\texttt{CONF\_DEG\_MAX}\xspace}
\def\blockByReducibleConfiguration{\texttt{blockedByReducibleConfiguration}\xspace}
\def\representativeDegree{\texttt{representativeDegree}\xspace}
\def\containConf{\texttt{containConf}\xspace}
\def\dartsByDegree{\texttt{dartsByDegree}\xspace}
\def\rootedContainConf{\texttt{rootedContainConf}\xspace}
\newcommand{\extendFromCutVertices}{\texttt{extendFromCutVertices}\xspace}
\newcommand{\findCutPairs}{\texttt{findCutPairs}\xspace}
\newcommand{\maximumDartDegree}{\texttt{maximumDartDegree}\xspace}
\newcommand{\mirror}{\texttt{mirror}\xspace}
\newcommand{\removeRing}{\texttt{removeRing}\xspace}

In this section, we present a procedure to determine if a given pseudo-configuration $(Z^*, \delta^*)$ with the center $c^*$ has a homomorphic image of a reducible configuration in $\mathcal{D}$.
The main routine is the \containConf routine in Algorithm \ref{alg:contain_conf}.

\paragraph{Cut-vertices}
First, we describe the procedure to convert a configuration in $\mathcal{D}$ to cut-vertex-free configurations, as described in Section \ref{sect:config-in-combine}.
The \extendFromCutVertices routine in Algorithm \ref{alg:extend_from_cut_vertices} handles this process.
Because this routine is called after reading a configuration file, it takes the rotations of neighbors around each vertex as input, not the dart representation, to construct new cut-vertex-free configurations.
From (Z3), each cut vertex in a configuration has two neighbors in the ring.
The \findCutPairs routine in Algorithm \ref{alg:find_cut_pairs} returns a set of pairs of ring vertices adjacent to each cut-vertex.
The \extendFromCutVertices routine calls the \findCutPairs routine to get such a set of pairs of ring vertices $P$.
Since our reducible configuration in $\mathcal{D}$ has at most one cut-vertex, $|P| \leq 1$.
We consider $2^{|P|}$ different extensions.
For each extension, we remove the vertices in the ring that are not added for the extension.
The \removeRing routine in Algorithm \ref{alg:remove_ring} handles this process.

Also, we choose a special dart $f=xy$ such that both $y$ and $x$ have fixed degrees and maximizing $(\delta^{-}(y), \delta^{-}(x))$ lexicographically, as described in Section \ref{sect:config-in-combine}.
This fixed-degree condition is satisfied when neither $x$ nor $y$ is an auxiliary vertex added for a cut-vertex.
The \maximumDartDegree procedure in Algorithm \ref{alg:maximum_dart_degree} handles this process.

\begin{algorithm}[H]
    \caption{\extendFromCutVerticesWithLink($N$, $R$, $\delta^-$, $\delta^+$, \texttt{rotations})}
    \label{alg:extend_from_cut_vertices}
    \begin{algorithmic}[1]
        \Require{The number of vertices $N$ in a configuration and $R$ in its ring; a vertex set $V = \{v_0, \ldots, v_{N-1}\}$ containing the ring vertices $V_R = \{v_0, \ldots, v_{R-1}\}$, degree functions $\delta^-, \delta^+$ on $V \setminus V_R$, and a list \texttt{rotations} representing rotations of neighbors for each vertex in $V$.}
        \Ensure{The set of configurations obtained by adding a neighbor for each cut-vertex, with the special dart.}
        \State $P \gets$ \Call{findCutPairs}{$N, R, \texttt{rotations}$} \Comment{In all configurations from $\mathcal{D}$, the number of cut-vertices are at most 1, so $|P| \leq 1$.}
        \State $\mathcal{K} \gets \emptyset$
        \ForAll{$S \gets \{0, \ldots, 2^{|P|}\}$}
            \State $\texttt{remove} \gets$ an array of size $R$, initialized by $\True$
            \ForAll{$i \gets \{0, \ldots, |P|-1\}$}
                \State $v_a, v_b \gets P[i]$
                \If {the $i$-bit of $S$ is 1}
                    \State $\texttt{remove}[a] \gets \False$
                \Else 
                    \State $\texttt{remove}[b] \gets \False$
                \EndIf
            \EndFor
            \State $(Z, \delta^-, \delta^+) \gets$ \Call{removeRing}{$N, R, \delta^-, \delta^+, \texttt{rotations}, \texttt{remove}$}
            \State $f \gets$ \Call{maximumDegreeDart}{$(Z, \delta^-, \delta^+)$}
            \State $\mathcal{K} \gets \mathcal{K} \cup \{((Z, \delta^-, \delta^+), f)\}$
        \EndFor
        \State \Return $\mathcal{K}$
    \end{algorithmic}
\end{algorithm}

In the \findCutPairs routine, for each vertex $v_i$ in a given configuration, we check whether $v_i$ is a cut-vertex.
If $v_i$ is a cut-vertex, we add a pair of neighbors in the ring to the resulting list.
To check whether $v_i$ is a cut-vertex, we calculate the set of vertices $U_R$ in the ring that are adjacent to $v_i$, and the number of times $t$ that $v_i$ touches the consecutive vertices of the ring.
By (Z3), $v_i$ must be a cut-vertex if both $|U_R|$ and $t$ are $2$. 
Also, if $t \geq 2$ and $|U_R| \neq 2$, then it would contradict (Z3).
In this case, we raise an error since an input configuration is invalid, but all configurations in $\mathcal{D}$ satisfy (Z3), so we have no error.

\begin{algorithm}[H]
    \caption{\findCutPairsWithLink($N, R, \texttt{rotations}$)}
    \label{alg:find_cut_pairs}
    \begin{algorithmic}[1]
        \Require{The number of vertices $N$ in a configuration and $R$ in its ring; a vertex set $V = \{v_0, \ldots, v_{N-1}\}$ containing the ring vertices $V_R = \{v_0, \ldots, v_{R-1}\}$, and a list \texttt{rotations} representing rotations of neighbors for each vertex in $V$.}
        \Ensure{The set of pairs of vertices in the ring that are adjacent to cut-vertices of the input configuration, or raise an error if the input is an invalid configuration.}
        \State $\texttt{P} \gets$ an empty array
        \ForAll{$v_i \in \{v_R, \ldots, v_{N-1}\}$}
            \State $U_R \gets \emptyset$ \Comment{$U_R$ is the set of vertices in the ring that is adjacent to $v_i$.}
            \State $t \gets 0$ \Comment{$t$ represents the number of times $v_i$ touches the consecutive vertices of the ring.}
            \State $d \gets |\texttt{rotations}[i]|$ 
            \ForAll{$j \gets \{0, \ldots, d-1\}$}
                \State $k_1 \gets \texttt{rotations}[i][j]$
                \If {$k_1 < R$}
                    \State $U_R \gets U_R \cup \{v_{k_1}\}$
                \EndIf
                \State $k_2 \gets \texttt{rotations}[i][(j+1) \bmod d]$
                \If {$k_1 < R$ and $k_2 \geq R$} \Comment{border between rign and internal vertex.}
                    \State $t \gets t + 1$
                \EndIf
            \EndFor
            \If {$t \geq 2$ and $|U_R| \neq 2$}
                \State \textbf{raise an error} \Comment{Invalid configuration}
            \EndIf
            \If {$t = 2$ and $|U_R|=2$} \Comment{$v_i$ is a cut-vertex.}
                \State $v_a, v_b \gets U_R$
                \State $\texttt{P.push\_back}((v_a, v_b))$
            \EndIf
        \EndFor
        \State \Return \texttt{P}
    \end{algorithmic}
\end{algorithm}

The \removeRing routine in Algorithm \ref{alg:remove_ring} removes the vertices $v_i$ in the ring with the input $\texttt{remove}[i]=\True$.
This routine consists of three steps.
First, for each remaining vertex, we assign a new vertex ID.
Second, we construct new rotations of neighbors for each remaining vertex.
Finally, we update the degrees of the remaining vertices.
For each remaining vertex $u$ in the ring such that the number of incident darts is $d(u)$, we set new degrees $\delta^{\prime-}(u)=d(u)+1$, $\delta^{\prime+}(u)=\infty$.
For each vertex not in the ring, the degree is the same as the input degree.
The routine returns the pseudo configuration constructed from new rotations and degrees by the \fromVRotations routine in Algorithm \ref{alg:from_v_rotations}.

\begin{algorithm}
    \caption{\removeRingWithLink$(N, R, \delta^-, \delta^+, \texttt{rotations}, \texttt{remove})$}
    \label{alg:remove_ring}
    \begin{algorithmic}[1]
        \Require{The number of vertices $N$ in a configuration and $R$ in its ring; a vertex set $V = \{v_0, \ldots, v_{N-1}\}$ containing the ring vertices $V_R = \{v_0, \ldots, v_{R-1}\}$, degree functions $\delta^-, \delta^+$ on $V \setminus V_R$, and a list \texttt{rotations} representing rotations of neighbors for each vertex in $V$, and a list \texttt{remove}.}
        \Ensure{The pseudo configuration by deleting vertices $v_i$ in $V_R$ with $\texttt{remove}[i]=\True$.}
        
        \Comment{Step 1: Assign new vertex IDs}
        \State \texttt{old2new} $\gets$ an array of length $N$, each element initialized by $-1$
        \State \texttt{new\_id} $\gets 0$
        \ForAll{$i \gets \{0, \ldots, N-1\}$}
            \If {$i < R$ and \texttt{remove}$[i]$ $=$ \True}
                \Continue
            \EndIf
            \State \texttt{old2new}$[i] \gets \texttt{new\_id}$
            \State $\texttt{new\_id} \gets \texttt{new\_id} + 1$
        \EndFor
        \State $N' \gets \texttt{new\_id}$

        \Comment{Step 2: Construct new rotations}
        \State $\texttt{new\_rotations} \gets$ an list of $N'$ lists
        \ForAll{$i \gets \{0, \ldots, N-1\}$}
            \If {$i < R$ and \texttt{remove}$[i]$ $= \True$}
                \Continue
            \EndIf
            \ForAll{$j \gets \texttt{rotations}[i]$}
                \If {$j = -1$}
                    \State \texttt{new\_rotations[old2new$[i]$].push\_back($-1$)}
                \Else
                    \State \texttt{new\_rotations[old2new$[i]$].push\_back(old2new[$j$])}
                \EndIf
            \EndFor
        \EndFor

        \Comment{Step 3: Update degrees}
        \State $U' \gets \{u_0, \ldots, u_{N'-1}\}$
        \State $\delta^{\prime-}, \delta^{\prime+} \colon U' \to \mathbb{N} \cup \{\infty\}$
        \ForAll{$i \gets \{0, \ldots, R-1\}$}
             \If {$\texttt{remove}[i] = \True$}
                 \Continue
             \EndIf
             \State $k \gets \texttt{old2new}[i]$
             \State $d \gets$ the number of elements, which are not $-1$ in $\texttt{new\_rotations}[k]$
             \State $\delta^{\prime-}(u_k) \gets d + 1$, $\delta^{\prime+}(u_k) \gets \infty$
        \EndFor
        \ForAll{$i \gets \{R, \ldots, N-1\}$}
            \State $k \gets \texttt{old2new}[i]$
            \State $\delta^{\prime-}(u_k)=\delta^-(v_i)$, $\delta^{\prime+}(u_k)=\delta^+(v_i)$
        \EndFor

        \State \Return The pseudo configuration obtained by \Call{fromVRotations}{$N'$, \texttt{new\_rotations}} with degree functions $\delta^{\prime-}, \delta^{\prime+}$.
    \end{algorithmic}
\end{algorithm}

\begin{algorithm}
    \caption{\maximumDegreeDartWithLink($(Z, \delta^-, \delta^+)$}
    \label{alg:maximum_dart_degree}
    \begin{algorithmic}[1]
        \Require{A configuration $(Z, \delta^-, \delta^+)$.}
        \Ensure{The dart $f=xy$ such that both $y$ and $x$ have fixed degrees and maximizing $(\delta^-(y), \delta^-(x))$ lexicographically.}
        \State $f \gets \nil$
        \State $d_f \gets (0, 0)$
        \ForAll{$e \gets D(Z)$}
            \State $y \gets \head(e)$
            \State $x \gets \head(\reverse(e))$
            \If {$\delta^-(y) \neq \delta^+(y)$ or $\delta^-(x) \neq \delta^+(x)$}
                \Continue
            \EndIf
            \State $d_e \gets (\delta^-(y), \delta^-(x))$
            \If {$d_e > d_f$} \Comment{Compare them lexicographically}
                \State $f \gets e$
                \State $d_f \gets d_e$
            \EndIf       
        \EndFor
        \State \Return $f$
    \end{algorithmic}
\end{algorithm}

The \mirror routine in Algorithm \ref{alg:mirror} converts a given pseudo configuration into its mirror by, for every dart $e$, swapping \predecessor($e$) and \successor($e$).

\begin{algorithm}[H]
    \caption{\mirrorWithLink($Z, \delta^-, \delta^+$)}
    \label{alg:mirror}
    \begin{algorithmic}[1]
        \Require{A pseudo configuration $(Z, \delta^-, \delta^+)$.}
        \Ensure{The mirror of $(Z, \delta^-, \delta^+)$.}
        \ForAll{$e \gets D(Z)$}
            \State swap $\predecessor(e), \successor(e)$
        \EndFor
        \State \Return $(Z, \delta^-, \delta^+)$
    \end{algorithmic}
\end{algorithm}

By calling the \extendFromCutVertices routine for each configuration in $\mathcal{D}$, we get another set of configurations with the special dart.
Moreover, we update this set by adding all mirrors of them.
Here, the symbol $\bar{\mathcal{D}}$ denotes this set of configurations obtained from $\mathcal{D}$.
In the program, we use $\bar{\mathcal{D}}$.
Note that every configuration in $\bar{\mathcal{D}}$ has the special dart chosen by the \maximumDartDegree subroutine.
Note also that every vertex of a configuration in $\bar{\mathcal{D}}$ basically has a degree range, since auxiliary neighbors of a cut-vertex have a degree range.

\paragraph{Checking configurations}
We use the constant \CONFDEGMAX that depends on the configuration set $\mathcal{D}$.
Specifically, \CONFDEGMAX is 12, representing the maximum value of $\delta$ for any configuration $(Z, \delta) \in \mathcal{D}$.

The \containConf subroutine takes an input set of configurations $\mathcal{K}$.
For the purpose of this algorithm, $\mathcal{K}$ is limited to either $\emptyset$, $\bar{\mathcal{D}}$, or $\bar{\mathcal{D}} \cup \{T_{7^3}\}$.
The final one is necessary to handle Lemma \ref{lem:zero-deg7,7}.
For $T_{7^3}$, we choose the special dart in the same way as $\bar{\mathcal{D}}$, so they all have each special dart.

Every configuration $(Z', \delta^{\prime-}, \delta^{\prime+})$ from $\bar{\mathcal{D}}$ has at most one vertex $c$ with $\delta^{\prime-}(c)=\delta^{\prime+}(c) > 8$.
We choose the special dart, say $f=xy$, of $Z'$ by maximizing the degrees of endpoints lexicographically, so the head $y$ is $c$, if such a vertex $c$ exists.
This is checked by looking $\delta^{\prime-}(y) > 8$.
In the \containConf routine, we only consider mapping of $c$ to the center $c^*$ in $Z^*$.
Thus, if $\delta^{\prime-}(y) > 8$, then $f$ must be mapped to $f^*$ with $\head(f^*)=c^*$.

Furthermore, we only need to consider mapping of $f$ to some dart $f^*$ in $Z^*$ whose endpoints have the same degree values.
To facilitate this, we classify the darts $e^*$ of $(Z^*, \delta^*)$ according to the values of $\delta^*$ of $\head(e^*)$, $\tail(e^*)=\head(\reverse(e^*))$.
This procedure is in the \dartsByDegree subroutine in Algorithm \ref{alg:darts_by_degree}.
Since $\mathcal{K}$ is either $\emptyset$, $\bar{\mathcal{D}}$, or $\bar{\mathcal{D}} \cup \{T_{7^3}\}$, we only need to classify the darts whose endpoints have degrees of at most \CONFDEGMAX.

The procedure to determine the existence of a homomorphism from $Z'$ to $Z^*$ such that $f$ maps to a specified dart $f^* \in D(Z^*)$ is in the \rootedContainConf subroutine in Algorithm \ref{alg:rooted_contain_conf}.

\begin{algorithm}[H]
    \caption{\containConfWithLink($(Z^*, \delta^*)$, $c^*$, $\mathcal{K}$)}
    \label{alg:contain_conf}
    \begin{algorithmic}[1]
        \Require{A pseudo-configuration $(Z^*, \delta^*)$ with a center vertex $c^*$, and a set of configurations $\mathcal{K}$.}
        \Ensure{\True\ if there exists a configuration $((Z', \delta^{\prime-}, \delta^{\prime+}), f) \in \mathcal{K}$ and a homomorphism $\phi$ from $(Z', \delta^{\prime-}, \delta^{\prime+})$ to $(Z^*, \delta^*)$ such that all $v \in V(Z')$ with $\delta^{\prime-}(v)=\delta^{\prime+}(v) > 8$ satisfy $\phi(v)=c^*$ , otherwise \False.}
        \State \texttt{darts\_by\_degree} $\gets$ \Call{dartsByDegree}{$(Z^*, \delta^*)$}
        \ForAll{$((Z', \delta^{\prime-}, \delta^{\prime+}), f) \in \mathcal{K}$} \Comment{$f$ is chosen by the \maximumDartDegree routine.}
            \State $y \gets \head(f)$
            \State $x \gets \head(\reverse(f))$
            \State $d_y \gets \delta^{\prime-}(y)$
            \State $d_x \gets \delta^{\prime-}(x)$
            \ForAll{$f^* \in \texttt{darts\_by\_degree}[d_y][d_x]$}
                 \If {$d_y > 8$ and $\head(f^*) \neq c^*$}
                     \Continue
                 \EndIf
                 \If {\Call{rootedContainConf}{$(Z^*, \delta^*)$, $f^*$, $(Z', \delta^{\prime-}, \delta^{\prime+})$, $f$}}
                     \State \Return \True
                 \EndIf
            \EndFor 
        \EndFor
        \State \Return \False
    \end{algorithmic}
\end{algorithm}

\begin{algorithm}[H]
    \caption{\dartsByDegreeWithLink($Z^*, \delta^*$)}
    \label{alg:darts_by_degree}
    \begin{algorithmic}[1]
        \Require{A pseudo-configuration $(Z^*, \delta^*)$.}
        \Ensure{A $(\CONFDEGMAX + 1) \times (\CONFDEGMAX + 1)$ array of dart sets \texttt{darts\_by\_degree} such that \texttt{darts\_by\_degree}[$d_y$][$d_x$] is the set of darts $xy$ where $\delta^*(x)=d_x, \delta^*(y)=d_y$.}
        \State \texttt{darts\_by\_degree} $\gets$ a $(\CONFDEGMAX + 1) \times (\CONFDEGMAX + 1)$ array of dart sets
        \ForAll{$e \in D(Z^*)$}
            \State $y \gets \head(e)$
            \State $x \gets \head(\reverse(e))$
            \State $d_y \gets \delta^*(y)$
            \State $d_x \gets \delta^*(x)$
            \If {$d_y > \CONFDEGMAX$ or $d_x > \CONFDEGMAX$}
                \Continue
            \EndIf
            \State $\texttt{darts\_by\_degree}[d_y][d_x] = \texttt{darts\_by\_degree}[d_y][d_x] \cup \{e\}$
        \EndFor
        \State \Return \texttt{darts\_by\_degree}
    \end{algorithmic}
\end{algorithm}

In the \rootedContainConf subroutine,
we check whether the homomorphism $\phi^*$ from $(Z, \delta^-, \delta^+)$ to $(Z^*, \delta^*)$ with $e$ mapping to $e^*$ satisfy, for every $v$ in $Z$ with $v^*=\phi^*(v)$, $\delta^-(v) \leq \delta^*(v^*) \leq \delta^+(v)$ holds by calling Algorithm \ref{alg:homomorphism} with the $\gInclude$ function.
If so, we return true; otherwise return false.

\begin{algorithm}[H]
    \caption{\rootedContainConfWithLink($(Z^*, \delta^*)$ $e^*$, $(Z, \delta^-, \delta^+)$, $e$)}
    \label{alg:rooted_contain_conf}
    \begin{algorithmic}[1]
        \Require{Two pseudo-configurations $(Z^*, \delta^*)$, $(Z, \delta^-, \delta^+)$ with darts $e^* \in D(Z^*), e \in D(Z)$.}
        \Ensure{\True\ if a homomorphism $\phi$ from $(Z, \delta^-, \delta^+)$ to $(Z^*, \delta^*)$ with $\phi(e)=e^*$ exists, otherwise \False.}
        \State \Return \Call{homomorphism}{$(Z, \delta^-, \delta^+), e, (Z^*, \delta^*, \delta^*), e^*, \gInclude$} $\neq \texttt{null}$
    \end{algorithmic}
\end{algorithm}

\subsection{Configuration with degree ranges blocked by reducible configurations}
\label{code:blocking-by-reduc}

In this section, we present the routine that determines whether a pseudo-configuration with degree ranges $(Z^*, \delta^{*-}, \delta^{*+})$ with the center $c^*$ is blocked by a given set of reducible configurations $\mathcal{K}$, as described in Section \ref{sect:block}.
The main routine is \blockByReducibleConfiguration in Algorithm \ref{alg:block_by_reduc}.

By Lemma \ref{lem:bounded-ranges}, we first select a finite set of degrees from $[\delta^{*-}(v), \delta^{*+}(v)]$, for each vertex $v \in Z$, that are representative to check blocking by reducible configurations in $\bar{\mathcal{D}}$.
We then generate all combinations of these degrees across all vertices.
Specifically, we select 
\begin{itemize}
    \item all degrees from $[\delta^{*-}(c^*), \delta^{*+}(c^*)]$ if $\delta^{*+}(c^*) \leq \CONFDEGMAX$, otherwise use only $\delta^{*+}(c^*)$, or
    \item all degrees from $[\delta^{*-}(v^*), \delta^{*+}(v^*)]$ if $\delta^{*+}(v^*) \leq 8$, otherwise use only $\delta^{*+}(v^*)$, if $v^* \neq c^*$.
\end{itemize}
This procedure is in the \representativeDegree subroutine in Algorithm \ref{alg:split}.

Next, for each pseudo-configuration with a single specified degree obtained by the \representativeDegree subroutine, we check whether some configuration from $\mathcal{K}$ has a homomorphic image into it satisfying the mapping condition about $c^*$ using the \containConf subroutine in Algorithm \ref{alg:contain_conf}.

\begin{algorithm}[H]
    \caption{\blockedByReducibleConfigurationWithLink($(Z^*, \delta^{*-}, \delta^{*+})$, $c^*$, $\mathcal{K}$)}
    \label{alg:block_by_reduc}
    \begin{algorithmic}[1]
        \Require{A pseudo configuration $(Z^*, \delta^{*-}, \delta^{*+})$, a center vertex $c^*$, a set of configurations $\mathcal{K}$.}
        \Ensure{\True\ if $(Z^*, \delta^{*-}, \delta^{*+})$ is blocked by $\mathcal{K}$, otherwise \False.}
        \ForAll{$(\tilde{Z}, \tilde{\delta}) \in$ \Call{representativeDegree}{($Z^*, \delta^{*-}, \delta^{*+}$), $c^*$}}
            \If {\Call{containConf}{$(\tilde{Z}, \tilde{\delta})$, $c^*$, $\mathcal{K}$} = \False}
                \State \Return \False
            \EndIf
        \EndFor
        \State \Return \True
    \end{algorithmic}
\end{algorithm}

\begin{algorithm}[H]
    \caption{\representativeDegreeWithLink($(Z^*, \delta^{*-}, \delta^{*+})$, $c^*$)}
    \label{alg:split}
    \begin{algorithmic}[1]
        \Require{A pseudo-configuration $(Z^*, \delta^{*-}, \delta^{*+})$ with a center vertex $c^*$.}
        \Ensure{A set of pseudo-configurations $\{(Z^*, \delta')\}$ obtained from $(Z^{*}, \delta^{*-}, \delta^{*+})$ by
        generating all combinations of single representative degrees for each vertex.}
        \State $\mathcal{T}' \gets \{\delta\}$,
        where $\delta$ is any map from $V(Z^*)$ to $\mathbb{N}$.
        \ForAll {$v^* \in V(Z^*)$ \bf{one at the time}}
            \If {$v^* = c^*$ and $\delta^+(v^*) > \CONFDEGMAX$}
                \State $L \gets \{\delta^{*+}(v^*)\}$
            \ElsIf {$v^* \neq c^*$ and $\delta^+(v^*) > 8$}
                \State $L \gets \{\delta^{*+}(v^*)\}$
            \Else
                \State $L \gets \{d \mid \delta^{*-}(v) \leq d \leq \delta^{*+}(v)\}$
            \EndIf
            \State $\tilde{\mathcal{T}} \gets \emptyset$
            \ForAll{$\delta' \in \mathcal{T}'$}
                \ForAll {$d \in L$}
                    \State $\tilde{\delta} \gets$ a copy of $\delta'$.
                    \State $\tilde{\delta}(v) \gets d$
                    \State $\tilde{\mathcal{T}} \gets \tilde{\mathcal{T}} \cup \{ \tilde{\delta}\}$
                \EndFor
            \EndFor
            \State $\mathcal{T}' \gets \tilde{\mathcal{T}}$
        \EndFor
        \State \Return $\{(Z^*, \delta') \mid \delta' \in \mathcal{T}'\}$
    \end{algorithmic}
\end{algorithm}

\subsection{Free combination of discharging rules with degree ranges}
\newcommand{\addRuleToCombination}{\texttt{addRuleToCombination}\xspace}
\newcommand{\combineRules}{\texttt{combineRules}\xspace}
In this subsection, we present the pseudo-code that combines discharging rules to prove Lemma \ref{lem:vsends}, \ref{lem:vsends-noconf}.
Although we have already described the algorithm in Section \ref{subsub:combine_rules}, we provide the pseudocode for clarity.
In our \Cpp program, a free combination $R^*$ of discharging rules has the flag representing the subset of rules $\tilde{R} \subseteq \mathcal{R}$ that lead to $R^*$.
The flag is represented by $f \colon \mathcal{R} \to \{\True, \False\}$ satisfying $f(R)=\True$ if and only if $R \in \tilde{R}$.
This flag is used in the \pruneByNonAssociatedRule subroutine in Algorithm \ref{alg:prune_non_assoc_rule}.

\begin{algorithm}[H]
    \caption{\addRuleToCombinationWithLink($R^*, R, \mathcal{K}$)}
    \label{alg:add_rule}
    \begin{algorithmic}[1]
        \Require{A combined rule $R^*$, a rule $R$, a configuration set $\mathcal{K}$
        is the set of configurations to be avoided.}

        \Ensure{Finds the set of rules combined from $R^*$ and $R$ that are not blocked by  $\mathcal{K}$.}
        \State $(Z^*, \delta^{*-}, \delta^{*+}, e^*, r^*, f^*) \gets R^*$
        \State $(Z, \delta^-, \delta^+, e, r) \gets R$
        \State $\tilde{\mathcal{Z}} \gets$ \Call{freeHomomorphismConfiguration}{$(Z^*, \delta^{*-}, \delta^{*+}) \sqcup (Z, \delta^-, \delta^+), \{(e^*,e)\}$}
        \State $f^*(R) \gets \True$
        \State $\tilde{\mathcal{R}} \gets \{(\tilde{Z}, \tilde{\delta}^{-}, \tilde{\delta}^{+}, \tilde{\phi}(e^*), r^* + r, f^*) \mid ((\tilde{Z}, \tilde{\delta}^{-}, \tilde{\delta}^{+}), \tilde{\phi}) \in \tilde{\mathcal{Z}}\}$
        \If {$\mathcal K=\emptyset$}
            \State \Return $\tilde{\mathcal{R}}$
        \EndIf
        \State $\mathcal{R}^* \gets \emptyset$
        \ForAll{$(\tilde{Z}, \tilde{\delta}^{-}, \tilde{\delta}^{+}, \tilde{e}, \tilde{r}, \tilde{f}) \in \tilde{\mathcal{R}}$}
            \If {\Call{blockedByReducibleConfiguration}{$(\tilde{Z}, \tilde{\delta}^{-}, \tilde{\delta}^{+}), \head(\tilde{e}), \mathcal{K}$} $=$ \True}
                \Continue
            \EndIf
            \State $\mathcal{R}^* \gets \mathcal{R}^* \cup \{(\tilde{Z}, \tilde{\delta}^{-}, \tilde{\delta}^{+}, \tilde{e}, \tilde{r}, \tilde{f})\}$
        \EndFor
        \State \Return $\mathcal{R}^*$
    \end{algorithmic}
\end{algorithm}

\begin{algorithm}[H]
    \caption{\combineRulesWithLink($\mathcal{R}$, $\mathcal{K}$) - Lemma \ref{lem:vsends}, \ref{lem:vsends-noconf}}
    \label{alg:combine_rules}
    \begin{algorithmic}[1]
        \Require{A set of rules $\mathcal{R}$, a set of configurations $\mathcal{K}$}
        \Ensure{The set $\cR^*$ of combined rules, each is a free combination of some subset $ \tilde{\mathcal{R}} \subseteq \mathcal{R}$ but only combinations not blocked by $\mathcal{K}$ with the map $f$ such that $f(R) = \True$ if and only if $R \in \tilde{\mathcal{R}}$}
        \State $f_0 \colon \mathcal{R} \to \{\True, \False\}$, initialized by $f_0(R)=\False$ for all $R \in \mathcal{R}$
        \State $\mathcal{R}^* \gets \{(Z_0, \delta_0^-, \delta_0^+, e_0, 0, f_0)\}$, where $(Z_0, \delta_0^-, \delta_0^+)$ is a pseudo-configuration with $V(Z_0) = \{s_0,t_0\}$, $D(Z_0) = \{s_0t_0, t_0s_0\}$, $\delta_0^-(s_0)=\delta_0^-(t_0)=1$, $\delta_0^+(s_0)=\delta_0^+(t_0)=\infty$, and $e_0=s_0t_0$.
        \ForAll{$R \in \mathcal{R}$}
            \State $\mathcal{R}^{*'} \gets \mathcal{R}^*$
            \ForAll{$R^* \in \mathcal{R}^*$}
                \State $\mathcal{R}^{*'} \gets \mathcal{R}^{*'} \cup$ \Call{addRuleToCombination}{$R^*, R, \mathcal{K}$}
            \EndFor
            \State $\mathcal{R}^* \gets \mathcal{R}^{*'}$
        \EndFor
        \State \Return $\mathcal{R}^*$
    \end{algorithmic}
\end{algorithm}

By calling the \combineRules routine with input $\mathcal{R}$ being the set of rules depicted in Figure \ref{fig:rules} and $\mathcal{K}=\emptyset$, we can obtain the set $\mathcal{R}^*$ in Lemma \ref{lem:free-comb-rules}.
We check the following Lemma.

\begin{lem}
\label{comp:vsends-noconf}
\showlabel{comp:vsends-noconf}
    Let $\mathcal{R}^*$ be the output for Algorithm \ref{alg:combine_rules} with inputs $\mathcal{R}$ being the set of rules depicted in Figure \ref{fig:rules} and $\mathcal{K}=\emptyset$.
    The maximum value of $r^*$ for all $(Z^*, \delta^{*-}, \delta^{*+}, e^*, r^*) \in \mathcal{R}^*$ is 8.
    Moreover, if it is exactly 8, then $(Z^*, \delta^{*-}, \delta^{*+})$ is isomorphic to one of pseudo-configurations depicted in Figure \ref{fig:any_send8} with $e^*$ mapping to the edge with the arrow.
\end{lem}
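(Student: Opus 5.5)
This lemma is a statement about the output of a finite computation, so I will establish it by running Algorithm~\ref{alg:combine_rules} with $\mathcal{K}=\emptyset$ on the 84 oriented rules of Figure~\ref{fig:rules}, and arguing that the output is trustworthy. The argument has three parts: correctness of the enumeration, termination and size, and the final scan.

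For correctness, I would chain the earlier lemmas. By Lemma~\ref{lem:free-homo-conf-ranges}, each call to \texttt{freeHomomorphismConfiguration} returns exactly the free homomorphic images respecting the single identification request $(e^*,e)$. By Lemma~\ref{lem:free-homo-conf-trans}, adding rules one at a time gives the same result as identifying them all at once. Starting from the neutral rule $R_0$ and, for each rule $R_i$ in turn, branching on inclusion or exclusion, the final set $\cR^*$ contains a free combination for every subset of rules that has a simultaneous homomorphic image in some pseudo-configuration. This is exactly the completeness statement of Lemma~\ref{lem:free-comb-rules}. The value $r^*$ attached to each combination is the sum of the charges $r(R)$ of the rules in it.

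Termination and size are the main practical obstacle. The branching is over $2^{84}$ subsets, but most pairs of rules fail to combine through loop, degree-mismatch, sub-degree or boundary errors. Any subset containing an incompatible pair is never generated, because a failed combination produces no descendants. I expect the surviving set to be small, so the loop completes quickly; this is confirmed by actually running the \Cpp\ code.

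For the scan, I would take the maximum of $r^*$ over $\cR^*$, check that it equals 8, and collect every combination attaining 8. For each of these I would test isomorphism with the configuration of Figure~\ref{fig:any_send8}, mapping the distinguished dart to the arrow edge. The test is done with \texttt{homomorphism} (Algorithm~\ref{alg:homomorphism}) in both directions from the distinguished darts, checking equal vertex and dart counts and equal degree ranges. As a sanity check, I would verify by hand that the depicted configuration really does receive charge 8: rule (R1) contributes 2, and three further rules contribute $2\cdot 3=6$ through their two mirror orientations.
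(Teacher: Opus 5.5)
Your proposal follows the paper's approach: the lemma is a claim about the output of Algorithm \ref{alg:combine_rules} with $\mathcal{K}=\emptyset$, and it is established by running the \Cpp\ implementation and inspecting the maximal $r^*$ and the combinations attaining it, with Lemmas \ref{lem:free-homo-conf-ranges}, \ref{lem:free-homo-conf-trans} and \ref{lem:free-comb-rules} justifying why this enumeration yields Lemma \ref{lem:vsends-noconf}. Your hand count of $2+2\cdot 3=8$ for Figure \ref{fig:any_send8} agrees with the paper's caption.
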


Similarly, by calling the \combineRules routine with input $\mathcal{R}$ being the set of rules depicted in Figure \ref{fig:rules} and $\mathcal{K}=\bar{\mathcal{D}}$, we can obtain the set $\mathcal{R}^{*-\mathcal{D}}$ in Lemma \ref{lem:free-comb-rules}.
We check the following Lemma.

\begin{lem}
\label{comp:vsends}
\showlabel{comp:vsends}
    Let $\mathcal{R}^{*-\mathcal{D}}$ be the output for Algorithm \ref{alg:combine_rules} with inputs $\mathcal{R}$ being the set of rules depicted in Figure \ref{fig:rules} and $\mathcal{K}=\bar{\mathcal{D}}$.
    The maximum value of $r^*$ for all $(Z^*, \delta^{*-}, \delta^{*+}, e^*, r^*) \in \mathcal{R}^*$ is 5.
    Moreover, if it is exactly 5, then $(Z^*, \delta^{*-}, \delta^{*+})$ is isomorphic to one of pseudo-configurations depicted in Figure \ref{fig:send5} with $e^*$ mapping to the edge with the arrow.
\end{lem}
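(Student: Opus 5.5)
This is a computer-verified statement. The plan is to justify why running \combineRules with $\mathcal K=\bar{\mathcal D}$ yields exactly $\cRnoD$, and then to read off the maximum charge. Correctness has three ingredients.

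First, every free combination of a subset of rules that can actually co-apply over a dart $uv$ in some triangulation $G$ is produced. This follows from Lemma~\ref{lem:free-comb-rules}, which uses Lemma~\ref{lem:free-homo-conf-ranges} and the sequential composition Lemma~\ref{lem:free-homo-conf-trans}. Together they show that adding rules one at a time, starting from the neutral rule $R_0$, covers every subset.

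Second, pruning by \blockByReducibleConfiguration is sound. If a partial combination is blocked by $\bar{\mathcal D}$ with center $\head(e^*)$, then so is every further combination. The reason is that a homomorphism of a reducible configuration into the smaller image composes with the free homomorphism into the larger one, and centers map to centers. Blocking is decided on finitely many degree choices by Lemma~\ref{lem:bounded-ranges}, which in turn rests on Lemma~\ref{lem:ranges}. Every combination that survives and is realized in $G$ maps into $\bar B^8_2(v)$ with its center at $v$. So if $B^8_2(v)$ has no obstructing cycle, a blocking configuration of diameter at most $4$ would embed in $B^8_2(v)$ by Lemma~\ref{lem:1-1-cut}, which would give a configuration from $\cD$ there. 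Hence the charge over $uv$ in the hypothesis of Lemma~\ref{lem:vsends} is at most $\max r^*$ over $\cRnoD$.

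Third, the actual computation is a finite enumeration over subsets of the 84 oriented rules. It terminates because each free-homomorphism construction terminates (argued after Lemma~\ref{lem:free-homo-conf}), and because the degree-range branching is bounded as discussed in Section~\ref{sect:ranges}. Running the program and inspecting its output gives maximum $5$. The combinations attaining $5$ are then compared up to isomorphism, with $e^*$ on the arrow, against the pictures in Figure~\ref{fig:send5}.

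The main obstacle is not mathematical but one of faithfulness. The pseudo-code must implement exactly the free combination, the blocking test with centering, and the mirror and cut-vertex extensions producing $\bar{\mathcal D}$ from $\cD$. A subtle bug, such as missing a mirror or a cut-vertex placement, would wrongly block a combination and undercount the maximum. I would cross-check by confirming that with $\mathcal K=\emptyset$ the same code reproduces Lemma~\ref{comp:vsends-noconf}, namely maximum $8$ with the unique configuration of Figure~\ref{fig:any_send8}. I would also check that this configuration is indeed blocked, because it contains Birkhoff's diamond.
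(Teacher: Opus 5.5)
Your proposal is correct and follows the paper's approach: the lemma is the recorded output of Algorithm~\ref{alg:combine_rules} with $\mathcal{K}=\bar{\mathcal{D}}$, and your justification is exactly the reasoning the paper places around that algorithm, namely completeness of the sequential free combinations via Lemma~\ref{lem:free-comb-rules}, heredity of blocking together with the sound representative-degree test of Lemma~\ref{lem:bounded-ranges}, and the centered embedding argument of Section~\ref{sub:max_without_reduc}. One correction to your risk discussion: omitting mirrors or a cut-vertex extension from $\bar{\mathcal{D}}$ only shrinks the blocking set, so it can only inflate the computed maximum and would be detected rather than hidden; the bugs that could silently undercount are over-blocking ones, such as dropping the centering requirement or matching a cut-vertex configuration without its auxiliary outer neighbor.
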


\subsection{Enumerating bad cartwheels with tail ranges}
In this section, we present the pseudocodes corresponding to the algorithm for enumerating bad cartwheels, as described in Section \ref{sub:overcharged_cartwheel_enumeration}.

\subsubsection{Charges along an edge and a bound on the final charge}
In this section, we introduce several algorithms for rule application, as described in Section \ref{sect:hom-to-ranges}.
We use these routines to calculate the lower and upper bounds on the amount of charge transferred by the rules.

Recall that $\gInclude, \gIntersection$ are defined in Section \ref{sect:code-hom}.
The \alwaysApply subroutine checks whether a rule $R$ {\bf always} applies to $e^*$ in $(Z^*, \delta^{*-}, \delta^{*+})$.
This routine calls Algorithm \ref{alg:homomorphism} with the $\gInclude$ function, and returns \True\ if such a homomorphism $\phi^*$ from $R$ to $Z^*$ exists.
Similarly, the \neverApply subroutine checks whether a rule $R$ {\bf never} applies to $e^*$ in $(Z^*, \delta^{*-}, \delta^{*+})$.
This routine calls Algorithm \ref{alg:homomorphism} with the $\gIntersection$ function, and returns \True\ if no such homomorphism exists.

The \amountOfChargeSend subroutine calculates the amount of charge transferred across $e$ by the rule set $\mathcal{R}$ for every $(Z_C, \delta_C) \in (Z_C, \delta_C^-, \delta_C^+)$.
This value is computed as the sum of the charges associated with the rules that always apply to $e$ in $(Z_C, \delta_C^-, \delta_C^+)$.
The \amountOfPossibleChargeSend subroutine calculates the maximum possible charge transferred across $e$ by the rule set $\mathcal{R}$ for any $(Z_C, \delta_C) \in (Z_C, \delta_C^-, \delta_C^+)$ that contains no centered reducible configuration from $\mathcal{D}$.
This value is computed as the maximum charge associated with the combined rules in $\mathcal{R}^{*-\mathcal{D}}$, excluding the rules that never apply to $e$ in $(Z_C, \delta_C^-, \delta_C^+)$.

\begin{algorithm}[H]
    \caption{\alwaysApplyWithLink($(Z^*, \delta^{*-}, \delta^{*+}), e^*, R$)}
    \label{alg:ineivtably_apply}
    \begin{algorithmic}[1]
        \Require{A pseudo-configuration $(Z^*, \delta^{*-}, \delta^{*+})$ with a dart $e^*$ and a rule $R$.}
        \Ensure{\True\ if $R$ applies to $e^*$ for every $(Z^*, \delta^*) \in (Z^*, \delta^{*-}, \delta^{*+})$, otherwise \False.}
        \State $(Z, \delta^-, \delta^+, e, r) \gets R$
        \State \Return \Call{homomorphism}{$(Z, \delta^-, \delta^+), e, (Z^*, \delta^{*-}, \delta^{*+}), e^*, \gInclude$} $\neq$ \texttt{null}
    \end{algorithmic}
\end{algorithm}

\begin{algorithm}[H]
    \caption{\neverApplyWithLink($(Z^*, \delta^{*-}, \delta^{*+}), e^*, R$)}
    \label{alg:never_apply}
    \begin{algorithmic}[1]
        \Require{A pseudo-configuration $(Z^*, \delta^{*-}, \delta^{*+})$ with a dart $e^*$ and a rule $R$.}
        \Ensure{\True\ if $R$ does not apply to $e^*$ for any $(Z^*, \delta^*) \in (Z^*, \delta^{*-}, \delta^{*+})$, otherwise \False.}
        \State $(Z, \delta^-, \delta^+, e, r) \gets R$
        \State \Return \Call{homomorphism}{$(Z, \delta^-, \delta^+), e, (Z^*, \delta^{*-}, \delta^{*+}), e^*, \gIntersection$} $=$ \texttt{null}
    \end{algorithmic}
\end{algorithm}

\begin{algorithm}[H]
    \caption{\amountOfChargeSendWithLink($(Z_C, \delta_C^-, \delta_C^+), e, \mathcal{R}$)}
    \label{alg:amount_of_charge_send}
    \begin{algorithmic}[1]
        \Require{A pseudo-configuration $(Z_C, \delta_C^-, \delta_C^+)$ with a dart $e$, a set of rules $\mathcal{R}$.}
        \Ensure{The sum of charges $r$ associated with $R \in \mathcal{R}$ that always applies to $e$ in $(Z_C, \delta_C^-, \delta_C^+)$.}
        \State $a \gets 0$
        \ForAll{$R \in \mathcal{R}$}
            \If {\Call{alwaysApply}{$(Z_C, \delta_C^-, \delta_C^+), e, R$}}
                 \State $a \gets a + r(R)$
            \EndIf
        \EndFor
        \State \Return $a$
    \end{algorithmic}
\end{algorithm}

\begin{algorithm}[H]
    \caption{\amountOfPossibleChargeSendWithLink($(Z_C, \delta_C^-, \delta_C^+), e, \mathcal{R}^{*-\mathcal{D}}$)}
    \label{alg:amount_of_possible_charge_send}
    \begin{algorithmic}[1]
        \Require{A pseudo-configuration $(Z_C, \delta_C^-, \delta_C^+)$ with a dart $e$, a set of rules $\mathcal{R}$.}
        \Ensure{The maximum of charge $r^*$ associated with $R^* \in \mathcal{R}^{*-\mathcal{D}}$ except when $R^*$ never applies to $e$ in $(Z_C, \delta_C^-, \delta_C^+)$.}
        \State $a \gets 0$
        \ForAll{$R^* \in \mathcal{R}^{*-\mathcal{D}}$}
            \If {\Call{neverApply}{$(Z_C, \delta_C^-, \delta_C^+), e, R^*$}}
                \Continue
            \EndIf
            \State $a \gets \max(a, r(R))$
        \EndFor
        \State \Return $a$
    \end{algorithmic}
\end{algorithm}

\subsubsection{Enumerating degrees of neighbors}

In this section, we describe the procedure to enumerate degrees of neighbors of the center, described in Section \ref{subsub:enum-degree-neighbor}.
The main routine is the \enumPossibleBadWheels routine in Algorithm \ref{alg:enum_possible_bad_wheels}.
We use the array $\CARTWHEELDEGREES \texttt{ = [5,6,7,8,9]}$, which represents the possible degrees assigned to vertices except the center for free cartwheels with limited degrees.
In the \enumWheels routine in Algorithm \ref{alg:enum_wheel}, given a center degree $d$, we enumerate all degree assignments for neighbors up to rotational equivalence.
To achieve this, we generate all arrays of length $d$ whose elements are chosen from $\CARTWHEELDEGREES$, and retain only those for which every rotation is lexicographically greater than or equal to the array itself.
Considering this condition, we slightly prune the search as follows: for each $d_0$ in \CARTWHEELDEGREES, we first assign $d_0$ to $\texttt{degrees}[0]$ and do not assign any element smaller than it to the remaining positions in the array.
This is justified because no array containing an element strictly smaller than its first element satisfies the above condition.

Then, we call the \generateCartwheel subroutine in Algorithm \ref{alg:generate_cartwheel} to generate a free cartwheel with limited degrees with all the second neighbors having tail ranges $[5,9]$.

\begin{algorithm}[H]
    \caption{\enumWheelsWithLink($d$)}
    \label{alg:enum_wheel}
    \begin{algorithmic}[1]
        \Require{A degree of the center $d$.}
        \Ensure{A set of free cartwheels with limited degrees and tail ranges $(Z, \delta^-, \delta^+)$ such that the center has degree $d$, the degrees of its neighbors are chosen from \CARTWHEELDEGREES, and all the degree-ranges of its second neighbors are $[5,9]$.}
        \State $\mathcal{W} \gets \emptyset$
        \Procedure{enumDegree}{$\texttt{degrees}, i, i_{\mathrm{lowest}}$}
            \If {$i=d$}
                \If {some rotation of \texttt{degrees} is lexicographically strictly smaller than \texttt{degrees}}
                    \State \Return
                \EndIf
                \State $(Z, \delta^-, \delta^+) \gets$ \Call{generateCartWheel}{$d$, \texttt{degrees}}
                \State $\mathcal{W} \gets \mathcal{W} \cup \{(Z, \delta^-, \delta^+)\}$
                \State \Return
            \EndIf
            \ForAll{$j \in \{i_{\mathrm{lowest}}, \ldots, 4\}$}
                \State $\texttt{degrees}[i] \gets \CARTWHEELDEGREES[j]$
                \State \Call{enumDegree}{$\texttt{degrees}, i+1, i_{\mathrm{lowest}}$}
            \EndFor
            \State \Return
        \EndProcedure
        \State $\texttt{degrees} \gets$ an array of size $d$.
        \ForAll{$j \in \{0, \ldots, 4\}$}
            \State $\texttt{degrees[0]} \gets \CARTWHEELDEGREES[j]$
            \State \Call{enumDegree}{$\texttt{degrees}, 1, j$}
        \EndFor
        \State \Return $\mathcal{W}$
    \end{algorithmic}
\end{algorithm}

In the \generateCartwheel subroutine in Algorithm \ref{alg:generate_cartwheel},
we construct $\texttt{rotations}$ representing the rotation of vertices around each vertex.
When $\texttt{rotations}[i]$ is a list of integers $[a_0, \ldots, a_k]$, it means that $v_i$ is adjacent to $v_{a_0}, \ldots, v_{a_k}$ in this rotation.
If $a_i=-1$, it represents the boundary.

First, we compute the rotations for the vertices within the neighbors of the center.
Second, we add the second neighbors of the center, if they are adjacent to the neighbor of the center of degree not $9$.
More precisely, for each neighbor $v_i$ of the center $v$, we add neighbors of it so that $v_i$ has a specified degree, if this degree is not $9$.
Every time we add new vertices, we extend $\texttt{rotations}$.
Finally, we add $-1$ to all second neighbors and to neighbors of degree $9$ to represent the boundary.
Then, we call the \fromVRotations subroutine in Algorithm \ref{alg:from_v_rotations} to get the pseudo triangulation represented by \texttt{rotations}.

\begin{algorithm}[H]
    \caption{\textsc{\generateCartwheelWithLink}($d, \texttt{degrees}$)}
    \label{alg:generate_cartwheel}
    \begin{algorithmic}[1]
        \Require{The degree of the center $d$, and an array $\texttt{degrees}$ specifying the degrees of its neighbors.}
        \Ensure{A free cartwheel with limited degrees and tail ranges $C$ such that the center has degree $d$, the degrees of its neighbors are specified by \texttt{degrees}, and all the degree-ranges of its second neighbors are $[5,9]$.}
        
        \State $\texttt{rotations} \gets \text{ a list of $d+1$ lists}$.
        \State $\texttt{rotations}[0] \gets [1, \ldots, d]$
        
        \ForAll{$i \in \{1, \ldots, d\}$}
            \State $i' \gets i+1 \text{ if } i < d \text{ otherwise } 1$
            \State $i'' \gets i-1 \text{ if } i > 1 \text{ otherwise } d$
            \State $\texttt{rotations}[i] \gets [i', 0, i'']$
        \EndFor
        
        \State $k \gets d + 1$ \Comment{$k$ is the next vertex id to assign.}
        \ForAll{$i \in \{1, \dots, d\}$}
            \If{$\texttt{degrees}[i-1] = 9$} \Comment{Do not add second neighbors if the degree is $9$.}
                \State \textbf{continue}
            \EndIf
            \State $A \gets \texttt{degrees}[i-1] - |\texttt{rotations}[i]|$ \Comment{Number of new neighbors to add to $v_i$}
            \RepeatN{$A$}
                \State $i_\text{last} \gets \text{the last element of } \texttt{rotations}[i]$
                \State $\texttt{rotations.push\_back}([])$
                \State $\texttt{rotations}[k] \gets [i, i_\text{last}]$
                \State $\texttt{rotations}[i]\texttt{.push\_back}(k)$
                \State $\texttt{rotations}[i_\text{last}] \gets [k] \concat \texttt{rotations}[i_\text{last}]$
                \State $k \gets k + 1$
            \End
            \State $i_\text{first} \gets \text{the first element of } \texttt{rotations}[i]$
            \State $i_\text{last} \gets \text{the last element of } \texttt{rotations}[i]$
            \State $\texttt{rotations}[i_\text{first}]\texttt{.push\_back}(i_\text{last})$
            \State $\texttt{rotations}[i_\text{last}] \gets [i_\text{first}] \concat \texttt{rotations}[i_\text{last}]$
        \EndFor
        
        \ForAll{$i \in \{1, \dots, k-1\}$}
            \If{$i > d \text{ \textbf{or} } \texttt{degrees}[i-1] = 9$}
                \State $\texttt{rotations}[i]\texttt{.push\_back}(-1)$ \Comment{$-1$ represents the boundary}
            \EndIf
        \EndFor
        
        \State $Z \gets$ \Call{fromVRotations}{$k, \texttt{rotation}$} \Comment{$V(Z)=\{v_0, \ldots, v_{k-1}\}$}
        \State $\delta^-, \delta^+ \colon V(Z) \to \mathbb{N}$ such that
        $\delta^-(v_0)=\delta^+(v_0)=d$,
        $\delta^-(v_i)=\delta^+(v_i)=\texttt{degrees}[i-1]$ if $0 < i \leq d$, and
        $\delta^-(v_i)=5$, $\delta^+(v_i)=9$ if $d < i < k$.
        \State \Return $(Z, \delta^-, \delta^+)$
    \end{algorithmic}
\end{algorithm}

In the \enumPossibleBadWheels routine, we call the \enumWheels subroutine to obtain all combinations of degrees for neighbors of the center.
Then, we remove free cartwheels with limited degrees and tail ranges if (i) an upper bound of the final charge is less than 0, or (ii) blocked by reducible configurations from $\bar{\mathcal{D}}$ by calling the \prune routine in Algorithm \ref{alg:prune}.

\begin{algorithm}[H]
    \caption{\enumPossibleBadWheelsWithLink($d, \mathcal{R}, \mathcal{R}^{*-\mathcal{D}}, \bar{\mathcal{D}}$)}
    \label{alg:enum_possible_bad_wheels}
    \begin{algorithmic}[1]
        \Require{The center degree $d$, the set of rules $\mathcal{R}$, the set of combined rules $\mathcal{R}^{*-\mathcal{D}}$, the set of configurations $\bar{\mathcal{D}}$.}
        \Ensure{The set of free cartwheels with limited degrees and tail ranges, where the center degree is $d$, all the second neighbors of the center have degree-ranges $[5,9]$, and it can contain possibly bad cartwheels.}
        \State $\mathcal{C}^d_{0} \gets \emptyset$
        \ForAll{$(Z, \delta^-, \delta^+) \in$ \Call{enumWheels}{$d$}}
             \If {\Call{prune}{$(Z, \delta^-, \delta^+), \emptyset, \mathcal{R}, \mathcal{R}^{*-\mathcal{D}}, \bar{\mathcal{D}}$}}
             \Continue
             \EndIf
        \State $\mathcal{C}^d_0 \gets \mathcal{C}^d_0 \cup \{(Z, \delta^-, \delta^+)\}$
        \EndFor
        \State \Return $\mathcal{C}^d_0$
    \end{algorithmic}
\end{algorithm}

\subsubsection{Fixing rules applied from neighbors to the center.}
In this section, we present the pseudocode for fixing the set of rules applied from neighbors to the center, described in Section \ref{subsub:fix_in_rules}.
The main routine is Algorithm \ref{alg:fix_in_rules}.
As described in Section \ref{subsub:fix_in_rules}, we decide the set of rules $\mathcal{R}_i$ applied to $v_iv$.
We take a free combination $R_i^* \in \mathcal{R}^{*-\mathcal{D}}$ and update the degree-ranges of the current free cartwheel so that $\mathcal{R}_i$ is the set of rules that lead to the free combination $\mathcal{R}_i$.

The \updateDegreeByRule subroutine in Algorithm \ref{alg:update_degree_by_rule} updates the degree-ranges as above.
Every time we decide $\mathcal{R}_i$ by updating degree-ranges, we check whether the updated free cartwheel with limited degrees and tail ranges is removed by the pruning method by the \prune routine in Algorithm \ref{alg:prune}.

\begin{algorithm}[H]
    \caption{\fixInRulesWithLink($(Z_{C_0}, \delta_{C_0}^-, \delta_{C_0}^+), \mathcal{R}, \mathcal{R}^{*-\mathcal{D}}, \bar{\mathcal{D}}$)}
    \label{alg:fix_in_rules}
    \begin{algorithmic}[1]
        \Require{A free cartwheel with limited degrees and tail ranges $(Z_{C_0}, \delta_{C_0}^-, \delta_{C_0}^+)$ such that all degree-ranges of second neighbors of the center are $[5,9]$, a set of rules $\mathcal{R}$, a set of combined rules $\mathcal{R}^{*-\mathcal{D}}$, a set of configurations $\bar{\mathcal{D}}$.}
        \Ensure{The set of free cartwheels with limited degrees and tail ranges obtained from $(Z_{C_0}, \delta_{C_0}^-, \delta_{C_0}^+)$ by determining degrees by fixing a set of rules $\mathcal{R}_i$ applied from each neighbor $v_i$ to the center $v$.}
        \State $\mathcal{C}_0 \gets \{((Z_{C_0}, \delta_{C_0}^-, \delta_{C_0}^+), \emptyset)\}$
        \ForAll{$i \in \{1, \ldots, d\}$}
            \State $\mathcal{C}_i \gets \emptyset$
            \ForAll{$((Z_C, \delta_C^-, \delta_C^+), \{R^*_j\}_{j=1}^{i-1}) \in \mathcal{C}_{i-1}$}
                \ForAll{$R_i^* \in \mathcal{R}^{*-\mathcal{D}}$}
                    \State $\mathcal{C}' \gets$ \Call{updateDegreeByRule}{$(Z_C, \delta_C^-, \delta_C^+), v_iv, R_i^*$}
                    \ForAll{$(Z_{C'}, \delta_{C'}^-, \delta_{C'}^+) \in \mathcal{C}'$}
                        \If {\Call{prune}{$(Z_{C'}, \delta_{C'}^-, \delta_{C'}^+), \{R_j^*\}_{j=1}^i, \mathcal{R}, \mathcal{R}^{*-\mathcal{D}}, \bar{\mathcal{D}}$}}
                            \Continue
                        \EndIf
                        \State $\mathcal{C}_i \gets \mathcal{C}_i \cup \{((Z_{C'}, \delta_{C'}^-, \delta_{C'}^+), \{R_j^*\}_{j=1}^i)\}$
                    \EndFor
                \EndFor
            \EndFor
        \EndFor
        \State \Return $\mathcal{C}_d$
    \end{algorithmic}
\end{algorithm}

In the \updateDegreeByRule subroutine in Algorithm \ref{alg:update_degree_by_rule}, we get a homomorphism $\phi^*$ from $R^*$ to $(Z_C, \delta_C^-, \delta_C^+)$ with $e_R^*$ mapping to $e$ in $Z_C$.
We update the degree-ranges of each vertex $v$ in the image of $\phi^*$ in $Z_C$ by calculating the intersection of the degree-ranges of $v_R$ with $v=\phi^*(v_R)$.
After that, we enumerate concrete degrees except for tail ranges of updated free cartwheels with limited degrees and tail ranges by the \concreteDegreesExceptTail subroutine.

\begin{algorithm}
    \caption{\updateDegreeByRuleWithLink($(Z_C, \delta_C^-, \delta_C^+)$, $e$, $R^*$)}
    \label{alg:update_degree_by_rule}
    \begin{algorithmic}
        \Require{A free cartwheel with limited degrees and tail ranges $(Z_C, \delta_C^-, \delta_C^+)$, a dart $e$ in $Z_C$, a combined rule $R^*$.}
        \Ensure{The set of free cartwheels with limited degrees and tail ranges by updating degree-ranges of $(Z_C, \delta_C^-, \delta_C^+)$ so that all rules associated with $R^*$ apply to $e$ in $(Z_C, \delta_C^-, \delta_C^+)$.}
        \State $(Z_R^*, \delta_R^{*-}, \delta_R^{*+}, e_R^*, r_R^*, f_R^*) \gets R^*$
        \State $\phi^* \gets$ \Call{homomorphism}{$(Z_R^*, \delta_R^{*-}, \delta_R^{*+}), e_R^*, (Z_C, \delta_C^-, \delta_C^+), e, \gIntersection$}
        \If {$\phi^* = \texttt{null}$}
            \State \Return $\emptyset$
        \EndIf
        \State $(Z_{C'}, \delta_{C'}^-, \delta_{C'}^+) \gets$ a copy of $(Z_C, \delta_C^-, \delta_C^+)$
        \ForAll{$v_R \in V(Z_R^*)$}
            \State $v_C \gets \phi^*(v_R)$
            \State $[\delta_{C'}^-(v_C), \delta_{C'}^+(v_C)] \gets [\delta_C^-(v_C), \delta_C^+(v_C)] \cap [\delta_R^{*-}(v_R), \delta_R^{*+}(v_R)]$
        \EndFor
        \State \Return \Call{concreteDegreeExceptTail}{$(Z_{C'}, \delta_{C'}^-, \delta_{C'}^+)$}
    \end{algorithmic}
\end{algorithm}

The \concreteDegreesExceptTail subroutine in Algorithm \ref{alg:concrete_degree} enumerates free cartwheels with limited degrees and tail ranges by taking combinations of concrete degrees except for tail ranges.

\begin{algorithm}[H]
    \caption{\concreteDegreeExceptTailWithLink($(Z_C, \delta_C^-, \delta_C^+)$)}
    \label{alg:concrete_degree}
    \begin{algorithmic}[1]
        \Require{A free cartwheel with limited degrees $(Z_C, \delta_C^-, \delta_C^+)$ that may have degree-ranges other than tail range.}
        \Ensure{The set of free cartwheels with limited degrees and tail ranges by taking combinations of concrete degrees within their degree-ranges if it is not tail range.}
        \State $\tilde{\mathcal{C}} \gets \{(Z_C, \delta_C^-, \delta_C^+)\}$
        \ForAll{$v \in V(Z_C)$}
            \If {$\delta_C^-(v)=\delta_C^+(v)$ or $\delta_C^+(v)=9$}
                \Continue
            \EndIf
            \State $\mathcal{C}' \gets \emptyset$
            \ForAll{$i \in \{0, \ldots, 3\}$}
                \State $d \gets \CARTWHEELDEGREES[i]$
                \If {$\delta^-(v) \leq d \leq \delta^+(v)$}
                    \ForAll{$(\tilde{Z}_C, \tilde{\delta}_C^-, \tilde{\delta}_C^+) \in \tilde{\mathcal{C}}$}
                         \State $(Z_{C'}, \delta_{C'}^-, \delta_{C'}^+) \gets$ a copy of $(\tilde{Z}_C, \tilde{\delta}_C^-, \tilde{\delta}_C^+)$
                         \State $\delta_{C'}^-(v)=\delta_{C'}^+(v)=d$
                         \State $\mathcal{C}' \gets \mathcal{C}' \cup \{(Z_{C'}, \delta_{C'}^-, \delta_{C'}^+)\}$
                    \EndFor
                \EndIf
            \EndFor
            \State $\tilde{\mathcal{C}} \gets \mathcal{C}'$
        \EndFor
        \State \Return $\tilde{\mathcal{C}}$
    \end{algorithmic}
\end{algorithm}

\subsubsection{Pruning}
In this section, we present the pseudocode of the pruning methods described in Section \ref{subsub:fix_in_rules}.
The \prune routine in Algorithm \ref{alg:prune} is the main routine that checks whether a given free cartwheel with limited degrees and tail ranges is cut.
The \pruneByNonAssociatedRule subroutine in Algorithm \ref{alg:prune_non_assoc_rule} checks whether a rule in $\mathcal{R} \setminus \mathcal{R}_j$ {\bf always} applies to $v_jv$ for each $1 \leq j \leq i$.
Note that for each free combination $R_j^*$, we computed the flag $f_j$ such that $f_j(R)=\True$ if and only if $R \in \mathcal{R}_j$ in the \combineRules routine in Algorithm \ref{alg:combine_rules}.
Thus, we can easily check $R \in \mathcal{R} \setminus \mathcal{R}_j$.
The \upperBoundOfCharge subroutine in Algorithm \ref{alg:upperbound_of_charge} calculates the upper bound of the final charge of the center of a given free cartwheel with limited degrees and tail ranges, guaranteed by Lemma \ref{lem:estimate_charge}.

\begin{algorithm}[H]
    \caption{\pruneWithLink($(Z_C, \delta_C^-, \delta_C^+), \{R^*_j\}_{j=1}^i, \mathcal{R}, \mathcal{R}^{*-\mathcal{D}}, \bar{\mathcal{D}}$)}
    \label{alg:prune}
    \begin{algorithmic}[1]
        \Require{A free cartwheel with limited degrees and tail ranges $(Z_C, \delta_C^-, \delta_C^+)$ with combined rules $\{R^*_j\}_{j=1}^i$, the set of rules $\mathcal{R}$, the set of combined rules $\mathcal{R}^{*-\mathcal{D}}$, the set of configurations $\bar{\mathcal{D}}$.}
        \Ensure{\True\ if every free cartwheel with limited degrees in $(Z_C, \delta_C^-, \delta_C^+)$ such that the set of rules applied to $v_jv$ is exactly $\mathcal{R}_j$ associated with $R^*_j$ for every $1 \leq j \leq i$ either contains centered reducible configurations from $\bar{\mathcal{D}}$ or has the final charge of less than $0$; otherwise, \False.}
        \If {\Call{pruneByNonAssociatedRule}{$(Z_C, \delta_C^-, \delta_C^+), \{R^*_j\}_{j=1}^i, \mathcal{R}$}}
            \State \Return \True
        \EndIf
        \If {\Call{upperBoundOfCharge}{$(Z_C, \delta_C^-, \delta_C^+), \{R^*_j\}_{j=1}^i, \mathcal{R}, \mathcal{R}^{*-\mathcal{D}}$)} $< 0$}
            \State \Return \True
        \EndIf
        \If {\Call{blockedByReducibleConfiguration}{$(Z_C, \delta_C^-, \delta_C^+), v, \bar{\mathcal{D}}$}}
            \State \Return \True
        \EndIf
        \State \Return \False
    \end{algorithmic}
\end{algorithm}

\begin{algorithm}[H]
    \caption{\pruneByNonAssociatedRuleWithLink($(Z_C, \delta_C^-, \delta_C^+), \{R^*_j\}_{j=1}^i, \mathcal{R}$)}
    \label{alg:prune_non_assoc_rule}
    \begin{algorithmic}[1]
        \Require{A free cartwheel with limited degrees and tail ranges $(Z_C, \delta_C^-, \delta_C^+)$ with combined rules $\{R^*_j\}_{j=1}^i$, the set of rules $\mathcal{R}$.}
        \Ensure{\True\ if some $R \in \mathcal{R} \setminus \mathcal{R}_j$ always applies to $v_jv$ for some $1 \leq j \leq i$: otherwise \False.}
        \ForAll{$j \in \{1, \ldots, i\}$}
            \State $f_j \colon \mathcal{R} \to \{\True, \False\}$ such that $f_j(R)=\True$ if and only if $R \in \mathcal{R}_j \subseteq \mathcal{R}$ where $R^*_j$ is a free combination of $\mathcal{R}_j$.
            \ForAll {$R \in \mathcal{R}$}
                \If {$f_j(R) = \False$ and \Call{alwaysApply}{$(Z_C, \delta_C^-, \delta_C^+)$, $v_jv$, $R$}}
                    \State \Return \True
                \EndIf
            \EndFor
        \EndFor
        \State \Return \False
    \end{algorithmic}
\end{algorithm}

\begin{algorithm}[H]
    \caption{\upperBoundOfChargeWithLink($(Z_C, \delta_C^-, \delta_C^+), \{R^*_j\}_{j=1}^i, \mathcal{R}, \mathcal{R}^{*-\mathcal{D}}$)}
    \label{alg:upperbound_of_charge}
    \begin{algorithmic}[1]
        \Require{A free cartwheel with limited degrees and tail ranges $(Z_C, \delta_C^-, \delta_C^+)$ with combined rules $\{R^*_j\}_{j=1}^i$, the set of rules $\mathcal{R}$, the set of combined rules $\mathcal{R}^{*-\mathcal{D}}$.}
        \Ensure{The upper bound of the final charge of the center $v$ for all free cartwheels with limited degrees such that they are refinements of $(Z_C, \delta_C^-, \delta_C^+)$, the set of rules applied to $v_jv$ is exactly $\mathcal{R}_j$ for $1 \leq j \leq i$, and contains no centered configurations from $\mathcal{D}$.}
        \ForAll{$j \in \{1, \ldots, i\}$}
            \State $r_j^* \gets$ the amount of charge associated with $R^*_j$
        \EndFor
        \ForAll{$j \in \{i+1, \ldots, d\}$}
            \State $r'_j \gets$ \Call{amountOfPossibleChargeSend}{$v_jv, \mathcal{R}^{*-\mathcal{D}}$}
        \EndFor
        \ForAll{$j \in \{1, \ldots, d\}$}
            \State $s_j \gets$ \Call{amountOfChargeSend}{$vv_j, \mathcal{R}$}
        \EndFor
        \State $T_0 \gets 10 \cdot (6 - d)$
        \State \Return $T_0 + \sum_{j=1}^i r^*_j + \sum_{j=i+1}^d r'_j - \sum_{j=1}^d s_j$
    \end{algorithmic}
\end{algorithm}

\subsubsection{Refinement}
In this section, we present the pseudocode regarding the refinement procedure, described in Section \ref{subsub:refinement}.
The main routine for constructing $\mathcal{C}$ from $\mathcal{C}_d$ is given in Algorithm \ref{alg:fix_out_rules}.
First, this routine pushes each element in $\mathcal{C}_d$ into the queue.
While the queue is not empty, we pop one element from the queue.
Then, we iterate through every rule $R$ in $\mathcal{R}$ and every neighbor $v_i$.
We use the \shouldRefine subroutine in Algorithm \ref{alg:should_refine} to check whether we should refine based on whether $R$ {\bf always} or {\bf never} applies to $vv_i$.
If \shouldRefine subroutine returns \True, we assign \texttt{refined\_flag} to \True, and do the refinement procedure in the \refinement subroutine in Algorithm \ref{alg:refinement}.
For each refined free cartwheel with limited degrees and tail ranges, we call the \prune subroutine to check whether it is cut.
If \texttt{refined\_flag} is $\False$ at the end of iterations of all pairs of rules and neighbors, no refinement is performed.
In this case, we add it to the resulting set $\mathcal{C}$.

\begin{algorithm}[H]
    \caption{\fixOutRulesWithLink($\mathcal{C}_d, \mathcal{R}, \mathcal{R}^{*-\mathcal{D}}, \bar{\mathcal{D}}$)}
    \label{alg:fix_out_rules}
    \begin{algorithmic}[1]
        \Require{A set of free cartwheels with limited degrees and tail ranges $\mathcal{C}_d$, the set of rules $\mathcal{R}$, the set of combined rules $\mathcal{R}^{*-\mathcal{D}}$, and the set of reducible configurations $\bar{\mathcal{D}}$.}
        \Ensure{The set of free cartwheels with limited degrees and tail ranges $\mathcal{C}$, obtained by refining the cartwheels in $\mathcal{C}_d$ based on whether a rule in $\mathcal{R}$ applies from the center.}
        \State \texttt{Q} $\gets$ an empty queue
        \ForAll{$((Z_C, \delta_C^-, \delta_C^+), \{R^*_j\}_{j=1}^d) \in \mathcal{C}_d$}
            \State $\texttt{Q.push}(((Z_C, \delta_C^-, \delta_C^+), \{R^*_j\}_{j=1}^d))$
        \EndFor
        \State $\mathcal{C} \gets \emptyset$
        \While{not \texttt{Q.empty()}}
            \State $((Z_C, \delta_C^-, \delta_C^+), \{R^*_j\}_{j=1}^d) \gets \texttt{Q.pop}()$
            \State \texttt{refined\_flag} $\gets$ \False
            \ForAll{$i \in [1,d]$}
                \ForAll{$R \in \mathcal{R}$}
                    \If {\Call{shouldRefine}{$(Z_C, \delta_C^-, \delta_C^+), i, R$} $=$ \False}
                        \Continue
                    \EndIf
                    \State \texttt{refined\_flag} $\gets$ \True
                    \State $\mathcal{Z}_{C'} \gets$ \Call{refinement}{$(Z_C, \delta_C^-, \delta_C^+), i, R$}
                    \ForAll {$(Z_{C'}, \delta_{C'}^-, \delta_{C'}^+) \in \mathcal{Z}_{C'}$}
                         \If {\Call{prune}{$(Z_{C'}, \delta_{C'}^-, \delta_{C'}^+), \{R_j\}_{j=1}^d, \mathcal{R}, \mathcal{R}^{*-\mathcal{D}}, \bar{\mathcal{D}}$}}
                             \Continue
                         \EndIf
                         \State \texttt{Q.push}(($(Z_{C'}, \delta_{C'}^-, \delta_{C'}^+), \{R^*_j\}_{j=1}^d$))
                    \EndFor
                    \Break
                \EndFor
                \If {\texttt{refined\_flag} = \True}
                    \Break
                \EndIf
            \EndFor
            \If {\texttt{refined\_flag} = \False}
                \State $\mathcal{C} \gets \mathcal{C} \cup \{((Z_C, \delta_C^-, \delta_C^+), \{R^*_j\}_{j=1}^d)\}$
            \EndIf
        \EndWhile
        \State \Return $\mathcal{C}$
    \end{algorithmic}
\end{algorithm}

\begin{algorithm}[H]
    \caption{\dominantlyApplyWithLink($(Z^*, \delta^{*-}, \delta^{*+}), e^*, R$)}
    \label{alg:dominantly_apply}
    \begin{algorithmic}[1]
        \Require{A pseudo-configuration $(Z^*, \delta^{*-}, \delta^{*+})$ with a dart $e^*$ and a rule $R$.}
        \Ensure{\True\ if $R$ dominantly applies to $e^*$ in $(Z^*, \delta^{*-}, \delta^{*+})$; otherwise \False}
        \State $g_\textsf{dominant}$ takes two degree-ranges $[\delta^-(v), \delta^+(v)]$, $[\delta^{*-}(v^*), \delta^{*+}(v^*)]$ and returns \True\ if and only if the intersection of them is non-empty and $\delta^+(v)=\infty$ or $\delta^{*+}(v^*)<9$ holds.
        \State $(Z, \delta^-, \delta^+, e, r) \gets R$
        \State \Return \Call{homomorphism}{$(Z, \delta^-, \delta^+), e, (Z^*, \delta^{*-}, \delta^{*+}), e^*, g_\textsf{dominant}$} $\neq \texttt{null}$
    \end{algorithmic}
\end{algorithm}

\begin{algorithm}[H]
    \caption{\shouldRefineWithLink($(Z_C, \delta_C^-, \delta_C^+), i, R$)}
    \label{alg:should_refine}
    \begin{algorithmic}[1]
        \Require{A free cartwheel with limited degrees and tail ranges $(Z_C, \delta_C^-, \delta_C^+)$, an index $1 \leq i \leq d$, a rule $R$.}
        \Ensure{\True\ if we should refine $(Z_C, \delta_C^-, \delta_C^+)$ for $R$ to always/never applies to $vv_i$; otherwise \False.}
        \State \Return \Call{alwaysApply}{$(Z_C, \delta_C^-, \delta_C^+), vv_i, R$} $=\False$ and \Call{dominantlyApply}{$(Z_C, \delta_C^-, \delta_C^+), vv_i, R$} $=\True$
    \end{algorithmic}
\end{algorithm}

The \refinement routine in Algorithm \ref{alg:refinement} refines $(Z_C, \delta_C^-, \delta_C^+)$ into $\mathcal{C}_{\textsf{always}}$ and $\mathcal{C}_{\textsf{never}}$ depending on whether $R$ always/never applies to $vv_i$.
First, we calculate the homomorphism $\phi$ from $Z_R$ to $Z_C$ with $e_R$ mapping to $vv_i$.
Then, we construct the set of vertices $U_R$ in $Z_R$.
Then, we refine degrees-ranges of $U=\phi(U_R)$ in the \refineAlways and \refineNever subroutines.
The \refineAlways subroutine in Algorithm \ref{alg:refine_always} generates $\mathcal{C}_{\textsf{always}}$, while the \refineNever subroutine in Algorithm \ref{alg:refine_never} generates $\mathcal{C}_{\textsf{never}}$.
The algorithms for generating them are the same as those described in Section \ref{subsub:refinement}.

\begin{algorithm}[H]
    \caption{\refinementWithLink($(Z_C, \delta_C^-, \delta_C^+), i, R$)}
    \label{alg:refinement}
    \begin{algorithmic}[1]
        \Require{A free cartwheel with limited degrees and tail ranges $(Z_C, \delta_C^-, \delta_C^+)$, an index $1 \leq i \leq d$, a rule $R$.}
        \Ensure{The set of free cartwheels with limited degrees and tail ranges by refining $(Z_C, \delta_C^-, \delta_C^+)$ for $R$ to always/never apply to $vv_i$.}
        \State $(Z_R, \delta_R^-, \delta_R^+, e_R, r_R) \gets R$
        \State $\phi \gets$ \Call{homomorphism}{$(Z_R, \delta_R^-, \delta_R^+), e_R, (Z_C, \delta_C^-, \delta_C^+), vv_i, \gIntersection$}
        \State $U_R \gets \emptyset$
        \ForAll{$u_R$ in vertices of $Z_R$}
            \State $u \gets \phi(u_R)$
            \If {$\delta_C^+(u)=9$ and $\delta_C^-(u) < \delta_R^-(u_R)$}
                \State $U_R \gets U_R \cup \{u_R\}$
            \EndIf
        \EndFor
        \State \Return \Call{refineAlways}{$(Z_C, \delta_C^-, \delta_C^+), U_R, \phi, R$} $\cup$ \Call{refineNever}{$(Z_C, \delta_C^-, \delta_C^+), U_R, \phi, R$}
    \end{algorithmic}
\end{algorithm}

\begin{algorithm}[H]
    \caption{\refineAlwaysWithLink($(Z_C, \delta_C^-, \delta_C^+), U_R, \phi, R$)}
    \label{alg:refine_always}
    \begin{algorithmic}[1]
        \Require{A free cartwheel with limited degrees and tail ranges $(Z_C, \delta_C^-, \delta_C^+)$, the set of vertices $U_R$, the homomorphism $\phi$ from $R$ to $(Z_C, \delta_C^-, \delta_C^+)$ with mapping $e_R$ to $vv_i$ for some $i$.}
        \Ensure{The set of free cartwheels with limited degrees and tail ranges by refining $(Z_C, \delta_C^-, \delta_C^+)$ for $R$ to always apply to $vv_i$.}
        \State $(Z_{C'}, \delta_{C'}^-, \delta_{C'}^+) \gets$ a copy of $(Z_C, \delta_C^-, \delta_C^+)$
        \ForAll{$u_R \in U_R$}
             \State $u \gets \phi(u_R)$
             \State $\delta_{C'}^-(u) \gets \delta_R^-(u_R)$
        \EndFor
        \State $\mathcal{C}_\textsf{always} \gets \{(Z_{C'}, \delta_{C'}^-, \delta_{C'}^+)\}$
        \State \Return $\mathcal{C}_\textsf{always}$
    \end{algorithmic}
\end{algorithm}

\begin{algorithm}[H]
    \caption{\refineNeverWithLink($(Z_C, \delta_C^-, \delta_C^+), U_R, \phi, R$)}
    \label{alg:refine_never}
    \begin{algorithmic}[1]
        \Require{A free cartwheel with limited degrees and tail ranges $(Z_C, \delta_C^-, \delta_C^+)$, the set of vertices $U_R$, the homomorphism $\phi$ from $R$ to $(Z_C, \delta_C^-, \delta_C^+)$ with mapping $e_R$ to $vv_i$ for some $i$.}
        \Ensure{The set of free cartwheels with limited degrees and tail ranges by refining $(Z_C, \delta_C^-, \delta_C^+)$ for $R$ to never apply to $vv_i$.}
        \State $\mathcal{C}_\textsf{never} \gets \emptyset$
        \ForAll{$v_R \in U_R$}
            \State $u \gets \phi(v_R)$
            \State $(Z^u_{C}, \delta_{C}^{u-}, \delta_{C}^{u+}) \gets$ a copy of $(Z_C, \delta_C^-, \delta_C^+)$
            \State $\delta_{C}^{u+}(u) \gets \delta_R^-(v_R) - 1$
            \State $\mathcal{C}_\textsf{never} \gets \mathcal{C}_\textsf{never} \cup$ \Call{concreteDegreeExceptTail}{($Z^u_{C}, \delta_{C}^{u-}, \delta_{C}^{u+}$)}
        \EndFor
        \State \Return $\mathcal{C}_\textsf{never}$
    \end{algorithmic}
\end{algorithm}

\subsubsection{Overall algorithm}
\newcommand{\enumAllBadCartwheels}{\texttt{enumAllBadCartwheels}\xspace}
The overall algorithm for constructing $\mathcal{C}_{\textsf{all}}$ to enumerate all bad cartwheels described in Section \ref{sub:overcharged_cartwheel_enumeration} is presented in the \enumAllBadCartwheels routine in Algorithm \ref{alg:enum_all_bad_cartwheel}.
The \enumAllBadCartwheels routine is not directly implemented as a \Cpp function, but is implemented as a shell script.
When generating $\mathcal{C}_0$ and $\mathcal{C}_{\textsf{all}}$ in the \enumAllBadCartwheels routine, we execute the algorithm in parallel.
Note that the parallel updates to $\mathcal{C}_0$ and $\mathcal{C}_{\textsf{all}}$ in the \enumAllBadCartwheels routine do not cause race conditions.
This is because, in practice, $\mathcal{C}_0$ and $\mathcal{C}_{\textsf{all}}$ are handled as directories, and each process simply writes its output cartwheel to a uniquely named file within them.
The \enumAllBadCartwheels routine calls the \enumBadCartwheels subroutine in Algorithm \ref{alg:enum_bad_cartwheel} to enumerate bad cartwheels generated from a given singleton set $\{(Z_{C_0}, \delta_{C_0}^-, \delta_{C_0}^+)\}$.

\begin{algorithm}[H]
    \caption{enumAllBadCartwheels($\mathcal{R}, \mathcal{R}^{*-\mathcal{D}}, \bar{\mathcal{D}}$)}
    \label{alg:enum_all_bad_cartwheel}
    \begin{algorithmic}[1]
        \Require{The set of rules $\mathcal{R}$, the set of combined rules $\mathcal{R}^{*-\mathcal{D}}$, the set of reducible configurations $\bar{\mathcal{D}}$.}
        \Ensure{The set of free cartwheels with limited degrees and tail ranges $(Z_C, \delta_C^-, \delta_C^+)$ such that the center degree $d$ is in $\{7,8,9,10,11\}$ and at least one $(Z_C, \delta_C) \in (Z_C, \delta_C^-, \delta_C^+)$ is bad.}
        \State $\mathcal{C}_0 \gets \emptyset$
        \ForAll{$d \in \{7,8,9,10,11\}$ \textbf{in parallel}}
            \State $\mathcal{C}_0 \gets \mathcal{C}_0 \cup $ \Call{enumPossibleBadWheels}{$d, \mathcal{R}, \mathcal{R}^{*-\mathcal{D}}, \bar{\mathcal{D}}$}
        \EndFor
        \State $\mathcal{C}_{\textsf{all}} \gets \emptyset$
        \ForAll{$(Z_{C_0}, \delta_{C_0}^-, \delta_{C_0}^+) \in \mathcal{C}_0$ \textbf{in parallel}} 
            \State $\mathcal{C}_{\textsf{all}} \gets \mathcal{C}_{\textsf{all}} \cup$ \Call{enumBadCartwheels}{$(Z_{C_0}, \delta_{C_0}^-, \delta_{C_0}^+), \mathcal{R}, \mathcal{R}^{*-\mathcal{D}}, \bar{\mathcal{D}}$}
        \EndFor
        \State \Return $\mathcal{C}_{\textsf{all}}$
    \end{algorithmic}
\end{algorithm}

\begin{algorithm}[H]
    \caption{\enumBadCartwheelsWithLink($(Z_{C_0}, \delta_{C_0}^-, \delta_{C_0}^+), \mathcal{R}, \mathcal{R}^{*-\mathcal{D}}, \bar{\mathcal{D}}$) - Lemma \ref{lem:positive-comp}}
    \label{alg:enum_bad_cartwheel}
    \begin{algorithmic}[1]
        \Require{The set of rules $\mathcal{R}$, the set of combined rules $\mathcal{R}^{*-\mathcal{D}}$, the set of reducible configurations $\bar{\mathcal{D}}$.}
        \Ensure{The set of free cartwheels with limited degrees and tail ranges $(Z_C, \delta_C^-, \delta_C^+)$ such that the center degree $d$ is in $\{7,8,9,10,11\}$ and at least one $(Z_C, \delta_C) \in (Z_C, \delta_C^-, \delta_C^+)$ is bad.}
        \State $\mathcal{C}_d \gets$ \Call{fixInRules}{$(Z_{C_0}, \delta_{C_0}^-, \delta_{C_0}^+), \mathcal{R}, \mathcal{R}^{*-\mathcal{D}}, \bar{\mathcal{D}}$}
        \State $\mathcal{C} \gets$ \Call{fixOutRules}{$\mathcal{C}_d, \mathcal{R}, \mathcal{R}^{*-\mathcal{D}}, \bar{\mathcal{D}}$}
        \ForAll {$((Z_C, \delta_C^-, \delta_C^+), \{\mathcal{R}_j^*\}_{j=1}^d) \in \mathcal{C}$}
            \State $C \gets$ \Call{upperBoundOfCharge}{$(Z_C, \delta_C^-, \delta_C^+), \{\mathcal{R}_j^*\}_{j=1}^d, \mathcal{R}, \mathcal{R}^{*-\mathcal{D}}$}
            \State $d \gets \delta_C^-(v)$, where $v$ is the center of $Z_C$.
            \State $\texttt{center\_darts} \gets$ \Call{centerDartsByDegree}{$(Z_C, \delta_C^-, \delta_C^+)$}
            \State \textbf{assert} $C=0$
            \State \textbf{assert} $d=7$ or $d=8$
            \State \textbf{assert} $|\texttt{center\_darts}[7]|+|\texttt{center\_darts}[8]|+|\texttt{center\_darts}[9]| > 0$
        \EndFor
        \State $\mathcal{C}' \gets \{(Z_C, \delta_C^-, \delta_C^+) \mid ((Z_C, \delta_C^-, \delta_C^+), \{\mathcal{R}_j^*\}_{j=1}^d) \in \mathcal{C}\}$
        \State \Return $\mathcal{C}'$
    \end{algorithmic}
\end{algorithm}

\begin{algorithm}[H]
    \caption{\centerDartsByDegreeWithLink($(Z_C, \delta_C^-, \delta_C^+)$)}
    \label{alg:center_darts_by_degree}
    \begin{algorithmic}[1]
        \Require{A free cartwheel $(Z_C, \delta_C^-, \delta_C^+)$.}
        \Ensure{The array of darts so that the $d$-th array represents all the center darts whose tail has degree $d$.}
        \State $\texttt{center\_darts} \gets$ the array of size $10$
        \ForAll{$e \gets$ each dart whose head is the center $v$ in cyclic order}
            \State $d \gets \delta_C^-(\head(\reverse(e))$ \Comment{The values for $\delta_C^-, \delta_C^+$ are the same.}
            \State \texttt{center\_darts}[$d$].push\_back($e$)
        \EndFor
        \State \Return \texttt{center\_darts}
    \end{algorithmic}
\end{algorithm}

\begin{lem}
\label{lem:for-lem:positive-comp}
    The assertions in Lines 7, 8, and 9 of Algorithm \ref{alg:enum_bad_cartwheel} always hold for all inputs in Algorithm \ref{alg:enum_all_bad_cartwheel}.
\end{lem}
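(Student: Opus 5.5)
This statement is a computational claim: the assertions in Lines 7--9 of Algorithm~\ref{alg:enum_bad_cartwheel} hold for every cartwheel surviving \texttt{fixOutRules}. The plan is therefore to establish it by executing the implementation, and to argue that the execution faithfully realizes the pseudo-code, rather than to give a conceptual derivation. First I would fix the inputs exactly as in the body of the paper. These are the 84 oriented discharging rules $\cR$ of Figure~\ref{fig:rules}; the set $\bar{\cD}$ obtained from $\cD$ by \texttt{extendFromCutVertices} and \texttt{mirror}; and $\cR^{*-\cD}$ computed by \texttt{combineRules}$(\cR,\bar\cD)$, whose correctness rests on Lemmas~\ref{lem:free-homo-conf}, \ref{lem:free-homo-conf-trans}, \ref{lem:free-homo-conf-ranges} and \ref{lem:free-comb-rules-noD}. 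Then I would run \texttt{enumAllBadCartwheels}. This generates, for each $d\in\{7,\dots,11\}$, all neighbor-degree assignments up to rotation (\texttt{enumWheels}) and prunes them with \texttt{prune}. Each survivor is then processed by \texttt{fixInRules} and \texttt{fixOutRules}, and the three assertions are checked on every element of the output.

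To make the computational check trustworthy, I would verify three things for the code path. (a) \texttt{upperBoundOfCharge} returns exactly the quantity of Lemma~\ref{lem:estimate_charge} with $i=d$. Thus after \texttt{fixInRules} all $r_j^*$ are fixed, and the bound is $10(6-d)+\sum_j r_j^*-\sum_j s_j$. (b) Every element reaching Line~7 has already passed \texttt{prune}, so this bound is $\ge 0$. Hence the assertion $C=0$ amounts to checking that the bound is never strictly positive; it is a genuine test and not a tautology. (c) The degree assertion $d\in\{7,8\}$ together with the assertion in Line~9 just reads off $\delta_C^-$ of the center and its neighbors. These are fixed degrees, because neighbors always receive concrete degrees in \texttt{generateCartwheel}.

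I expect the main obstacle to be feasibility rather than logic. For $d=11$ the branching in \texttt{fixInRules} over $\cR^{*-\cD}$ for each of the $d$ neighbors, followed by the refinement queue, could blow up. I would rely on the early charge pruning: by Lemma~\ref{lem:vsends} each $r_j\le 5$, so for $d\ge 9$ the initial charge $10(6-d)\le -30$ forces many neighbors to send near-maximal charge. Via Figure~\ref{fig:send5}, that in turn forces many degree-5 vertices, which should quickly be blocked by $\cD$. I would run the degrees $9,10,11$ first to confirm that they produce empty outputs, so that assertion~8 holds vacuously there. I would then run the degrees $7,8$ in parallel, reporting the final counts (728 and 9366 surviving cartwheels) as a reproducibility check. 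If some assertion failed, I would inspect the offending cartwheel and add any reducible configuration it contains to $\cD$, as was done iteratively when $\cD$ was assembled. A new configuration would have to be certified D-reducible by Algorithm~\ref{alg:check-red_a} and checked for (D0)--(D3) before rerunning.
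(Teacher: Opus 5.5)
Your proposal is correct and takes the same approach as the paper. The paper establishes this lemma solely by running \texttt{enumAllBadCartwheels} (Algorithm~\ref{alg:enum_all_bad_cartwheel}) on the fixed inputs $\cR$, $\cR^{*-\cD}$, $\bar{\cD}$ and observing that no assertion in Algorithm~\ref{alg:enum_bad_cartwheel} fails, and your remarks (a)--(c) on what Lines 7--9 actually test are accurate (in particular, pruning already forces the bound to be $\ge 0$, so Line 7 genuinely checks non-positivity).

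There are two small slips. The paper reports 9366 surviving cartwheels for center degree 7 and 728 for center degree 8, which is the reverse of the order your sentence suggests. Also, your fallback of enlarging $\cD$ would prove a different statement, since the lemma concerns this fixed $\bar{\cD}$.
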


\subsection{Free combinations of free cartwheels}
\newcommand{\deleteDegreeFromKtoN}{\texttt{deleteDegreeFromKto9}\xspace}
\newcommand{\combineEachCartwheel}{\texttt{combineEachCartwheel}\xspace}
\newcommand{\combineEachCartwheelTwice}{\texttt{combineEachCartwheelTwice}\xspace}
\newcommand{\checkDegE}{\texttt{checkDeg8}\xspace}
\newcommand{\checkEE}{\texttt{check88}\xspace}
\newcommand{\checkES}{\texttt{check87}\xspace}
\newcommand{\checkSES}{\texttt{check787}\xspace}
\newcommand{\containX}{\texttt{containX}\xspace}
\newcommand{\checkStriangle}{\texttt{check7triangle}\xspace}
\newcommand{\checkDegS}{\texttt{checkDeg7}\xspace}
\newcommand{\checkSS}{\texttt{check77}\xspace}
\newcommand{\checkSSS}{\texttt{check777}\xspace}
In this section, we provide the pseudocodes corresponding to the algorithms described in Section \ref{subsect:combine-cartwheel}.

We use the \deleteDegreeFromKtoN routine to convert the set of free cartwheels $\mathcal{C}_{\textsf{all}}$.
To prove Lemma \ref{lem:zero-deg7,8}, we convert $\mathcal{C}_{\textsf{all}}$ by calling \deleteDegreeFromKtoN with $k=9$.
Furthermore, to prove Lemma \ref{lem:777}, we convert the resulting set by calling \deleteDegreeFromKtoN with $k=8$.

Note that, for every free cartwheel $(Z_C, \delta_C^-, \delta_C^+)$ in $\mathcal{C}_{\textsf{all}}$, the center $v$ has a fixed degree (i.e., $\delta_C^-(v)=\delta_C^+(v)$).
Each neighbor of the center also has a fixed degree. 

\begin{algorithm}[H]
    \caption{\deleteDegreeFromKToNWithLink($\mathcal{C}, k$)}
    \label{alg:delete_degree_k}
    \begin{algorithmic}[1]
        \Require{A set of free cartwheels $\mathcal{C}$, an positive integer $k \leq 9$.}
        \Ensure{The set of free cartwheels obtained from $\mathcal{C}$ by deleting free cartwheels containing vertices of fixed degree $k$ and updating degree range $[k-1,9]$ to fixed degree $k-1$.}
        \State $\mathcal{C}' \gets \emptyset$
        \ForAll{$(Z_C, \delta_C^-, \delta_C^+) \in \mathcal{C}$}
            \State $\texttt{remove} \gets \False$
            \ForAll{$v \in V(Z_C)$}
                \If {$\delta_C^-(v)=\delta_C^+(v)=k$}
                    \State $\texttt{remove} \gets \True$
                    \Break
                \ElsIf {$\delta_C^-(v)=k-1$ and $\delta_C^+(v)=9$}
                    \State $\delta_C^+(v) \gets k-1$
                \EndIf
            \EndFor
            \If {$\texttt{remove} = \False$}
                \State $\mathcal{C}' \gets \mathcal{C}' \cup \{(Z_C, \delta_C^-, \delta_C^+)\}$
            \EndIf
        \EndFor
        \State \Return $\mathcal{C}'$
    \end{algorithmic}
\end{algorithm}

The \combineEachCartwheel routine combines a given pseudo-configuration by identifying $e$ with each center dart $e'$ of $(Z', \delta^{\prime-}, \delta^{\prime+}) \in \mathcal{C}$ that are not blocked by an input configuration set $\mathcal{K}$.
Note that the input $\mathcal{K}$ is $\bar{\mathcal{D}}$ or $\bar{\mathcal{D}} \cup \{T_{7^3}\}$.
The latter set is necessary to handle Lemma \ref{lem:zero-deg7,7}.
The \combineEachCartwheelTwice routine combines two free cartwheels in $\mathcal{C}$ by calling the \combineEachCartwheel subroutine twice.

\begin{algorithm}[H]
    \caption{\combineEachCartwheelWithLink($(Z, \delta^-, \delta^+), e, \mathcal{C}, \mathcal{K}$)}
    \label{alg:combine_each_cartwheel}
    \begin{algorithmic}[1]
        \Require{A pseudo-configuration $(Z, \delta^-, \delta^+)$ with a specified dart $e$, a set of cartwheels $\mathcal{C}$, a set of configurations $\mathcal{K}$.}
        \Ensure{The set of all pseudo-configurations obtained by combining $(Z, \delta^-, \delta^+)$ and $(Z', \delta^{\prime-}, \delta^{\prime+}) \in \mathcal{C}$ by identifying $e$ with a center dart $e'$ of $(Z', \delta^{\prime-}, \delta^{\prime+})$, and not blocked by $\mathcal{K}$ with a homomorphism from $Z$ to it.}
        \State $\mathcal{Z} \gets \emptyset$
        \ForAll{$(Z', \delta^{\prime-}, \delta^{\prime+}) \in \mathcal{C}$}
            \ForAll{$e' \gets$ the dart whose head is the center $v$ of $(Z', \delta^{\prime-}, \delta^{\prime+})$}
                \State $\mathcal{Z}^* \gets$ \Call{freeHomomorphismConfiguration}{$(Z, \delta^-, \delta^+) \sqcup (Z', \delta^{\prime-}, \delta^{\prime+}), \{(e, e')\}$}
                \ForAll{$((Z^*, \delta^{*-}, \delta^{*+}), \phi^*) \in \mathcal{Z}^*$}
                    \State $c^* \gets$ an arbitrary vertex in $Z^*$
                    \If {\Call{blockedByReducibleConfiguration}{$(Z^*, \delta^{*-}, \delta^{*+}), c^*, \mathcal{K}$} $=$ \True}
                        \Continue
                    \EndIf
                    \State $\mathcal{Z} \gets \mathcal{Z} \cup \{((Z^*, \delta^{*-}, \delta^{*+}), \phi^*_{|Z}) \}$
                \EndFor
            \EndFor
        \EndFor
        \State \Return $\mathcal{Z}$
    \end{algorithmic}
\end{algorithm}

\begin{algorithm}[H]
    \caption{\combineEachCartwheelTwiceWithLink($(Z, \delta^-, \delta^+), e_1, e_2, \mathcal{C}, \mathcal{K}$)}
    \label{alg:combine_each_cartwheel_twice}
    \begin{algorithmic}[1]
        \Require{A pseudo-configuration $(Z, \delta^-, \delta^+)$ with two specified darts $e_1, e_2$, a set of cartwheels $\mathcal{C}$, a set of configurations $\mathcal{K}$.}
        \Ensure{The set of pseudo-configurations $Z^{**}$ by combining $(Z, \delta^-, \delta^+)$ and $(Z', \delta^{\prime-}, \delta^{\prime+}), (Z'', \delta^{\prime\prime-}, \delta^{\prime\prime+}) \in \mathcal{C}$ by identifying $e_1$ with a center dart of $(Z', \delta^{\prime-}, \delta^{\prime+})$, $e_2$ with a center dart of $(Z'', \delta^{\prime\prime-}, \delta^{\prime\prime+})$, and not blocked by $\mathcal{K}$ with a homomorphism from $Z$ to it.}
        \State $\mathcal{Z}^* \gets$ \Call{combineEachCartwheel}{$(Z, \delta^-, \delta^+), e_1, \mathcal{C}, \bar{\mathcal{D}}$}
        \State $\mathcal{Z}^{**} \gets \emptyset$
            \ForAll{$((Z^*, \delta^{*-}, \delta^{*+}), \phi^*_{|Z}) \in \mathcal{Z}^*$}
                \ForAll {$((Z^{**}, \delta^{**-}, \delta^{**+}), \phi^{**}) \in$ \Call{combineEachCartwheel}{$(Z^*, \delta^{*-}, \delta^{*+}), \phi^*_{|Z}(e_2), \mathcal{C}, \mathcal{K}$}}
                    \State $\mathcal{Z}^{**} \gets \mathcal{Z}^{**} \cup \{((Z^{**}, \delta^{**-}, \delta^{**+}), \phi^{**} \circ \phi^*_{|Z})\}$
                \EndFor
            \EndFor
        \State \Return $\mathcal{Z}^{**}$
    \end{algorithmic}
\end{algorithm}

\subsubsection{A vertex of degree 8}
The main routine to check Lemma \ref{lem:zero-deg7,8} is \checkDegE in Algorithm \ref{alg:for-lem:zero-deg7,8}. As explained in Section \ref{subsect:combine-cartwheel}, we have a lemma below.
Recall that in Lemma \ref{lem:zero-deg7,8} (iii), we require that two neighbors of degree $7$ are as close as possible in the successor order.
Thus, in the \checkSES routine in Algorithm \ref{alg:for-lem:zero-deg7,8(iii)}, we calculate the minimum distance of the successor order, and only check pairs of neighbors satisfying this minimum distance.

\begin{algorithm}[H]
    \caption{\checkDegEWithLink($\mathcal{C}_{\textsf{all}}, \bar{\mathcal{D}}$) - Lemma \ref{lem:zero-deg7,8}}
    \label{alg:for-lem:zero-deg7,8}
    \begin{algorithmic}[1]
        \Require{The set of free cartwheels $\mathcal{C}_{\textsf{all}}$, the set of configurations $\bar{\mathcal{D}}$.}
        \State $\mathcal{C} \gets$ \Call{deleteDegreeFromKto9}{$\mathcal{C}_\textsf{all}, 9$}
        \ForAll{$(Z_C, \delta_C^-, \delta_C^+) \in \mathcal{C}$ \textbf{in parallel}}
            \State $v \gets$ the center of $Z_C$
            \If {$\delta_C^-(v)=8$} \Comment{$\delta_C^-(v)=\delta_C^+(v)$ trivially holds.}
                \State $\texttt{center\_darts} \gets$ \Call{centerDartsByDegree}{$(Z_C, \delta_C^-, \delta_C^+)$}
                \If {$|\texttt{center\_darts}[8]| > 0$}
                    \State \Call{check88}{$(Z_C, \delta_C^-, \delta_C^+), \texttt{center\_darts}[8], \mathcal{C}, \bar{\mathcal{D}}$} \Comment{Lemma \ref{lem:zero-deg7,8}(i)}
                \ElsIf {$|\texttt{center\_darts}[7]|=1$}
                    \State \Call{check87}{$(Z_C, \delta_C^-, \delta_C^+), \texttt{center\_darts}[7], \mathcal{C}, \bar{\mathcal{D}}$} \Comment{Lemma \ref{lem:zero-deg7,8}(ii)}
                \ElsIf {$|\texttt{center\_darts}[7]|>1$}
                    \State \Call{check787}{$(Z_C, \delta_C^-, \delta_C^+), \texttt{center\_darts}[7], \mathcal{C}, \bar{\mathcal{D}}$} \Comment{Lemma \ref{lem:zero-deg7,8}(iii)}
                \EndIf
            \EndIf
        \EndFor
    \end{algorithmic}
\end{algorithm}

\begin{algorithm}[H]
    \caption{\checkEEWithLink($(Z_C, \delta_C^-, \delta_C^+), \texttt{darts8}, \mathcal{C}, \bar{\mathcal{D}}$) - Lemma \ref{lem:zero-deg7,8}(i)}
    \label{alg:for-lem:zero-deg7,8(i)}
    \begin{algorithmic}[1]
        \Require{A free cartwheel $(Z_C, \delta_C^-, \delta_C^+)$, \texttt{darts8} is the set of center darts in $Z_C$ where the tail has degree $8$, a set of free cartwheels $\mathcal{C}$, the set of configurations $\bar{\mathcal{D}}$.}
        \ForAll{$e \gets$ each dart in \texttt{darts8}}
            \State \textbf{assert} \Call{CombineEachCartwheel}{$(Z, \delta^-, \delta^+), \reverse(e), \mathcal{C}, \bar{\mathcal{D}}$} $= \emptyset$
        \EndFor
    \end{algorithmic}
\end{algorithm}

\begin{algorithm}[H]
    \caption{\checkESWithLink($(Z_C, \delta_C^-, \delta_C^+), \texttt{darts7}, \mathcal{C}, \bar{\mathcal{D}}$) - Lemma \ref{lem:zero-deg7,8}(ii)}
    \label{alg:for-lem:zero-deg7,8(ii)}
    \begin{algorithmic}[1]
        \Require{A free cartwheel $(Z_C, \delta_C^-, \delta_C^+)$, \texttt{darts7} is the set of center darts in $Z_C$ where the tail has degree $7$, a set of free cartwheels $\mathcal{C}$, the set of configurations $\bar{\mathcal{D}}$.}
        \State $e \gets$ the single element of \texttt{darts7}.
        \State \textbf{assert} \Call{combineEachCartwheel}{$(Z_C, \delta_C^-, \delta_C^+), \reverse(e), \mathcal{C}, \bar{\mathcal{D}}$} $=\emptyset$
    \end{algorithmic}
\end{algorithm}

\begin{algorithm}[H]
    \caption{\checkSESWithLink($(Z_C, \delta_C^-, \delta_C^+), \texttt{darts7}, \mathcal{C}, \bar{\mathcal{D}}$) - Lemma \ref{lem:zero-deg7,8}(iii)}
    \label{alg:for-lem:zero-deg7,8(iii)}
    \begin{algorithmic}[1]
        \Require{A free cartwheel $(Z_C, \delta_C^-, \delta_C^+)$, \texttt{darts7} is the set of center darts in $Z_C$ where the tail has degree $7$, a set of free cartwheels $\mathcal{C}$, the set of configurations $\bar{\mathcal{D}}$.}
        \State $\texttt{min\_dist} \gets \infty$
        \State $\texttt{dist} \gets$ an array of size $|\texttt{darts7}|$ filled with zeros
        \ForAll{$i \gets \{0, \ldots, |\texttt{darts7}|-1\}$}
            \State $e_1 \gets \texttt{darts7}[i]$ 
            \State $e_2 \gets$  $\texttt{darts7}[0]$ if $i=|\texttt{darts7}|-1$ otherwise $\texttt{darts7}[i+1]$
            \While{$e_1 \neq e_2$}
                \State $e_1 \gets \successor(e_1)$
                \State $\texttt{dist}[i] \gets \texttt{dist}[i] + 1$
            \EndWhile
            \State $\texttt{min\_dist} \gets \min(\texttt{min\_dist}, \texttt{dist}[i])$
        \EndFor
        \ForAll{$i \gets \{0, \ldots, |\texttt{darts7}|-1\}$}
            \If {$\texttt{dist}[i] > \texttt{min\_dist}$}
                \Continue
            \EndIf
            \State $e_1 \gets \texttt{darts7}[i]$ 
            \State $e_2 \gets$  $\texttt{darts7}[0]$ if $i=|\texttt{darts7}|-1$ otherwise $\texttt{darts7}[i+1]$
            \State $\mathcal{Z}^{**} \gets$ \Call{combineEachCartwheelTwice}{$(Z_C, \delta_C^-, \delta_C^+), \reverse(e_1), \reverse(e_2), \mathcal{C}, \bar{\mathcal{D}}$}
            \ForAll{$((Z^{**}, \delta^{**-}, \delta^{**+}), \phi^{**}_{Z_C}) \gets \mathcal{Z}^{**}$}
                \State $c^{**} \gets \phi^{**}_{Z_C}(v)$, where $v$ is the center of $Z_C$ 
                \State \textbf{assert} \Call{containX}{$(Z^{**}, \delta^{**-}, \delta^{**+}), c^{**}$}
            \EndFor
        \EndFor
    \end{algorithmic}
\end{algorithm}

\begin{algorithm}
    \caption{\containXWithLink($(Z, \delta^-, \delta^+), v$)}
    \label{alg:contain_X}
    \begin{algorithmic}[1]
        \Require{A pseudo-configuration $(Z, \delta^-, \delta^+)$ with a distinguished vertex $c$.}
        \Ensure{\True\ if $(Z, \delta^-, \delta^+)$ contains $X$ such that $v$ corresponds to the central vertex of degree $8$ in $X$ (See Figure \ref{fig:exception-zero-concentrate}).}
        \State $(X, \delta_X) \gets$ the configuration $X$
        \State $e_Z \gets$ any dart whose head is $v$ in $Z$.
        \State $e_X \gets$ any dart whose head is the central vertex of degree $8$ in $X$.
        \ForAll{$i \in \{0, \ldots, 7\}$}
            \If {\Call{homomorphism}{$(X, \delta_X, \delta_X), e_X, (Z, \delta^-, \delta^+), e_Z, \gInclude$} $\neq$ \texttt{null}}
                \State \Return \True
            \EndIf
            \State $e_X \gets \successor(e_X)$
        \EndFor
        \State \Return \False
    \end{algorithmic}
\end{algorithm}

\begin{lem}
\label{lem:comp-check-deg8}
    The assertions in Algorithm \ref{alg:for-lem:zero-deg7,8(i)}, \ref{alg:for-lem:zero-deg7,8(ii)}, and \ref{alg:for-lem:zero-deg7,8(iii)} always hold.
\end{lem}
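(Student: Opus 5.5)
This statement is a computer-verified assertion, so the argument has two parts: show that executing the checks is a faithful finite test, and then report the outcome of the run.

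The faithfulness part comes first. The input $\mathcal{C}_{\textsf{all}}$ is produced by Algorithm~\ref{alg:enum_all_bad_cartwheel}, and by the lemma preceding Lemma~\ref{lem:for-lem:positive-comp} it covers every bad free cartwheel. We restrict it with \texttt{deleteDegreeFromKto9} at $k=9$. This removal is harmless because only degrees at most $8$ are relevant in Lemma~\ref{lem:zero-deg7,8}. The same step turns the tail range $[8,9]$ into the fixed degree $8$, while every other tail range is kept so that the blocking test can still treat it as the representative degree $9$ (Lemma~\ref{lem:bounded-ranges}).

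Next we check that each call to \texttt{combineEachCartwheel} enumerates every free combination, in the sense of Lemma~\ref{lem:free-homo-conf-ranges}, of the current configuration with a cartwheel from $\mathcal{C}$. The identification must send the chosen dart $\reverse(e)$, directed from the neighbor to the center, to a center dart of that cartwheel. For the case with two neighbors, Lemma~\ref{lem:free-homo-conf-trans} shows that combining one cartwheel at a time and pruning blocked intermediate results loses nothing. The reason is that if an intermediate combination is blocked, so is any further combination built from it, because homomorphisms compose. In \texttt{check787} the minimum successor distance is computed over all pairs of consecutive degree-7 darts. This matches the phrase ``as close as possible'' in Lemma~\ref{lem:zero-deg7,8}(iii), since every minimizing pair is tried.

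With faithfulness established, the assertions themselves are statements about a finite computation. The claim is that running Algorithms~\ref{alg:for-lem:zero-deg7,8(i)}--\ref{alg:for-lem:zero-deg7,8(iii)} on the fixed inputs $\mathcal{C}_{\textsf{all}}$ and $\bar{\mathcal{D}}$ terminates without any failed assertion. Termination follows from the finiteness of $\mathcal{C}$ and $\bar{\mathcal{D}}$ and from the termination argument for the free homomorphism construction. The proof therefore reduces to executing the \Cpp code linked to each pseudocode block and recording that:
\begin{itemize}
\item in cases (i) and (ii), every combination is blocked by $\bar{\mathcal{D}}$, so the returned set is empty;
\item in case (iii), every surviving double combination passes \texttt{containX} centered at the image of $v$.
\end{itemize}

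The main obstacle is the correctness of \texttt{containX} and of the centering convention. In \texttt{combineEachCartwheel} the center $c^*$ is an arbitrary vertex, which is only valid if no configuration in $\bar{\mathcal{D}}$ that uses a vertex of degree above $8$ can occur. This holds here because all degrees are now at most $8$. This point has to be argued explicitly rather than left to the code. Beyond that, what remains is to trust the run itself, which should be reproducible on an independent machine.
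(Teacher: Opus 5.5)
Your proposal is correct and takes the same approach as the paper: the lemma is purely a claim about the output of a finite computation, and the paper's only justification is running the \Cpp{} code behind Algorithms~\ref{alg:for-lem:zero-deg7,8(i)}--\ref{alg:for-lem:zero-deg7,8(iii)} (via \texttt{checkDeg8}) and observing that no assertion fails. Your faithfulness discussion actually belongs to the derivation of Lemma~\ref{lem:zero-deg7,8} from this check in Section~\ref{subsect:combine-cartwheel}, and one point there needs restating: tail ranges $[d,9]$ with $d\le 7$ survive \texttt{deleteDegreeFromKto9}, so the arbitrary $c^*$ in \texttt{combineEachCartwheel} is harmless not because the pseudo-configurations have all degrees at most $8$, but because every vertex of $B\subseteq B_3(v)$ in $G$ has degree at most $8$, so the representative assignment agreeing with $G$ at $c^*$ gives $c^*$ degree at most $8$ and no configuration vertex of degree above $8$ can map there (similarly, pruning a blocked intermediate combination is safe because that combination already yields an embedded reducible configuration in $B$, not because blocking transfers to further combinations).
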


\subsubsection{Maximal degree 7}
The main routine to check Lemma \ref{lem:777} is \checkStriangle in Algorithm \ref{alg:for-lem:777}.  As explained in Section \ref{subsect:combine-cartwheel}, we have a lemma below. 

\begin{algorithm}[H]
    \caption{\checkStriangleWithLink($\mathcal{C}_{\textsf{all}}, \bar{\mathcal{D}}$) - Lemma \ref{lem:777}}
    \label{alg:for-lem:777}
    \begin{algorithmic}[1]
        \Require{The set of free cartwheels $\mathcal{C}_{\textsf{all}}$, the set of configurations $\bar{\mathcal{D}}$.}
        \State $\mathcal{C} \gets$ \Call{deleteDegreeFromKto9}{$\mathcal{C}_{\textsf{all}}, 9$}
        \State $\mathcal{C} \gets$ \Call{deleteDegreeFromKto9}{$\mathcal{C}, 8$}
        \ForAll{$(Z_C, \delta_C^-, \delta_C^+) \in \mathcal{C}$ \textbf{in parallel}}
            \State $v \gets$ the center of $Z_C$
            \ForAll{$e \gets$ each dart whose head is $v$ in $Z_C$ in the cycic order}
                \State $f \gets \successor(e)$
                \State $v_e \gets \head(\reverse(e))$
                \State $v_f \gets \head(\reverse(f))$
                \If {$\delta_C^-(v_e)=7$ and $\delta_C^-(v_f)=7$}
                    \State \textbf{assert} \Call{combineEachCartWheelTwice}{$(Z_C, \delta_C^-, \delta_C^+), \reverse(e), \reverse(f), \mathcal{C}, \bar{\mathcal{D}}$} $= \emptyset$
                \EndIf
            \EndFor
        \EndFor
    \end{algorithmic}
\end{algorithm}

\begin{lem}
\label{lem:comp-check-7triangle}
    The assertions in Algorithm \ref{alg:for-lem:777} always hold.
\end{lem}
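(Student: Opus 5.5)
The statement is Lemma \ref{lem:comp-check-7triangle}, a computer-verified assertion. My plan is to reduce it to a finite, correctly implemented computation and then argue that the computation is exhaustive.

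First, I fix the input precisely. The set $\mathcal{C}_{\textsf{all}}$ is produced by Algorithm \ref{alg:enum_all_bad_cartwheel}. We then apply \deleteDegreeFromKtoN with $k=9$ and $k=8$. The result is a finite list of free cartwheels with center degree $7$, non-center degrees in $\{5,6,7\}$, and possibly tail ranges whose lower bound is at most $6$.

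For each such cartwheel and each pair of consecutive center darts $e,f$ whose tails have degree $7$, we run \combineEachCartwheelTwice. Its first stage combines with every cartwheel in $\mathcal{C}$ along $\reverse(e)$ and keeps only the combinations that are not blocked by $\bar{\mathcal{D}}$. Its second stage does the same along the image of $\reverse(f)$. The assertion says that the second stage always returns the empty set.

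Correctness of this reduction rests on earlier lemmas.
\begin{itemize}
\item Lemma \ref{lem:free-homo-conf-ranges} guarantees that the branching outputs of \freeHomomorphismConfiguration cover every homomorphic image.
\item Lemma \ref{lem:free-homo-conf-trans} justifies doing the two combinations sequentially and pruning blocked intermediates, since blocking is inherited by further free combinations.
\item Lemma \ref{lem:bounded-ranges} justifies \representativeDegree. The remaining tail ranges are reduced to degree $7$ by \deleteDegreeFromKtoN, or represented by the value $9$, so no reducible configuration can use them.
\end{itemize}

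The verification itself is an execution: run the $\Cpp$ implementation of Algorithm \ref{alg:for-lem:777} in parallel over all cartwheels and confirm that no assertion fails. Two facts ensure termination. The number of cartwheels is finite (9366 for center degree 7 before deletion). Each free combination terminates, by the argument following Lemma \ref{lem:free-homo-conf-ranges}.

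The main obstacle is not mathematical but computational and trust-related. The number of pairs of cartwheels combined around a $\Tseven$ is large, so the blocking checks must be efficient. The degree-indexed dart lookup in \containConf and the early pruning after the first combination are what make this feasible.

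Correctness also hinges on including the radius-3 configuration of Figure \ref{fig:radius3} in $\bar{\mathcal{D}}$. Some triple combinations are blocked only by it (Figure \ref{fig:777combineRadius3}). If any assertion failed, I would first inspect the surviving combination for a new D-reducible configuration, check it with Algorithm \ref{alg:check-red_a}, add it to $\cD$, and rerun.
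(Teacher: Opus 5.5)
Your proposal is correct and matches the paper: Lemma \ref{lem:comp-check-7triangle} is a purely computational statement, and the paper establishes it exactly as you do, by running the \Cpp{} implementation of Algorithm \ref{alg:for-lem:777} (after the two calls to \texttt{deleteDegreeFromKto9}) and observing that every call to \texttt{combineEachCartwheelTwice} returns the empty set. Your appeal to Lemmas \ref{lem:free-homo-conf-ranges}, \ref{lem:free-homo-conf-trans} and \ref{lem:bounded-ranges} is really the argument that this check implies Lemma \ref{lem:777}, not part of the check itself; one small correction there is that ranges $[7,9]$ are turned into the fixed degree $7$ and can then be used by configurations, so only the ranges whose upper end is still $9$ are excluded from them.
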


The main routine to check Lemma \ref{lem:zero-deg7,7} is \checkDegS in Algorithm \ref{alg:for-lem:zero-deg7,7}.

\begin{algorithm}[H]
    \caption{\checkDegSWithLink($\mathcal{C}_{\textsf{all}}, \bar{\mathcal{D}}$) - Lemma \ref{lem:zero-deg7,7}}
    \label{alg:for-lem:zero-deg7,7}
    \begin{algorithmic}[1]
        \Require{The set of free cartwheels $\mathcal{C}_{\textsf{all}}$, the set of configurations $\bar{\mathcal{D}}$.}
        \State $\mathcal{C} \gets$ \Call{deleteDegreeFromKto9}{$\mathcal{C}_{\textsf{all}}, 9$}
        \State $\mathcal{C} \gets$ \Call{deleteDegreeFromKto9}{$\mathcal{C}, 8$}
        \State $\mathcal{C} \gets$ all the free cartwheels in $\mathcal{C}$ that are not blocked by $T_{7^3}$
        \ForAll{$(Z_C, \delta_C^-, \delta_C^+) \in \mathcal{C}$ \textbf{in parallel}}
            \State $v \gets$ the center of $Z_C$ \Comment{$\delta_C^-(v)=\delta_C^+(v)=7$ holds.}
            \State \texttt{center\_darts} $\gets$ \Call{centerDartsByDegree}{$(Z_C, \delta_C^-, \delta_C^+)$}
            \If {$|\texttt{center\_darts}[7]| = 1$}
                \State \Call{check77}{$(Z_C, \delta_C^-, \delta_C^+), \texttt{center\_darts}[7], \mathcal{C}, \bar{\mathcal{D}} \cup \{T_{7^3}\}$} \Comment{Lemma \ref{lem:zero-deg7,7} (i)}
            \ElsIf {$|\texttt{center\_darts}[7]| > 1$}
                \State \Call{check777}{$(Z_C, \delta_C^-, \delta_C^+), \texttt{center\_darts}[7], \mathcal{C}, \bar{\mathcal{D}} \cup \{T_{7^3}\}$} \Comment{Lemma \ref{lem:zero-deg7,7} (ii)}
            \EndIf
        \EndFor
    \end{algorithmic}
\end{algorithm}

\begin{algorithm}[H]
    \caption{\checkSSWithLink($(Z_C, \delta_C^-, \delta_C^+), \texttt{darts7}, \mathcal{C}, \bar{\mathcal{D}} \cup \{T_{7^3}\}$) - Lemma \ref{lem:zero-deg7,7}(i)}
    \label{alg:for-lem:zero-deg7,7(i)}
    \begin{algorithmic}[1]
        \Require{A free cartwheel $(Z_C, \delta_C^-, \delta_C^+)$, \texttt{darts7} is the set of center darts in $Z_C$ where the tail has degree $7$, a set of free cartwheels $\mathcal{C}$, the set of configurations $\bar{\mathcal{D}} \cup \{T_{7^3}\}$.}
        \State $e \gets$ the single element of \texttt{darts7}.
        \State \textbf{assert} \Call{combineEachCartwheel}{$(Z_C, \delta_C^-, \delta_C^+), \reverse(e), \mathcal{C}, \bar{\mathcal{D}} \cup \{T_{7^3}\}$} $=\emptyset$
    \end{algorithmic}
\end{algorithm}

\begin{algorithm}[H]
    \caption{\checkSSSWithLink($(Z_C, \delta_C^-, \delta_C^+), \texttt{darts7}, \mathcal{C}, \bar{\mathcal{D}} \cup \{T_{7^3}\}$) - Lemma \ref{lem:zero-deg7,7}(ii)}
    \label{alg:for-lem:zero-deg7,7(ii)}
    \begin{algorithmic}[1]
        \Require{A free cartwheel $(Z_C, \delta_C^-, \delta_C^+)$, \texttt{darts7} is the set of center darts in $Z_C$ where the tail has degree $7$, a set of free cartwheels $\mathcal{C}$, the set of configurations $\bar{\mathcal{D}} \cup \{T_{7^3}\}$.}
        \ForAll{$i \in \{0, \ldots, |\texttt{darts7}|-1\}$}
            \State $e_1 \gets \texttt{darts7}[i]$
            \ForAll{$j \in \{0, \ldots, i - 1\}$}
                \State $e_2 \gets \texttt{darts7}[j]$
                \State \textbf{assert} \Call{combineEachCartwheelTwice}{$(Z_C, \delta_C^-, \delta_C^+), \reverse(e_1), \reverse(e_2), \mathcal{C}, \bar{\mathcal{D}} \cup \{T_{7^3}\}$} $= \emptyset$
            \EndFor
        \EndFor
    \end{algorithmic}
\end{algorithm}

\begin{lem}
\label{lem:comp-check-deg7}
    The assertions in Algorithm \ref{alg:for-lem:zero-deg7,7(i)} and \ref{alg:for-lem:zero-deg7,7(ii)} always hold.
\end{lem}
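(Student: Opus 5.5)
This statement is a computer-verified assertion about Algorithms \ref{alg:for-lem:zero-deg7,7(i)} and \ref{alg:for-lem:zero-deg7,7(ii)}. The plan is therefore to run the verified pipeline and, alongside it, to argue that each step is a faithful and finite computation. The main inputs are Lemma \ref{lem:free-homo-conf-ranges} (free combinations capture every homomorphic image), Lemma \ref{lem:free-homo-conf-trans} (combinations may be taken one at a time), and Lemma \ref{lem:bounded-ranges} (blocking reduces to finitely many representative degrees).

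The first step is to build $\mathcal{C}_{\textsf{all}}$ by running Algorithm \ref{alg:enum_all_bad_cartwheel}, whose assertions are guaranteed by Lemma \ref{lem:for-lem:positive-comp}. Every remaining cartwheel then has center degree 7 or 8. Next I apply \texttt{deleteDegreeFromKto9} with $k=9$ and then with $k=8$, which leaves only cartwheels with degrees at most 7. I then discard every cartwheel that is blocked by $\Tseven$. After this filtering, every surviving center has degree exactly 7, and none of these cartwheels contains a $T_{7^3}$ through a homomorphic image.

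The second step is the enumeration itself. For each surviving cartwheel I branch on the number of degree-7 center darts. If there is exactly one such dart, I combine along its reverse with every center dart of every cartwheel in $\mathcal{C}$, using Algorithm \ref{alg:combine_each_cartwheel}. I resolve degree issues as in Algorithm \ref{alg:resolve_degree_issues}, and I discard each combination that is blocked by $\bar{\mathcal{D}}\cup\{\Tseven\}$. If there are two or more such darts, I do the same for every unordered pair $\{e_1,e_2\}$ using Algorithm \ref{alg:combine_each_cartwheel_twice}, pruning after the first combination for efficiency. This intermediate pruning is justified by Lemma \ref{lem:free-homo-conf-trans}, since a combination blocked after the first step stays blocked after the second.

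The claim is that each of these calls returns the empty set. Termination is not an issue. The procedure in Section \ref{sect:ranges} terminates, all degree ranges are tail ranges or fixed degrees at most 9, and blocking is decided over finitely many representative degrees by Lemma \ref{lem:bounded-ranges}. So the entire check is a finite computation.

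The main obstacle is computational cost rather than any conceptual difficulty. There are $9366$ cartwheels with center degree 7 before filtering, and the pairwise double combinations against all of $\mathcal{C}$ could blow up. To contain this, I would rely on three measures. The first is to prune intermediate combinations by blocking, as already described. The second is to restrict candidate darts $f^*$ by the endpoint degrees and the centering condition, as in Algorithm \ref{alg:contain_conf}. The third is to run the whole check in parallel over cartwheels. If some combination survived, the fallback would be to inspect it and either add a new D-reducible configuration to $\mathcal{D}$ or extend the treatment of exceptional configurations, as was done with $X$ in Lemma \ref{lem:zero-deg7,8}. The paper reports that the run succeeds with no survivors, so the assertions hold.
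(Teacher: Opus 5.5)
Your proposal is correct and is the paper's argument: the lemma amounts to running Algorithm~\ref{alg:for-lem:zero-deg7,7} on the cartwheels of $\mathcal{C}_{\textsf{all}}$ that remain after restricting to degree at most 7 and discarding those blocked by $\Tseven$, and observing that every single or double free combination is blocked by $\bar{\mathcal{D}}\cup\{\Tseven\}$. Two small points do not affect the check: ``not blocked by $\Tseven$'' is decided on representative degrees (tail ranges read as 9), so a surviving cartwheel may still have realizations containing a $\Tseven$ through a tail-range vertex, and the intermediate pruning in Algorithm~\ref{alg:combine_each_cartwheel_twice} uses $\bar{\mathcal{D}}$ alone.
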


\drop{
\section*{\color{blue} Can we use computers in theoretical computer science?
\carsten{It would strange if the answer were negative. I suggest a more specific headline, for example: Comparison with previous proofs of the Four-Color Theorem.}\mikkel{Then I think we should drop all the colored text (which is totally fine with me). It is only meaningful as a "Response" to a criticism. Otherwise it will just look strangely defensive and not motivated. I tried to write the last part of the intro to more clearly explain the point of how we use computers for proofs.}}
{\color{magenta}
\paragraph{A negative STOC'26 referee suggested posting this paper on arXiv to start a discussion:} "In summary, the result is very strong, but I'm not sure if STOC is an appropriate venue. The setting (proof by computer with 8000+ configurations) makes the paper much more suitable for a journal, maybe a book, or just a paper in arxiv.
\\
By accepting a paper we're giving a strong stamp of confidence in the result, and here I'm [not] comfortable in giving it. The paper has two parts: the reduction to 8000+ configutations (relying on very interesting analysis of properties of planar graphs) and then a computer check that these configurations are OK. Making even a rough check for the correctness is just infeasible for a conference submission. 
Furthermore, the authors could have made the paper available, for example, on arxiv, where maybe having access to the paper and the code, the community could provide some evidence of the paper's correctness".}

\carsten{I think it is problematic to publish an anonymous report. If anonymous reports are quoted, referees may be reluctant to offer a candid opinion. We would need permission from the referee. (The public does not know the referee's identity but the organizers do, and both the referee and organizers may be embarressed.)
I would prefer that we do not mention the rejcection at all. The blue text below is still valuable and relevant (after minor modifications).}

{\color{blue}
\paragraph{Response}

We are happy to post our paper on arXiv, inviting the community to assess its correctness. However, we do not consider our paper controversial. It is a mathematical proof that we can color any planar graph in near-linear time (no claim of practicality), and as such, it falls within the scope of theoretical computer science.  An integral part of the proof is the proof that any triangulation has a linear number of reducible configurations. To prove 4CT you only need to prove that it has at least one reducible configuration. As in previous successful proofs of 4CT, e.g., \cite{AppelHaken89,steinberger2010unavoidable,RSST}, we use computers to check many tedious cases (all attempts to prove 4CT without computers have failed). Without computers, we only have the 4-color conjecture. A purely human proof was proposed in 1879 by Kempe \cite{kempe1879}, but a 
mistake was found 11 years later by Heawood in 1890 \cite{heawood1890}.

We emphasize that our proof is humanly verifiable. In particular, the reader need not trust our programming skills.
Our proof relies on statements that if you run a certain $\Cpp$ code with a certain input, then it yields a certain output. The $\Cpp$ code and the input are made available so that skeptical readers can run the code on their own trusted computers. 

To integrate this into our humanly verifiable proof, we first present and analyze the algorithms for 
the checks. All of this is contained in the main body of the paper. Then, in appendices, we implement the algorithmic checks in pseudo-code, and finally, 
we implement the pseudo-code in \Cpp, making the code available on GitHub..

Mathematically, the most interesting and challenging part to understand is the overall proof of our main theorems, including the explanation of the algorithmic checks. All of this is in the main body of the paper. Next, one must verify that the pseudocode correctly implements the algorithm. This is fairly straightforward. Finally, one must verify that the pseudocode is correctly implemented in \Cpp. This is fairly straightforward for someone familiar with $\Cpp$ programming. Thus, we claim that our proof is humanly verifiable.

Next comes the issue of whether a conference reviewer can verify our proof quickly. The purpose of conferences such as STOC and FOCS is the timely dissemination of the most important results in theoretical computer science. STOC/FOCS has a long tradition of accepting non-trivial results with proofs that cannot be fully verified within the short review time.
However, we have done what we can to make the proof relatively quick to check plausibility.

For plausibility, first we note that the claim that a triangulation has a linear number of reducible configurations is {\em not} controversial. It is hard to prove, but it does
 not contradict any existing beliefs. 
The proof can also be checked locally. More precisely, if the reader is skeptical about a particular algorithmic check, then we present with the algorithm a pointer to the corresponding pseudo-code in the appendix, and from there is a pointer to the corresponding $\Cpp$ code.

So far, we have received no indication that any reviewer has found any concrete part of our proof hard to read. The main complaint seems to be that we have more than 8000 reducible configurations. This may sound scary to a human,  but these configurations are only checked by a computer.

Finally, we note, in all fairness, that the version presented here is better than the one the referee reacted to.
}}
\end{document}